\documentclass[11pt, a4paper]{amsart}

\usepackage{geometry}
 \geometry{
 a4paper,
 left=30mm,
 right=30mm,
 top=40mm
 }

\usepackage{enumerate, mathtools}
\usepackage{hyperref}
\hypersetup{
    colorlinks=true,
    linkcolor=blue,
    filecolor=blue,      
    urlcolor=blue,
    citecolor=blue,
}
\usepackage{amsmath}
\usepackage{amsthm}
\usepackage{amssymb}
\usepackage{xcolor}
\usepackage{url}
\usepackage{enumitem}
\usepackage[capitalize, noabbrev]{cleveref}
\usepackage{longtable}
\usepackage{bm}
\usepackage{xypic}
\usepackage{soul}
\usepackage{stmaryrd}
\usepackage{tabularray}
\usepackage{subcaption}
\usepackage{graphicx}

\usepackage[utf8]{inputenc}
\usepackage{pgfplots}
\DeclareUnicodeCharacter{2212}{−}
\usepgfplotslibrary{groupplots,dateplot}
\usetikzlibrary{patterns,shapes.arrows}
\pgfplotsset{compat=newest}

\usepackage{algorithm}
\usepackage[noend]{algpseudocode}

\numberwithin{equation}{section}

\newtheorem{thmabc}{Theorem}

\newtheorem{corabc}[thmabc]{Corollary}

\newtheorem{thm}{Theorem}

\numberwithin{thm}{section}

\newtheorem*{thm*}{Theorem}
\newtheorem*{con*}{Conjecture}
\newtheorem{lem}[thm]{Lemma}
\newtheorem{prop}[thm]{Proposition}
\newtheorem{coro}[thm]{Corollary}
\newtheorem{lemma}[thm]{Lemma}
\newtheorem{conj}[thm]{Conjecture}

\theoremstyle{definition}
\newtheorem{defn}[thm]{Definition}
\newtheorem{remark}[thm]{Remark}
\newtheorem{ex}[thm]{Example}
\newtheorem{quest}[thm]{Question}

\DeclareMathOperator{\End}{End}
\DeclareMathOperator{\Hom}{Hom}
\DeclareMathOperator{\Aut}{Aut}
\DeclareMathOperator{\GL}{GL}

\DeclareMathOperator{\diag}{diag}
\DeclareMathOperator{\Gal}{Gal}

\DeclareMathOperator{\im}{im}

\DeclareMathOperator{\tuB}{B}
\DeclareMathOperator{\tuM}{M}
\DeclareMathOperator{\tuN}{N}
\DeclareMathOperator{\GP}{G}
\DeclareMathOperator{\LA}{\mathfrak{g}}
\DeclareMathOperator{\Mat}{Mat}
\DeclareMathOperator{\id}{id}

\DeclareMathOperator{\tp}{t}
\DeclareMathOperator{\ZG}{Z}

\DeclareMathOperator{\Pf}{Pf}

\DeclareMathOperator{\Cent}{Cent}
\DeclareMathOperator{\Adj}{Adj}
\DeclareMathOperator{\Div}{Div}
\DeclareMathOperator{\Pic}{Pic}

\newcommand{\Q}{\mathbb{Q}}

\newcommand{\F}{\mathbb{F}}
\newcommand{\Z}{\mathbb{Z}}

\renewcommand{\phi}{\varphi}
\renewcommand{\leq}{\leqslant}
\renewcommand{\geq}{\geqslant}
\renewcommand{\epsilon}{\varepsilon}
\renewcommand{\P}{\mathbb{P}}
\newcommand{\alg}{K}
\newcommand{\ff}{F}

\newcommand{\pseudo}{\Psi\mathrm{Isom}}
\newcommand{\spseudo}{\mathrm{S}\Psi\mathrm{I}}
\newcommand{\J}[2]{J_{#1,#2}}
\newcommand{\charac}{\mathrm{char}}
\newcommand{\ti}[1]{\mathcal{T}(#1)}

\newcommand{\cor}[1]{\mathcal{#1}}
\newcommand{\ps}[1]{\llbracket #1 \rrbracket}

\allowdisplaybreaks

\begin{document}

\title{
Smooth cuboids in group theory
}

\author[Joshua Maglione]{Joshua Maglione}
\address[Joshua Maglione]{
University of Galway,  School of Mathematical and Statistical Sciences
}
\email{joshua.maglione@universityofgalway.ie}

\author[Mima Stanojkovski]{Mima Stanojkovski}
\address[Mima Stanojkovski]{
Universit\`a di Trento, Dipartimento di Matematica
}
\email{mima.stanojkovski@unitn.it}

\makeatletter
\@namedef{subjclassname@2020}{
 \textup{2020} Mathematics Subject Classification}
\makeatother

\subjclass[2020]{20D15, 14M12, 68Q25,  11G20, 15A69, 20D45.}
\keywords{Finite $p$-groups, isomorphism testing, automorphism groups, Baer correspondence, Pfaffians, determinantal representations of cubics.}

\begin{abstract}
A smooth cuboid can be identified with a $3\times 3$ matrix of linear forms in $3$ variables,  with coefficients in a field $\alg$, whose determinant describes a smooth cubic in the projective plane.  To each such matrix one can associate a group scheme over $\alg$. We produce isomorphism invariants of these groups in terms of their \emph{adjoint algebras},  which also give information on the number of their maximal abelian subgroups. Moreover, when $\alg$ is finite, we give a characterization of the isomorphism types of the groups in terms of isomorphisms of elliptic curves and also describe their automorphism groups. 
We conclude by applying our results to the determination of the automorphism groups and isomorphism testing of finite $p$-groups of class $2$ and exponent $p$ arising in this way.
\end{abstract}

\maketitle

\setcounter{tocdepth}{1}
\tableofcontents

\section{Introduction}

The Baer correspondence is a classical way of associating an alternating
bilinear map with a nilpotent group or a nilpotent Lie algebra over some field $\alg$. 
In the case of groups of prime power order, this is also referred to as the Lazard correspondence and allows one to study groups in the (often easier) context of Lie algebras.
As $\alg$-bilinear maps can be represented as matrices of linear forms, the study of groups arising from the Baer correspondence, moreover, affords an additional geometric point of view.
In this paper, our focus is on a
systematic study of the groups, which we call
\emph{$E$-groups}, obtained by inputting matrices of linear forms,
whose determinant defines an elliptic curve in $\P^2_{\alg}$. This work also expands on both the results and techniques from~\cite{stanojkovski2019hessian}.

\subsection{Notation}
Throughout $p$ denotes an odd prime number, and $G$ a $p$-group.
Moreover, $\alg$ denotes an arbitrary field and $\ff$ a finite field of
characteristic $p$ and cardinality $q$. Let $n$ and $d$ be positive integers. For a vector ${\bm y}=(y_1,\ldots,y_d)$, we denote by  $\alg[\bm{y}]_1$ the vector space of linear homogeneous polynomials with coefficients in $\alg$. We write $\Mat_n(\alg[\bm{y}]_1)$ for the $n\times n$ matrices of linear forms in $y_1,\ldots,y_d$ with coefficients in $\alg$. 
\subsection{Baer correspondence and Pfaffians}\label{sec:intro-Baer}
Given a skew-symmetric
matrix of linear forms $\tuB\in\Mat_{2n}(\ff[{\bm y}]_1)$, the Baer
correspondence associates $\tuB$ with a $p$-group $G=\GP_{\tuB}(\ff)$ of exponent $p$ and class at most $2$ with underlying set $\ff^{2n+d}$. A \emph{non-degenerate} matrix $\tuB$ completely prescribes the commutator relations on $G$ and ensures that the bilinear map 
\begin{equation}\label{eq:commap}
t_G:G/\ZG(G)\times G/\ZG(G)\longrightarrow G'=[G,G]
\end{equation}
induced by the commutator map on $G$ is $\ff$-bilinear (so not just
$\F_p$-bilinear). On the other hand, as we explain  below, $\ff$ and $\tuB$ can
also be recovered  from $G$.

Given a finite $p$-group $G$ of class $2$ and exponent $p$, one can
construct a skew-symmetric matrix of linear forms $\tuB_G $ over $\F_p$ from the
commutator map of $G$, for example in terms of a minimal generating set of $G$.
Moreover, if $\ff$ is an extension of $\F_p$ over which the
map in \eqref{eq:commap} is $\ff$-bilinear, then we can express $\tuB_G$ as a matrix $\tuB=\tuB_\ff$ of linear forms with coefficients in $\ff$.
If $\tuB$ has an odd number of rows and columns, then
$\det(\tuB)=0$; otherwise, there exists a homogeneous polynomial
$\Pf(\tuB)\in\ff[y_1,\dots, y_d]$ of degree $n$, called the \emph{Pfaffian},
such that $\det(\tuB) = \Pf(\tuB)^2$. The equation $\Pf(\tuB)=0$ defines a
projective, degree $n$ hypersurface in $\P^{d-1}_{\ff}$, also called a
\emph{linear determinantal hypersurface}. There are now many examples in the
literature, cf.\ \cite[Sec.~1.5.4]{stanojkovski2019hessian}, describing how the intrinsic geometry 
of $\tuB$ strongly influences many invariants of
$\GP_{\tuB}(\ff)$, like the number of conjugacy classes \cite{BostonIsaacs04,O'BrienVoll/15,Rossmann/19,Rossmann/22,RV/2019}, faithful dimensions \cite{BMS/17},
automorphism group sizes \cite{dS+VL,stanojkovski2019hessian,VL/18}, and the number of immediate descendants \cite{dS+VL,Lee/16}. We look at the Baer correspondence in more detail in \cref{sec:baer3}.

\subsection{Elliptic groups}

The focus of this paper is, in the language of \cref{sec:intro-Baer}, to study
groups $\GP_{\tuB}(\ff)$ with $d=n=3$ and $\Pf(\tuB)=0$ defining an elliptic
curve. Moreover, we are concerned with isomorphisms and automorphisms of abstract groups.

\begin{defn}\label{defn:EIH-groups}
An \emph{elliptic group} (abbreviated to {\em $E$-group}) is a group $G$ that is isomorphic to $\GP_{\tuB}(\alg)$ where $\tuB$ is a skew-symmetric matrix of linear forms over a field $\alg$ such that
$\Pf(\tuB)=0$ defines an elliptic
curve in $\P^2_{\alg}$. If $\alg$ is finite of  characteristic $p$, the group $G$ is called an \emph{elliptic $p$-group}.
\end{defn}

From \cref{defn:EIH-groups} it follows that, if $G\cong \GP_{\tuB}(\ff)$ is an $E$-group, then the matrix $\tuB$ belongs to
$\Mat_6(\ff[y_1,y_2,y_3]_1)$ and 
$|G|=q^9$.

\subsection{Elliptic groups and variations over the primes}\label{sec:PORC}

In the study of $p$-groups, the groups arising from the Baer correspondence are
sometimes specializations $\GP_{\tuB}(\mathfrak{O}_k/\mathfrak{p})$ to a quotient ring or field of global unipotent group schemes $\GP_{\tuB}$
over the ring of integers $\mathfrak{O}_k$ of some number field $k$. In this respect,
it is natural to study how the properties of the group
$\GP_{\tuB}(\mathfrak{O}_k/\mathfrak{p})$ vary with $\mathfrak{p}$. Significant
concepts in this regard are those of quasipolynomiality (also called
\emph{polynomiality on residue classes}, PORC) \cite{HIgman1} and
\emph{polynomiality on Frobenius sets} (i.e.\ polynomiality on finite Boolean combinations of sets
of primes defined by the solvability of polynomial congruences; cf.~\cite[Sec.~1.5.2]{stanojkovski2019hessian}).  Of particular relevance is Higman's PORC
conjecture~\cite{Higman2} about the function $\{p \in\Z \mid p \textup{ prime}\}\rightarrow\Z$ enumerating the isomorphism
classes of groups of order $p^n$ for fixed $n$.

We recall that a function $f:\{p \in\Z \mid p \textup{ prime}\}\rightarrow\Z$ is \emph{quasipolynomial} if there exists a positive integer $N$ and polynomials $f_0,\ldots,f_{N-1}\in\Z[x]$ such that 
$$f(p)=f_i(p) \ \ \textup{whenever}\ \ p\equiv i \bmod N.$$
In the context of quasipolynomiality, elliptic groups are for instance employed
in \cite{dS+VL} to construct a family of $p$-groups where the number of
isomorphism classes is not a quasipolynomial in $p$. More specifically, the
elliptic group scheme $\GP$ is defined over $\Z$, and the family arises as a
collection of central extensions of $\GP(\F_p)$.  In earlier work, du Sautoy
\cite{duSautoy/01} showed that the number of subgroups, resp.\ normal subgroups, of index $p^3$ of
$\GP(\Z)$, for almost all primes $p$, depends on the number of
$\F_p$-points on $E$, written $|E(\F_p)|$. Du
Sautoy~\cite{duSautoy/02} extended this to show that, for a parametrized family $(E_{D})_D$ of elliptic curves
given in short Weierstrass form, (infinitely many terms of) the subgroup
zeta functions of a resulting parametrized family $(\GP_D)_D$ of $E$-groups
depend on $|E_D(\F_p)|$.  In the normal subgroup case, these zeta functions were made explicit by Voll \cite{Voll/04} as an application of more general results on smooth curves in the plane, which was further generalized in \cite{Voll/05} to smooth projective hypersurfaces with no lines.
Recently Voll and the second author~\cite{stanojkovski2019hessian} showed that
the automorphism group sizes of a class of elliptic $p$-groups are multiples of the number of $3$-torsion points of the corresponding curves. For more context, not only involving
elliptic groups, we refer to
\cite[Sec.~1.5.2]{stanojkovski2019hessian} and the references therein.

\subsection{Groups from points on curves}
\label{sec:G_EP}

As we explain in \cref{sec:determinantal}, there is a straightforward way to
construct examples of elliptic groups from triples $(\alg,E,P)$ where $\alg$ is
a field, $E$ is an elliptic curve given by a short Weierstrass equation 
\begin{equation}\label{eq:intro-wf}
y^2=x^3+ax+b \textup{ with } a,b\in \alg
\end{equation} 
and $P=(\lambda, \mu)$ is a point in $E(\alg)$. In this case, the matrix is defined as follows:
\begin{equation}\label{eqn:J_EP}
\tuB_{E,P}=\begin{pmatrix}
0 & \J{E}{P} \\
-\J{E}{P}^{\tp} & 0
\end{pmatrix} \textup{ where } \J{E}{P}=\begin{pmatrix}
y_1-\lambda y_3 & y_2-\mu y_3 & 0 \\
y_2+\mu y_3 & \lambda y_1 +(a+\lambda^2)y_3 & y_1 \\
0 & y_1 & -y_3 
\end{pmatrix}.
\end{equation}
The matrix $\J{E}{P}$ is a particular instance of a \emph{smooth  cuboid}: indeed this $3\times 3$ matrix of linear forms in $3$ variables can be interpreted as a $(3,3,3)$ \emph{cuboid} as in \cite{Ng}, and it is not difficult to see that by homogenizing the \emph{smooth} curve \eqref{eq:intro-wf}, one
recovers precisely $\Pf(\tuB_{E,P})=0$. For simplicity, we denote the group
$\GP_{\tuB_{E,P}}(\alg)$ by $\GP_{E,P}(\alg)$.

\subsection{The contributions of this paper}
\label{sec:main-results}

The main results of this paper are Theorems \ref{mainthm:sat}, \ref{mainthm:aut}, \ref{mainthm:adj},  and \ref{mainthm:iso}. The first three results are proven in \cref{sec:ABC}, while the fourth is given in \cref{sec:algs} together with the necessary computational conventions.

If $E$ is an elliptic curve, we indicate by $\cor{O}$
its identity element. If $E$ is given by a Weierstrass equation as in
\eqref{eq:intro-wf}, then, in projective coordinates,  one has $\cor{O}=(0:1:0)$.
If $E$ and $E'$ are elliptic curves in the plane with identity elements
$\cor{O}$ and $\cor{O}'$ respectively, then an isomorphism $E\rightarrow E'$ is
an isomorphism of projective varieties mapping $\cor{O}$ to $\cor{O}'$. The
automorphism group of the elliptic curve $E$ is denoted $\Aut_{\cor{O}}(E)$, and
its isomorphism type depends only on the $j$-invariant $j(E)$ of $E$.  Moreover, if $P$ is a
point on $E$, then we write $\Aut_{\cor{O}}(E)\cdot P$ for the orbit of $P$ under
the action of $\Aut_{\cor{O}}(E)$ on $E$. For a positive integer $n$, the
$n$-torsion subgroup of $E$ is denoted by $E[n]$. If $\sigma\in\Gal(\ff/\F_p)$
and $E$ is an elliptic curve defined by $f=0$ over $\ff$, then $\sigma(E)$ is
defined by the polynomial $\sigma(f)$, where $\sigma$ acts on the coefficients of $f$. Moreover, if $P=(a:b:c)\in E(\ff)$ then
$\sigma(P)\in \sigma(E)(\ff)$ is defined by
$\sigma(P)=(\sigma(a):\sigma(b):\sigma(c))$.

The following two theorems greatly generalize Theorem~1.1 from
\cite{stanojkovski2019hessian}.

\begin{thmabc}\label{mainthm:sat}
 Let $\ff$ be a finite field with $\charac(\ff)=p\geq 5$.   Let, moreover, $E$ and $E'$ be elliptic curves in $\P^2_{\ff}$ given by Weierstrass
    equations, and let $P\in E(\ff)\setminus\{\cor{O}\}$ and $P'\in
    E'(\ff)\setminus\{\cor{O}'\}$.  Then the
    following are equivalent.
    \begin{enumerate}[label=$(\arabic*)$]
        \item The groups $\GP_{E,P}(\ff)$ and $\GP_{E',P'}(\ff)$ are
        isomorphic.
        \item There exist $\sigma\in \Gal(\ff/\F_p)$ and an isomorphism
        $\varphi:E'\rightarrow \sigma(E)$ of elliptic curves such that
        $\varphi(P')=\sigma(P)$.
    \end{enumerate}
\end{thmabc}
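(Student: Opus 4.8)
The plan is to convert the abstract isomorphism of groups into a concrete equivalence of the matrices of linear forms defining them, and then to read off that equivalence on the level of elliptic curves via the classification of linear determinantal representations of smooth plane cubics.

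\emph{From groups to matrices.} Since $\charac(\ff)=p\ge 5>2$ and the groups involved have nilpotency class $2$ and exponent $p$, I would use the Baer (Lazard) correspondence to identify $\GP_{E,P}(\ff)$ with the alternating $\F_p$-bilinear commutator map $t$ of \cref{eq:commap} on $\ff^6\times\ff^6\to\ff^3$ encoded by $\tuB_{E,P}$, and similarly for $\GP_{E',P'}(\ff)$; an isomorphism of groups then amounts to a pseudo-isometry of these bilinear maps. The ambient field $\ff$ is not visible in the abstract group, but it is recovered from the centroid of $t$, whose structure is controlled by the adjoint-algebra analysis carried out in \cref{sec:ABC}; hence any such pseudo-isometry is $\ff$-semilinear for a uniquely determined $\sigma\in\Gal(\ff/\F_p)$. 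Applying $\sigma$ to the coefficients of $\tuB_{E',P'}$ produces $\tuB_{\sigma(E'),\sigma(P')}$, the entries in \cref{eqn:J_EP} being polynomial in $\lambda,\mu,a$, so statement $(1)$ becomes: there exist $\sigma\in\Gal(\ff/\F_p)$, $X\in\GL_6(\ff)$ and $Y\in\GL_3(\ff)$ with $X^{\tp}\,\tuB_{E,P}(Y{\bm y})\,X=\tuB_{\sigma(E'),\sigma(P')}({\bm y})$. Replacing $(E',P')$ by $(\sigma(E'),\sigma(P'))$, it suffices to treat the $\ff$-linear case $\sigma=\id$ and then carry the twist along.

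\emph{From $6\times 6$ to $3\times 3$, and to the curve.} Both $\tuB_{E,P}$ and $\tuB_{E',P'}$ are block anti-diagonal with $3\times 3$ blocks $\J{E}{P}$, $\J{E'}{P'}$ whose determinants are the homogenized Weierstrass cubics $C$, $C'$ of $E$, $E'$. Since $C,C'$ are smooth, I expect the pseudo-isometry above to be forced --- up to the substitution ${\bm y}\mapsto Y{\bm y}$, and possibly a transposition reflecting $P\mapsto\ominus P$ --- to descend to a two-sided equivalence $A\,\J{E}{P}(Y{\bm y})\,B=\J{E'}{P'}({\bm y})$ with $A,B\in\GL_3(\ff)$; taking determinants, $Y$ restricts to an isomorphism $C'\to C$ of curves. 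At this point I would invoke the classical dictionary (Dixon, Beauville) identifying the equivalence classes, under $\GL_3(\ff)\times\GL_3(\ff)$, of linear determinantal representations of a fixed smooth plane cubic with its nontrivial degree-$0$ line bundles; fixing the group law with origin the Weierstrass point, these are the points of the curve other than the origin, and $\J{E}{P}$ is the representation attached to the class $[P-\cor{O}]$ --- this identification, uniformly in $(E,P)$, being what \cref{sec:determinantal} is designed to prove. By functoriality of pullback, the two representations are equivalent if and only if the isomorphism $C'\to C$ carries $[P'-\cor{O}']$ to $[P-\cor{O}]$; since a morphism of elliptic curves sending origin to origin is automatically a group isomorphism, this translates --- after absorbing the translation by which a projective isomorphism may move the Weierstrass point --- into the existence of an isomorphism of elliptic curves $E'\to E$ sending $P'$ to $P$. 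Reinstating the Galois twist then yields precisely $(2)$, the hypotheses $P\ne\cor{O}$, $P'\ne\cor{O}'$ keeping us within the range where \cref{eqn:J_EP} defines an honest representation.

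\textbf{Main obstacle.} The subtle step is not the elliptic-curve bookkeeping but the reduction in the previous paragraph: showing that an arbitrary pseudo-isometry of the two $6$-dimensional bilinear maps must respect the anti-diagonal block structure, i.e.\ genuinely comes from a two-sided equivalence of the $3\times 3$ blocks together with a coordinate change on $\P^2$. This is where the smoothness of the Pfaffian cubic and the structure of the adjoint algebra from \cref{sec:ABC} must enter essentially. A secondary, purely careful point is to verify that the normalizations in \cref{eqn:J_EP} and in the determinantal-representation dictionary match, so that the correspondence comes out as $P\leftrightarrow[P-\cor{O}]$ exactly and the Frobenius twist $\sigma$ is introduced once and only once, when passing from the intrinsic field of the group to the given field $\ff$.
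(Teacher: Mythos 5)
Your proposal follows essentially the same route as the paper: Baer correspondence to reduce to pseudo-isometry, recovery of $\ff$ from the centroid to isolate a single Galois twist $\sigma$, reduction of the $6\times 6$ pseudo-isometry to a $\GL_3\times\GL_3\times\GL_3$-equivalence of the blocks $\J{E}{P}$ (the paper does this via the $2$-transitive action on the totally isotropic subspaces, \cref{lem:swap} and \cref{lem:psi-M}), and then the Beauville--Ishitsuka dictionary plus Abel's theorem to absorb the translation by a $3$-torsion (flex) point and land on an isomorphism of elliptic curves carrying $P'$ to $\sigma(P)$. The two obstacles you flag are exactly the two steps the paper works hardest on, and the tools you name for them are the right ones, so the plan is sound.
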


\begin{thmabc}\label{mainthm:aut}
 Let $\ff$ be a field with $\charac(\ff)=p\geq 5$ and cardinality $p^e$.   Let, moreover,  $E$ be an elliptic curve in $\P^2_{\ff}$ given by a short Weierstrass equation,
    and let $P\in
    E(\ff)\setminus\{\cor{O}\}$.  Then there exists a subgroup $S$ of $\Gal(\ff/\F_p)$ such that the following holds:
    \[
        \dfrac{|\Aut(\GP_{E,P}(\ff))|}{{
        p^{18 e^2}}}=|S|\cdot|E[3](\ff)|\cdot \frac{|\Aut_{\cor{O}}(E)|}{|\Aut_{\cor{O}}(E)\cdot P|}\cdot\begin{cases}
        |\GL_2(\ff)| & \textup{if } P\in E[2](\ff),\\
        2(p^e-1)^2 & \textup{otherwise}. 
        \end{cases}
    \]
\end{thmabc}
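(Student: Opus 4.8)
The plan is to translate, via the Baer correspondence, the computation of $|\Aut(\GP_{E,P}(\ff))|$ into that of a pseudo-isometry group of the commutator form, and then to carry this out through the linear determinantal representation of $E$ attached to $P$; the overall strategy follows and extends \cite{stanojkovski2019hessian}.

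\emph{Reduction to pseudo-isometries.} Put $G\coloneqq\GP_{E,P}(\ff)$ and let $t_G\colon V\times V\to W$ be its commutator map, with $V=G/\ZG(G)$ and $W=G'$. As $\Pf(\tuB_{E,P})$ defines a smooth cubic, $\tuB_{E,P}$ is non-degenerate, so $\ZG(G)=G'$ and $t_G$ is faithfully encoded by the block shape of $\tuB_{E,P}$, i.e.\ by $\J{E}{P}$. Since $p$ is odd and $G$ has class $2$ and exponent $p$, the Baer correspondence realises $\Aut(G)$ as an extension of $\pseudo(t_G)$ by the central homomorphisms $\Hom(V,W)$; moreover, by the description of $\Adj(t_G)$ from \cref{mainthm:adj} the centroid of $t_G$ is $\ff$, so every pseudo-isometry is $\ff$-semilinear and one obtains
\[
|\Aut(G)|\;=\;q^{18}\cdot|S|\cdot|\spseudo(t_G)|,
\]
where $S\leq\Gal(\ff/\F_p)$ records the Galois twisting and $\spseudo(t_G)$ denotes the group of $\ff$-linear pseudo-isometries of $t_G$. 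It thus remains to prove that $|\spseudo(t_G)|$ equals
\[
|E[3](\ff)|\cdot\frac{|\Aut_{\cor{O}}(E)|}{|\Aut_{\cor{O}}(E)\cdot P|}\cdot
\begin{cases}|\GL_2(\ff)| & \text{if }P\in E[2](\ff),\\ 2(q-1)^2 & \text{otherwise}.\end{cases}
\]

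\emph{Splitting off the algebraic part.} The block shape of $\tuB_{E,P}$ presents $V=V_1\oplus V_2$ with $V_1,V_2$ totally isotropic and the pairing $V_1\times V_2\to W$ given by $\J{E}{P}=\sum_i y_iM_i$, where $M_1,M_2,M_3\in\Mat_3(\ff)$ span a space $\M$ whose determinant is the Weierstrass cubic of $E$; by \cref{sec:determinantal} this $\M$ is the determinantal representation of $E$ attached to $P$. The group $\spseudo(t_G)$ permutes the maximal totally isotropic $\ff$-subspaces of $V$, equivalently (by \cref{mainthm:adj}) the maximal abelian subgroups of $G$, whose configuration is read off from $\Adj(t_G)$. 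When $P\notin E[2]$ one has $\Adj(t_G)\cong\ff\times\ff$, the only maximal isotropic subspaces are $V_1,V_2$, and this action accounts for the factor $2(q-1)^2$: the units $(\ff^\times)^2$ together with the swap $V_1\leftrightarrow V_2$, which is the substitution $\J{E}{P}\rightsquigarrow\J{E}{P}^{\top}$ — always realised, since $\J{E}{P}^{\top}$ is equivalent to $\J{E}{P}$ precomposed with $[-1]\in\Aut_{\cor{O}}(E)$. When $P\in E[2]$ one has $\Adj(t_G)\cong\Mat_2(\ff)$, the maximal isotropic subspaces form a $\P^1(\ff)$, and the action accounts for the factor $|\GL_2(\ff)|$. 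After this reduction, what remains is the stabiliser of $\M$ under the two-sided action $(A,B)\cdot M=A^{\top}MB$ of $\GL_3(\ff)\times\GL_3(\ff)$.

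\emph{The determinantal-representation stabiliser.} Here one invokes the classical description of linear determinantal representations of a smooth plane cubic by line bundles: the cokernel of $M$ identifies $\M$ with a line bundle on $E$ that, after twisting by $\cor{O}_E(-H)$, represents a non-trivial class in $\Pic^0(E)$, and under the construction of \cref{sec:determinantal} this class is $P$. The stabiliser of $\M$ then decomposes into the scalars (already counted), the translations by $\ff$-rational $3$-torsion points of $E$ — which fix the linear system $|H|$, hence act $\P$-linearly and fix $\M$; this is the theta group of $\cor{O}_E(H)$, a central extension of $E[3]$ by $\ff^\times$ acting on $\ff^3\cong H^0(E,\cor{O}_E(H))$ via the Schr\"odinger representation — contributing $|E[3](\ff)|$; and the automorphisms of $E$ fixing $P$, which preserve the pair $(E,P)$ and hence $\M$, contributing $|\Aut_{\cor{O}}(E)|/|\Aut_{\cor{O}}(E)\cdot P|$. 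Multiplying this with the algebraic contribution above and inserting into the first display gives the formula.

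\emph{Main obstacle.} The hard part is the last step: showing that the stabiliser of $\M$ is \emph{exactly} generated by scalars, the theta group of $\cor{O}_E(H)$, and the point-stabiliser in $\Aut_{\cor{O}}(E)$ — that no further $\GL_3(\ff)\times\GL_3(\ff)$-symmetries appear — and dovetailing this count with the orbit $\Aut_{\cor{O}}(E)\cdot P$; here \cref{mainthm:sat} (or the techniques behind it) is what rules out stray equivalences. The symmetric case $P\in E[2]$ needs the analogous but heavier bookkeeping: establishing $\Adj(t_G)\cong\Mat_2(\ff)$, the resulting $\PGL_2(\ff)$-action on the $\P^1(\ff)$ of maximal abelian subgroups, and the kernel of that action. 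The Galois reduction of the first step, by contrast, is routine once the shape of $\Adj(t_G)$ is in hand.
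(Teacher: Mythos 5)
Your proposal is correct and follows essentially the same route as the paper: reduce via the Baer correspondence and the centroid computation to counting $\ff$-pseudo-isometries of $\tuB_{E,P}$, split off the $2(q-1)^2$ resp.\ $|\GL_2(\ff)|$ factor from the ($2$-transitive) action on the maximal totally isotropic subspaces together with the scalar pairs $(aI_3,bI_3)$ pinned down by $\Adj(\J{E}{P})\cong\alg$, and obtain the remaining factor $|E[3](\ff)|\cdot|\Aut_{\cor{O}}(E)|/|\Aut_{\cor{O}}(E)\cdot P|$ from the line-bundle description of the determinantal representation (translations by $3$-torsion acting linearly, plus automorphisms of $(E,\cor{O})$ fixing $P$). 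The "main obstacle" you flag is resolved in the paper exactly as you anticipate: exactness of the stabiliser description comes from the explicit computation $\Adj(\J{E}{P})=\{(kI_3,kI_3)\}$ for the kernel and from the bijection of \cref{thm:ishitsuka} together with the argument of \cref{th:awesome} for the image.
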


For the precise definition of the subgroup $S$ from \cref{mainthm:aut} we refer the reader to \eqref{Gal_t}; see also \cref{prop:reduction} and, for an example, \cref{rmk:computing-galois}. For the groups $\GP_{E,P}(\ff)$,  a generator of the subgroup $S$ can be computed efficiently.  This is not necessarily the case for groups $\GP_{\tuB}(\ff)$, where $\tuB$ is arbitrary. 

Combining \cref{mainthm:aut} with \cite[Th.~2.2.1]{Weinstein/16} and the classification from \cite{BanPal/12}, one obtains that, although the function in~\eqref{eqn:counting-function}
counting the number of automorphisms of the family $(\GP_{E,P}(\F_p))_p$ is
polynomial on Frobenius sets, it is almost never quasipolynomial; cf.\ \cref{cor:POFS} and \cref{rmk:Duke} for some more detail.

\begin{corabc}\label{cor:POFS}
Let $E$ be an elliptic curve given by the Weierstrass equation 
\[y^2=x^3+ax+b, \quad a,b\in \Q\]
    and $P\in
    E(\Q)\setminus\{\cor{O}\}$. Over the set of primes for which $E$ has good reduction, the function 
    \begin{align}\label{eqn:counting-function}
        p &\longmapsto |\Aut(\GP_{E,P}(\F_p))|    
    \end{align}
 is polynomial on Frobenius sets and is quasipolynomial precisely in the following cases:
    \begin{enumerate}[label=$(\arabic*)$]
    \item $a=0$ and there exists $\beta\in\Q^\times$ such that $b=2\beta^3$,
    \item $b\neq 0$ and there exist $h, \ell\in\Q^{\times}$ such that 
    \[
		h^3=-16(4a^3+27b^2) \textup{ and }
		\ell^2= (-h-4a)/3     
    \]
    and one of the following holds:
    \begin{enumerate}[label=$(\alph*)$]
    \item there exist $\alpha, \beta, m\in\Q$ with $\beta\neq 0$ that satisfy:
    \[
m^2=\alpha^2+3\beta^2, \quad a=-3m^2+6\beta m, \quad b = 2\alpha^3+12\alpha\beta^2-6\alpha\beta m;
    \]
    \item for $\gamma=-\ell^2-4a-8b/\ell$, the element
  $
-\ell^3-8a\ell-16b+(-\ell^2+4b/\ell)\sqrt{\gamma}$ is a square in  the splitting field of $(x^2-\gamma)(x^3-1)$.
    \end{enumerate}
    \end{enumerate}
\end{corabc}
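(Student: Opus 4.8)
The plan is to specialise \cref{mainthm:aut} to $\ff=\F_p$ and then read the arithmetic of the resulting count off the classical theory of the mod-$3$ Galois representation $\bar\rho_{E,3}\colon\Gal(\overline{\Q}/\Q)\to\GL_2(\F_3)$ attached to $E$. Since $\Gal(\F_p/\F_p)$ is trivial we have $|S|=1$, and for every prime $p\geq 5$ of good reduction at which $\bar P\neq\cor{O}$ --- which leaves out only finitely many primes --- \cref{mainthm:aut} reads, $E$ now also denoting the reduction of $E$ modulo $p$,
\[
|\Aut(\GP_{E,P}(\F_p))|=p^{18}\cdot|E[3](\F_p)|\cdot\frac{|\Aut_{\cor{O}}(E)|}{|\Aut_{\cor{O}}(E)\cdot P|}\cdot\begin{cases} |\GL_2(\F_p)| & \text{if }\bar P\in E[2](\F_p),\\ 2(p-1)^2 & \text{otherwise}.\end{cases}
\]
Here $p^{18}$, $|\GL_2(\F_p)|=p(p-1)^2(p+1)$ and $2(p-1)^2$ are honest polynomials in $p$; the factor $|\Aut_{\cor{O}}(E)|/|\Aut_{\cor{O}}(E)\cdot P|$ is eventually constant, because $j(E)\bmod p$ stabilises and because a fixed nonzero geometric point of $E$ --- such as $\alpha(P)-P$ with $\alpha\in\Aut_{\cor{O}}(E)$ --- reduces to $\cor{O}$ at only finitely many primes; and the alternative ``$\bar P\in E[2]$'' is decided by whether $2P=\cor{O}$ already holds in $E(\Q)$, so it too is uniform outside a finite set of primes. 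Discarding these finitely many primes, the whole question reduces to the behaviour of the $\{1,3,9\}$-valued function $p\mapsto|E[3](\F_p)|$.

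For $p\neq 3$ of good reduction, $|E[3](\F_p)|$ is the number of fixed vectors of $\mathrm{Frob}_p$ on $E[3](\overline{\F}_p)\cong(\Z/3)^2$, hence a class function of $\bar\rho_{E,3}(\mathrm{Frob}_p)$, so it is constant on each of the finitely many sets cut out by the conjugacy classes of $G:=\bar\rho_{E,3}(\Gal(\overline{\Q}/\Q))=\Gal(\Q(E[3])/\Q)$. Each such set is a Frobenius set, by the standard translation between the factorisation type modulo $p$ of a polynomial with splitting field $\Q(E[3])$ and the Frobenius class in $\Gal(\Q(E[3])/\Q)$; hence $p\mapsto|E[3](\F_p)|$, and therefore $p\mapsto|\Aut(\GP_{E,P}(\F_p))|$, is polynomial on Frobenius sets. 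This proves the first assertion.

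For the dichotomy, observe that for large $p$ the automorphism count is the product of the bounded function $|E[3](\F_p)|$ with a fixed nonzero polynomial in $p$ and the eventually constant orbit factor; since an integer-valued rational function of $p$ that is bounded on an infinite set of primes must be constant there, this product is quasipolynomial if and only if $p\mapsto|E[3](\F_p)|$ is, i.e.\ if and only if $|E[3](\F_p)|$ is determined by $p$ modulo some integer $N$. By Chebotarev this holds exactly when the class function $c\colon G\to\{1,3,9\}$, $c(g)=|\{v\in E[3]:gv=v\}|$, factors through $\Gal(\Q(\zeta_N)/\Q)$ for some $N$, equivalently through the maximal abelian quotient $G^{\mathrm{ab}}$ of $G$; comparing with $c(1)=9$ then forces every element of $[G,G]$ to act trivially on $E[3]$, so --- as $G$ acts faithfully on $E[3]$ --- $[G,G]=1$. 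Thus $p\mapsto|\Aut(\GP_{E,P}(\F_p))|$ is quasipolynomial if and only if $\Q(E[3])/\Q$ is an abelian extension.

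It remains to translate ``$\bar\rho_{E,3}$ has abelian image'' into conditions on $a$ and $b$. The subgroups of $\GL_2(\F_3)$ realised as $\bar\rho_{E,3}(\Gal(\overline{\Q}/\Q))$ for $E/\Q$ are listed in \cite[Thm.~2.2.1]{Weinstein/16}; an abelian one lies in a split torus --- so $E$ admits two rational $3$-isogenies, and a rational $3$-torsion point when one eigencharacter is trivial --- or in a non-split torus, and the corresponding families of short Weierstrass models over $\Q$ are written out explicitly in \cite{BanPal/12}. Matching these with $y^2=x^3+ax+b$ produces the stated list: on the locus $j(E)=0$ (i.e.\ $a=0$) one has $\psi_3=3x(x^3+4b)$, whose $3$-division field is abelian precisely when $4b$ is a rational cube $8\beta^3$ --- in which case it equals $\Q(\sqrt{-3},\sqrt{2\beta})$ --- and this is case~(1); on the locus $j(E)\neq 0$ (i.e.\ $b\neq 0$) the requirement that $-16(4a^3+27b^2)$ be a cube $h^3$ together with $\ell^2=(-h-4a)/3$ records the presence of a rational $3$-isogeny and identifies the relevant quadratic twist, and splitting according to whether the isogeny carries a rational kernel point --- giving the rational parametrisation of case~(2a) --- or not --- where the residual obstruction is exactly the square condition of case~(2b) in the splitting field of $(x^2-\gamma)(x^3-1)$ --- produces cases~(2a) and~(2b). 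I expect the main obstacle to be precisely this last step: the group theory (``abelian versus non-abelian mod-$3$ image'') is clean, but turning the two families of Weierstrass models with abelian $3$-division field into the exact polynomial identities among $a,b,h,\ell,\alpha,\beta,m,\gamma$ of the statement --- and carefully tracking normalisations and the finitely many exceptional primes --- is where the real work lies.
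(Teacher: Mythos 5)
Your proposal is correct and follows precisely the paper's route: the paper's proof consists of exactly the combination you describe, namely \cref{mainthm:aut} (with $|S|=1$ over the prime field), Weinstein's Theorem~2.2.1 giving the equivalence between congruence-determined Frobenius class functions and abelian splitting fields applied to $p\mapsto|E[3](\F_p)|$, and the Bandini--Paladino classification of curves with abelian $3$-division field to extract conditions (1), (2a), (2b). The only quibble is that the factor $|\Aut_{\cor{O}}(E)|/|\Aut_{\cor{O}}(E)\cdot P|$ need not be eventually constant (for $a=0$ and $P=(0,\mu)$ the stabilizer is $\mu_3(\F_p)$, whose order depends on $p\bmod 3$), but it is eventually periodic in $p$, which is all your reduction needs.
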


\begin{remark}\label{rmk:Duke}
The case distinction in \cref{cor:POFS} comes from the explicit description, given in \cite{BanPal/12}, of curves $E$ in short Weierstrass form for which the Galois group of 
$$\Q(E[3])=\Q(\{x,y : (x,y)\in E[3](\Q^{\textup{sep}})\})$$
is abelian.  Thanks to \cite[Th.~2.2.1]{Weinstein/16}, i.e.\ the Abelian Polynomial Theorem from \cite{Wyman/72}, $\Gal(\Q(E[3])/\Q)$ is abelian if and only if the function $p \mapsto |E[3](\F_p)|$ is quasipolynomial.  With one of the classical definitions of heights for elliptic curves (defined for short Weierstrass equations as $\mathrm{ht}(E)=\max\{|a|^3,|b|^2\}$), Theorem~1 from \cite{Duke} implies that the density of isomorphism classes of rational elliptic curves for which $\Gal(\Q(E[3])/\Q)$ is abelian is $0$. In particular, this tells us that examples of groups $\GP_{E,P}$ as in \cref{cor:POFS} for which $p\mapsto|\Aut(\GP_{E,P}(\F_p))|$ is quasipolynomial are very rare.
\end{remark}

Theorems~\ref{mainthm:sat} and~\ref{mainthm:aut} and \cref{cor:POFS} concern
the groups from \cref{sec:G_EP}, which might not appear to be
representative of the class of $E$-groups. However, up to equivalence, which we
define in \cref{sec:determinantal}, these groups seem to occur with probability
$1/2$ in the precise sense explained in  \cref{sec:implementation}.

One of the main tools we use to extract the core geometric properties of the
groups $\GP_{E, P}(\ff)$ is the adjoint algebra, which has recently been
used to understand the structure of $p$-groups~\cite{BOW/19,
BrooksbankWilson/12}. We define  adjoint algebras in \cref{sec:adj-algebras}, but
remark that their isomorphism types are polynomial-time computable isomorphism
invariants of $p$-groups since they are constructed by solving 
linear systems.  With
the notation from \cref{sec:adj-algebras}, \cref{mainthm:adj} follows from
combining the more general 
\cref{thm:reducible-adjoints} with \cref{prop:JP-adj}.  The relevance of $\cor{O}$ being a flex point is
explained in \cref{rmk:flex}. In \cref{mainthm:adj},  we use the fact that a smooth cubic in the projective plane with a marked rational point has the structure of an elliptic curve. 

\begin{thmabc}\label{mainthm:adj} 
Let $\alg$ be a field with $6\alg = \alg$.
    Let, moreover, $\tuB\in\Mat_6(\alg[y_1,y_2,y_3]_1)$ be skew-symmetric with
    $\Pf(\tuB)=0$ defining a smooth cubic $E$ in $\P^2_{\alg}$ with a flex point $\cor{O}\in
    E(\alg)$, and set $G = \GP_{\tuB}(\alg)$. Then
    the following hold:
    \begin{enumerate}[label=$(\arabic*)$] 
        \item There is $P\in E(\alg)\setminus E[2](\alg)$ with $G\cong \GP_{E,P}(\alg)$ if and only if $\Adj(\tuB)\cong \mathbf{X}_1(\alg)$.
        \item There is $P\in E[2](\alg)\setminus\{\cor{O}\}$ with $G\cong \GP_{E,P}(\alg)$ if and only if $\Adj(\tuB)\cong \mathbf{S}_2(\alg)$.
    \end{enumerate}
\end{thmabc}

Our next main theorem allows us to constructively recognize elliptic $p$-groups
and to decide whether two such groups are isomorphic. Deciding whether two
groups of order $n$ are isomorphic uses, in the worst case, $n^{O(\log n)}$
operations \cite{Miller/78}, which is just a brute-force search, and the same
timing prevails for constructing generators of the automorphism group. General
purpose algorithms, like \cite{CH:isomorphism} and
\cite{ELGO:pgrp-automorphism}, use induction by constructing known
characteristic subgroups, which are subgroups fixed by the automorphism group.
Even with recent tools to uncover more characteristic
subgroups~\cite{BOW/19,M17,M21,Wilson/13}, elliptic $p$-groups evade capture.
Prior to this work, it seems that the best general purpose
algorithm~\cite{IvanyosQiao/19}, together with the reduction from
\cref{prop:reduction}, would construct generators for their automorphism group
using $O(|G|^{8/9}\log |G|)$ operations (without \cref{prop:reduction} the
number of operations is $|G|^{O(\log|G|)}$). We significantly improve upon this
timing.

\begin{figure}[h]
    \centering
    \includegraphics[scale=0.9]{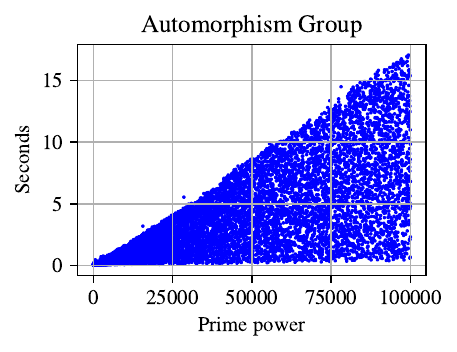}
    \caption{The runtimes of a \textsf{Magma} implementation of
    \cref{mainthm:iso} on the prime powers $p^e$, for $p\notin \{2,3\}$, up to
    $10^5$.}
    \label{fig:data}
\end{figure}

\begin{thmabc}\label{mainthm:iso}
    There are algorithms that, given groups $G_1$ and $G_2$ of order $p^{9m}$,
    with $p\geq 5$, 
    \begin{enumerate}[label=$(\roman*)$]
        \item\label{thm-part:recog1} decide if, for each $i\in\{1,2\}$ and field $\ff$ of cardinality $p^m$,  there exist 
        \begin{itemize}
        \item an elliptic curve $E_i$ given by a short Weierstrass equation with $\ff$-coefficients, 
        \item a point $P_i\in E_i(\ff)\setminus\{(0:1:0)\}$, 
        \end{itemize}
       such that  $G_i$ is isomorphic to $\GP_{E_i,P_i}(\ff)$,
  and if so
        \item\label{thm-part:iso} return the possibly empty coset of
        isomorphisms $G_1\to G_2$.
    \end{enumerate}
 The algorithm for~\ref{thm-part:recog1} is of Las Vegas type and uses $O(m^7 +
    m\log p)$ field operations. The algorithm for~\ref{thm-part:iso} uses $O(p^m)$ field operations.
\end{thmabc}

 Some of the algorithms we use are of \emph{Las Vegas} type. These
are randomized algorithms that only return correct answers and for some
probability, determined by a user-prescribed upper bound, terminate without an
answer.

In \cref{mainthm:iso}\ref{thm-part:iso}, if the groups are isomorphic, the
algorithm returns an isomorphism $G_1\to G_2$ together with a generating set for
$\Aut(G_1)$. In order to demonstrate the efficacy of \cref{mainthm:iso}, we have
constructed generators for the automorphism groups of several instances of
$\GP_{E, P}(\ff)$, built uniformly at random as explained in
\cref{sec:implementation}.

\begin{remark}
As the careful reader has noticed, in our main theorems $p$ is at least $5$.  The prime $2$ is excluded to begin with because there does not exist a group of class $2$ and exponent $2$. Moreover, if $p\in\{2,3\}$, elliptic curves need not admit a short Weierstrass form and, in this case,  their $3\times 3$ determinantal representations might not be equivalent to any $J_{E,P}$; see \cref{sec:G_EP} and \cref{sec:beauville}.
\end{remark}

\smallskip
\smallskip
\noindent
{\bf Acknowledgements.} 
We wish to thank Daniele Agostini for his precious help in connection to the proof of \cref{mainthm:sat}; Mima especially thanks Daniele for always joyfully welcoming her questions. We also thank Fulvio Gesmundo, Carlo Pagano, and Rosa Winter for pointing out \cite{Ng}, \cite{Duke}, and \cite{BanPal/12} to us. We thank Andrea Bandini for some clarifications around \cite{BanPal/12}. We, moreover, thank Francesco Galuppi, Tobias Rossmann, Christopher Voll for their precious comments on an early version of this manuscript. {{We thank Lars B\"ugemannskemper for pointing out a typo involving the order of the central automorphisms.} 
We are grateful to the anonymous referees for their careful study of our paper and for their kind and detailed reports, which helped us fix some imprecisions and led to an improvement of the paper. 

This project was initiated during a Research in Pairs visit at MFO in March 2022
and was partially supported by the Daimler and Benz Foundation. The first author
was supported by the DFG-Graduiertenkolleg ``Mathematical Complexity Reduction''
and the DFG grant VO 1248/4-1 (project number 373111162). The second author was
supported by the DFG -- Project-ID 286237555 -- TRR 195 and by the Italian program ``Rita Levi Montalcini'', edition 2020.

\section{Tensors and unipotent group schemes} 

Fix finite-dimensional
$\alg$-vector spaces $U$, $W$, $V$, $V'$, and $T$. By a \emph{$3$-tensor} (or simply \emph{tensor}, throughout),
we mean a $\alg$-bilinear map $t:V\times V'\rightarrow T$, that is, for all
$u,v\in V$, all $u',v'\in V'$, and all $\lambda,\mu\in \alg$, the following
holds
\[
t(u + \lambda v, u'+\mu v') = t(u, u') + \lambda t(v,u')+\mu t(u,v')+\lambda\mu t(v,v').
\]
A tensor $t:V\times V\rightarrow T$ is \emph{alternating} if, for all $v\in V$,
one has $t(v,v)=0$.

\begin{defn} 
Let $t:V\times V\rightarrow T$ be a tensor. 
    A subspace $U\leq V$ is \emph{totally
    isotropic} with respect to $t$ if, for every $u,u'\in U$, one has $t(u,u')=0$, in other words if the restriction of $t$ to $U\times U$ is the zero map. 
\end{defn} 

Note that, if $t:V\times V\rightarrow T$ is alternating, every line in $V$ is totally isotropic.

\begin{defn} 
    An alternating tensor $t : V\times V\rightarrow T$ is \emph{isotropically
    decomposable} if there exist totally isotropic subspaces $U, W\leq V$ such
    that $V=U\oplus W$.
\end{defn}

By choosing bases for $U$, $W$, and $T$, we may write $t:U\times W\rightarrow T$
as a matrix of linear forms or as a system of forms; that is, a sequence of
matrices over $\alg$. Let $\{e_1,\dots, e_m\}$, $\{f_1,\dots, f_n\}$, and
$\{g_1,\dots, g_d\}$ be bases for $U$, $W$, and $T$, respectively. For $i\in
[m]$, $j\in [n]$, and $k\in [d]$, define $b_{ij}^{(k)}\in \alg$ such that 
\begin{align*} 
    t(e_i, f_j) &= \sum_{k=1}^d b_{ij}^{(k)} g_k.
\end{align*} 
The matrix of linear forms $\tuB =(b_{ij}) \in \Mat_{m\times n}(\alg[\bm{y}]_1)$
corresponding to $t$ is given by 
\begin{align*} 
    b_{ij} &= \sum_{k=1}^d b_{ij}^{(k)} y_k. 
\end{align*} 
In the sequel we will always assume that a tensor $t:U\times W\rightarrow T$ is given together with a choice of bases for $U,W,T$. If $t:V\times V\rightarrow T$ is an alternating tensor, we take the same basis on the first and the second copy of $V$ and, if $V$ is given by an isotropic decomposition $U\oplus W$, we assume that the basis on $V$ is the composite of a basis of $U$ and a basis of $W$.

\begin{remark}
    Since every $3$-tensor $t$ is, with respect to a choice of bases, given by a matrix $\tuB$ of linear forms,  throughout the paper everything that is defined for tensors will also apply to matrices of linear forms.
\end{remark}

If $t : V\times V \to T$ is an alternating tensor, just as matrices of linear
forms associated with $t$ vary as the bases for $V$ and $T$ vary, so does the
corresponding Pfaffian. We will say that $\Pf(t)\in \alg[{\bm y}]$ is a Pfaffian
for $t$ if there exists some choice of bases for $V$ and $T$ whose associated
matrix of linear forms $\tuB$ satisfies $\Pf(t)=\Pf(\tuB)$. 

\begin{defn} 
    A tensor $t:U\times W\rightarrow T$ is \emph{nondegenerate} if the following
    are satisfied: 
    \begin{enumerate}[label=(\emph{\roman*})] 
        \item $t(u, W)=0$ implies $u=0$, and 
        \item $t(U, w)=0$ implies $w=0$.
    \end{enumerate}
    We say that $t$ is \emph{full} if $t(U, W) = T$. A tensor $t$ is \emph{fully
    nondegenerate} if $t$ is both nondegenerate and full. 
\end{defn}

\subsection{Maps between $3$-tensors}

In this section we present various types of maps between tensors that we will exploit in later sections for the determination of isomorphisms between groups and Lie algebras. Many of these definitions can be found in~\cite{BOW/19}.

\begin{defn}\label{def:equivalence-tensor}
    Two $\alg$-tensors $s : U\times W \rightarrow T$ and $t:U'\times
    W'\rightarrow T'$ are \emph{equivalent} if $T=T'$ and there exist
    $\alg$-linear isomorphisms $\alpha:U\rightarrow U'$ and $\beta :
    W\rightarrow W'$ such that for all $u\in U$ and all $w\in W$, one has
    $t(\alpha(u), \beta(w)) = s(u, w)$; equivalently, the following diagram
    commutes
    \[
        \xymatrix{
        U \ar@<-2.7ex>[d]_{\alpha} \ar@<2ex>[d]^{\beta} \times W \ar[rr]^s & & T\ar[d]^{\id}\\
        U' \times W' \ar[rr]^{t} && T'
        }
    \]
    In the case $U=W$, $U'=W'$, and $\alpha=\beta$, we say the two tensors are \emph{isometric}.
\end{defn}

Relevant to our context is a different form of equivalence from
\cref{def:equivalence-tensor}, namely, \emph{pseudo-isometry}. Isomorphisms of groups
induce this weaker equivalence, which is considered in more detail in the next section. 

\begin{defn}\label{defn:pseudo-isometry}
    Let $L\subset \alg$ be a subfield and let  $s : V\times V \rightarrow T$ and
    $t:V'\times V'\rightarrow T'$ be $\alg$-tensors.  An
    \emph{$L$-pseudo-isometry} from $s$ to $t$ is a pair $(\alpha, \beta)$ such
    that $\alpha:V\rightarrow V'$ and $\beta : T\rightarrow T'$ are $L$-linear
    isomorphisms and for all $u,v \in V$,  the equality $t(\alpha(u), \alpha(v))
    = \beta(s(u, v))$ holds; equivalently the following diagram commutes:
    \[
        \xymatrix{
        V \ar@<-2.7ex>[d]_{\alpha} \ar@<2ex>[d]^{\alpha} \times V \ar[rr]^s & & T\ar[d]^{\beta}\\
        V' \times V' \ar[rr]^{t} && T'
        }
    \]
    The set of $L$-pseudo-isometries from $s$ to $t$ is denoted $\pseudo_L(s,t)$
    and, if $\pseudo_L(s,t)$ is non-empty, $s$ and $t$ are called
    $L$-\emph{pseudo-isometric}. If $s=t$, then the group of
    $L$-pseudo-isometries of $t$ is denoted by $\pseudo_L(t)$ instead of
    $\pseudo_L(t,t)$.
\end{defn}

\begin{remark}
    We  emphasize that $\pseudo_L(s,t)$ has more structure than an arbitrary
    set. Indeed, identifying $V$ with $V'$ and $T$ with $T'$, we can view $\pseudo_L(s, t)$ both as a left $\pseudo_L(t)$-coset and a right
    $\pseudo_L(s)$-coset of $\GL_L(V)\times \GL_L(T)$. That is, if
    $(\alpha,\beta) \in \pseudo_L(s, t)$,  while $(\gamma,\delta)\in \pseudo_L(t)$ and
    $(\epsilon, \zeta) \in \pseudo_L(s)$, then the following holds: 
    \begin{align*} 
        (\gamma \alpha, \delta \beta), (\alpha \epsilon, \beta \zeta) \in \pseudo_L(s, t). 
    \end{align*} 
    Because $\pseudo_L(t)\cdot (\alpha, \beta) = (\alpha,
    \beta)\cdot\pseudo_L(s) = \pseudo_L(s, t)$, we will just refer to
    $\pseudo_L(s, t)$ as a coset.
\end{remark}

\begin{remark}
    If $G$ is a $p$-group with central elementary abelian derived subgroup, then its commutator determines a fully
    nondegenerate $\F_p$-tensor $t_G : G/\ZG(G)\times G/\ZG(G) \rightarrow
    G'$ given by $(g\ZG(G), h\ZG(G))\mapsto [g,h]$. If $G$ and $H$ are two such
    $p$-groups whose tensors $t_G$ and $t_H$ are
    $\F_p$-pseudo-isometric, then $G$ and $H$ are \emph{isoclinic}, cf.\
    \cite{Hall}. If, in addition, both $G$ and $H$ have exponent $p$, then $G$
    and $H$ are isomorphic.  
\end{remark}

\begin{defn}
    Let $L\subset \alg$ be a subfield and let  $s : U\times W \rightarrow T$ and
    $t:U'\times W'\rightarrow T'$ be $\alg$-tensors.  An
    \emph{$L$-isotopism} from $s$ to $t$ is a triple $(\alpha, \beta,\gamma)$ such
    that $\alpha:U\rightarrow U'$, $\beta : W\rightarrow W'$, and $\gamma:T\rightarrow T'$ are $L$-linear
    isomorphisms and for all $(u,w) \in U\times W$,  the equality $t(\alpha(u), \beta(w))
    = \gamma(s(u, w))$ holds; equivalently the following diagram commutes:
    \[
        \xymatrix{
        U \ar@<-2.7ex>[d]_{\alpha} \ar@<2ex>[d]^{\beta} \times W \ar[rr]^s & & T\ar[d]^{\gamma}\\
        U' \times W' \ar[rr]^{t} && T'
        }
    \]
If $s=t$, then the triple $(\alpha,\beta,\gamma)$ is called an \emph{$L$-autotopism} of $t$ and the group of $L$-autotopisms of $t$ is denoted by $\mathrm{Auto}_L(t)$.
\end{defn}

\begin{remark}\label{rmk:autotopism-pseudo}
Assume $\tuM\in\Mat_n(\ff[y_1,\dots, y_d]_1)$ realizes the tensor $\tilde{t}:U \times W\rightarrow T$. Then the autotopism group of $\tuM$ can be described as
\begin{align*} 
    \mathrm{Auto}_{\ff}(\tuM) &= \left\{ (X, Y, Z) \in \GL_n(\ff)\times \GL_n(\ff)\times \GL_d(\ff) ~\middle|~ X^{\tp}\tuM(\bm{y})  Y = \tuM(Z\bm{y}) \right\} . 
\end{align*} 
In particular,  if $U\oplus W=V$ is an isotropic decomposition for the tensor $t:V\times V\rightarrow T$ inducing $\tilde{t}$ and with associated matrix $\tuB$, then every triple $(X,Y,Z)\in\mathrm{Auto}_{\ff}(\tilde{t})$ yields an element of $\pseudo_{\ff}(t)$ via the following:
\begin{align*}
    \begin{pmatrix} X^{\tp} & 0 \\ 0 & Y^{\tp} \end{pmatrix} \tuB\begin{pmatrix} X & 0 \\ 0 & Y \end{pmatrix} & =     \begin{pmatrix} X^{\tp} & 0 \\ 0 & Y^{\tp} \end{pmatrix} \begin{pmatrix}
   0 & \tuM \\ -\tuM^{\tp} & 0 
    \end{pmatrix} \begin{pmatrix} X & 0 \\ 0 & Y \end{pmatrix} \\
    & = \begin{pmatrix}
    0 & X^{\tp}\tuM(\bm{y})Y \\ - Y^{\tp}\tuM(\bm{y})^{\tp}X & 0
    \end{pmatrix} = \begin{pmatrix}
    0 & \tuM(Z\bm{y}) \\ -\tuM(Z\bm{y})^{\tp} & 0
    \end{pmatrix}= \tuB(Z\bm{y}).
\end{align*} 
\end{remark}

\subsection{Algebras associated to $3$-tensors}\label{sec:adj-algebras}
Throughout this subsection, we use tensors rather than matrices of linear forms,
but the content applies to both. These algebras are also found in~\cite{BOW/19}.

Let $t:U\times W\rightarrow T$ be a
$\alg$-tensor. The \emph{centroid} of $t$ is the $\alg$-subalgebra of
$\mathcal{E} = \End_{\alg}(U) \times \End_{\alg}(W) \times \End_{\alg}(T)$
defined by
\begin{align}\label{eq:centroid}
    \Cent(t) &= \left\{(\alpha, \beta, \gamma) \in \mathcal{E} ~\middle|~ \begin{array}{c} \forall u\in U, \; \forall w\in W, \\ t(\alpha(u), w) = t(u, \beta(w)) = \gamma(t(u,w)) \end{array} \right\} .
\end{align}
Let $c=(\alpha,\beta,\gamma)\in\Cent(t)$, and suppose $\Cent(t)$ acts on $U$, $W$, and $T$
by respectively applying $\alpha$, $\beta$, and $\gamma$. Then for each pair $(u,w)\in U\times W$, we have 
\begin{align*}
    c\cdot t(u,w) &= t(c\cdot u, w) = t(u, c\cdot w). 
\end{align*}
Thus, $t$ is $\Cent(t)$-bilinear. Additionally, the centroid satisfies the following universal property. If $t$ is also
$A$-bilinear for some $\alg$-algebra $A$, then there is a unique ring homomorphism $A\rightarrow\Cent(t)$ such that the action of $A$ on $U$, $W$, and $T$ is that of $\Cent(t)$; cf.~\cite[Lem.~6.8$(ii)$]{Wilson/12}.

\begin{lem} \label{lem:comm-cent}
    If $t$ is fully-nondegenerate, then $\Cent(t)$ is commutative. 
\end{lem}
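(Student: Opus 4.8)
The plan is to exploit the universal property of the centroid together with full nondegeneracy to produce, from any two elements of $\Cent(t)$, a constraint forcing their components to commute. Let $(\alpha_1,\beta_1,\gamma_1)$ and $(\alpha_2,\beta_2,\gamma_2)$ be elements of $\Cent(t)$. First I would record the two basic ``sliding'' identities available from \eqref{eq:centroid}: for all $u\in U$, $w\in W$,
\[
t(\alpha_i(u),w)=t(u,\beta_i(w))=\gamma_i(t(u,w)),\qquad i=1,2.
\]
The idea is to evaluate $t\bigl((\alpha_1\alpha_2)(u),w\bigr)$ in two ways and compare with $t\bigl((\alpha_2\alpha_1)(u),w\bigr)$, pushing the endomorphisms across the tensor until everything is expressed as $\gamma_1\gamma_2$ (resp.\ $\gamma_2\gamma_1$) applied to $t(u,w)$.

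The key computation runs as follows. Applying the first identity for $i=1$ with $\alpha_2(u)$ in place of $u$, then the identity for $i=2$:
\[
t\bigl(\alpha_1\alpha_2(u),w\bigr)=\gamma_1\bigl(t(\alpha_2(u),w)\bigr)=\gamma_1\gamma_2\bigl(t(u,w)\bigr).
\]
Running the same two steps with the roles of the indices swapped gives
\[
t\bigl(\alpha_2\alpha_1(u),w\bigr)=\gamma_2\gamma_1\bigl(t(u,w)\bigr).
\]
Subtracting, $t\bigl((\alpha_1\alpha_2-\alpha_2\alpha_1)(u),w\bigr)=(\gamma_1\gamma_2-\gamma_2\gamma_1)\bigl(t(u,w)\bigr)$ for all $u,w$. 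To conclude I would argue in two stages. First, since $t$ is full, $t(U,W)=T$, so the right-hand side determines $\gamma_1\gamma_2-\gamma_2\gamma_1$ on all of $T$ once we know the left-hand side; but we do not yet know the left-hand side vanishes. Instead, swap which variable carries the endomorphism: using the middle term of \eqref{eq:centroid}, one also gets
\[
t\bigl(u,(\beta_1\beta_2-\beta_2\beta_1)(w)\bigr)=(\gamma_1\gamma_2-\gamma_2\gamma_1)\bigl(t(u,w)\bigr).
\]
Now here is the crucial move: in the displayed subtraction identity, replace $u$ by $\alpha_2\alpha_1(u)$ is not quite what is needed; rather, I would observe that $(\gamma_1\gamma_2-\gamma_2\gamma_1)(t(u,w))$ equals \emph{both} $t((\alpha_1\alpha_2-\alpha_2\alpha_1)(u),w)$ and $t(u,(\beta_1\beta_2-\beta_2\beta_1)(w))$. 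Set $\delta=\alpha_1\alpha_2-\alpha_2\alpha_1\in\End_\alg(U)$ and $\epsilon=\beta_1\beta_2-\beta_2\beta_1\in\End_\alg(W)$. Then $t(\delta(u),w)=t(u,\epsilon(w))$ for all $u,w$, and moreover $t(\delta(u),w)=(\gamma_1\gamma_2-\gamma_2\gamma_1)(t(u,w))$.

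The final step is where nondegeneracy does its work, and I expect the mild bookkeeping there to be the only real obstacle. Fix $u$ and consider $w\mapsto t(\delta(u),w)$; applying $t(\delta^2(u),w)=t(\delta(u),\epsilon(w))=t(\delta(u),\text{?})$—more cleanly: from $t(\delta(u),w)=t(u,\epsilon(w))$ we get, replacing $u$ by $\delta(u)$, that $t(\delta^2(u),w)=t(\delta(u),\epsilon(w))$, and iterating with the second identity $t(\delta(u),\epsilon(w))=t(u,\epsilon^2(w))=\cdots$; this shows $t(\delta^{k}(u),w)=(\gamma_1\gamma_2-\gamma_2\gamma_1)^{k}(t(u,w))$ but does not immediately give vanishing. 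The clean argument I would actually use: compute $t(\alpha_1\alpha_2(u),w)$ by \emph{first} sliding across to $\beta$ and \emph{then} applying $\gamma$, versus first applying $\gamma$ and then sliding — i.e.\ exploit that $(\alpha_1,\beta_1,\gamma_1)$ lets us move between all three expressions freely. Concretely, $t(\delta(u),w)=t(u,\epsilon(w))$, and also $t(\alpha_2(u),\beta_1(w))=\gamma_2 t(u,\beta_1(w))=\gamma_2\gamma_1 t(u,w)=t(\alpha_2(u),\beta_1(w))$ consistently, while $=\gamma_1\gamma_2 t(u,w)$ via the other order, forcing $(\gamma_1\gamma_2-\gamma_2\gamma_1)(t(u,w))=0$ for all $u,w$; since $t$ is full, $\gamma_1\gamma_2=\gamma_2\gamma_1$. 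Feeding this back, $t(\delta(u),w)=0$ for all $w$, so nondegeneracy condition $(i)$ gives $\delta(u)=0$ for all $u$, i.e.\ $\alpha_1\alpha_2=\alpha_2\alpha_1$; symmetrically condition $(ii)$ gives $\beta_1\beta_2=\beta_2\beta_1$. Hence every two elements of $\Cent(t)$ commute, so $\Cent(t)$ is commutative. The part needing care is making sure the ``evaluate in two orders'' step is set up so that it yields the commutativity of the $\gamma$-components \emph{first} (using fullness), since that is what then unlocks the nondegeneracy arguments for the $\alpha$- and $\beta$-components.
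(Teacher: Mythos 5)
Your final argument is correct and is essentially the paper's proof: both hinge on evaluating an expression like $t(\alpha_2(u),\beta_1(w))$ by sliding the endomorphisms across $t$ in two different orders, and then invoking nondegeneracy to kill the commutator. The only cosmetic difference is that you first extract $\gamma_1\gamma_2=\gamma_2\gamma_1$ via fullness and feed it back, whereas the paper's four-term chain of equalities yields $t\bigl((\alpha\alpha'-\alpha'\alpha)(u),w\bigr)=0$ directly; the disregarded detours in your write-up (the iteration of $\delta$, etc.) are harmless since you discard them.
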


\begin{proof}
   Take $(\alpha,\beta,\gamma),(\alpha',\beta',\gamma')\in \Cent(t)$; then
    for all $u\in U$ and all $w\in W$, 
    \begin{align*} 
        t(\alpha\alpha'(u), w) &= t(\alpha'(u), \beta(w)) 
        = \gamma'(t(u, \beta(w))) = \gamma'(t(\alpha(u), w))
        = t(\alpha'\alpha(u), w). 
    \end{align*} 
    This implies that $t((\alpha\alpha' - \alpha'\alpha)(u), w) = 0$, and since
    $t$ is fully-nondegenerate, it follows that $\alpha\alpha' = \alpha'\alpha$.
    Similar arguments hold for the other coordinates as well. 
\end{proof}

For a ring $R$, we denote by $R^{\mathrm{op}}$ the \emph{opposite ring} of $R$,
with opposite product given by $x\cdot_{\mathrm{op}}y=yx$ for all $x,y\in R$.
The \emph{adjoint algebra} of $t$ is the $\alg$-algebra
\begin{align} \label{eq:adjoint}
    \Adj(t) &= \left\{(\alpha, \beta) \in \End(U)\times \End(W)^{\mathrm{op}} ~\middle|~ \begin{array}{c} \forall u\in U, \; \forall w\in W, \\ t(\alpha(u), w) = t(u, \beta(w)) \end{array} \right\}. 
\end{align} 
If $t : V\times V\rightarrow T$ is alternating, then the natural
anti-isomorphism $*:\Adj(t)\rightarrow \Adj(t)^{\mathrm{op}}$, given by $(\alpha,
\beta) \mapsto (\beta, \alpha)$, makes $\Adj(t)$ a \emph{$*$-algebra},
cf.\ Section~\ref{subsec:*-rings}. To see this, let $(\alpha,\beta)\in\Adj(t)$.
Then for all $u,v\in V$, one has
\begin{align*} 
    t(\beta(u), v) = -t(v, \beta(u)) = -t(\alpha(v), u) = t(u, \alpha(v)),
\end{align*} 
from which it follows that $(\beta,\alpha)\in\Adj(t)^{\mathrm{op}}$.

We define a module version of the adjoint algebra. For this, let $t : U\times
W\rightarrow T$ and $s : U'\times W'\rightarrow T'$ be $\alg$-tensors. Identifying $T$ with $T'$, the
\emph{adjoint module} of $s$ and $t$ is 
\begin{align}\label{eq:adjoint-space}
    \Adj(s, t) &= \left\{(\alpha,\beta)\in\Hom(U',U)\times \Hom(W,W') ~\middle|~ \begin{array}{c} \forall u'\in U', \; \forall w\in W, \\ t(\alpha(u'), w) = s(u', \beta(w)) \end{array} \right\} .
\end{align} 
We remark that $\Adj(s, t)$ is a left $\Adj(t)$-module and a right
$\Adj(s)$-module via defining
\[
    (\gamma, \delta) \cdot (\alpha, \beta) = (\gamma\alpha, \beta\delta)\ \ \textup{ and } \ \  (\alpha, \beta) \cdot (\epsilon, \zeta) = (\alpha\epsilon, \zeta\beta),
\]
for all $(\alpha,\beta)\in\Adj(s, t)$, all $(\gamma,\delta)\in\Adj(t)$, and all
$(\epsilon, \zeta)\in\Adj(s)$. 

The key computational advantage to the vector spaces in \eqref{eq:centroid},
\eqref{eq:adjoint}, and \eqref{eq:adjoint-space} is that they are simple to
compute, since they are given by a system of linear equations, and carry useful
structural information. Although they are computed by solving a linear system,
this is the computational bottleneck for
\cref{mainthm:iso}\ref{thm-part:recog1}.

\subsection{Structure of Artinian $*$-rings}\label{subsec:*-rings}

We will need specific structural information about $*$-rings. We follow the
treatment given in \cite[Sec.~2]{BrooksbankWilson/12} and, only in \cref{subsec:*-rings}, use right action in accordance to the existing literature on $*$-algebras.  

A \emph{$*$-ring} is a pair $(A,*)$ where $A$ is an Artinian ring and
$*:A\rightarrow A^{\mathrm{op}}$ is a ring homomorphism such that, for all $a\in A$, the equality $(a^*)^*=a$ holds. A \emph{$*$-homomorphism} $\phi : (A,*_A) \rightarrow (B,*_B)$
is a ring homomorphism $A\rightarrow B$ such that $\phi(a^{*_A}) =
\phi(a)^{*_B}$. A \emph{$*$-ideal} is an ideal $I$ of $A$ such that $I^*=I$.
A $*$-ring $A$ is \emph{simple} if its only $*$-ideals are $0$ and $A$.
Moreover, from~\cite[Th.~2.1]{BrooksbankWilson/12}, the Jacobson radical of
a $*$-ring is a $*$-ideal, and the quotient by the Jacobson radical $R$ decomposes
into a direct sum of simple $*$-rings. If $A$ is a $*$-ring with Jacobson radical $R$ such that $S= A/R$, then we write $A\cong R\rtimes S$. 

For finite fields of odd characteristic, we can completely describe the simple
$*$-rings. Using the notation from \cite{BrooksbankWilson/12}, we have the
following classification.

\begin{thm}[{\cite[Th.~2.2]{BrooksbankWilson/12}}]\label{thm:simple-star-algebras}
    Let $(A, *)$ be a simple $*$-algebra over $\ff$. Then there is a positive
    integer $n$ such that $(A,*)$  is $*$-isomorphic to one of the following:
    \begin{enumerate}[label=$(\roman*)$]
        \item $\mathbf{O}_n^{\varepsilon}(\ff) = (\Mat_n(\ff), X\mapsto DX^{\tp} D^{-1})$, where $D$ is one of the diagonal matrices in $\{I_n, I_{n-1}\oplus \omega\}$, for a non-square $\omega\in\ff$, and $\varepsilon\in\{+,-,\circ\}$ according to whether the form $(u,v)\mapsto u^{\tp}Dv$ induces an orthogonal geometry of type $\varepsilon$.
        \item $\mathbf{U}_n(L) = (\Mat_n(L), X\mapsto \overline{X}^{\tp})$, where $\alpha\mapsto \overline{\alpha}$ is the nontrivial Galois automorphism of the degree two extension $L/\ff$.
        \item $\mathbf{S}_{2n}(\ff) = (\Mat_{2n}(\ff), X \mapsto JX^{\tp}J^{-1})$, where $J$ is the $n$-fold direct sum of $\left(\begin{smallmatrix} 0 & 1 \\ -1 & 0 \end{smallmatrix}\right)$. 
        \item $\mathbf{X}_n(\ff) = (\Mat_n(\ff)\oplus \Mat_n(\ff), (X, Y) \mapsto (Y^{\tp}, X^{\tp}))$. 
    \end{enumerate} 
\end{thm}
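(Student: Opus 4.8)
The plan is to first pin down the ring structure of $A$ and then classify the involution on each piece, keeping in mind the standing hypothesis that $\ff$ is the $*$-fixed subring of the center of $A$. Since $*$ is an anti-automorphism it preserves the Jacobson radical $R$ of $A$, so $R$ is a $*$-ideal; as $A$ is a nonzero unital algebra, $R\ne A$, and $*$-simplicity forces $R=0$. Hence $A$ is semisimple Artinian, and by Artin--Wedderburn together with Wedderburn's little theorem (a finite division ring is a field) one can write $A\cong\bigoplus_i\Mat_{n_i}(L_i)$ with the $L_i$ finite fields. The involution permutes the simple two-sided ideals of $A$, and if a proper nonempty union of them were $*$-invariant it would be a nontrivial $*$-ideal; so $*$ acts transitively on the summands, leaving two possibilities: a single simple factor $A\cong\Mat_n(L)$ carrying an involution, or exactly two factors interchanged by $*$.

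In the two-factor case I would write $A\cong B\oplus B^{*}$, note that $(b^{*})^{*}=b$ forces $*$ to restrict to an anti-isomorphism $B\to B^{*}$, and identify $B^{*}$ with $B^{\mathrm{op}}$ through it, turning $*$ into the swap $(x,y)\mapsto(y,x)$; the hypothesis on the center then gives $B\cong\Mat_n(\ff)$, and composing with the transpose anti-isomorphism $\Mat_n(\ff)^{\mathrm{op}}\xrightarrow{\sim}\Mat_n(\ff)$ converts the swap into $(X,Y)\mapsto(Y^{\tp},X^{\tp})$, which is type $\mathbf{X}_n(\ff)$. In the single-factor case $A\cong\Mat_n(L)$, I would examine the restriction of $*$ to the center $L$: if it is trivial then $L=\ff$ by hypothesis and $*$ is $\ff$-linear ("of the first kind"); otherwise it has order two with fixed field $\ff$, so $L/\ff$ is the quadratic extension ("of the second kind").

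For the first kind I would use the adjoint description: composing $*$ with the reference anti-automorphism $X\mapsto X^{\tp}$ yields an $\ff$-algebra automorphism of $\Mat_n(\ff)$, hence inner by Skolem--Noether, which unwinds to $X^{*}=DX^{\tp}D^{-1}$ for some $D\in\GL_n(\ff)$; thus $*$ is the adjoint of the bilinear form with Gram matrix $D$. The identity $(X^{*})^{*}=X$ then forces $D$ to be symmetric or skew-symmetric up to a scalar, and since rescaling $D$ does not change the adjoint, exactly two families remain. For $\charac(\ff)$ odd, nondegenerate skew-symmetric forms exist only in even rank $2n$ and are pairwise equivalent, giving $\mathbf{S}_{2n}(\ff)$; nondegenerate symmetric forms are classified up to equivalence by rank and discriminant in $\ff^{\times}/(\ff^{\times})^{2}$, but after passing to equivalence up to scalar the two discriminant classes merge in odd rank (they differ by the non-square scalar $\omega$) and stay distinct in even rank, which is exactly the orthogonal trichotomy $\varepsilon\in\{+,-,\circ\}$ recorded by $D\in\{I_n,I_{n-1}\oplus\omega\}$ in $\mathbf{O}_n^{\varepsilon}(\ff)$. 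For the second kind the same argument with reference anti-automorphism $X\mapsto\overline{X}^{\tp}$ exhibits $*$ as the adjoint of a nondegenerate sesquilinear form that is Hermitian or skew-Hermitian up to scalar; over a finite field every skew-Hermitian form is a scalar multiple of a Hermitian form, so only the Hermitian case survives, and all nondegenerate Hermitian forms of a given rank over $L/\ff$ are equivalent, producing the one remaining type $\mathbf{U}_n(L)$.

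The step I expect to be the main obstacle is the reduction of an abstract involution to the adjoint of an explicit form and the associated bookkeeping that identifies $*$-isomorphism classes with scalar-equivalence classes of (symmetric, alternating, or Hermitian) forms — in particular, seeing that what survives in the symmetric case is the full $+/-/\circ$ trichotomy rather than just the two discriminant classes, and checking that the reference anti-automorphisms really generate all of them modulo inner automorphisms. Once this is set up, the underlying classification of forms over finite fields of odd characteristic is classical and causes no real trouble.
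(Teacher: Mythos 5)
The paper does not prove this statement; it is quoted verbatim from Brooksbank--Wilson \cite[Theorem~2.2]{BrooksbankWilson/12}, so there is no internal argument to compare against. Your outline is the standard (and correct) derivation: killing the radical by $*$-simplicity, letting $*$ act on the Wedderburn factors to split off the exchange type, separating involutions of the first and second kind via the center, and then invoking Skolem--Noether to realize $*$ as the adjoint of a bilinear (resp.\ sesquilinear) form classified up to scalar equivalence -- including the correct observation that scaling merges the two discriminant classes exactly in odd rank, which yields the $+/-/\circ$ trichotomy. The only point worth making fully explicit is the convention you already flagged, namely that ``over $\ff$'' means $\ff$ is the $*$-fixed subfield of $Z(A)$ (this is what forces $L=\ff$ in the exchange and first-kind cases and $[L:\ff]=2$ in the unitary case); with that in place the argument is complete.
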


We stress that in this paper we are mostly concerned with $(iii)$ and $(iv)$ from the last classification.  We  mention $(i)$ primarily for \cref{thm:reducible-adjoints} for the values $n=1$ and $\varepsilon=\circ$, in which case $\mathbf{O}_n^{\varepsilon}(\alg)=\mathbf{O}_1^{\varepsilon}(\alg)$ is a field isomorphic to $\alg$.

\subsection{Unipotent group schemes and nilpotent Lie algebras from $3$-tensors}\label{sec:baer3}

Let $t:V\times V\rightarrow T$ be an alternating $\alg$-tensor and assume that
$\alg=2\alg$. Then the set $V\oplus T$, with multiplication given by 
\begin{align*} 
    (v, w) \cdot (v', w') &= (v + v', w + w' + \dfrac{1}{2}t(v, v')), 
\end{align*} 
is a nilpotent group (of class at most $2$),
called the \emph{Baer group associated with $t$}.
The commutator map of $\GP_t(\alg)=(V\oplus T, \cdot)$ is given by 
\begin{align*} 
 ((v, w), (v', w')) \longmapsto  \left[(v, w), (v', w')\right] &= (0, t(v, v')).
\end{align*} 
Interpreting the last commutator map as a Lie bracket, the Lie algebra $\LA_{t}(\alg)$ associated to $t$ is precisely $V\oplus T$ together with this Lie product.  We note that this construction is the same as in~\cite{stanojkovski2019hessian},
where $t$ is assumed to be isotropically decomposable (and the associated half-matrix $\tuB$ to be symmetric). 

The (Baer) group scheme $\GP_t$ associated to $t$ is determined in the following
way -- without requiring $\alg=2\alg$; see~\cite[Sec.~2.4]{RV/2019}. Fix bases
$(v_1,\dots, v_n)$ and $(w_1,\dots, w_d)$ of $V$ and $T$ respectively. Let $L$
be an associative commutative unital $\alg$-algebra, and identify $\GP_t(L)$
with the set $(V\otimes_\alg L)\oplus (T\otimes_\alg L)$.  For
$\ell\in L$, we abbreviate $v_i\otimes \ell$ and $w_j\otimes \ell$ to $\ell v_i$
and $\ell w_j$ in $V\otimes_\alg L$ and $T\otimes_\alg L$ respectively.

We define the multiplication $\bullet$ on $\GP_t(L)$ as follows.  
For $v=a_1v_1+ \cdots + a_nv_n$ and $v'=b_1v_1+\cdots + b_nv_n$ with $a_1,\dots,
a_n,b_1,\dots, b_n\in L$, we set 
\[ 
    v\bullet v' = v + v' - \sum_{1\leq i < j \leq n}a_jb_i\cdot t(v_i, v_j). 
\]
 Additionally, for all $x\in
\GP_t(L)$ and $w\in T\otimes_KL$, define $x\bullet w=x+w=w\bullet x$. 
This determines the group structure and
is independent of the chosen bases: shall the tensors be given in terms of a
matrix $\tuB$ of linear forms, we will write $\GP_{\tuB}(L)$ for the resulting
group. The last construction defines a representable functor from the category
of $\alg$-algebras (associative, commutative, and unital) to groups; namely, via
$L\mapsto \left( (V\otimes_\alg L)\oplus (T\otimes_\alg L), \bullet\right)$.
Concretely, if $\alg=2\alg$, then $\GP_t(\alg)$ is isomorphic to the Baer group
associated with $t$  since the two groups yield pseudo-isometric commutator tensors. One approaches Lie algebras in a similar fashion.

Our main source of unipotent group schemes comes from linear determinantal
representations of elliptic curves, but this change of perspective from
determinantal representations to nilpotent groups comes with a few subtleties.
For example, the specific Pfaffian hypersurface associated to the linear
Pfaffian representation is an invariant in the equivalence class as given in \cref{def:equivalence}. However for
nilpotent groups of class 2, the Pfaffian hypersurface is not an invariant of
the group -- the next theorem, essentially due to Baer~\cite{Baer/38},
illustrates this point.

\begin{thm}\label{thm:Baer-correspondence}
    Let $s: V\times V \rightarrow T$ and $t:V'\times V'\rightarrow T'$ be
    alternating, full $\alg$-tensors, where $\charac(\alg) = p >2$. Then the
    following hold.
    \begin{enumerate}[label=$(\arabic*)$] 
        \item The $\alg$-Lie algebras $\LA_s(\alg)$ and  $\LA_t(\alg)$ are
        isomorphic if and only if $s$ and $t$ are $\alg$-pseudo-isometric.
        \item The groups $\GP_s(\alg)$ and  $\GP_t(\alg)$ are isomorphic if and only
        if $s$ and $t$ are $\F_p$-pseudo-isometric. 
    \end{enumerate}
\end{thm}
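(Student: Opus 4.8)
The plan is to exhibit, for each direction, an explicit dictionary between pseudo-isometries of the tensors and isomorphisms of the associated objects, and then to track how coefficients (scalars from $\alg$ versus scalars from $\F_p$) are constrained by the algebraic structure in play. For part $(1)$, recall that $\LA_s(\alg) = V\oplus T$ and $\LA_t(\alg) = V'\oplus T'$ as $\alg$-vector spaces with Lie bracket supported on the $V$-part and landing in the central ideal $T$ (respectively $T'$). Since $s$ and $t$ are full, the bracket surjects onto $T$ (resp.\ $T'$), so $T$ is exactly the derived subalgebra $[\LA_s,\LA_s]$ and, because the bracket is alternating with values in the center, $T$ is also forced to contain the center; fullness plus nondegeneracy pins down $Z(\LA_s(\alg)) \supseteq T$ and the induced map on $\LA_s/Z$. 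The key point is that any $\alg$-Lie algebra isomorphism $\Phi:\LA_s(\alg)\to\LA_t(\alg)$ must send derived subalgebra to derived subalgebra, hence $T\to T'$, inducing $\beta:T\to T'$; and it descends to an isomorphism $\bar\alpha: V \cong \LA_s/T \to \LA_t/T \cong V'$ (after modding out the part of the center not detected by the bracket — but fullness and the construction mean $T$ is precisely the relevant target). Lifting $\bar\alpha$ to a linear map $\alpha:V\to V'$ and checking bracket-compatibility gives exactly $t(\alpha(u),\alpha(v)) = \beta(s(u,v))$, i.e.\ $(\alpha,\beta)\in\pseudo_\alg(s,t)$. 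Conversely an $\alg$-pseudo-isometry $(\alpha,\beta)$ assembles into $\alpha\oplus\beta: V\oplus T\to V'\oplus T'$ which is visibly an $\alg$-Lie isomorphism. The mild subtlety is that a general isomorphism need not respect the chosen vector-space splitting $V\oplus T$, only the filtration $T \subset \LA$; one fixes this by choosing any $\alg$-linear splitting of $\Phi$ modulo $T'$, which changes $\Phi$ only by a map into the center and does not affect the bracket identity.

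For part $(2)$, the group $\GP_s(R)$ has underlying set $(V\otimes_\alg R)\oplus(T\otimes_\alg R)$ with the explicit multiplication $\bullet$; since $\charac(\alg)=p>2$ and $R$ is an $\alg$-algebra, $2$ is invertible, so over $R$ this group is the Baer group with multiplication $(v,w)(v',w')=(v+v',w+w'+\tfrac12 t(v,v'))$. Its commutator map is $(0,t(v,v'))$ and, because $s$ is full, $[\GP_s(R),\GP_s(R)] = 0\oplus(T\otimes R)$ while $\GP_s(R)/Z$ is naturally $V\otimes R$ as an $\F_p$-vector space (indeed as an $R$-module, but a group isomorphism only sees the $\F_p$-structure). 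So an abstract group isomorphism $\GP_s(R)\to\GP_t(R)$ induces $\F_p$-linear isomorphisms on $T\otimes R$ and on $V\otimes R$ intertwining the commutator maps — that is precisely an element of $\pseudo_{\F_p}(s,t)$ (viewing $s,t$ as $\F_p$-bilinear maps on the $\F_p$-spaces $V\otimes R$, $T\otimes R$). The reason one only gets $\F_p$-linearity and not $\alg$-linearity is exactly the content of the remark preceding the theorem: conjugation/automorphisms of the group need not commute with the $\alg$-scalar action, since that action is not visible from the group law alone. Conversely, any $\F_p$-pseudo-isometry $(\alpha,\beta)$ of the $\F_p$-tensors yields $\alpha\oplus\beta$, which one checks is a group homomorphism for $\bullet$ using the commutator formula and the fact that exponent considerations (or the $\tfrac12$ with $p>2$) make the "square" and "Baer" terms match up.

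The main obstacle, and the step deserving the most care, is establishing that a group isomorphism is forced to respect the decomposition $\GP_s(R)/Z \leftrightarrow V\otimes R$ and $[\GP,\GP]\leftrightarrow T\otimes R$ \emph{canonically} — i.e.\ that $Z(\GP_s(R)) = 0\oplus(T\otimes R)$ — and that this is an $\F_p$-linear, not merely set-theoretic, identification compatible with commutators. This requires fullness of $s$ (so the commutator subgroup is all of $T\otimes R$) together with nondegeneracy (so the center is not larger than $T\otimes R$); without fullness one would need to replace $T$ by the span of the image of the bracket. One must also handle the lifting ambiguity: a group isomorphism $\GP_s(R)\to\GP_t(R)$ determines $\alpha$ only up to a homomorphism $V\otimes R\to T\otimes R$, but any such modification still lies in $\pseudo_{\F_p}(s,t)$ as a coset, consistent with the remark that $\pseudo_L(s,t)$ is naturally a coset rather than a single element. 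Once these structural points are nailed down, the bracket/commutator bookkeeping is routine given $p>2$, and the theorem follows; the attribution to Baer reflects that this is the classical correspondence, here packaged functorially and over an arbitrary $\alg$-algebra $R$.
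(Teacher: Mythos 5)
Your proposal is correct and follows essentially the same route as the paper: fullness identifies the commutator subgroup (resp.\ derived subalgebra) with $0\oplus T$, an isomorphism then induces compatible maps on $T$ and on the abelianization $V$, yielding the pseudo-isometry, and conversely $\diag(\alpha,\beta)$ is checked directly against the Baer multiplication. One caution: nondegeneracy is not a hypothesis of the theorem, so your identification of the center with $0\oplus T$ is not available in general; but, as you also observe, it suffices to pass to the quotient by the derived subgroup, through which the commutator map factors because the derived subgroup is central.
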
 

\begin{proof}
    We prove the claim for groups as the Lie algebras statement is similar.
    Since $s$ and $t$ are full, the commutator subgroups of $\GP_s(\alg)$ and
    $\GP_t(\alg)$ are equal to $0\oplus T$ and $0\oplus T'$, respectively. An
    isomorphism from $\GP_s(\alg)$ to $\GP_t(\alg)$ induces an isomorphism of their commutator subgroups and their abelianizations. These yield $\F_p$-linear isomorphisms 
    $V\rightarrow V'$ and $T\rightarrow T'$, so $s$ and $t$ are
    $\F_p$-pseudo-isometric. 

    Conversely if $\alpha: V\rightarrow V'$ and $\beta : T\rightarrow T'$ are
    $\F_p$-linear isomorphisms with $(\alpha,\beta)\in\pseudo_{\F_p}(s,t)$, then for all $v,v'\in V$ and all
    $w,w'\in T$,  the following holds:
    \begin{align*} 
        (\alpha(v), \beta(w)) \cdot (\alpha(v'), \beta(w')) &= (\alpha(v+v'), \beta(w+w') + \dfrac{1}{2}t(\alpha(v), \alpha(v'))) \\ 
        &= (\alpha(v+v'), \beta(w+w'+\dfrac{1}{2}s(v, v'))) . 
    \end{align*} 
 In other words, $\diag(\alpha,\beta)$ yields an isomorphism of groups. 
\end{proof}

For an $\ff$-vector space $V$ with a fixed basis $\mathcal{B}$,  we extend the action of $\Gal(\ff/\F_p)$ from $\ff$ to $V$ in the following way:
\begin{align*}
\Gal(\ff/\F_p) & \longrightarrow \Aut_{\F_p}(V), \quad 
\sigma  \longmapsto \left(u=\sum_{b\in \mathcal{B}} c_b b   \mapsto \sigma(u) = \sum_{b\in\mathcal{B}} \sigma(c_b) b\right).
\end{align*}

\begin{defn}
Let $t:V\times V \to T$ be an $\ff$-tensor and let $\sigma \in
\Gal(\ff/\F_p)$. For a fixed choice of bases for $V$ and $T$, the tensor
${}^{\sigma}t : V \times V \to T$ is defined by
\begin{align*} 
    {}^{\sigma}t(u, v) &= \sigma t(\sigma^{-1}(u), \sigma^{-1}(v)) . 
\end{align*} 
On the level of matrices of linear forms, if $t(u,v)=u^{\tp}\tuB v$, then
${}^{\sigma}t(u,v) = u^{\tp}(\sigma(\tuB)) v$, where the action of $\sigma$ on
$\tuB$ is entry-wise, so $\sigma(\tuB) = (\sigma(b_{ij}))$. 
\end{defn}

The action of $\Gal(\ff/\F_p)$ on an $\ff$-tensor $t: V\times V\to T$
depends on the choice of $\ff$-bases for $V$ and $T$, and different choices of
bases may yield different $\ff$-pseudo-isometry classes. This shall not concern
us because this does not happen on the level of
$\F_p$-pseudo-isometries -- our primary focus when working with
$\Gal(\ff/\F_p)$ -- as $\sigma$ is $\F_p$-linear. 

The next theorem has many appearances in different guises~\cite{BMW/17,
stanojkovski2019hessian, Wilson/17}, and it describes the group
$\pseudo_{\F_p}(t)$ as a subgroup of a direct product of
$\ff$-semilinear groups. The \emph{$\ff$-semilinear group} of $V$ is
$\GL_{\ff}(V)\rtimes \Gal(\ff/\F_p)$, and for a fixed basis of $V$, it
acts on $V$ by mapping $v$ to $(X, \sigma) v = X\sigma(v)$. For $(X, \sigma),
(Y, \tau) \in \GL_{\ff}(V)\rtimes \Gal(\ff/\F_p)$, we compute that $(X, \sigma)
\cdot (Y, \tau) = (X\sigma Y\sigma^{-1}, \sigma\tau)$, and we
extend this operation to the larger $(\GL_{\ff}(V)\times \GL_{\ff}(T)) \rtimes
\Gal(\ff/\F_p)$ by setting
\begin{equation}\label{eq:operation}
(\alpha, \beta, \sigma) \cdot (\gamma, \delta,
\tau) = (\alpha \sigma\gamma \sigma^{-1}, \beta \sigma\delta
\sigma^{-1}, \sigma\tau).  
\end{equation} 
Then the \emph{$\ff$-semilinear pseudo-isometry group} of
$t$ is 
\begin{align*} 
    \spseudo_{\ff/\F_p}(t) &= \{(\alpha, \beta, \sigma) \in (\GL_{\ff}(V)\times \GL_{\ff}(T)) \rtimes \Gal(\ff/\F_p) ~|~ (\alpha,\beta)\in \pseudo_{\ff}({}^{\sigma}t, t)\} 
\end{align*} 
with the induced multiplication.
Note that, though the definition of $\spseudo_{\ff/\F_p}(t)$ we gave clearly depends on a choice of bases of $V$ and $T$, its isomorphism type does not.
We will write $\Gal_t(\ff/\F_p) $ for the following:
\begin{equation}\label{Gal_t}
\Gal_t(\ff/\F_p) = \{\sigma \in\Gal(\ff/\F_p) ~|~
    \pseudo_{\ff}({}^{\sigma}t, t)\neq\varnothing\}.
    \end{equation}
By using the operation
    from \eqref{eq:operation}, a calculation shows that
    $\Gal_t(\ff/\F_p)$ is a subgroup of $\Gal(\ff/\F_p)$. 

\begin{thm}\label{prop:reduction}
    Let $t:V\times V\to T$ be an alternating, fully nondegenerate $\ff$-tensor.
    If $\Cent(t)\cong \ff$, then the following hold:
    \begin{enumerate}[label=$(\arabic*)$]
    \item $\Aut(\GP_{t}(\ff)) \cong \Hom_{{ \F_p}}(V,T) \rtimes \spseudo_{\ff/\F_p}(t)$ and 
    \item $|\spseudo_{\ff/\F_p}(t)| = |\pseudo_{\ff}(t)|\cdot|\Gal_t(\ff/\F_p)|$.
    \end{enumerate}
\end{thm}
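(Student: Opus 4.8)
The plan is to prove both parts by unwinding the definitions and using \cref{thm:Baer-correspondence} together with the hypothesis $\Cent(t)\cong\ff$. For part $(1)$, recall from \cref{thm:Baer-correspondence}$(2)$ that $\Aut(\GP_t(\ff))$ consists exactly of the group automorphisms of $\GP_t(\ff)$, and that such automorphisms are governed by $\F_p$-pseudo-isometries of $t$ (since $t$ is full). First I would observe that any automorphism of $\GP_t(\ff)=(V\oplus T,\cdot)$ preserves the commutator subgroup $0\oplus T$ and the center; since $t$ is fully nondegenerate, the center is exactly $0\oplus T$, so an automorphism induces $\F_p$-linear maps $\bar\alpha\in\GL_{\F_p}(V)$ and $\beta\in\GL_{\F_p}(T)$ with $(\bar\alpha,\beta)\in\pseudo_{\F_p}(t)$. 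Conversely, as in the proof of \cref{thm:Baer-correspondence}, any $(\bar\alpha,\beta)\in\pseudo_{\F_p}(t)$ together with an arbitrary $\ff$-linear (indeed $\F_p$-linear suffices, but we only need one splitting) map $V\to T$ recording the "shearing by a central element" gives rise to an automorphism; the kernel of the map $\Aut(\GP_t(\ff))\to\pseudo_{\F_p}(t)$ is precisely $\Hom_{\F_p}(V,T)$, acting by $v\mapsto v+f(v)$ on $V$ and trivially on $T$. This yields a semidirect-product decomposition $\Aut(\GP_t(\ff))\cong\Hom_{\F_p}(V,T)\rtimes\pseudo_{\F_p}(t)$.

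The real content is then to identify $\pseudo_{\F_p}(t)$ with $\spseudo_{\ff/\F_p}(t)$ and $\Hom_{\F_p}(V,T)$ with $\Hom_{\ff}(V,T)$, and here is where $\Cent(t)\cong\ff$ enters. The point is the universal property of the centroid: $t$ is $\Cent(t)$-bilinear, so if $\Cent(t)\cong\ff$ then $t$ is $\ff$-bilinear but no larger subring of $\alg$ acts. An $\F_p$-pseudo-isometry $(\alpha,\beta)$ conjugates $t$ to itself over $\F_p$; I would argue that the $\ff$-structure on $V$ (resp.\ $T$) and its image under $\alpha$ (resp.\ $\beta$) both give $\ff$-module structures under which $t$ is bilinear, and by the centroid being exactly $\ff$ these two $\ff$-structures must be related by a field automorphism $\sigma\in\Gal(\ff/\F_p)$ — that is, $\alpha$ is $\sigma$-semilinear for a unique $\sigma$. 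This is exactly the statement that $(\alpha,\beta)=(\alpha_0\sigma,\beta_0\sigma)$ with $(\alpha_0,\beta_0)\in\pseudo_{\ff}({}^{\sigma}t,t)$, i.e.\ an element of $\spseudo_{\ff/\F_p}(t)$; one checks the group operations match via \eqref{eq:operation}. The same semilinearity forces $\Hom_{\F_p}(V,T)$ to be replaced by $\Hom_{\ff}(V,T)$ only if one is careful — in fact the kernel genuinely is $\Hom_{\F_p}(V,T)$, so one must instead rewrite $\Hom_{\F_p}(V,T)=\Hom_{\ff}(V,T)$ as abelian groups are not equal; more likely the intended reading is that the $\ff$-semilinear pseudo-isometry group already absorbs the discrepancy, and I would double-check the indexing of semilinear maps so that $\Hom_{\ff}(V,T)\rtimes\spseudo_{\ff/\F_p}(t)$ has the correct order $q^{\dim V\cdot\dim T}\cdot|\pseudo_{\ff}(t)|\cdot|\Gal_t(\ff/\F_p)|$ matching $|\Aut(\GP_t(\ff))|$.

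For part $(2)$, I would consider the natural projection $\spseudo_{\ff/\F_p}(t)\to\Gal(\ff/\F_p)$, $(\alpha,\beta,\sigma)\mapsto\sigma$. Its image is, by definition, $\Gal_t(\ff/\F_p)$, the set of $\sigma$ for which $\pseudo_{\ff}({}^{\sigma}t,t)\neq\varnothing$; one checks this is a subgroup using that ${}^{\sigma\tau}t$ relates to ${}^\sigma t$ and ${}^\tau t$ compatibly, and that pseudo-isometries compose. Its kernel is $\{(\alpha,\beta,\id):(\alpha,\beta)\in\pseudo_{\ff}(t)\}\cong\pseudo_{\ff}(t)$. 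The order formula $|\spseudo_{\ff/\F_p}(t)|=|\pseudo_{\ff}(t)|\cdot|\Gal_t(\ff/\F_p)|$ is then immediate from the first isomorphism theorem applied to this short exact sequence.

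The main obstacle I anticipate is the bookkeeping in part $(1)$: carefully verifying that the hypothesis $\Cent(t)\cong\ff$ forces every $\F_p$-pseudo-isometry to be $\ff$-semilinear (this is the crux, and uses the universal property of the centroid in an essential way — without it, $\pseudo_{\F_p}(t)$ could be strictly larger), and then matching the semidirect-product structure and the group operation \eqref{eq:operation} on the nose, including getting the $\Hom$-term over the right field. The rest is routine diagram-chasing.
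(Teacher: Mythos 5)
Your proposal takes essentially the same route as the paper: the Baer correspondence reduces $\Aut(\GP_t(\ff))$ to $\Aut_{\F_p}(\LA_t(\ff))$, which (since $t$ is full and nondegenerate) sits in a split exact sequence with kernel the central automorphisms and quotient $\pseudo_{\F_p}(t)$; your ``two $\ff$-structures differ by a field automorphism'' argument is precisely the paper's conjugation action of $\pseudo_{\F_p}(t)$ on $\Cent(t)\cong\ff$, whose kernel is $\pseudo_{\ff}(t)$ and whose image in $\Aut(\Cent(t))\cong\Gal(\ff/\F_p)$ is shown to be $\Gal_t(\ff/\F_p)$; and the identification $(\alpha,\beta,\sigma)\mapsto(\alpha\sigma,\beta\sigma)$ of $\spseudo_{\ff/\F_p}(t)$ with $\pseudo_{\F_p}(t)$ together with the resulting short exact sequence gives part $(2)$ exactly as you describe. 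Your hesitation about the $\Hom$-term is well founded but is not a defect of your argument relative to the paper's: the kernel of $\Aut_{\F_p}(\LA)\to\pseudo_{\F_p}(t)$ genuinely is $\Hom_{\F_p}(V,T)$ (any additive map $v\mapsto v+f(v)$ is an automorphism, since $T$ is central), the paper's own proof produces exactly this group and never reconciles it with the $\Hom_\ff(V,T)$ of the statement, and for $[\ff:\F_p]>1$ the two have different orders -- so this discrepancy lives in the source, not in your write-up.
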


\begin{proof} 
    Let $\LA = \LA_t(\ff)$ be the Lie algebra associated to $G = \GP_{t}(\ff)$.
    Since $G$ is of class $2$ with exponent $p>2$, the Baer correspondence
    guarantees that $\Aut(G)\cong \Aut_{\F_p}(\LA)$. Since $t$ is full,
    $T = [\LA, \LA]$, so every endomorphism of $\LA = V\oplus T$ maps $T$ into
    $T$. Thus, we have a split exact sequence of groups
    \begin{align*}
        1 &\longrightarrow \Hom_{\F_p}(V, T) \longrightarrow \Aut_{\F_p}(\LA) \longrightarrow \pseudo_{\F_p}(t) \longrightarrow 1, 
    \end{align*} 
    where the penultimate map is given by $\alpha\mapsto (\alpha_V, \alpha_T)$
    with $\alpha_V : V \to V$ given by $v + T \mapsto \alpha(v) + T$ and
    $\alpha_T = \alpha|_T$ is the restriction of $\alpha$ to $T$. 

    The group $\pseudo_{\F_p}(t)$ acts on $\Cent(t)$ via conjugation. If
    $(X,Y,Z)\in\Cent(t)$ and $(\alpha,\beta)\in\pseudo_{\F_p}(t)$, then
    $(\alpha X \alpha^{-1}, \alpha Y \alpha^{-1}, \beta Z \beta^{-1})$ belongs to $\Cent(t)$. This defines a homomorphism $\gamma :
    \pseudo_{\F_p}(t) \to \Aut(\Cent(t))$, where $\Aut(\Cent(t))$
    denotes the automorphism group of $\Cent(t)$ as a unital $\F_p$-algebra.
    Since $\Cent(t)\cong \ff$, the kernel of this map is $\pseudo_{\ff}(t)$.
    The choice of an isomorphism $\phi:\Aut(\Cent(t)) \to
    \Gal(\ff/\F_p)$ yields the following exact sequence of groups:
    \begin{align*}
        1 &\longrightarrow \pseudo_{\ff}(t) \longrightarrow \pseudo_{\F_p}(t) \longrightarrow \Gal(\ff/\F_p). 
    \end{align*} 
    We show that $\im(\phi\circ\gamma) = \Gal_t(\ff/\F_p)$. Given
    $\sigma\in\Gal_t(\ff/\F_p)$, we have that $\sigma\in \im(\phi\circ\gamma)$ if and only if there exists
    $(X,Y)\in\GL_{\ff}(V)\times\GL_{\ff}(T)$ such that $(\alpha,\beta)=(X\sigma,
    Y\sigma)\in\pseudo_{\F_p}(t)$. This condition is equivalent to the equation   
    \begin{align}\label{eqn:Galois-action-t}
        t(X\sigma(u),X\sigma(v)) &  = t(\alpha(u),\alpha(v)) = \beta t(u,v) =Y\sigma t(u,v) = Y({}^\sigma t(\sigma(u),\sigma(v))).
    \end{align} 
    being verified for all $u,v\in V$. However,
    since $\sigma$ induces an automorphism of $V$, the outer equality of \eqref{eqn:Galois-action-t}
    is equivalent to having $(X,Y)\in\pseudo_{\ff}({}^\sigma t,t)$, meaning that $\sigma\in \Gal_t(\ff/\F_p)$.
    As a result, the following is a short exact sequence: 
    \begin{align*}
        1 &\longrightarrow \pseudo_{\ff}(t) \longrightarrow \pseudo_{\F_p}(t) \longrightarrow \Gal_t(\ff/\F_p) \longrightarrow 1. 
    \end{align*} 
    In particular, the size of $\pseudo_{\F_p}(t)$ is equal to
    $|\pseudo_{\ff}(t)|\cdot|\Gal_t(\ff/\F_p)|$. 
    
    We conclude by showing that $\pseudo_{\F_p}(t)$ and
    $\spseudo_{\ff/\F_p}(t)$ are isomorphic. For this, note that, if
    $(\alpha, \beta)\in\pseudo_{\F_p}(t)$ is such that
    $\phi\gamma((\alpha, \beta)) = \sigma$, then $(\alpha\sigma^{-1},
    \beta\sigma^{-1})$ belongs to $\pseudo_{\ff}({}^{\sigma}t, t)$. Therefore, the map
    \[ 
        \spseudo_{\ff/\F_p}(t) \longrightarrow \pseudo_{\F_p}(t) \quad\text{given by}\quad (\alpha,\beta, \sigma) \longmapsto (\alpha\sigma, \beta\sigma)
    \]
    defines an isomorphism. Putting everything together, the theorem follows.
\end{proof}

Following \cite[Sec.~4]{GS/84}, we say two
homogeneous polynomials $f,g\in \ff[y_1,\dots,y_d]$ are \emph{projectively
semi-equivalent} if there exist
$M=(m_{ij})\in\GL_d(\ff)$, $\sigma\in \Gal(\ff/\F_p)$, and $\lambda\in
\ff^\times$ such that
\begin{align}\label{def:proj-semi-equiv}
    (\sigma(f))(m_{11}y_1 + \cdots + m_{1d}y_d, \dots, m_{d1}y_1 + \cdots + m_{dd}y_d) = \lambda g(\bm{y}), 
\end{align} 
where $(\sigma(f))(\bm{y})$ is the polynomial obtained by applying $\sigma$ to
the coefficients of $f$. If $\sigma=1$, then we just say that $f$ and $g$ are \emph{projectively
equivalent}. Projective equivalence yields an isomorphism of the varieties of
$f$ and $g$, but the converse is not true in general. Moreover projective
semi-equivalence need not induce an isomorphism of varieties of $f$ and $g$ but
of $\sigma(f)$ and $g$ instead. The following corollary collects some direct
geometric implications of \cref{thm:Baer-correspondence}, some more will be presented in \cref{sec:tenspfaff}.

\begin{coro}
    Assume that $s: V\times V\to T$ and $t: V'\times V'\to T'$ are
    fully-nondegenerate, that $\Cent(t)\cong\Cent(s)\cong \ff$, and that
    $\GP_s(\ff)\cong \GP_t(\ff)$. Let $\Pf(s)$ and $\Pf(t)$ be Pfaffians over
    $\ff$ of $s$ and $t$, respectively. Then the following hold.
    \begin{enumerate}[label=$(\arabic*)$] 
        \item The polynomials $\Pf(s)$ and $\Pf(t)$ are projectively semi-equivalent.
        \item The $\ff$-points of the singular loci of $\Pf(s)$ and $\Pf(t)$ are
        in bijection. 
        \item The polynomials $\Pf(s)$ and $\Pf(t)$ have the same splitting
        behavior, i.e.\ the degrees of their irreducible factors, counted with multiplicities, are the same.
    \end{enumerate}
\end{coro}

\begin{proof}
Thanks to
    \cref{thm:Baer-correspondence}(2), there exist $\F_p$-linear isomorphisms
    $\alpha:V\rightarrow V'$ and $\beta:T\rightarrow T'$ such that for all
$u,v\in V$, 
    \begin{equation}\label{eqn:alpha-beta-iso}
        t(\alpha(u), \alpha(v)) = \beta(s(u,v)) . 
    \end{equation}
Then the  following map $\Cent(s) \to \Cent(t)$ is an isomorphism of $\F_p$-algebra:
    \[ 
        (X, Y, Z) \longmapsto (\kappa_{\alpha}(X), \kappa_{\alpha}(Y), \kappa_{\beta}(Z)) := (\alpha X\alpha^{-1}, \alpha Y \alpha^{-1}, \beta Z \beta^{-1}). 
    \] 
    Let $\varphi_s: \ff \to \Cent(s)$ and $\varphi_t: \ff \to \Cent(t)$ be
    field isomorphisms, so $s$ and $t$ are $\ff$-bilinear via $\varphi_s$ and
    $\varphi_t$, respectively. Since $s$ and $t$ are fully-nondegenerate, the
    maps $\pi_s : \Cent(s)\rightarrow\End_{\F_p}(T)$ and $\pi_t :
    \Cent(t)\rightarrow\End_{\F_p}(T')$ given by 
    \[
        (X,Y,Z)\longmapsto Z \ \textup{ and }\ (X',Y',Z')\longmapsto Z',
    \] 
    respectively, are injective, so $\im(\pi_s)\cong \im(\pi_t)\cong\ff$. Set
    $\psi_s = \pi_s\varphi_s$ and $\psi_t = \pi_t\varphi_t$, so $\sigma =
    \psi_t^{-1}\kappa_{\beta} \psi_s\in \Gal(\ff/\F_p)$. Thus, $\beta$ is
    $\ff$-semilinear: for all $u,v\in T$ and all $\lambda\in \ff$, 
    \[ 
        \beta(u+\lambda v) = \beta(u + \psi_s(\lambda) v) = \beta(u) + \kappa_{\beta}\psi_s(\lambda)\beta(v) = \beta(u) + \sigma(\lambda)\beta(v).
    \] 
    By a similar argument, $\alpha$ is also $\ff$-semilinear for some
    $\sigma'\in\Gal(\ff/F_p)$.
  
    Choose $\ff$-bases for $V$, $V'$, $T$, and $T'$. Since both $\alpha$ and
    $\beta$ are $\ff$-semilinear, there exist $\ff$-matrices $M$ and $N$ such
    that $\alpha$ is given by $(N,\sigma')$ and $\beta$ by $(M, \sigma)$
    relative to our choice of bases. If $u,v\in V$ are basis vectors, then
    \eqref{eqn:alpha-beta-iso} implies
    \begin{equation}\label{eqn:semi-linear}
        t(Nu, Nv) = t(\alpha(u), \alpha(v)) = \beta(s(u,v)) = M\sigma(s(u,v)).
    \end{equation}
    With $\lambda = \det(N)^2\in\ff^{\times}$, the equality in
    \eqref{eqn:semi-linear} implies \eqref{def:proj-semi-equiv} for the
    Pfaffians of $s$ and $t$ over $\ff$. Hence, (1) holds. The other statements (2) and (3) easily follow from (1).
\end{proof}

\section{Tensors and irreducible Pfaffians}\label{sec:tenspfaff}

In the study of elliptic $p$-groups $\GP_t(\ff)$, the number of the maximal totally isotropic
subspaces of $V$ is a fundamental invariant. We will show in this
section that there are four possible situations for the isotropic subspaces, distinguished by the adjoint algebra of $t$.
These four possibilities fit into two larger frameworks: $\tuB$ is either
isotropically-decomposable or isotropically-indecomposable. 
As a matter of fact, we prove this within the much larger framework of groups for which $\Pf(t)$ is an irreducible variety.
We 
eventually make an additional ``genericity'' assumption, which is satisfied by all
tensors coming from $E$-groups. 

\subsection{Pfaffians and isotropic subspaces}
In this section we present a number of results on isotropic subspaces with respect to an alternating tensor with irreducible Pfaffian.

\begin{lemma}\label{lem:max-dim}
 Let $t: V\times V \rightarrow T$ be an alternating $\alg$-tensor.  If $\Pf(t)\neq 0$ and
    $U\leq V$ is totally isotropic, then $2\dim_{\alg}U\leq \dim_{\alg}V$.
\end{lemma}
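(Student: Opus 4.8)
The plan is to pick a basis of $V$ extending a basis of $U$, realize $t$ by the associated matrix of linear forms $\tuB$, observe that total isotropy of $U$ forces a large zero block in $\tuB$, and conclude that $\det(\tuB)$ — and hence $\Pf(t)$ — must vanish unless $2\dim_{\alg}U\leq\dim_{\alg}V$.

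First I would dispose of two preliminary points. Since $\Pf(t)$ is only defined when $V$ has even dimension, the hypothesis $\Pf(t)\neq 0$ already forces $\dim_{\alg}V=2n$ for some positive integer $n$. Moreover, although the matrix $\tuB$ realizing $t$ and its Pfaffian depend on the chosen bases of $V$ and $T$, the \emph{vanishing} of $\Pf(\tuB)$ does not: changing the basis of $V$ by $P\in\GL_{2n}(\alg)$ replaces $\tuB$ by $P^{\tp}\tuB P$, so $\det(\tuB)$ gets multiplied by $\det(P)^2\in\alg^{\times}$, while changing the basis of $T$ amounts to an invertible linear substitution of $y_1,\dots,y_d$; since $\det(\tuB)=\Pf(\tuB)^2$ and $\alg[\bm{y}]$ is an integral domain, $\Pf(\tuB)=0$ for one choice of bases if and only if it holds for every choice. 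Hence it suffices to exhibit bases for which $\det(\tuB)=0$ in $\alg[\bm{y}]$.

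Now suppose for contradiction that $2\dim_{\alg}U>\dim_{\alg}V=2n$, so that $k:=\dim_{\alg}U\geq n+1$. Extend a basis $v_1,\dots,v_k$ of $U$ to a basis $v_1,\dots,v_{2n}$ of $V$, fix any basis of $T$, and let $\tuB\in\Mat_{2n}(\alg[\bm{y}]_1)$ be the resulting matrix of linear forms. Since $U$ is totally isotropic, $t(v_i,v_j)=0$ whenever $1\leq i,j\leq k$, so the top-left $k\times k$ block of $\tuB$ is zero. I would then invoke the elementary fact that an $m\times m$ matrix over a commutative ring possessing a $p\times q$ zero submatrix with $p+q>m$ has vanishing determinant: in the Leibniz expansion a nonzero summand would require a permutation sending the $p$ rows meeting the zero block injectively into the $m-q$ columns avoiding it, which forces $p\leq m-q$. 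Applying this with $m=2n$ and $p=q=k$, where $p+q=2k\geq 2n+2>m$, gives $\det(\tuB)=0$, hence $\Pf(\tuB)=0$, and therefore $\Pf(t)=0$ by the basis-independence noted above — contradicting the hypothesis.

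I do not anticipate a genuine obstacle here: the argument is entirely elementary, and the only points requiring care are the even-dimensionality forced by $\Pf(t)\neq 0$ and the basis-independence of the vanishing of the Pfaffian. (Alternatively, one could specialize $y_1,\dots,y_d$ to a point of $\overline{\alg}^d$ at which $\Pf$ is nonzero, obtaining a nondegenerate alternating form on $V\otimes_{\alg}\overline{\alg}$ having $U\otimes_{\alg}\overline{\alg}$ as a totally isotropic subspace, and then quote the classical bound $2\dim U\leq\dim V$ for nondegenerate alternating forms.)
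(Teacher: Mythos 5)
Your proof is correct and follows essentially the same route as the paper's: extend a basis of $U$ to a basis of $V$, observe that total isotropy forces a zero block too large for the determinant to survive, and conclude that the Pfaffian vanishes. The only (harmless) difference is at the last step, where the paper reads off a zero column of the off-diagonal block $\tuM_1$ and uses $\Pf(\tuB)=u\det(\tuM_1)$, while you apply the zero-submatrix criterion to $\det(\tuB)$ directly and deduce $\Pf(\tuB)=0$ from $\det(\tuB)=\Pf(\tuB)^2$ in the integral domain $\alg[\bm{y}]$; your explicit check that vanishing of the Pfaffian is basis-independent is a welcome addition.
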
 

\begin{proof} 
 From $\Pf(t)\neq 0$ it follows that $\dim_{\alg}V=2n$ for some $n\geq 1$.
    For a contradiction,  let $U$ be a totally isotropic subspace of $V$ with
    $\dim_{\alg}U\geq n+1$. Extend a basis of $U$ to a basis of $V$, and with
    this basis represent $t$ as the following  matrix of linear forms 
    \begin{align*} 
        \tuB = \begin{pmatrix} 
            0 & \tuM_1 \\ 
            -\tuM_1^{\tp} & \tuM_2
        \end{pmatrix} , \textup{ with } \tuM_1,\tuM_2\in\Mat_n(\alg[y_1,\ldots,y_d]_1), \ \tuM_2^{\tp}=-\tuM_2.
    \end{align*} 
Note that $\tuM_1$ contains a zero column since
    $\dim_{\alg}U\geq n+1$,  and so, for some
    $u\in\alg^\times$, the equality $\Pf(t)=u\det(\tuM_1)=0$ holds.  Contradiction.
\end{proof}

\begin{defn}
    Let $t: V\times V \rightarrow T$ be an alternating $\alg$-tensor.  The set
    of totally isotropic subspaces of $V$ of dimension $\frac{1}{2}\dim_{\alg}
    V$ is denoted $\ti{t}$.
\end{defn}

\begin{lemma}\label{lem:overlaps}
  Let $t:V \times V \rightarrow T$ be an alternating $\alg$-tensor such that $\Pf(t)$ is
    irreducible over $\alg$. If $U, W\in\ti{t}$, then either $U=W$ or
    $U\cap W=0$.
\end{lemma}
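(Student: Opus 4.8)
The claim is that two maximal totally isotropic subspaces $U, W \in \ti{t}$ either coincide or intersect trivially, when $\Pf(t)$ is irreducible. The natural approach is by contradiction: suppose $U \neq W$ and $U \cap W \neq 0$, and derive a nontrivial factorization of $\Pf(t)$. Set $\dim_{\alg} V = 2n$, so $\dim_{\alg} U = \dim_{\alg} W = n$. Let $R = U \cap W$, with $0 < r := \dim_{\alg} R < n$ (it is strictly less than $n$ because $U \neq W$ while both have dimension $n$).

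The key is to choose a well-adapted basis of $V$. First I would pick a basis $(v_1,\dots,v_r)$ of $R$, extend it to a basis $(v_1,\dots,v_r,v_{r+1},\dots,v_n)$ of $U$, and separately extend it to a basis $(v_1,\dots,v_r,w_{r+1},\dots,w_n)$ of $W$. Since $U + W$ has dimension $2n - r$, the vectors $v_1,\dots,v_n,w_{r+1},\dots,w_n$ are linearly independent; extend them by $w_1,\dots,w_r$ to a full basis of $V$ ordered as $(v_1,\dots,v_n,w_1,\dots,w_n)$. Writing $t$ as a skew matrix of linear forms $\tuB$ in this basis, total isotropy of $U$ forces the top-left $n \times n$ block to vanish, and total isotropy of $W$ forces the block indexed by $\{v_1,\dots,v_r,w_{r+1},\dots,w_n\}$ (against itself) to vanish as well. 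Concretely, after a final harmless reordering, $\tuB$ acquires a block of zeros large enough that its determinant — equivalently, up to a nonzero scalar, $\Pf(t)^2$ — has a forced factorization: by the standard ``block of zeros forces the determinant to split'' argument (an $a \times b$ zero block with $a + b > 2n$ kills the determinant, and $a + b = 2n$ with the block in the right position splits it into a product of two lower-order minors), one gets $\det(\tuB) = c \cdot f \cdot g$ with $f, g$ nonconstant homogeneous polynomials. Then $\Pf(t)^2 = c^{-1} \det(\tuB)$ is a product of two nonconstant polynomials, so $\Pf(t)$ itself has a nontrivial factor, contradicting irreducibility.

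I would carry out the steps in this order: (1) reduce to the contradiction hypothesis and fix $r$ with $0 < r < n$; (2) build the adapted basis of $V$ as above, being careful that the union of the spanning sets of $U$ and $W$ is independent except for the shared part $R$; (3) record which entries of $\tuB$ vanish by isotropy of $U$ and of $W$; (4) exhibit the resulting zero block and invoke the determinantal splitting to factor $\det(\tuB)$, hence $\Pf(t)$; (5) conclude.

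The main obstacle is step (3)–(4): getting the bookkeeping of basis vectors right so that the vanishing blocks coming from $U$ and from $W$ overlap in exactly the pattern that forces $\det(\tuB)$ to factor, rather than merely to vanish (vanishing would also contradict $\Pf(t) \neq 0$, but irreducibility is the cleaner hypothesis to target and is what the statement is phrased around). One has to track that the rows/columns indexed by $R$ are common to both isotropic conditions, so the zero region is an ``L-shape'' that, after permuting rows and columns, becomes a single rectangular zero block of the size needed for the Laplace-type splitting. A small amount of care is also needed because $\tuB$ is skew-symmetric and one is really factoring the Pfaffian; but since $\Pf(t)^2 = u\det(\tuB)$ for a unit $u$, factoring the determinant into two nonconstant pieces suffices to contradict irreducibility of $\Pf(t)$, so I do not need a Pfaffian-specific splitting lemma.
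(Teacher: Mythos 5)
Your construction --- the adapted basis through $R=U\cap W$ with $0<r<n$, the two overlapping zero blocks coming from the isotropy of $U$ and of $W$, and the observation that rows $1,\dots,r$ of $\tuB$ vanish on all $2n-r$ columns indexed by $U+W$, producing a zero block of complementary dimensions and hence a Laplace-type splitting $\det(\tuB)=c\,f\,g$ with $\deg f=r$ and $\deg g=2n-r$ --- is sound and is essentially the paper's construction. But your closing inference is invalid as written: from ``$\Pf(t)^2$ is a product of two nonconstant polynomials'' you cannot conclude that $\Pf(t)$ has a nontrivial factor, because $\Pf(t)^2=\Pf(t)\cdot\Pf(t)$ is \emph{always} a product of two nonconstant polynomials. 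The contradiction only lands through the degree bookkeeping you set up but did not use: in the UFD $\alg[y_1,\dots,y_d]$, if $\Pf(t)$ is irreducible of degree $n$, then any factorization $\Pf(t)^2=fg$ into nonconstant homogeneous pieces forces $f$ and $g$ to be associates of $\Pf(t)$, hence of degree $n$; your $f$ has degree $r<n$. Without that sentence the proof does not close.

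The paper avoids the issue by never passing to $\det(\tuB)=\Pf(t)^2$ at all. Writing $\tuB=\left(\begin{smallmatrix}0&\tuM_1\\-\tuM_1^{\tp}&\tuM_2\end{smallmatrix}\right)$ in the adapted basis (with $U$ occupying the first $n$ coordinates), one has $\Pf(t)=u\det(\tuM_1)$ for a nonzero scalar $u$, independently of $\tuM_2$, and the second ($W$) zero block, overlapping the first along $R$, forces an $r\times(n-r)$ zero block inside $\tuM_1$, making $\tuM_1$ block-upper-triangular. Hence $\det(\tuM_1)$ --- and therefore $\Pf(t)$ itself --- factors into nonzero homogeneous pieces of degrees $r$ and $n-r$, both positive, which contradicts irreducibility directly with no appeal to unique factorization of a square. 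Your route is repairable in one line, but as submitted the last step is a gap.
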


\begin{proof}
Fix $U, W\in\ti{t}$.
    Suppose $U\cap W\neq 0$ and $U\neq W$. Let $\mathcal{B}_{U\cap W}$ be a
    basis of $U\cap W$, and extend it to bases of $U$ and $W$, denoted by
    $\mathcal{B}_U$ and $\mathcal{B}_W$, respectively. Extend
    $\mathcal{B}_U\cup\mathcal{B}_W$ to a basis $\mathcal{B}$ of $V$. Let
    $\tuB$ be a matrix of linear forms associated to $t$ via this choice of basis for $V$. Since $\Pf(t)\neq 0$, we know that $\dim_{\alg}V=2n$ for some $n\geq 1$. It follows that there exist $n\times n$ matrices $\tuM_1$ and $\tuM_2$  of
    linear forms, with $\tuM_2$ skew-symmetric, such that
    \begin{align*} 
        \tuB = \begin{pmatrix} 
            0 & \tuM_1 \\ -\tuM_1^{\tp} & \tuM_2
        \end{pmatrix}.
    \end{align*}
By construction, there are two overlapping
    $n\times n$ blocks of zeros (corresponding to $U$ and $W$) along the diagonal of $\tuB$. In particular $\tuM_1$ is
    block-upper triangular, and therefore $\det(M_1)$ is not irreducible in $\alg[y_1,\ldots,y_d]$.
Since $\Pf(t)/\det(\tuM_1)$ is a non-zero  scalar,  we have reached a contradiction. 
\end{proof}

We remark that the proof of \Cref{lem:overlaps} may be further generalized to describe the
    intersection of $U,W\in\ti{t}$ for Pfaffians that split into a
    product of irreducible polynomials. 

\begin{defn}\label{def:equivalence}
Let $\mathcal{V}$ be a hypersurface in $\P^{d-1}_{\alg}$ of degree $n$ and let $I(\mathcal{V})\subseteq \alg[\bm{y}]$ be the ideal of $\mathcal{V}$. The following notions are defined as follows:
\begin{enumerate}[label=$(\arabic*)$]
\item if $\tuM, \tuM'\in\Mat_n(\alg[\bm{y}]_1)$ are such that $\det\tuM, \det\tuM'\in I(\mathcal{V})$, then 
\[
\tuM \sim \tuM' \ \Longleftrightarrow \textup{ there exist } X,Y\in\GL_n(\alg) \textup{ with } \tuM'=X\tuM Y.
\]
\item\label{it:det1} if $\tuM\in\Mat_n(\alg[\bm{y}]_1)$ is such that $\det\tuM\in I(\mathcal{V})$, then 
\[
[\tuM]=\{\tuM'\in\Mat_n(\alg[\bm{y}]_1) : \tuM\sim \tuM' \}.
\]
\item\label{it:det2} $\mathcal{L}_{\mathcal{V}}$ is the set of all $[\tuM]$ where $\tuM$ is as in \ref{it:det1}.
\item $\mathcal{L}_{\mathcal{V}}^{\mathrm{sym}}$ is the subset of $\mathcal{L}_{\mathcal{V}}$ consisting of all $[\tuM]$ with a symmetric representative.
\end{enumerate} 
\end{defn}

We remark that the relation $\sim$ defined above is the standard equivalence
relation considered in the study of determinantal varieties, cf.\
\cref{def:equivalence-tensor}. Moreover, it is clear that, for any two equivalent elements
$\tuM,\tuM'$, there exists $u\in\alg^\times$ such that $\det\tuM=u\det\tuM'$. When $n=d$, we refer to $\tuM$ as a \emph{cuboid}, which is \emph{smooth}, if $\det\tuM$ is a smooth polynomial in $\alg[\bm{y}]$. The name smooth cuboids is borrowed from \cite[Sec.\ 2]{Ng}; cf.\ \cref{rmk:Ng}.

\begin{prop}\label{prop:ti3d}
    Let $t:V\times V\rightarrow T$ be an alternating $\alg$-bilinear map with an
    irreducible Pfaffian and associated matrix $\tuB\in\Mat_{2n}(\alg[y_1,\dots,
    y_d]_1)$. If $|\ti{t}|\geq 2$, then there exists $\tuM\in
    \Mat_n(\alg[\bm{y}]_1)$ such that  $\tuB$ is pseudo-isometric to
    \[
    \begin{pmatrix}
        0 & \tuM \\ -\tuM^{\tp} & 0
    \end{pmatrix} \]
    and exactly one of the following holds:
    \begin{enumerate}[label=$(\arabic*)$]
        \item $|\ti{t}|>2$ and $[\tuM]\in\mathcal{L}_{\mathcal{V}}^{\mathrm{sym}}$. 
        \item $|\ti{t}|=2$ and $[\tuM]\in \mathcal{L}_{\mathcal{V}}\setminus \mathcal{L}_{\mathcal{V}}^{\mathrm{sym}}$.
    \end{enumerate}
\end{prop}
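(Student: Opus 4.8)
The plan is to leverage the structure theory already developed in Lemmas~\ref{lem:max-dim} and~\ref{lem:overlaps} together with a normal-form argument. Assume $|\ti{t}|\geq 2$ and pick distinct $U,W\in\ti{t}$. By \Cref{lem:overlaps} we have $U\cap W=0$, and since $\dim_\alg U=\dim_\alg W=\tfrac12\dim_\alg V=n$, this forces $V=U\oplus W$; in particular $t$ is isotropically decomposable. Choosing a basis of $U$ followed by a basis of $W$, the tensor $t$ is then represented by a matrix of the shape $\left(\begin{smallmatrix} 0 & \tuM \\ -\tuM^{\tp} & 0\end{smallmatrix}\right)$ for some $\tuM\in\Mat_n(\alg[\bm y]_1)$, and this matrix is pseudo-isometric to $\tuB$ by construction (the change of basis on $V$ together with the identity on $T$). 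Since $\Pf$ of this block matrix is (up to a unit) $\det\tuM$, and $\Pf(t)$ is irreducible and cuts out $\mathcal V$, we get $\det\tuM\in I(\mathcal V)$, so $[\tuM]\in\mathcal{L}_{\mathcal V}$ is well defined.

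Next I would set up the correspondence between elements of $\ti{t}$ and the data attached to $\tuM$. With $U,W$ as the ``standard'' pair, any further $U'\in\ti{t}$ satisfies $U'\cap U=U'\cap W=0$ by \Cref{lem:overlaps}, so $U'$ is the graph of an $\alg$-linear isomorphism $\phi\colon W\to U$; writing $U'=\{(\phi(w),w):w\in W\}$, the totally-isotropic condition on $U'$ translates, after a short computation with the block form, into a symmetry-type constraint relating $\phi$ (viewed as a matrix) to $\tuM$. Conversely, the pair $\{U,W\}$ itself should be thought of as the ``graph'' data coming from $\tuM$ versus $\tuM^{\tp}$. The upshot I want to extract: $|\ti{t}|=2$ exactly when no such nontrivial $\phi$ exists, i.e.\ when $\tuM$ is \emph{not} equivalent (in the sense of $\sim$) to a symmetric matrix, whereas $|\ti{t}|>2$ forces $[\tuM]$ to contain a symmetric representative, because a third isotropic subspace provides precisely the cogredient transformation $X\tuM Y$ landing in the symmetric locus. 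This is the classical dictionary between symmetric determinantal representations and configurations of isotropic subspaces, and I expect it to be the technical heart of the argument.

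Concretely, the key steps in order are: (i) deduce $V=U\oplus W$ and the block normal form, recording that the relevant matrix is pseudo-isometric to $\tuB$; (ii) show $\det\tuM\in I(\mathcal V)$ using irreducibility of $\Pf(t)$, so that $[\tuM]\in\mathcal{L}_{\mathcal V}$; (iii) parametrize the elements of $\ti{t}\setminus\{W\}$ by linear isomorphisms $W\to U$ whose matrices interact with $\tuM$ via a symmetry condition, checking nondegeneracy of such a graph using that $\Pf(t)\neq 0$; (iv) if $|\ti{t}|=2$, argue by contradiction that a symmetric representative of $[\tuM]$ would manufacture a third element of $\ti{t}$, giving case~$(2)$; (v) if $|\ti{t}|>2$, use a third isotropic subspace to conjugate $\tuM$ into symmetric form, giving case~$(1)$ with $[\tuM]\in\mathcal{L}_{\mathcal V}^{\mathrm{sym}}$; and finally note the two cases are mutually exclusive since $\mathcal{L}_{\mathcal V}^{\mathrm{sym}}$ and its complement in $\mathcal{L}_{\mathcal V}$ partition the possibilities. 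The main obstacle will be step~(iii)--(v): making the passage ``third isotropic subspace $\Longleftrightarrow$ symmetric representative'' precise and symmetric, in particular verifying that the candidate subspaces one writes down genuinely lie in $\ti{t}$ (totally isotropic \emph{and} of the correct dimension $n$), and handling the bookkeeping of which $\GL_n(\alg)\times\GL_n(\alg)$ action is induced by a change of the pair $\{U,W\}$. I would also double-check that irreducibility of $\Pf(t)$ is used wherever \Cref{lem:overlaps} is invoked, so that no spurious overlapping isotropic subspaces appear.
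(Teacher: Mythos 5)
Your plan is correct and follows essentially the same route as the paper: decompose $V=U\oplus W$ via \Cref{lem:overlaps}, realize any further element of $\ti{t}$ as the graph of a matrix $D$ whose totally-isotropic condition is exactly the symmetry of $D^{\tp}\tuM$, and use invertibility of $D$ (forced, again by \Cref{lem:overlaps}, since a kernel vector would put the graph equal to $W$) to produce the symmetric representative $D^{\tp}\tuM$ of $[\tuM]$ in case $(1)$ and to rule one out in case $(2)$. The steps you flag as the main obstacle are resolved in the paper precisely as you anticipate.
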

    
\begin{proof}
    Suppose $|\ti{t}|\geq 2$, and take $U,W\in\ti{t}$ to be   distinct. Then \Cref{lem:overlaps} yields $U\cap W=0$, so there exists $\tuM\in
    \Mat_n(\alg[\bm{y}]_1)$ and a choice of basis for $V$ such that 
    \[
    \tuB=\begin{pmatrix}
    0 & \tuM \\ -\tuM^{\tp} & 0
    \end{pmatrix}.
    \]
    Since $t$ has non-zero Pfaffian, $t$ is nondegenerate,
    so $\tuM$ defines a non-degenerate tensor $\tilde{t}:U\times W \rightarrow T$. Fix bases of
    $U$ and $W$, and let $W\rightarrow U$ be the linear map identifying the chosen 
    bases, written as $w\mapsto \overline{w}$.

    Let $X\in\ti{t}$, and assume without loss of generality that
    $X\cap U=0$.  Then both $W$ and $X$ are complements of $U$ in $V$, so there
    is $D\in\Mat_n(\alg)$ such that $X=\{w+D\overline{w} \mid w\in W\}$. Fix
    such a~$D$. Now $X$ is totally isotropic if and only if, for all $w,w'\in
    W$, the element $t(w+D\overline{w}, w'+D\overline{w'})$ is trivial. This
    happens if and only if, for all $w,w'\in W$, one has
    \begin{align}\label{eqn:ti-symmetric}
        0 &= 
        \begin{pmatrix} \overline{w}{}^{\tp} D^{\tp} & w^{\tp} \end{pmatrix}
        \begin{pmatrix} 0 & \tuM \\ -\tuM^{\tp} & 0 \end{pmatrix} 
        \begin{pmatrix} D\overline{w'} \\ w' \end{pmatrix}
        =-\overline{w}{}^{\tp}\tuM^{\tp}D w'+\overline{w}{}^{\tp}D^{\tp}\tuM w'. 
    \end{align}
    We conclude from \eqref{eqn:ti-symmetric}
    that $X$ is totally isotropic if and only if $D^{\tp}\tuM=\tuM^{\tp}
    D=(D^{\tp}\tuM)^{\tp}$, equivalently
    \begin{align} \label{eqn:symm-isom}
        \begin{pmatrix} 
            D^{\tp} & 0 \\ 0 & I 
        \end{pmatrix} \begin{pmatrix}
            0 & \tuM \\ -\tuM^{\tp} & 0
        \end{pmatrix}
        \begin{pmatrix} 
            D & 0 \\ 0 & I 
        \end{pmatrix} &= \begin{pmatrix}
            0 & D^{\tp}\tuM \\ -D^{\tp}\tuM & 0
        \end{pmatrix} .
    \end{align} 
        Call $\tuB'$ the matrix on the right hand side of \eqref{eqn:symm-isom}.
    If $D$ is invertible, then $X\neq W$ and $\tuB$ and $\tuB'$ are
    $\alg$-pseudo-isometric. This proves
    (1). If $D$ is not invertible, then let $w\in W\setminus\{0\}$ be such that
    $D\overline{w}=0$. it follows that 
    \[
        w=w+D\overline{w}\in X\cap W,
    \]
    and so \cref{lem:overlaps} yields $X=W$, which proves (2) and completes the
    proof.
\end{proof} 

\begin{coro}\label{cor:four-cases}
    Let $t:V\times V\rightarrow T$ be an
    alternating $\ff$-tensor such that $\Pf(t)$ describes an elliptic
    curve in $\P^2_{\ff}$. Then $\dim V=2\dim T=6$ and  
    $|\ti{t}|$ takes values in $\{0,1,2,q+1\}$.
\end{coro}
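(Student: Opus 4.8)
The plan is as follows. The dimension statement is immediate from the hypotheses: $\Pf(t)$ is a homogeneous form cutting out a curve in $\P^2_{\ff}$, so it is a form in $\dim_{\ff}T=3$ variables, and it has degree $\tfrac12\dim_{\ff}V$ because $t$ is represented by an alternating matrix of linear forms of size $\dim_{\ff}V$; since a plane elliptic curve is a smooth cubic, $\dim_{\ff}V=6=2\dim_{\ff}T$. Moreover, up to a nonzero scalar any Pfaffian of $t$ defines a smooth plane cubic; a cubic that factors or is nonreduced has a singular point, so $\Pf(t)$ is irreducible over $\overline{\ff}$, and in particular \cref{lem:max-dim,lem:overlaps,prop:ti3d} all apply. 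I would next dispose of the cases $|\ti{t}|\le 2$: if $|\ti{t}|\le 1$ there is nothing to do, and if $|\ti{t}|\ge 2$ then \cref{prop:ti3d} leaves only the two possibilities $|\ti{t}|=2$ and $|\ti{t}|>2$. It therefore remains to show that $|\ti{t}|>2$ forces $|\ti{t}|=q+1$.

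So assume $|\ti{t}|>2$. By \cref{prop:ti3d}(1), and since by the computation in \cref{rmk:autotopism-pseudo} the equivalence $\tuM\mapsto X^{\tp}\tuM Y$ (with $X,Y\in\GL_3(\ff)$) is realised by pseudo-isometries of $\left(\begin{smallmatrix}0 & \tuM\\ -\tuM^{\tp} & 0\end{smallmatrix}\right)$ and hence does not change $|\ti{t}|$, I may assume that $t$ is represented by $\tuB=\left(\begin{smallmatrix}0 & \tuM\\ -\tuM^{\tp} & 0\end{smallmatrix}\right)$ with $\tuM\in\Mat_3(\ff[\bm{y}]_1)$ \emph{symmetric}, with $U,W$ the two coordinate subspaces from the proof of \cref{prop:ti3d} lying in $\ti{t}$, and with $\det\tuM$ equal, up to a nonzero constant, to a Pfaffian of $t$ (so $\det\tuM$ cuts out a smooth plane cubic and is irreducible over $\overline{\ff}$). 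The argument in the proof of \cref{prop:ti3d} shows that each element of $\ti{t}$ different from $U$ meets $U$ trivially (by \cref{lem:overlaps}) and has the form $X_D=\{w+D\overline{w}\mid w\in W\}$ for a unique $D\in\Mat_3(\ff)$, and that $X_D\in\ti{t}$ if and only if $D^{\tp}\tuM$ is symmetric, that is (since $\tuM$ is symmetric) if and only if $D^{\tp}\tuM=\tuM D$. Therefore
\[
    |\ti{t}|=1+|\mathcal{D}|,\qquad \mathcal{D}:=\{D\in\Mat_3(\ff)\mid D^{\tp}\tuM=\tuM D\},
\]
and $\mathcal{D}$ is an $\ff$-subspace of $\Mat_3(\ff)$ containing $I$. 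So I am reduced to proving $\mathcal{D}=\ff I$.

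The heart of the argument is to realise $\mathcal{D}$ as a centralizer. As $\mathcal{D}$ is defined by linear equations, $\dim_{\ff}\mathcal{D}=\dim_{\overline{\ff}}(\mathcal{D}\otimes_{\ff}\overline{\ff})$, so I may extend scalars to $\overline{\ff}$ and write $\tuM=y_1M_1+y_2M_2+y_3M_3$ with $M_i\in\Mat_3(\overline{\ff})$ symmetric. Since $\det\tuM\neq 0$, there is $\bm{y}_0\in\overline{\ff}^{3}$ with $M_0:=\tuM(\bm{y}_0)$ invertible; put $N_i:=M_0^{-1}M_i$ and let $A\subseteq\Mat_3(\overline{\ff})$ be the unital subalgebra generated by $N_1,N_2,N_3$. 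From $\sum_{i} y_iN_i=M_0^{-1}\tuM$ one gets $\det\tuM=\det(M_0)\cdot\det(\sum_{i} y_iN_i)$, so $\det(\sum_{i} y_iN_i)$ again cuts out a smooth, hence over $\overline{\ff}$ irreducible, plane cubic. If $A$ had a nonzero proper invariant subspace of $\overline{\ff}^{3}$, then in an adapted basis every $N_i$, and hence $\sum_{i} y_iN_i$, would be block upper triangular, so $\det(\sum_{i} y_iN_i)$ would factor as a product of two homogeneous forms of positive degree, contradicting irreducibility. Hence $A$ acts irreducibly on $\overline{\ff}^{3}$, and Burnside's theorem gives $A=\Mat_3(\overline{\ff})$. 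Finally, for $D\in\mathcal{D}\otimes_{\ff}\overline{\ff}$ one has $D^{\tp}M_0=M_0D$ (specialise $\bm{y}=\bm{y}_0$), so $D^{\tp}=M_0DM_0^{-1}$, and then $D^{\tp}M_i=M_iD$ yields $DN_i=N_iD$ for all $i$; thus $D$ commutes with $A=\Mat_3(\overline{\ff})$ and $D\in\overline{\ff}I$. Therefore $\mathcal{D}\otimes_{\ff}\overline{\ff}=\overline{\ff}I$, so $\dim_{\ff}\mathcal{D}=1$, $|\mathcal{D}|=q$, and $|\ti{t}|=q+1$. Combining all cases, $|\ti{t}|\in\{0,1,2,q+1\}$.

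The step I expect to be the crux is the last one, controlling the size of $\mathcal{D}$. Everything before it is bookkeeping built on \cref{prop:ti3d} and \cref{lem:overlaps}; the new input is that smoothness of the Pfaffian forces the algebra generated by a generic slice $M_0^{-1}\tuM(\bm{y})$ of the pencil to be the full matrix algebra (via irreducibility over $\overline{\ff}$ together with Burnside), after which the ``self-adjointness'' conditions defining $\mathcal{D}$ collapse to commuting with $\Mat_3(\overline{\ff})$, leaving only the scalars.
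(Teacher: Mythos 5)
Your proof is correct, and for the only nontrivial part of the statement it takes a genuinely different route from the paper. The paper's own proof is very short: after the dimension count and the reduction via \cref{prop:ti3d} to a symmetric representative $\tuM$, it simply cites \cite[Prop.~4.10]{stanojkovski2019hessian} for the fact that the symmetric case yields exactly $q+1$ maximal totally isotropic subspaces. You instead prove this count from scratch: you parametrize the elements of $\ti{t}$ other than $U$ by the solution space $\mathcal{D}=\{D \mid D^{\tp}\tuM=\tuM D\}$ (legitimate, since \cref{lem:overlaps} forces every other element to be a complement of $U$, hence a graph $X_D$, and the isotropy condition from the proof of \cref{prop:ti3d} is exactly $D^{\tp}\tuM=\tuM^{\tp}D$), and then pin down $\mathcal{D}=\ff I$ by a centralizer argument: geometric irreducibility of the smooth cubic $\det\tuM$ forces the algebra generated by the normalized slices $M_0^{-1}M_i$ to act irreducibly, Burnside gives the full matrix algebra, and the relations $D^{\tp}M_i=M_iD$ collapse to $D$ commuting with everything. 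All the individual steps check out (in particular, the base-change to $\overline{\ff}$ is harmless because $\mathcal{D}$ is cut out by linear equations, and a block-triangular common invariant subspace would indeed factor the determinant). What your approach buys is self-containedness and generality: it works for any symmetric $3\times 3$ determinantal representation of a geometrically irreducible smooth cubic, without reference to Weierstrass or Hessian normal forms, whereas the paper leans on an external computation tied to the specific matrices of \cite{stanojkovski2019hessian}. The trade-off is that the paper's citation also packages the $2$-transitivity of the $\GL_2$-action on $\ti{t}$ (used later in \cref{lem:psi-M} and \cref{thm:pseudo}), which your argument does not reproduce — though it is not needed for this corollary.
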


\begin{proof}
    An elliptic curve in $\P^2_{\ff}$ is defined by a homogeneous cubic polynomial in $3$ variables and therefore the dimensions of
    $V$ and $T$ are respectively $6$ and $3$. Moreover, if $|\ti{t}|>2$,
    then \cref{prop:ti3d} yields that $t$ can be represented by a symmetric
    matrix of linear forms. The claim follows from
    \cite[Prop.~4.10]{stanojkovski2019hessian}.
\end{proof}

\subsection{Adjoint algebras of half-generic tensors}

Before dealing directly with the specific situation where $n=3$ and
$\tuM=\J{E}{P}$, we turn to the more general case of half-generic tensors (see \cref{def:dim}), for which $|\ti{t}| \geq 1$. In this case, indeed, we have some more characteristic information given by the existence of maximal totally isotropic subspaces. As we will see, the adjoint algebra can tell exactly how many such subspaces are present and whether the equivalence class of $\tuB$ satisfies some additional symmetry constraints.
As supported by computational evidence, when $|\ti{t}|=0$, we expect to find that $\Adj(t) \cong \alg$.  
The adjoint algebras for half-generic tensors in the case when
$|\ti{t}|\geq 2$ are, thanks to \Cref{prop:ti3d}, already determined
without much additional work. We analyze the case where $|\ti{t}|=1$ in
Lemmas~\ref{lem:asymmetric-dim} and~\ref{lem:symmetric-dim}.

\begin{lem}\label{lem:adj-structure}
Let $\tuM\in\Mat_{n}(\alg[\bm{y}]_1)$ and write
    \begin{align*} 
        \tuB &= \begin{pmatrix} 
            0 & \tuM \\ 
            -\tuM^{\tp} & 0
        \end{pmatrix} .
    \end{align*} 
    Then the following holds:
    \begin{align*} 
        \Adj(\tuB) &= \left\{\left(\begin{pmatrix} 
            A_{11} & A_{12} \\ A_{21} & A_{22}
        \end{pmatrix}, \begin{pmatrix} 
            B_{11} & B_{12} \\ B_{21} & B_{22}
        \end{pmatrix} \right) ~:~ \begin{array}{c} 
            (A_{11}, B_{22}), (B_{11}, A_{22}) \in \Adj(\tuM), \\
            (A_{21}, -B_{21}) \in\Adj(\tuM, \tuM^{\tp}), \\ 
            (A_{12}, -B_{12}) \in\Adj(\tuM^{\tp}, \tuM)
        \end{array} \right\}.
    \end{align*} 
\end{lem}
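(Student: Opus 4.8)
The statement is essentially a direct unravelling of the definition of $\Adj(\tuB)$ in \eqref{eq:adjoint} once one writes $V = U \oplus W$ (with $U$, $W$ the two copies of $\alg^n$ on which $\tuM$ acts) and decomposes an endomorphism of $V$ into a $2\times 2$ block matrix accordingly. So the plan is to start from the defining condition
\[
  t(\alpha(u), v) = t(u, \beta(v)) \quad\text{for all } u, v \in V,
\]
where $t(x,y) = x^{\tp}\tuB y$, write
\[
  \alpha = \begin{pmatrix} A_{11} & A_{12} \\ A_{21} & A_{22} \end{pmatrix}, \qquad
  \beta = \begin{pmatrix} B_{11} & B_{12} \\ B_{21} & B_{22} \end{pmatrix},
\]
and expand $t(\alpha(u),v) = u^{\tp}\alpha^{\tp}\tuB v$ and $t(u,\beta(v)) = u^{\tp}\tuB\beta v$, so that the adjoint condition becomes the single matrix identity $\alpha^{\tp}\tuB = \tuB\beta$. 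Using the block form of $\tuB$ this is a system of four block equations obtained by comparing the $(i,j)$ blocks of
\[
  \begin{pmatrix} A_{11}^{\tp} & A_{21}^{\tp} \\ A_{12}^{\tp} & A_{22}^{\tp} \end{pmatrix}
  \begin{pmatrix} 0 & \tuM \\ -\tuM^{\tp} & 0 \end{pmatrix}
  = \begin{pmatrix} 0 & \tuM \\ -\tuM^{\tp} & 0 \end{pmatrix}
  \begin{pmatrix} B_{11} & B_{12} \\ B_{21} & B_{22} \end{pmatrix}.
\]

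The next step is to read off the four resulting equations and recognize each as an instance of the adjoint-(module)-condition for $\tuM$. Multiplying out the left side gives $\begin{pmatrix} -A_{21}^{\tp}\tuM^{\tp} & A_{11}^{\tp}\tuM \\ -A_{22}^{\tp}\tuM^{\tp} & A_{12}^{\tp}\tuM \end{pmatrix}$ and the right side gives $\begin{pmatrix} \tuM B_{21} & \tuM B_{22} \\ -\tuM^{\tp}B_{11} & -\tuM^{\tp}B_{12} \end{pmatrix}$. Equating blocks: the $(1,2)$ block gives $A_{11}^{\tp}\tuM = \tuM B_{22}$, i.e.\ $(A_{11}, B_{22}) \in \Adj(\tuM)$ (reading $t_{\tuM}(A_{11}u, w) = t_{\tuM}(u, B_{22}w)$ as $u^{\tp}A_{11}^{\tp}\tuM w = u^{\tp}\tuM B_{22} w$); the $(2,1)$ block gives $A_{22}^{\tp}\tuM^{\tp} = \tuM^{\tp}B_{11}$, i.e.\ $(B_{11}, A_{22}) \in \Adj(\tuM)$; the $(1,1)$ block gives $-A_{21}^{\tp}\tuM^{\tp} = \tuM B_{21}$, i.e.\ $(A_{21}, -B_{21}) \in \Adj(\tuM, \tuM^{\tp})$; and the $(2,2)$ block gives $A_{12}^{\tp}\tuM = -\tuM^{\tp}B_{12}$, i.e.\ $(B_{12}, -A_{12}) \in \Adj(\tuM, \tuM^{\tp})$. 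One has to be careful about which slot is the ``$U$-component'' and which the ``$W$-component'' for the module $\Adj(\tuM,\tuM^{\tp})$, and about the transpose/sign bookkeeping; matching against the definitions in \eqref{eq:adjoint} and \eqref{eq:adjoint-space} is the only place where sign conventions can go wrong, so I would double-check each of the four by writing out the bilinear-form version rather than trusting the matrix shorthand alone.

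Finally I would note that the argument is reversible: given blocks satisfying the four membership conditions on the right-hand side, reassembling them into $\alpha, \beta$ and running the computation backwards shows $(\alpha,\beta) \in \Adj(\tuB)$, giving the reverse inclusion and hence equality. I do not expect any real obstacle here — the whole proof is bookkeeping — and the write-up can be compressed to displaying $\alpha^{\tp}\tuB = \tuB\beta$, expanding the product, and listing the four block identities with their interpretations. The only genuine subtlety worth a sentence is that $\Adj(\tuM,\tuM^{\tp})$ is a module of pairs $(\gamma,\delta)$ with $\gamma \in \Hom(U',U)$ and $\delta \in \Hom(W,W')$ for $\tuM: U\times W \to T$ and $\tuM^{\tp}: W\times U \to T$, so the two entries ``swap roles,'' which is exactly why the pairs appear in the form $(A_{21}, -B_{21})$ and $(B_{12}, -A_{12})$ rather than with the indices in the naive order.
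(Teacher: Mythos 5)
Your proof is correct and follows exactly the same route as the paper's: expand the single condition $\alpha^{\tp}\tuB = \tuB\beta$ in $2\times 2$ block form and read off the four block identities $-A_{21}^{\tp}\tuM^{\tp}=\tuM B_{21}$, $A_{11}^{\tp}\tuM=\tuM B_{22}$, $-A_{22}^{\tp}\tuM^{\tp}=-\tuM^{\tp}B_{11}$, $A_{12}^{\tp}\tuM=-\tuM^{\tp}B_{12}$, which is precisely the paper's entire argument. Your extra remarks on the sign/slot conventions for $\Adj(\tuM,\tuM^{\tp})$ only make explicit a bookkeeping point the paper leaves implicit; nothing further is needed.
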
 

\begin{proof} 
    For $i,j\in [2]$, let $A_{ij},B_{ij}\in\Mat_n(\alg)$. Then 
    \begin{align*} 
        \begin{pmatrix} 
            A_{11}^{\tp} & A_{21}^{\tp} \\ A_{12}^{\tp} & A_{22}^{\tp}
        \end{pmatrix} \begin{pmatrix} 
            0 & \tuM \\ 
            -\tuM^{\tp} & 0
        \end{pmatrix} &= \begin{pmatrix} 
            0 & \tuM \\ 
            -\tuM^{\tp} & 0
        \end{pmatrix} \begin{pmatrix} 
            B_{11} & B_{12} \\ B_{21} & B_{22}
        \end{pmatrix}
    \end{align*} 
    implies that the following equations hold: 
    \[
        -A_{21}^{\tp}\tuM^{\tp}= \tuM B_{21}, \quad A_{11}^{\tp}\tuM = \tuM B_{22},\quad  -A_{22}^{\tp}\tuM^{\tp} = -\tuM^{\tp}B_{11}, \quad A_{12}^{\tp}\tuM = -\tuM^{\tp}B_{12} .    \qedhere 
    \]
\end{proof}

\begin{defn}\label{def:dim}
    A skew-symmetric matrix $\tuB\in \Mat_{2n}(\alg[\bm{y}]_1)$ is
    \emph{half-generic} if $\tuB$ is pseudo-isometric to
    $\left(\begin{smallmatrix} 0 & \tuM \\ -\tuM^{\tp} &
    \mathrm{S}\end{smallmatrix}\right)$ where $\tuM, \mathrm{S}\in
    \Mat_{n}(\alg[\bm{y}]_1)$ satisfy 
    \begin{align*} 
    \Adj(\tuM)\cong \alg \ \ \textup{ and } \ \ 
        \Adj(\tuM, \tuM^{\tp}) = \Adj(\tuM^{\tp}, \tuM) = \begin{cases} 
            \Adj(\tuM) & \text{ if } \tuM^{\tp} = \tuM, \\
            0 & \text{ otherwise}. 
        \end{cases} 
    \end{align*} 
    If $\tuM$ can be chosen to satisfy $\tuM^{\tp} = \tuM$, then $\tuB$ is \emph{symmetrically half-generic} and
    \emph{asymmetrically half-generic} if no such $\tuM$ exists. 
\end{defn}

\begin{remark} 
    In the situation described in \Cref{def:dim}, both $\Adj(\tuM)$ and
    $\Adj(\tuM^{\tp},\tuM)$ are as small as possible. This is what we expect in
    general for ``most'' linear determinantal representations, motivating the
    name ``half-generic''. Indeed, if $\tuM = \tuM_1y_1 + \ldots + \tuM_dy_d$
    with $\tuM_1,\ldots,\tuM_d\in\Mat_n(\alg)$, then the following hold:
    \begin{align*} 
        \Adj(\tuM) &= \bigcap_{i=1}^d \Adj(\tuM_i) & \Adj(\tuM, \tuM^{\tp}) 
        &= \bigcap_{i=1}^d \Adj(\tuM_i, \tuM_i^{\tp}) . 
    \end{align*} 
    Generically, both $\Adj(\tuM_i)$ and $\Adj(\tuM_i, \tuM_i^{\tp})$ define a
    codimension $n^2$ subspace of $\alg^{2n^2}$, so for sufficiently large $d$,
    generically $\Adj(\tuM)$ and $\Adj(\tuM, \tuM^{\tp})$ are given as in
    \Cref{def:dim}.
\end{remark}

Because half-genericity is defined for pseudo-isometry classes, it makes sense to extend \Cref{def:dim} to tensors, via the choice of a basis. Alternating tensors $t : V \times V \to T$
with $|\ti{t}| \geq 1$ and whose Pfaffian defines an elliptic curve are
half-generic, cf.\ \Cref{rem:elliptic-dimin}.

\begin{lem}\label{lem:asymmetric-dim} 
    Let $t:V\times V\rightarrow T$ be an asymmetrically half-generic
    $\alg$-tensor such that $|\ti{t}|=1$. Then $\Adj(t)$ is
    $*$-isomorphic to $\mathbf{O}_1(\alg)$.
\end{lem}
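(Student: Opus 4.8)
The goal is to show that for an asymmetrically half-generic tensor $t$ with $|\ti{t}|=1$, the $*$-algebra $\Adj(t)$ is $*$-isomorphic to $\mathbf{O}_1(\alg)$, that is, one-dimensional with trivial involution. The plan is to use the description of $\Adj(\tuB)$ from \Cref{lem:adj-structure}. Since $t$ is asymmetrically half-generic, we may assume $t$ is represented by $\tuB = \left(\begin{smallmatrix} 0 & \tuM \\ -\tuM^{\tp} & 0\end{smallmatrix}\right)$ (the lower-right block $\mathrm{S}$ can be absorbed, or one handles it as in \Cref{prop:ti3d}; note $|\ti{t}|\geq 1$ forces at least one totally isotropic summand), with $\Adj(\tuM)\cong\alg$ and $\Adj(\tuM,\tuM^{\tp})=0$ since $\tuM\neq\tuM^{\tp}$ (the asymmetric case). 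Plugging $\Adj(\tuM,\tuM^{\tp})=0$ into \Cref{lem:adj-structure} immediately forces $A_{21}=B_{21}=0$ and $A_{12}=B_{12}=0$, so every element of $\Adj(\tuB)$ is block-diagonal of the form $\left(\begin{pmatrix} A_{11} & 0 \\ 0 & A_{22}\end{pmatrix}, \begin{pmatrix} B_{11} & 0 \\ 0 & B_{22}\end{pmatrix}\right)$ with $(A_{11},B_{22}),(B_{11},A_{22})\in\Adj(\tuM)$.

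\textbf{Main step.} The next step is to extract that this forces $\Adj(\tuB)$ to be one-dimensional. Because $\Adj(\tuM)\cong\alg$, every adjoint pair of $\tuM$ is $(\lambda\,\id,\mu\,\id)$ for scalars $\lambda,\mu$; and the universal property of the centroid (together with nondegeneracy of the tensor $\tuM$ induces) forces the relation between $\lambda$ and $\mu$ to be an algebra isomorphism $\alg\to\alg$, hence $\lambda=\mu$. So $\Adj(\tuM) = \{(\lambda\,\id,\lambda\,\id) : \lambda\in\alg\}$. Then $(A_{11},B_{22})\in\Adj(\tuM)$ gives $A_{11}=B_{22}=\lambda\,\id$ and $(B_{11},A_{22})\in\Adj(\tuM)$ gives $B_{11}=A_{22}=\mu\,\id$. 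Therefore an element of $\Adj(\tuB)$ is $\left(\lambda(\id\oplus\,0)+\mu(0\oplus\id),\ \mu(\id\oplus\,0)+\lambda(0\oplus\id)\right)$, a two-parameter family a priori. Here I must invoke that $t$ is \emph{alternating}: the $*$-structure $(\alpha,\beta)\mapsto(\beta,\alpha)$ must preserve $\Adj(\tuB)$, which is automatic, but the key additional constraint is that $|\ti{t}|=1$ rather than $2$ — I should show that if $\lambda\neq\mu$ were allowed, the nontrivial idempotent $\left(\id\oplus 0,\ 0\oplus\id\right)\in\Adj(\tuB)$ would split $V$ into two totally isotropic subspaces, producing a second element of $\ti{t}$, contradicting $|\ti{t}|=1$. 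Concretely, the idempotent's image and kernel are $U$ and $W$, both totally isotropic, and any nontrivial idempotent in $\Adj(t)$ for an alternating $t$ yields a totally isotropic direct summand of complementary dimension; so having two distinct ones (the identity restricted to $U$ versus to $W$) contradicts $|\ti{t}|=1$. Hence $\lambda=\mu$, and $\Adj(\tuB)=\{(\lambda\,\id_V,\lambda\,\id_T) : \lambda\in\alg\}\cong\alg$, with the involution sending $(\lambda\,\id,\lambda\,\id)$ to itself, i.e.\ trivial. That is exactly $\mathbf{O}_1(\alg)$.

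\textbf{Anticipated obstacle.} The main subtlety is the last step: ruling out $\lambda\neq\mu$. One has to be careful about what structure is actually available. The cleanest route is probably: $\Adj(t)$ is a commutative (by \Cref{lem:comm-cent}-type reasoning, or directly) $*$-subalgebra of $\alg\times\alg$ via $(\lambda,\mu)$; if it were all of $\alg\times\alg$ then it contains the idempotent $e=(1,0)$, and for alternating $t$ the pair $(e_V, e_T)$ being an idempotent adjoint pair forces both $\im e_V$ and $\ker e_V$ to be totally isotropic (using $t(e u, v) = t(u, e v)$ applied with $u\in\ker$, $v\in\im$ and alternation), giving $\ge 2$ maximal totally isotropic subspaces, contradiction. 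So $\Adj(t)$ is a proper $*$-subalgebra of $\alg\times\alg$ containing the diagonal, hence equals the diagonal $\cong\alg$. One should double-check that the involution on $\alg\times\alg$ coming from $(\alpha,\beta)\mapsto(\beta,\alpha)$ swaps the two factors — if so, the diagonal is precisely the $*$-fixed part, which is the $\mathbf{O}_1$ pattern, and a proper $*$-subalgebra containing the diagonal must be the diagonal. A secondary technical point is justifying the reduction to the $\mathrm{S}=0$ form in \Cref{def:dim} when only $|\ti{t}|=1$ is assumed (rather than $|\ti{t}|\geq 2$ as in \Cref{prop:ti3d}); here ``asymmetrically half-generic'' is by definition a pseudo-isometry-invariant condition phrased in terms of such a block decomposition, so one simply works with a representative of the given form and notes $\mathrm{S}$ can be taken $0$ after a further isometry, or carries $\mathrm{S}$ along and checks it contributes nothing new to the adjoint computation.
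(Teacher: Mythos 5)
There is a genuine gap, and it sits exactly at the point you dismiss as a ``secondary technical point'': the reduction to $\mathrm{S}=0$. By \cref{def:dim}, an asymmetrically half-generic $\tuB$ is only pseudo-isometric to $\left(\begin{smallmatrix} 0 & \tuM \\ -\tuM^{\tp} & \mathrm{S}\end{smallmatrix}\right)$ with a possibly nonzero skew-symmetric block $\mathrm{S}$, and under the hypothesis $|\ti{t}|=1$ the block $\mathrm{S}$ can \emph{never} be taken to be zero: if both diagonal blocks vanished, then $U=\alg^n\oplus 0$ and $W=0\oplus\alg^n$ would be two distinct half-dimensional totally isotropic subspaces, so $|\ti{t}|\geq 2$. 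Your first step therefore assumes something incompatible with the hypothesis, and the rest of the argument reasons from contradictory premises; consistently with this, in the genuine $\mathrm{S}=0$ asymmetric situation the adjoint algebra is $\mathbf{X}_1(\alg)$ (case (3) of \cref{thm:reducible-adjoints}), not $\mathbf{O}_1(\alg)$.

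The alternative you mention in passing --- ``carry $\mathrm{S}$ along and check it contributes nothing new'' --- is precisely where all the work lies, and it is not routine. With $\mathrm{S}\neq 0$ the elements of $\Adj(\tuB)$ are not block-diagonal: the analogue of \cref{lem:adj-structure} forces the lower-left blocks to vanish (via $\Adj(\tuM,\tuM^{\tp})=0$) and the diagonal blocks to be scalars $aI_n$ and $(a-k)I_n$, but it leaves upper-right blocks $A,B$ subject to $A^{\tp}\tuM+\tuM^{\tp}B=k\mathrm{S}$, with the two diagonal scalars coupled through $k$. One must then show that the solution space of triples $(A,B,k)$ is at most one-dimensional (the $k=0$ slice dies by $\Adj(\tuM,\tuM^{\tp})=0$), and that a solution with $k=1$ would --- using skew-symmetry of $\mathrm{S}$ to deduce $B=-A$ --- put into $\Adj(t)$ an upper-triangular element $\alpha$ with $\alpha\alpha^*=\alpha^*\alpha=0$, whose image together with that of $\alpha^*$ gives two distinct elements of $\ti{t}$, contradicting $|\ti{t}|=1$. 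Your ``idempotent with $ee^*=0$ yields totally isotropic images'' mechanism is the right tool for this final contradiction, but it must be applied to that upper-triangular element; the block-diagonal idempotent $\diag(I_n,0)$ you use simply does not lie in $\Adj(t)$ when $\mathrm{S}\neq 0$.
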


\begin{proof} 
    Let $\tuB\in \Mat_{2n}(\alg[\bm{y}])$ be associated with $t$ and in the form
    given in \cref{def:dim}. Since $t$ is asymmetrically half-generic,
    $\Adj(\tuM, \tuM^{\tp})=\Adj(\tuM^{\tp}, \tuM)=0$, and hence by a computation similar to that carried out in the proof of \cref{lem:adj-structure},  we have
    \begin{align} \label{eqn:asymm-adj}
        \Adj(\tuB) &= {\footnotesize \left\{\left(\begin{pmatrix} 
            aI_n & \Gamma \\ 0 & (a-k)I_n
        \end{pmatrix}, \begin{pmatrix} 
            (a-k)I_n & \Phi \\ 0 & aI_n 
        \end{pmatrix}\right) ~\middle|~ \begin{array}{c} 
            a, k \in \alg,\; \Gamma,\Phi\in\Mat_n(\alg), \\ 
            \Gamma^{\tp}\tuM + \tuM^{\tp} \Phi = k\mathrm{S} 
        \end{array}\right\} } .
    \end{align} 
    Now we consider the $\alg$-vector space 
    \begin{align*} 
        L = \left\{(\Gamma, \Phi, k)\in\Mat_n(\alg)^2\oplus \alg ~|~ \Gamma^{\tp}\tuM + \tuM^{\tp} \Phi = k\mathrm{S} \right\},
    \end{align*}
    which we will show to be trivial. First, we show that
    $\dim_{\alg}L\leq 1$. For this note that, if $(\Gamma, \Phi, 0)\in L$, then $(\Gamma, -\Phi)\in\Adj(\tuM^{\tp},
    \tuM)=0$.  Now, if $(\Gamma, \Phi, k), (\Gamma', \Phi', k')\in L$ with $k\neq 0$, then there    exists $\lambda\in \alg$ such that $\lambda k = k'$. It follows that $(\lambda \Gamma-\Gamma',\lambda \Phi-\Phi',0)\in L$ and therefore   
    $(\lambda \Gamma,
    \lambda \Phi, \lambda k) = (\Gamma', \Phi', k')$.  This proves the claim.
    
 Suppose now, for a contradiction, that $\dim_{\alg}L=1$,
    and let $(X, Y, 1)\in L$. Since $\mathrm{S}$ is skew-symmetric, also $(Y, X,
    -1)$ belongs to $L$. It follows that $(X+Y, -(X+Y))\in \Adj(\tuM^{\tp}, \tuM)$, equivalently $Y=-X$.
    Hence, $\dim_{\alg}\Adj(t)= 2$, and the following holds:
    \begin{align*} 
        \Adj(\tuB) &= {\footnotesize \left\{\left(\begin{pmatrix} 
            aI_n & kX \\ 0 & (a-k)I_n
        \end{pmatrix}, \begin{pmatrix} 
            (a-k)I_n & -kX \\ 0 & aI_n 
        \end{pmatrix}\right) ~\middle|~ \begin{array}{c} 
            a, k \in \alg,\; X\in\Mat_n(\alg),   \\ 
            X^{\tp}\tuM - \tuM^{\tp} X = \mathrm{S} 
        \end{array}\right\} } .
    \end{align*} 
    In particular, $\Adj(\tuB)$ contains 
    \begin{align*} 
        (\alpha, \alpha^*) &= \left(\begin{pmatrix} 0 & -X \\ 0 & I_n \end{pmatrix}, \begin{pmatrix} I_n & X \\ 0 & 0 \end{pmatrix}\right). 
    \end{align*} 
    It follows that $\alpha\alpha^* = \alpha^*\alpha = 0$. Hence, for all
    $u,v\in V$,  we compute
    \begin{align*} 
        t(\alpha(u), \alpha(v)) &= t(u, \alpha^*\alpha(v)) = 0 = t(\alpha\alpha^*(u), v) = t(\alpha^*(u), \alpha^*(v)). 
    \end{align*}
It follows that $\alpha(V)$ and $\alpha^*(V)$ are two distinct $n$-dimensional, totally isotropic
    subspaces, which contradicts
    $|\ti{t}|=1$.  We have proven that $\dim_{\alg}L=0$, from which we derive that $\Adj(\tuB) \cong \alg\cong\mathbf{O}_1(\alg)$. 
\end{proof}

Recall from \Cref{subsec:*-rings} that the notation $A\cong R\rtimes S$
indicates that $R$ is the Jacobson radical of the $*$-ring $A$ and that
$A/R\cong S$.

\begin{lem}\label{lem:symmetric-dim}
    Let $t:V\times V\rightarrow T$ be a symmetrically half-generic $\alg$-tensor
    such that $|\ti{t}|=1$. Assume that $\charac(\alg)\neq 2$.  Then
    $\Adj(t)$ is $*$-isomorphic to $ \alg\rtimes \mathbf{O}_1(\alg)$ and, if
    $(\alpha, \beta)\neq(0,0)$ is in the Jacobson radical of $\Adj(t)$, then $\ti{t} = \{\alpha(V)\}$.
\end{lem}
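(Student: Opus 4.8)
Following the strategy used in \Cref{lem:asymmetric-dim}, I would work with a matrix $\tuB = \left(\begin{smallmatrix} 0 & \tuM \\ -\tuM^{\tp} & \mathrm{S}\end{smallmatrix}\right)$ representing $t$ as in \Cref{def:dim}, where now $\tuM^{\tp} = \tuM$ because $t$ is symmetrically half-generic, so $\Adj(\tuM, \tuM^{\tp}) = \Adj(\tuM) \cong \alg$, meaning $\Adj(\tuM)$ and $\Adj(\tuM,\tuM^{\tp})$ both consist of the scalar pairs $(aI_n, aI_n)$. Plugging this into \Cref{lem:adj-structure}, the off-diagonal blocks $A_{12}, A_{21}, B_{12}, B_{21}$ are now each forced to be scalar multiples of $I_n$, and the diagonal blocks are scalar as well, so $\Adj(\tuB)$ becomes an explicit low-dimensional algebra of pairs of $2\times 2$ scalar-block matrices subject to a single scalar linear constraint coming from $\mathrm{S}$ (the analogue of $A^{\tp}\tuM + \tuM^{\tp}B = k\mathrm{S}$ in \eqref{eqn:asymm-adj}, but now with $A, B$ scalar).

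\textbf{Key steps.} First I would compute $\Adj(\tuB)$ explicitly: writing out the four equations from \Cref{lem:adj-structure} with all blocks scalar, one gets that a general element is parametrized by $a, k \in \alg$ (diagonal entries $aI_n$ and $(a-k)I_n$ in the pattern of \eqref{eqn:asymm-adj}) together with off-diagonal scalars $c, c'$ satisfying the constraint that $(c\,\tuM^{\tp} + c'\,\tuM) = k\mathrm{S}$, i.e.\ $(c+c')\tuM = k\mathrm{S}$ since $\tuM$ is symmetric. This shows $\Adj(\tuB)$ is $3$-dimensional over $\alg$, with basis the identity, a nilpotent $\epsilon$ coming from $k=0$ and $c = -c' \ne 0$ (which squares to zero), and a further nilpotent from the constraint being solvable when $\mathrm{S}$ is a scalar multiple of $\tuM$. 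The upshot: $\Adj(\tuB) \cong \alg[\epsilon]/(\epsilon^2) \times \alg$ or rather, matching the notation of \Cref{subsec:*-rings}, the Jacobson radical $R$ is $1$-dimensional, $\Adj(t)/R \cong \alg \cong \mathbf{O}_1(\alg)$, and $R^* = R$, so $\Adj(t) \cong \alg \rtimes \mathbf{O}_1(\alg)$. I would check the $*$-action $(\alpha,\beta)\mapsto(\beta,\alpha)$ sends the radical element to itself up to sign, confirming it is a $*$-ideal.

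\textbf{The radical and the isotropic subspace.} For the last assertion, take $(\alpha,\beta) \in J(\Adj(t)) = R$, a nonzero radical element; by the explicit description it has the block form $\alpha = \left(\begin{smallmatrix} 0 & cI_n \\ 0 & 0\end{smallmatrix}\right)$ (up to the scalar normalization and the choice of which complement $U$ plays the distinguished role). Then $\alpha^2 = 0$ and, as in the computation at the end of \Cref{lem:asymmetric-dim}, $t(\alpha(u), \alpha(v)) = t(u, \alpha^*\alpha(v)) = t(u,\beta\alpha(v))$; since $\beta\alpha$ is again a product of radical elements it is zero, so $\alpha(V)$ is totally isotropic. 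As $\alpha(V) = U \oplus 0$ (the first summand, which has dimension $n = \tfrac12\dim V$) we get $\alpha(V) \in \ti{t}$, and since $|\ti{t}| = 1$ by hypothesis this forces $\ti{t} = \{\alpha(V)\}$. One should note $\alpha(V)$ is independent of the choice of nonzero $(\alpha,\beta)\in R$ because $R$ is one-dimensional, so rescaling does not change the image.

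\textbf{Main obstacle.} The routine but slightly delicate point is step one: correctly bookkeeping the off-diagonal scalar constraint and verifying the radical is genuinely one-dimensional (not zero), i.e.\ that the nilpotent element actually exists. In the asymmetric case \Cref{lem:asymmetric-dim} showed $L = 0$ precisely by deriving a contradiction with $|\ti{t}| = 1$; here, by contrast, the symmetric representative $\tuM$ itself already witnesses a solution ($c = 1$, $c' = -1$, $k = 0$ giving the nilpotent; and since $\mathrm{S}$ may be taken proportional to $\tuM$ in the symmetric case, the constraint is consistent), so one must instead argue that this nilpotent does \emph{not} produce a second isotropic subspace — which is automatic because its image equals $U$, the one we already have. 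Reconciling these two parallel computations and making sure the dimension count lands on exactly $3$ (equivalently, that the semisimple quotient is $\mathbf{O}_1(\alg)$ and not something larger) is where the care is needed.
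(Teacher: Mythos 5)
There is a genuine gap at the very first step, and it is precisely where the paper's proof does all of its work. You invoke \cref{lem:adj-structure} to conclude that all blocks of an adjoint element are scalar, but that lemma only applies when $\tuB=\left(\begin{smallmatrix}0&\tuM\\-\tuM^{\tp}&0\end{smallmatrix}\right)$, i.e.\ when the lower-right block $\mathrm{S}$ vanishes. Under the hypothesis $|\ti{t}|=1$ this is impossible: a representative with $\mathrm{S}=0$ is an isotropic decomposition and would force $|\ti{t}|\geq 2$. So one must work with $\mathrm{S}\neq 0$, and then only the $(2,1)$-blocks satisfy the untwisted condition $A_{21}^{\tp}\tuM+\tuM B_{21}=0$ (hence are scalar, giving the parameter $k$). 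The diagonal blocks satisfy $A_{11}^{\tp}\tuM-\tuM B_{22}=-k\mathrm{S}$ and the $(1,2)$-blocks satisfy $A_{12}^{\tp}\tuM+\tuM B_{12}=\mathrm{S}B_{22}-A_{22}^{\tp}\mathrm{S}$; these are \emph{twisted} adjoint equations with right-hand side involving $\mathrm{S}$, and nothing forces their solutions to be scalar a priori. Showing that the spaces of solutions $P=\{(A,B,k):A^{\tp}\tuM-\tuM B=k\mathrm{S}\}$ and its analogue $P'$ for the $(1,2)$-block equation are as small as possible is the entire content of the paper's proof: one assumes $\dim_{\alg}P=2$, pushes through to $\dim_{\alg}P'\in\{2,3\}$, and in each case manufactures an element $(\alpha,\alpha^*)$ with $\alpha\alpha^*=\alpha^*\alpha=0$ whose images give \emph{two} distinct $n$-dimensional totally isotropic subspaces, contradicting $|\ti{t}|=1$. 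Your proposal never uses the hypothesis $|\ti{t}|=1$ to bound $\Adj(t)$ from above, so it cannot rule out these extra elements; it only uses the hypothesis at the end to identify $\ti{t}=\{\alpha(V)\}$, which is the easy half.

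Two further concrete errors: the algebra $\alg\rtimes\mathbf{O}_1(\alg)$ is $2$-dimensional (one-dimensional radical, one-dimensional quotient), not $3$-dimensional, and it is local, not of the form $\alg[\epsilon]/(\epsilon^2)\times\alg$; and $\mathrm{S}$ can never be "taken proportional to $\tuM$", since a skew-symmetric matrix proportional to a nonzero symmetric one is zero when $\charac(\alg)\neq 2$, which would again contradict $|\ti{t}|=1$. The final paragraph of your proposal (total isotropy of $\alpha(V)$ for $(\alpha,\beta)$ in the radical, and its independence of the choice of radical element) is correct and matches the paper's conclusion, but it rests on the unproved description of $\Adj(\tuB)$.
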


\begin{proof}
    Suppose, without loss of generality, that the matrix $\tuB$ associated with
    $t$ is given in the form from \cref{def:dim} with  $\tuM^{\tp} = \tuM$.  It
    follows from the assumptions that $\Adj(\tuM^{\tp},\tuM)=\Adj(\tuM)\cong
    \alg$ and in the same way as in \cref{lem:asymmetric-dim} we derive
    \begin{align*} 
        \Adj(\tuB) &= \left\{\left(\begin{pmatrix} 
            \Gamma_{11} & \Gamma_{12} \\ kI_n & \Gamma_{22} 
        \end{pmatrix}, \begin{pmatrix} 
            \Phi_{11} & \Phi_{12} \\ -kI_n & \Phi_{22} 
        \end{pmatrix}\right) ~\middle|~ \begin{array}{c} 
            k \in \alg,\; \Gamma_{ij},\Phi_{ij}\in\Mat_n(\alg), \\ 
            \Gamma_{11}^{\tp}\tuM - \tuM \Phi_{22} = -k\mathrm{S}, \\
            \Gamma_{22}^{\tp}\tuM - \tuM \Phi_{11} = k\mathrm{S}, \\
            \Gamma_{12}^{\tp} \tuM + \tuM \Phi_{12} = \mathrm{S}\Phi_{22} - \Gamma_{22}^{\tp}\mathrm{S}
        \end{array}\right\} .
    \end{align*} 
    Since $\Adj(\tuM)\cong \alg$,  the $\alg$-vector space 
    \begin{align*} 
        P &= \left\{(\Gamma, \Phi, k)\in\Mat_n(\alg)^2 \oplus \alg ~|~ \Gamma^{\tp}\tuM - \tuM \Phi = k\mathrm{S} \right\}
    \end{align*}
    has dimension $1$ or $2$. Indeed $(\Gamma,\Phi,0)\in P$ is equivalent to $(\Gamma, \Phi) \in
    \Adj(\tuM)$: the forward implication implies that $\dim_{\alg}P\leq 2$ while the reverse one ensures $\dim_{\alg}P\geq 1$. 

 We claim that  $\dim_{\alg}P=1$ and we prove so working by contradiction.   To this end, assume that $\dim_{\alg}P=2$ and let $\{(I_n, I_n, 0), (X, Y, 1)\}$ be a
    basis of $P$. Since $\mathrm{S}$ is skew-symmetric,  $(Y, X,
    1)$ also belongs to $P$, so $(X-Y, Y-X)\in \Adj(\tuM)$. Hence, there exists $\rho\in \alg$
    such that $X - Y = \rho I_n = Y - X$. This implies $2(X-Y) = 0$, and since
    $\charac(\alg)\neq 2$,  we derive $X=Y$. Define now $\mathrm{S}' =
    \mathrm{S}X + X^{\tp} \mathrm{S}$. With this, we have
    \begin{align*} 
        \Adj(\tuB) &= {\footnotesize\left\{\left(\begin{pmatrix} 
            aI_n - kX & C \\ kI_n & bI_n + kX
        \end{pmatrix}, \begin{pmatrix} 
            bI_n + kX & D \\ -kI_n & aI_n - kX
        \end{pmatrix}\right) ~\middle|~ \begin{array}{c} 
            a,b,k \in \alg,\; C,D\in\Mat_n(\alg), \\ 
            C^{\tp} \tuM + \tuM D = (a-b)\mathrm{S} - k\mathrm{S}' \\ 
            X^{\tp}\tuM - \tuM X = \mathrm{S}
        \end{array}\right\} }.
    \end{align*} 
    Now we consider the following vector space: 
    \begin{align*} 
        P' &= \left\{(C, D, c, d)\in\Mat_n(\alg)^2\oplus \alg^2 ~|~ C^{\tp}\tuM + \tuM D = c\mathrm{S} - d\mathrm{S}'\right\}. 
    \end{align*} 
Note that,  whenever $(C, D, c, 0) \in P'$, the element $(C, -D, c)$ belongs to $P$. It then follows from $\dim_{\alg} P = 2$ that $\dim_{\alg}P'\in \{2, 3\}$. If $\dim_{\alg}P'=2$, then a basis for $P'$ is $\{(I_{n}, -I_{n}, 0, 0), (X, -X, 1, 0)\}$ and 
    \begin{align*} 
        \Adj(\tuB) &= {\footnotesize\left\{\left(\begin{pmatrix} 
            aI_n & cI_n + (a-b)X \\ 0 & bI_n
        \end{pmatrix}, \begin{pmatrix} 
            bI_n & -cI_n + (b-a)X \\ 0 & aI_n
        \end{pmatrix}\right) ~\middle|~ \begin{array}{c} 
            a,b,c \in \alg, \\ 
            X^{\tp}\tuM - \tuM X = \mathrm{S}
        \end{array}\right\} }.
    \end{align*} 
   In particular, $\Adj(\tuB)$ contains
    the following element: 
    \begin{align} \label{eqn:problem-elts}
        (\alpha, \alpha^*) &= \left(\begin{pmatrix} 0 & -X \\ 0 & I_n \end{pmatrix}, \begin{pmatrix} I_n & X \\ 0 & 0 \end{pmatrix}\right). 
    \end{align} 
It follows that $\alpha(V)$ and $\alpha^*(V)$ are two distinct $n$-dimensional, totally isotropic
    subspaces, which contradicts
    $|\ti{t}|=1$.  This proves that 
    $\dim_{\alg}P' \neq 2$, so $\dim_{\alg}P' = 3$.

    Let now $\{(I_n, -I_n, 0, 0), (X, -X, 1, 0), (Y, Z, \ell, 1)\}$ be a basis of
    $P'$. Since both $\mathrm{S}$ and $\mathrm{S}'$ are skew-symmetric, we have $(Z, Y,
    -\ell, -1) \in P'$, so $(Y+Z, Y+Z, 0, 0)\in P'$.  The characteristic of $\alg$ being different from $
    2$, this implies that $Z=-Y$. Thus, if $C, D\in \Mat_n(\alg)$ and $a,b,k\in \alg$ are such that 
    \[
        C^{\tp} \tuM + \tuM D = (a-b)\mathrm{S} - k\mathrm{S}',
    \]
   then the following holds: for some $c\in \alg$,
    \begin{align*} 
        C &= cI_n + (a-b-k\ell)X + kY = -D.
    \end{align*}
    Thus,  taking $b=1$ and $a=c=k=0$, we see that the element in~\eqref{eqn:problem-elts} is
    also contained in $\Adj(\tuB)$; contradiction.
 In particular $\dim_{\alg}P' \neq 3$, which yields that $\dim_{\alg}P \neq 2$ and, consequently, that
    $\dim_{\alg}P = 1$.  
    
We conclude by observing that  $\Adj(t) \cong \alg\rtimes \mathbf{O}_1(\alg)$
    because  
    \begin{align} \label{eqn:adj-sym}
        \Adj(\tuB) &= \left\{\left(\begin{pmatrix} 
            aI_n & bI_n \\ 0 & aI_n 
        \end{pmatrix}, \begin{pmatrix} 
            aI_n & -bI_n \\ 0 & aI_n 
        \end{pmatrix}\right) ~\middle|~ a,b\in \alg \right\} .
    \end{align} 
    We also see from \eqref{eqn:adj-sym} that if $0\neq(\alpha,-\alpha)$ is in the radical of $\Adj(\tuB)$, then $\alpha(V)\in\ti{t}$. 
\end{proof}

\begin{prop}\label{prop:adj-X-S}
    Let $t : V \times V \rightarrow T$ be a $\alg$-tensor with irreducible
    Pfaffian. If $\Adj(t)$ is $*$-isomorphic to either $\mathbf{X}_1(\alg)$ or
    $\mathbf{S}_2(\alg)$, then $|\ti{t}|= 2$ or $|\ti{t}|> 2$, respectively.
\end{prop}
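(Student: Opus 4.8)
The plan is to reduce \cref{prop:adj-X-S} to the structural computations of \cref{prop:ti3d} and Lemmas~\ref{lem:asymmetric-dim} and~\ref{lem:symmetric-dim}, but first to remove the genericity hypotheses those results carry. Since $\Pf(t)\neq 0$, \cref{lem:max-dim} gives $\dim_\alg V = 2n$, and since $\Adj(t)$ is $*$-isomorphic to $\mathbf{X}_1(\alg)$ or $\mathbf{S}_2(\alg)$, in either case $\Adj(t)$ contains a nontrivial idempotent $(\pi,\pi^\ast)$ with $(\pi^\ast,\pi) = (\pi,\pi^\ast)^\ast$ and $\pi + \pi' = \mathrm{id}_V$ for a complementary idempotent. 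The defining relation of the adjoint algebra for an \emph{alternating} tensor, together with the $*$-structure $(\alpha,\beta)\mapsto(\beta,\alpha)$ established in \cref{sec:adj-algebras}, forces $t(\pi(u),\pi(v)) = t(u,\pi^\ast\pi(v))$; one computes $\pi^\ast\pi = 0$ (and $\pi\pi^\ast = 0$) from the multiplication table of the two candidate $*$-algebras — in $\mathbf{X}_1(\alg)$ the element $(1,0)$ satisfies $(1,0)^\ast(1,0) = (0,1)(1,0) = (0,0)$, and in $\mathbf{S}_2(\alg)$ one picks the rank-one idempotent whose $*$-transpose is the complementary rank-one idempotent. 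Hence $t(\pi(u),\pi(v)) = 0$ and likewise $t(\pi^\ast(u),\pi^\ast(v)) = 0$, so $\im\pi$ and $\im\pi^\ast$ are both totally isotropic. Since $\pi + \pi^\ast$ is invertible (it is the identity in $\mathbf{X}_1$; in $\mathbf{S}_2$ a suitable $\ast$-symmetric combination is) and $\pi$, $\pi^\ast$ have complementary ranks summing to $2n$, \cref{lem:max-dim} forces each to have rank exactly $n$, so $\im\pi,\im\pi^\ast\in\ti{t}$ are distinct. This already gives $|\ti{t}|\geq 2$.

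Next I would upgrade this to the exact dichotomy. Having produced two distinct members of $\ti{t}$, \cref{lem:overlaps}—which only needs $\Pf(t)$ irreducible, and an elliptic-curve Pfaffian is irreducible—tells us they intersect trivially, so we are in the setting of \cref{prop:ti3d}: $t$ is pseudo-isometric to $\left(\begin{smallmatrix} 0 & \tuM \\ -\tuM^\tp & 0\end{smallmatrix}\right)$, and we land in case (1) if $[\tuM]\in\mathcal{L}_{\mathcal{V}}^{\mathrm{sym}}$ (then $|\ti{t}|>2$) or case (2) otherwise (then $|\ti{t}| = 2$). It remains to match the symmetric/asymmetric alternative with the two $*$-algebras. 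For this I would compute $\Adj\left(\begin{smallmatrix} 0 & \tuM \\ -\tuM^\tp & 0\end{smallmatrix}\right)$ via \cref{lem:adj-structure}: the answer is governed by $\Adj(\tuM)$ and $\Adj(\tuM,\tuM^\tp)$. When $\tuM$ can be chosen symmetric, $\Adj(\tuM,\tuM^\tp) = \Adj(\tuM)$ and the adjoint algebra acquires an extra symmetry making it $*$-isomorphic to $\mathbf{S}_{2n}$-type (and here $n=1$ after accounting for the size of $\Adj(\tuM)$, which for an elliptic curve must be $\alg$ by half-genericity, cf.\ \cref{rem:elliptic-dimin}); when no symmetric representative exists, $\Adj(\tuM,\tuM^\tp) = 0$ and \cref{lem:adj-structure} collapses $\Adj(\tuB)$ to the block-diagonal pair $\{(\mathrm{diag}(A_{11},A_{22}),\mathrm{diag}(A_{22}^\tp\text{-type},\ldots))\}$, which is precisely $\mathbf{X}_1(\alg) = \Mat_1(\alg)\oplus\Mat_1(\alg)$ with the swap involution. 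So $\Adj(t)\cong\mathbf{X}_1(\alg)$ exactly in the asymmetric case, i.e.\ $|\ti{t}| = 2$, and $\Adj(t)\cong\mathbf{S}_2(\alg)$ exactly in the symmetric case, i.e.\ $|\ti{t}|>2$.

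The main obstacle, I expect, is the bookkeeping in the two directions of the correspondence between the $*$-isomorphism type of $\Adj(t)$ and the symmetry of the equivalence class $[\tuM]$—particularly being careful that the hypothesis is only ``$\Adj(t)$ is $*$-isomorphic to'' one of these algebras, so one must argue that no \emph{other} configuration of $\ti{t}$ (namely $|\ti{t}|\in\{0,1\}$, ruled out by \cref{cor:four-cases} once we know $\dim T = 3$) can produce such an adjoint algebra; Lemmas~\ref{lem:asymmetric-dim} and~\ref{lem:symmetric-dim} handle the $|\ti{t}|=1$ case by showing the adjoint algebra is then $\mathbf{O}_1(\alg)$ or $\alg\rtimes\mathbf{O}_1(\alg)$, neither of which is $*$-isomorphic to $\mathbf{X}_1$ or $\mathbf{S}_2$ (the first is commutative without nontrivial idempotents beyond $\{0,1\}$, the second has nonzero radical), and the $|\ti{t}|=0$ case is expected to give $\Adj(t)\cong\alg$. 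A secondary subtlety is that \cref{prop:adj-X-S} is stated for a general non-zero Pfaffian, not just the elliptic one, so the argument should only invoke irreducibility of $\Pf(t)$ plus the half-generic hypothesis where needed; I would phrase the extraction of the totally isotropic subspaces from the idempotents so that it uses nothing beyond $\Pf(t)\neq 0$ and the $*$-structure, deferring the finer case analysis to the already-proven lemmas.
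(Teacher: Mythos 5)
Your first step --- pulling an idempotent $e$ with $e+e^*=1$ and $ee^*=e^*e=0$ back through the $*$-isomorphism, observing that $t(\pi(u),\pi(v))=t(u,\pi^*\pi(v))=0$, and using \cref{lem:max-dim} to force $\im\pi$ and $\im\pi^*$ to be complementary $n$-dimensional members of $\ti{t}$ --- is exactly the paper's opening move and correctly yields $|\ti{t}|\geq 2$ from nothing more than $\Pf(t)\neq 0$.

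The gap is in how you convert ``$\geq 2$'' into the exact dichotomy. Your route runs through \cref{lem:overlaps}, \cref{prop:ti3d}, \cref{lem:adj-structure} and the computations \eqref{eqn:X-adj}--\eqref{eqn:S-adj}, all of which need hypotheses the proposition does not carry: irreducibility of $\Pf(t)$ and half-genericity (i.e.\ $\Adj(\tuM)\cong\alg$ and the dichotomy for $\Adj(\tuM,\tuM^{\tp})$). You flag this mismatch yourself but do not resolve it, so as written you prove a statement with strictly stronger hypotheses; and it is precisely the hypothesis-light version that \cref{thm:reducible-adjoints} and \cref{cor:adj-cent-elliptic} later invoke for their reverse implications (in the corollary, half-genericity is only \emph{derived} after one already knows $|\ti{t}|\geq 2$, so leaning on it here to pin down the exact count courts circularity). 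The paper instead never leaves the $*$-algebra. For $\mathbf{S}_2(\alg)$ it uses not one idempotent but the whole family $\mathcal{E}=\{E_{11}+bE_{12}\mid b\in\alg\}$, each member of which satisfies $e+e^*=1$ and $ee^*=e^*e=0$; pulling back $E_{11}$ and $E_{11}+E_{12}$ to $(X_1,Y_1)$ and $(X_2,Y_2)$ yields the three pairwise distinct subspaces $X_1V$, $X_2V$, $Y_1V$ in $\ti{t}$, whence $|\ti{t}|>2$ with no geometric input at all. For $\mathbf{X}_1(\alg)$, exactness is read off from the fact that $(1,0)$ and $(0,1)$ are the \emph{only} elements with $e+e^*=1$ and $e^*e=0$, while every complementary pair in $\ti{t}$ produces such an element (the corresponding pair of projections lies in $\Adj(t)$). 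This direct production of a third isotropic subspace in the symplectic case is the key idea missing from your proposal; without it your second paragraph cannot be repaired to cover the stated generality.
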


\begin{proof}
    Let $\phi : \Adj(t)\to A$ be a $*$-isomorphism, where $A$
    is either $\mathbf{X}_1(\alg)$ or $\mathbf{S}_2(\alg)$. 
    
If $A=\mathbf{X}_1(\alg)$ and $(X, Y) =
    \phi^{-1}((1,0))$, then \cref{thm:simple-star-algebras} yields that $XY=0=YX$ and  $X+Y=I_{2n}$.  Then $U=XV$ and $W=YV$ are totally isotropic subspaces and satisfy $U+W=V$.  In particular,  $|\ti{t}|\geq 2$.  To show that equality holds, we assume for a contradiction that $U'\in\ti{t}\setminus\{U,W\}$. Then thanks to \cref{lem:overlaps}, we write $V=U\oplus U'$. Taking $\alpha:V\rightarrow U$ and $\beta:V\rightarrow U'$ to be projections such that $\alpha_{|U}=\id_U$ and $\beta_{|U'}=\id_{U'}$, it follows that $(\alpha,\beta)\in\Adj(t)$ and so the dimensions of $A$ and $\Adj(t)$ do not coincide.  
    
   Assume now that $A=\mathbf{S}_2(\alg)$ and define 
   \[
(X_1,Y_1)=\varphi^{-1}\left(
\begin{pmatrix}
1 & 0 \\ 0 & 0 
\end{pmatrix} 
\right)  
\ \ \textup{ and } \ \ 
(X_2,Y_2)=\varphi^{-1}\left(
\begin{pmatrix}
1 & 1 \\ 1 & 1 
\end{pmatrix} 
\right). 
   \]
\cref{thm:simple-star-algebras} yields that  $X_1Y_1=Y_1X_1=0=X_2Y_2=Y_2X_2$. Therefore 
 $X_1V$, $X_2V$,
    and $Y_1V$ are totally isotropic subspaces. Moreover,  as $X_1+Y_1$, $X_1+X_2$, and $X_2+Y_1$ are invertible, \cref{lem:overlaps} ensures that they are distinct. In particular $|\ti{t}| > 2$.
\end{proof}

In the following, all isomorphisms concerning adjoint algebras are taken as
$*$-algebras.

\begin{thm}\label{thm:reducible-adjoints}
    Let $t : V \times V \rightarrow T$ be a full, half-generic $\alg$-tensor
    with irreducible Pfaffian. Then $\Cent(t)\cong \alg$, and the following
    hold.
    \begin{enumerate}[label=$(\arabic*)$] 
        \item\label{thm-part:asym} $|\ti{t}|=1$ and $t$ asymmetrically
        half-generic imply $\Adj(t)\cong \mathbf{O}_1(\alg)$.
        \item\label{thm-part:sym} $|\ti{t}|=1$, $t$ symmetrically
        half-generic, and $2\alg=\alg$ imply $\Adj(t)\cong \alg \rtimes
        \mathbf{O}_1(\alg)$.
        \item\label{thm-part:2} $|\ti{t}|=2$ if and only if $\Adj(t)
        \cong \mathbf{X}_1(\alg)$.
        \item\label{thm-part:q+1} $|\ti{t}|>2$ if and only if $\Adj(t)
        \cong \mathbf{S}_2(\alg)$.
    \end{enumerate}
\end{thm}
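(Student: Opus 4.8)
\textbf{Proof strategy for \cref{thm:reducible-adjoints}.}
The plan is to assemble the theorem from the pieces already developed in this section, treating the four cases essentially as bookkeeping on top of two substantive inputs. First I would establish $\Cent(t)\cong\alg$: since $t$ is full and half-generic, after a pseudo-isometry I may assume $\tuB=\left(\begin{smallmatrix}0&\tuM\\-\tuM^{\tp}&\mathrm S\end{smallmatrix}\right)$ with $\Adj(\tuM)\cong\alg$. A centroid element $(\gamma_V,\gamma_T)$ in particular gives an adjoint pair for $\tuM$ (projecting onto the block structure via \cref{lem:adj-structure}, the diagonal blocks land in $\Adj(\tuM)$), so the irreducibility of $\Pf(t)$ together with $\Adj(\tuM)\cong\alg$ forces the scalar action; fullness then pins down $\gamma_T$. (Alternatively one invokes \cref{lem:comm-cent} to know $\Cent(t)$ is a commutative $\alg$-algebra and uses the irreducible Pfaffian to rule out idempotents, hence nontrivial direct factors.) This is the routine part.

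For parts \ref{thm-part:asym} and \ref{thm-part:sym}, the hypothesis $|\ti t|=1$ combined with the half-generic assumption is \emph{exactly} the setting of \cref{lem:asymmetric-dim} and \cref{lem:symmetric-dim} respectively — the former needs nothing extra, the latter needs $\charac(\alg)\neq 2$, which is supplied by $2\alg=\alg$. So these two parts are immediate citations. The content is really in parts \ref{thm-part:2} and \ref{thm-part:q+1}, and there I would argue each as a biconditional. For the ``only if'' directions: if $|\ti t|=2$ then \cref{prop:ti3d}(2) says $t$ is pseudo-isometric to $\left(\begin{smallmatrix}0&\tuM\\-\tuM^{\tp}&0\end{smallmatrix}\right)$ with $[\tuM]\in\mathcal L_{\mathcal V}\setminus\mathcal L_{\mathcal V}^{\mathrm{sym}}$, so $\tuM^{\tp}$ is not equivalent to $\tuM$; feeding this into \cref{lem:adj-structure} and using the half-generic constraints $\Adj(\tuM)\cong\alg$, $\Adj(\tuM,\tuM^{\tp})=0$, the off-diagonal blocks in the \cref{lem:adj-structure} description vanish and the diagonal blocks become independent scalars, yielding $\Adj(\tuB)\cong\Mat_1\oplus\Mat_1$ with the $*$ swapping the factors — i.e.\ $\mathbf X_1(\alg)$. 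If instead $|\ti t|>2$ then \cref{prop:ti3d}(1) gives a symmetric representative $\tuM=\tuM^{\tp}$, so $\Adj(\tuM,\tuM^{\tp})=\Adj(\tuM)\cong\alg$, and \cref{lem:adj-structure} now produces a $4$-dimensional algebra; identifying it, one checks the $*$-structure matches $\mathbf S_2(\alg)=(\Mat_2(\alg),X\mapsto JX^{\tp}J^{-1})$. The ``if'' directions are precisely \cref{prop:adj-X-S}, which already shows $\Adj(t)\cong\mathbf X_1(\alg)\Rightarrow|\ti t|=2$ and $\Adj(t)\cong\mathbf S_2(\alg)\Rightarrow|\ti t|>2$; combined with \cref{lem:max-dim} and \cref{cor:four-cases} (so that the four values $0,1,2,>2$ of $|\ti t|$ are mutually exclusive) the biconditionals close up.

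The main obstacle I anticipate is purely computational rather than conceptual: verifying that the explicit $4$-dimensional algebra coming out of \cref{lem:adj-structure} in the $|\ti t|>2$ case is $*$-isomorphic to $\mathbf S_2(\alg)$ and not, say, to $\mathbf O_2^{\pm}(\alg)$ or $\mathbf U_2$ — one must produce the matrix $J$ realizing the $*$ and confirm the involution type, using $\charac(\alg)\neq 2$ to diagonalize/normalize. I would handle this by picking a convenient basis of $\Adj(\tuB)$ adapted to the symmetric $\tuM$, exhibiting a nilpotent self-adjoint element and a nontrivial idempotent $e$ with $e+e^*=1$, and matching the resulting $2\times 2$ picture against the list in \cref{thm:simple-star-algebras}; the symplectic form appears because the radical-free quotient acts on a $2$-dimensional space carrying the alternating pairing inherited from $t$ restricted to a pair of complementary isotropics. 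A secondary subtlety is making sure the half-generic normal form used throughout is compatible with the pseudo-isometries produced by \cref{prop:ti3d}, but since \cref{def:dim} is a pseudo-isometry invariant this is automatic.
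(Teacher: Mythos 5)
Your proposal is correct and follows essentially the same route as the paper: parts (1)--(2) are direct citations of \cref{lem:asymmetric-dim} and \cref{lem:symmetric-dim}, the forward directions of (3)--(4) come from \cref{prop:ti3d} fed into \cref{lem:adj-structure} with the half-generic constraints (and the resulting $4$-dimensional algebra in case (4) is visibly $\mathbf{S}_2(\alg)$, since the induced involution on the $2\times 2$ coefficient matrix is exactly $X\mapsto JX^{\tp}J^{-1}$ --- no diagonalization or characteristic argument is needed), and the converses are exactly \cref{prop:adj-X-S}. The only divergence is the centroid claim, where instead of your block computation the paper combines \cref{lem:comm-cent} with the fact (from \cite[Thm.~A]{BMW/20}) that $\Cent(t)$ embeds into the center of $\Adj(t)$, which is isomorphic to $\alg$ in all four cases.
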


\begin{proof}
    Let $\tuB\in\Mat_{2n}(\alg[\bm{y}]_1)$ be the matrix associated with $t$.
    Since the Pfaffian is non-zero,  we write $\dim_{\alg}V=2n$. If
    $|\ti{t}|=1$, then  Lemmas~\ref{lem:asymmetric-dim}
    and~\ref{lem:symmetric-dim} settle  \ref{thm-part:asym}--\ref{thm-part:sym}.  If $|\ti{t}|=2$, we know from \cref{prop:ti3d},
    that $\tuB$ is asymmetrically half-generic.  In this case
    \cref{lem:adj-structure} yields
    \begin{align} \label{eqn:X-adj}
        \Adj(\tuB) &= \left\{\left(\begin{pmatrix} aI_n & 0 \\ 0 & bI_n 
        \end{pmatrix}, \begin{pmatrix} bI_n & 0 \\ 0 & aI_n \end{pmatrix} 
        \right) ~\middle|~ a,b\in\alg \right\} \cong \mathbf{X}_1(\alg) .
    \end{align} 
    This proves the forward direction of \ref{thm-part:2}. If, on the other
    hand, $|\ti{t}|>2$, then $\tuB$ is symmetrically half-generic by
    \cref{prop:ti3d}. By \cref{lem:adj-structure}, we have
    \begin{align}\label{eqn:S-adj} 
        \Adj(\tuB) &= \left\{\left(\begin{pmatrix} aI_n & bI_n \\ cI_n & dI_n 
        \end{pmatrix}, \begin{pmatrix} dI_n & -bI_n \\ -cI_n & aI_n 
        \end{pmatrix}\right) ~\middle|~ a,b,c,d\in\alg\right\}\cong\mathbf{S}_2(\alg) . 
    \end{align} 
    Thus, the forward direction of \ref{thm-part:q+1} follows
    from~\eqref{eqn:S-adj}. The reverse directions for \ref{thm-part:2} and
    \ref{thm-part:q+1} follow from \cref{prop:adj-X-S}.

We conclude by showing that   $\Cent(t)\cong \alg$.   Because $\Pf(t)$ is irreducible and $t$ is full, $t$ is fully-nondegenerate. Thus, \Cref{lem:comm-cent}
    ensures that $\Cent(t)$ is commutative. Moreover, from~\cite[Th.~A]{BMW/20},  we know
    that $\Cent(t)$ embeds into the center of $\Adj(t)$ which is,  in the four
    cases of this theorem, always isomorphic to $\alg$.  This ends the proof.
\end{proof}

\section{Determinantal representations of cubics}\label{sec:determinantal}

Let $E$ be an elliptic curve in $\P^2_{\alg}$ with identity element $\cor{O}$.
Let $E(\alg)$ denote the $\alg$-rational points of $E$ and $\oplus$ the addition
on $E$. If $P\in E(\alg)$ and $m$ is a positive integer, we write $\ominus P$
for the opposite of $P$ and $[m]P\in E(\alg)$ and $E[m]$, respectively, for the
sum of $m$ copies of $P$ and the $m$-torsion subgroup of $E$. We write $\Aut(E)$
for the group of automorphisms of $E$ as a projective curve, and
$\Aut_{\cor{O}}(E)$ for the automorphism group of the elliptic curve $E$. In
particular, $\Aut_{\cor{O}}(E)$ comprises the elements of $\Aut(E)$ that fix
$\cor{O}$. We let $\Div(E)$ be the Weil divisor group of $E$ and $\Div^0(E)$ the
degree $0$ part of the divisor group of $E$. We denote the class of $D$ in the
Picard group $\Pic(E)$ by $[D]$ and write $\Pic^0(E)$ for the degree $0$ part of
$\Pic(E)$. If $D\in\Div(E)$, denote by $\mathcal{L}(D)$ the sheaf of $D$, which
is also commonly denoted by $\cor{O}_E(D)$. Our notation is mostly adherent to
the one from \cite[Ch.~II]{silverman} and we refer the reader to
\cite{Hartshorne,silverman} for the geometric background of
\cref{sec:determinantal,sec:ABC}.

\subsection{Divisors on elliptic curves}\label{sec:divisors}

 In this section, we recall some basic facts on divisors.

\begin{lem}[{\cite[Prop.~II.6.13]{Hartshorne}}]\label{lem:divlinbdl}
Let $D, D'\in\Div(E)$. Then the following are equivalent: 
\begin{enumerate}[label=$(\arabic*)$]
\item $[D]=[D']$,
\item $\cor{L}(D)$ and $\cor{L}(D')$ are isomorphic.
\end{enumerate} 
\end{lem}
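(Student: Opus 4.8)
The statement to be proved is \Cref{lem:divlinbdl}, which the excerpt already attributes to \cite[Prop.~II.6.13]{Hartshorne}. So the natural ``proof'' here is simply to cite that reference, and perhaps to recall the one-line mechanism, since this is a standard dictionary between linear equivalence of divisors and isomorphism of the associated invertible sheaves. I would not attempt to reprove Hartshorne's proposition from scratch; instead I would give a short self-contained sketch for the reader's convenience and point to the reference for details.

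\textbf{Key steps.} First I would recall the construction $D \mapsto \cor{L}(D)$: for a divisor $D$ on the (smooth projective) curve $E$, the sheaf $\cor{L}(D)$ has sections over an open $V\subseteq E$ given by the rational functions $f$ with $\operatorname{div}(f)|_V + D|_V \geq 0$ (together with $0$). Then for the implication $(1)\Rightarrow(2)$: if $[D]=[D']$, i.e.\ $D - D' = \operatorname{div}(g)$ for some $g\in \alg(E)^\times$, then multiplication by $g$ gives an $\cor{O}_E$-module isomorphism $\cor{L}(D) \xrightarrow{\sim} \cor{L}(D')$, because $f \mapsto gf$ sends $\{f : \operatorname{div}(f) + D \geq 0\}$ bijectively to $\{h : \operatorname{div}(h) + D' \geq 0\}$ (using $\operatorname{div}(gf) = \operatorname{div}(g) + \operatorname{div}(f) = D - D' + \operatorname{div}(f)$). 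For the converse $(2)\Rightarrow(1)$: an isomorphism $\varphi\colon \cor{L}(D)\xrightarrow{\sim}\cor{L}(D')$ is determined by a global section of $\cor{Hom}(\cor{L}(D),\cor{L}(D')) \cong \cor{L}(D'-D)$, and it being an isomorphism forces the corresponding rational function $g$ to satisfy $\operatorname{div}(g) = D' - D$ exactly (since over each point the local generator must go to a local generator, i.e.\ the section is a local unit), so $D' - D = \operatorname{div}(g)$ and $[D]=[D']$. I would phrase both directions tersely and then write that full details are in \cite[Prop.~II.6.13]{Hartshorne}.

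\textbf{Main obstacle.} There is essentially no obstacle: this is a textbook fact and the paper has already cited the precise location. The only minor care needed is the converse direction, where one should make explicit that an isomorphism of invertible sheaves corresponds to a rational function whose divisor is \emph{exactly} $D'-D$ (not merely $\geq$ something), which is where the ``isomorphism'' (as opposed to ``injection'') hypothesis is used. Given the level of the surrounding material, I expect the paper's actual proof to be just the citation, possibly with the one-line remark that $\cor{L}(D)\otimes\cor{L}(D')^{-1}\cong\cor{L}(D-D')$ and $\cor{L}(E') \cong \cor{O}_E$ iff $E'$ is principal.

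\begin{proof}
This is \cite[Prop.~II.6.13]{Hartshorne}. For the reader's convenience, we recall the argument. If $[D]=[D']$, write $D-D'=\operatorname{div}(g)$ with $g\in\alg(E)^\times$; then $f\mapsto gf$ defines an isomorphism $\cor{L}(D)\xrightarrow{\sim}\cor{L}(D')$ of $\cor{O}_E$-modules, since $\operatorname{div}(gf)+D'=\operatorname{div}(f)+D$. Conversely, any isomorphism $\cor{L}(D)\xrightarrow{\sim}\cor{L}(D')$ corresponds to a global section of $\cor{L}(D'-D)$ which generates the sheaf at every point; such a section is a rational function $g$ with $\operatorname{div}(g)=D'-D$, whence $[D]=[D']$.
\end{proof}
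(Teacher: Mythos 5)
Your proposal is correct, and it matches the paper exactly: the paper states this lemma purely as a citation to \cite[Prop.~II.6.13]{Hartshorne} and gives no proof of its own, so your citation together with the standard sketch (multiplication by $g$ with $\operatorname{div}(g)=D-D'$ for one direction, and the identification of isomorphisms with nowhere-vanishing global sections of $\cor{L}(D'-D)$ for the other) is precisely the right treatment. The only nit is the sign of $\operatorname{div}(g)$ in the converse direction, which depends on the chosen identification of $\cor{Hom}(\cor{L}(D),\cor{L}(D'))$ with $\cor{L}(D'-D)$ and does not affect the conclusion $[D]=[D']$.
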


Let $\cor{L}_0$ denote the collection of isomorphism classes of non-effective degree $0$ line bundles on $E$. 
As a consequence of \cref{lem:divlinbdl} one can easily derive that the map
\begin{equation}\label{eq:D}
\Pic^0(E)\setminus\{0\} \longrightarrow \cor{L}_0,\quad [D]\longmapsto \cor{L}(D)
\end{equation}
is a well-defined bijection.
Moreover, \cite[Prop.~III.3.4]{silverman} ensures that the following map is a well-defined bijection: 
\begin{equation}\label{eq:pic0}
E\setminus \{\cor{O}\}\longrightarrow\Pic^0(E)\setminus\{0\},  \quad P \longmapsto [P- \cor{O}].
\end{equation}

\begin{lem}[Abel's Theorem, cf.\ {\cite[Cor.~III.3.5]{silverman}}]\label{lem:abel}
Let $D=\sum_{P\in E(\alg)}n_PP\in \Div(E)$. Then $[D]=0$ if and only if $\sum_{P\in E(\alg)}n_P=0$ and $\bigoplus_{P\in E(\alg)}[n_P]P=0$.
\end{lem}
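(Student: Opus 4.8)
The plan is to reduce the statement to the bijection \eqref{eq:pic0}, promoted to an isomorphism of abelian groups. First I would extend \eqref{eq:pic0} to a map $\kappa\colon E(\alg)\to\Pic^0(E)$, $P\mapsto[P-\cor{O}]$, which is defined on all of $E(\alg)$ (with $\kappa(\cor{O})=0$) and is still a bijection. Next I would record the standard fact that a principal divisor on the smooth projective curve $E$ has degree $0$, so that the degree descends to a homomorphism $\Pic(E)\to\Z$; hence $[D]=0$ already forces $\sum_{P}n_P=\deg D=0$, which gives the ``only if'' part of the first condition. In what follows I may therefore assume $\sum_P n_P=0$, and then $D=\sum_P n_P\bigl((P)-(\cor{O})\bigr)$ holds on the nose, so that $[D]=\sum_P n_P\,\kappa(P)$ in $\Pic^0(E)$.

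The heart of the matter is that $\kappa$ is a group homomorphism, i.e.\ that the addition $\oplus$ on $E$ is carried by $\kappa$ to addition of divisor classes. I would verify this by the classical chord-and-tangent computation: if a line $\ell\subset\P^2_{\alg}$ cuts $E$ in points $P_1,P_2,P_3$ counted with multiplicity, then, writing $\ell_{\cor{O}}$ for the line whose intersection divisor with $E$ is $3(\cor{O})$ (available since $\cor{O}$ is a flex, in particular for Weierstrass models), the function $\ell/\ell_{\cor{O}}$ has divisor $(P_1)+(P_2)+(P_3)-3(\cor{O})$, so $\kappa(P_1)+\kappa(P_2)+\kappa(P_3)=0$. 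Applying this to the line through $P$ and $\cor{O}$ gives $\kappa(\ominus P)=-\kappa(P)$, and applying it to the line through $P$ and $Q$, whose residual intersection point is $\ominus(P\oplus Q)$, gives $\kappa(P)+\kappa(Q)=\kappa(P\oplus Q)$; the tangency cases follow from the same identity read with the correct multiplicities. (Alternatively one may simply invoke \cite[Prop.~III.3.4]{silverman}, which is precisely the assertion that $\kappa$ is a group isomorphism and whose content is exactly this computation.) This step is where I expect the only genuine difficulty to lie: making the comparison of $\oplus$ with the group law on $\Pic^0(E)$ fully rigorous, including the choice of $\ell_{\cor{O}}$ and the handling of intersection multiplicities at tangency.

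Granting that $\kappa$ is a group isomorphism, the lemma follows formally. From $\kappa([m]P)=m\,\kappa(P)$ for $m\in\Z$ together with the homomorphism property, the identity $[D]=\sum_P n_P\,\kappa(P)$ of the first paragraph rewrites as $[D]=\kappa\bigl(\bigoplus_P[n_P]P\bigr)$. Since $\kappa$ is injective with $\kappa(\cor{O})=0$, this yields $[D]=0$ if and only if $\bigoplus_P[n_P]P=\cor{O}$; combined with the necessity of $\sum_P n_P=0$ noted above, this is exactly the stated equivalence. The remaining case $\deg D\neq0$ needs no argument: then $[D]\neq0$ in $\Pic(E)$ and $\sum_P n_P\neq0$, so both sides of the equivalence are false. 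Everything outside the middle step is bookkeeping with the bijections already set up in \cref{sec:divisors}.
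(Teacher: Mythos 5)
Your argument is correct. Note, however, that the paper does not prove \cref{lem:abel} at all: it is stated as a quoted result with the reference \cite[Cor.~III.3.5]{silverman}, so there is no internal proof to compare against. What you have written is essentially the standard derivation of Abel's theorem for plane cubics, and it is exactly the content of the cited passages of Silverman: the extension of \eqref{eq:pic0} to a bijection $\kappa:E(\alg)\to\Pic^0(E)$, the fact that principal divisors have degree $0$, and the chord-and-tangent verification that $\kappa$ is a homomorphism are precisely \cite[Prop.~III.3.4]{silverman}, from which the corollary follows by the formal bookkeeping in your last paragraph. The one step you rightly flag as delicate — comparing $\oplus$ with addition in $\Pic^0(E)$, including tangency multiplicities and the choice of the line $\ell_{\cor{O}}$ (the line at infinity for a Weierstrass model, since $\cor{O}$ is a flex) — is the genuine mathematical content, and your sketch of it is sound; if you wanted a complete self-contained proof you would need to carry out that intersection-multiplicity computation in detail, but for the purposes of this paper citing Silverman, as the authors do, is the intended route.
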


\subsection{Linear representations and non-effective line bundles}\label{sec:beauville}

This short section collects a few results on equivalence classes of determinantal representations of cubics from \cite{Ishitsuka/17}, which build upon the seminal paper \cite{Beauville00} of Beauville. 

\begin{lem}[{\cite[Th.~5.2]{Ishitsuka/17}}]\label{prop:bundles}
Let $C$ be a smooth genus $1$ curve in $\P^2_{\alg}$. Then there is a natural bijection between the following sets:
\begin{enumerate}[label=$(\arabic*)$]
\item the set $\mathcal{L}_C$ as given in \cref{def:equivalence}\ref{it:det2},
\item the set of isomorphism classes of non-effective line bundles of degree $0$ on $C$.
\end{enumerate}
\end{lem}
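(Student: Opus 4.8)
The plan is to reconstruct the bijection of Beauville~\cite{Beauville00} in the form given by Ishitsuka~\cite[\S5]{Ishitsuka/17}. Write $S=\alg[\bm{y}]$, fix a defining cubic $f\in S$ of $C$, and let $\iota\colon C\hookrightarrow\P^2_{\alg}$ be the inclusion. Given $[\tuM]\in\mathcal{L}_C$, i.e.\ $\tuM\in\Mat_3(\alg[\bm{y}]_1)$ with $\det\tuM\in I(C)=(f)$, comparison of degrees gives $\det\tuM=cf$ with $c\in\alg^\times$, so the map of sheaves $\tuM\colon\mathcal{O}_{\P^2}(-1)^3\to\mathcal{O}_{\P^2}^3$ is injective. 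Its cokernel is annihilated by $f$, and since $C$ is smooth (at a smooth point $\tuM$ has Smith normal form $\diag(1,1,f)$ over the local ring) it equals $\iota_*\mathcal{L}_{\tuM}$ for a line bundle $\mathcal{L}_{\tuM}$ on $C$, giving a short exact sequence $0\to\mathcal{O}_{\P^2}(-1)^3\xrightarrow{\tuM}\mathcal{O}_{\P^2}^3\to\iota_*\mathcal{L}_{\tuM}\to0$. Taking Euler characteristics and using that $C$ has genus $1$ yields $\deg\mathcal{L}_{\tuM}=\chi(\mathcal{L}_{\tuM})=3$; twisting the sequence by $\mathcal{O}_{\P^2}(-1)$ and using $H^0(\mathcal{O}_{\P^2}(-1))=H^1(\mathcal{O}_{\P^2}(-2))=0$ gives $H^0(C,\mathcal{L}_{\tuM}(-1))=0$, so $\mathcal{L}_{\tuM}(-1)$ is a non-effective line bundle of degree $0$ (and $H^1(C,\mathcal{L}_{\tuM}(-1))=0$ by Riemann--Roch in degree $0$). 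Replacing $\tuM$ by $X\tuM Y$ with $X,Y\in\GL_3(\alg)$ induces an isomorphism of two-term complexes, hence of cokernels, so $[\tuM]\mapsto\mathcal{L}_{\tuM}(-1)$ is a well-defined map from the set in~(1) to the set in~(2).

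For the reverse direction, start from a non-effective line bundle $\mathcal{N}$ of degree $0$ on $C$ and put $\mathcal{L}=\mathcal{N}(1)$, of degree $3$, with $H^0(C,\mathcal{L}(-1))=H^1(C,\mathcal{L}(-1))=0$ and, by Riemann--Roch, $h^0(C,\mathcal{L})=3$ and $H^1(C,\mathcal{L})=0$. Consider the graded $S$-module $M=\bigoplus_{m\ge0}H^0(C,\mathcal{L}(m))$. Since a plane curve is arithmetically Cohen--Macaulay and $\mathcal{L}$ has the stated vanishing, $M$ is a maximal Cohen--Macaulay $S/(f)$-module generated in degree $0$, and its Hilbert function (computed from $\chi(\mathcal{L}(m))=3(m+1)$, $h^0(\mathcal{L})=3$, $h^0(\mathcal{L}(-1))=0$) together with the Auslander--Buchsbaum formula forces the minimal graded free $S$-resolution of $M$ to have the shape $0\to S(-1)^3\xrightarrow{\tuM}S^3\to M\to0$ with $\tuM\in\Mat_3(\alg[\bm{y}]_1)$. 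Then $\det\tuM$ is a cubic vanishing on $C$, so $\det\tuM\in I(C)$ and $[\tuM]\in\mathcal{L}_C$; sheafifying recovers $\iota_*\mathcal{L}$ as the cokernel, whence $\mathcal{N}=\mathcal{L}_{\tuM}(-1)$, proving surjectivity. Injectivity is the uniqueness of minimal graded free resolutions up to isomorphism of complexes: if $\mathcal{L}_{\tuM}(-1)\cong\mathcal{L}_{\tuM'}(-1)$ then $\iota_*\mathcal{L}_{\tuM}\cong\iota_*\mathcal{L}_{\tuM'}$, the associated modules $M$ coincide, and the two resolutions differ by automorphisms of $S(-1)^3$ and of $S^3$, i.e.\ by a pair in $\GL_3(\alg)\times\GL_3(\alg)$, so $[\tuM]=[\tuM']$. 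Naturality (independence of the auxiliary choices, compatibility with isomorphisms of $C$) is built into the construction of the minimal resolution.

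The crux is the identification of the shape of that resolution, namely that $M$ is generated by exactly three degree-$0$ elements with exactly three independent linear first syzygies and no generators or syzygies in other degrees. This rests on the Cohen--Macaulayness of $M$ over $S/(f)$ (equivalently, Castelnuovo--Mumford $1$-regularity of $\iota_*\mathcal{L}$), on the vanishing $H^0(C,\mathcal{L}(-1))=H^1(C,\mathcal{L}(-1))=0$ to exclude generators or syzygies in the wrong degrees, and on the numerical input from Riemann--Roch on the elliptic curve $C$. This is precisely the content of \cite[Theorem~5.2]{Ishitsuka/17} (building on \cite{Beauville00}), which we may therefore invoke directly; the argument above records its structure.
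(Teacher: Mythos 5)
The paper gives no proof of this lemma at all --- it is imported verbatim as \cite[Th.~5.2]{Ishitsuka/17} --- and your argument is a correct reconstruction of the standard Beauville--Ishitsuka proof of that result (cokernel sheaf of the matrix of linear forms in one direction, minimal graded free resolution of the section module of $\mathcal{N}(1)$ in the other, with equivalence classes matched to isomorphism classes via uniqueness of minimal resolutions). The only point worth flagging is cosmetic: you tacitly assume $\det\tuM\neq 0$ when writing $\det\tuM=cf$ with $c\in\alg^\times$, which is how \cref{def:equivalence} is meant to be read.
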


\begin{defn}\label{def:ps}
Let $\cor{C}$ denote the collection of smooth genus $1$ curves in $\P^2_{\alg}$ and write 
\[
\cor{L}_{\cor{C}}=\bigcup_{C\in \cor{C}}\cor{L}_{C}. 
\]
For each choice of $\tuM \in\cor{L}_{\cor{C}}$, the notation $\ps{\tuM}$ is used for the orbit of $\tuM$ with respect to the action of $\Gamma=\GL_3(\alg)\times \GL_3(\alg)\times\GL_3(\alg)$ on $\cor{L}_{\cor{C}}$ that is defined by
\begin{align*}
\Gamma\times 
\cor{L}_{\cor{C}} & \longrightarrow \cor{L}_{\cor{C}}, \\
((X,Y,Z), \tuM(\bm{y})) & \longmapsto X^{\tp}\tuM(Z\bm{y})Y.
\end{align*}
Moreover,  $\mathcal{M}_{\mathcal{C}}$ denotes the collection of all $\ps{\tuM}$
where $\tuM\in\cor{L}_{\cor{C}}$.
\end{defn}

\begin{prop}\label{prop:M-bundles}
Let $\cor{C}$ denote the collection of smooth genus $1$ curves in $\P^2_{\alg}$. 
Let $[\tuM],[\tuM']\in\cor{L}_\cor{C}$ and write $C$ and $C'$ for the curves defined by $\det\tuM=0$ and $\det\tuM'=0$, respectively. Let, moreover, $\cor{L}$ and $\cor{L}'$ be degree $0$ non-effective line bundles on $C$ resp.\ $C'$ associated to $\tuM$ and $\tuM'$ as in \cref{prop:bundles}. Then the following are equivalent: 
\begin{enumerate}[label=$(\arabic*)$]
\item $\ps{\tuM}=\ps{\tuM'}$,
\item there exists a linear isomorphism $\gamma : C\rightarrow C'$ such that $\gamma^*\cor{L}'=\cor{L}$.
\end{enumerate}
\end{prop}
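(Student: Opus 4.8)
The plan is to deduce this result by combining \cref{prop:bundles} with a bookkeeping argument that tracks how the $\Gamma$-action on $\cor{L}_{\cor{C}}$ interacts with the bijection $\tuM \mapsto \cor{L}$ between $\cor{L}_C$ and non-effective degree $0$ line bundles on $C$. The key point is that the action of $(X,Y,Z)\in\Gamma$ on a matrix $\tuM$ decomposes into two conceptually different pieces: the ``left-right'' pair $(X,Y)$ acts by $\tuM\mapsto X^{\tp}\tuM Y$, which by \cref{def:equivalence}\ref{it:det1} fixes the class $[\tuM]$ and hence, by \cref{prop:bundles}, fixes both the curve $C$ and the associated line bundle $\cor{L}$; while the ``coordinate'' part $Z\in\GL_3(\alg)$ acts by $\tuM(\bm y)\mapsto \tuM(Z\bm y)$, which is precisely a linear change of coordinates on $\P^2_{\alg}$ and thus induces a linear isomorphism of the ambient plane carrying $C$ to a new curve $C'$ together with a pullback identification of line bundles.

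First I would set up the forward implication $(1)\Rightarrow(2)$. Suppose $\ps{\tuM}=\ps{\tuM'}$, so there is $(X,Y,Z)\in\Gamma$ with $\tuM'(\bm y)=X^{\tp}\tuM(Z\bm y)Y$. Let $\gamma\colon\P^2_{\alg}\to\P^2_{\alg}$ be the linear automorphism induced by $Z$ (more precisely by $Z^{-1}$, to get the directions right); since $\det\tuM'(\bm y)$ is a unit times $\det\tuM(Z\bm y)$, the map $\gamma$ restricts to a linear isomorphism $C'\to C$, or equivalently we get $\gamma\colon C\to C'$ after inverting. Then $\tuM(Z\bm y)$ and $\tuM'$ differ only by the left-right pair $(X,Y)$, so $[\tuM(Z\bm y)]=[\tuM']$ in $\cor{L}_{C'}$; on the other hand $\tuM(Z\bm y)$ is literally the coordinate pullback of $\tuM$, so its associated line bundle on $C'$ is $\gamma^*\cor{L}$ by naturality of the bijection in \cref{prop:bundles} (this is the naturality clause — it is asserted to be a \emph{natural} bijection). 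Chasing through, $\cor{L}'=\gamma^*\cor{L}$, i.e.\ $\gamma^*\cor{L}'=\cor{L}$ after possibly replacing $\gamma$ by its inverse. For $(2)\Rightarrow(1)$, given a linear isomorphism $\gamma\colon C\to C'$ with $\gamma^*\cor{L}'=\cor{L}$, lift $\gamma$ to $Z\in\GL_3(\alg)$; then $\tuM(Z\bm y)$ is a determinantal representation of $C'$ whose associated line bundle is $\gamma^*\cor{L}=\cor{L}'$, hence by the injectivity half of \cref{prop:bundles} we have $[\tuM(Z\bm y)]=[\tuM']$ in $\cor{L}_{C'}$, which gives $X,Y\in\GL_3(\alg)$ with $X^{\tp}\tuM(Z\bm y)Y=\tuM'$, so $(X,Y,Z)$ witnesses $\ps{\tuM}=\ps{\tuM'}$.

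The main obstacle I anticipate is making the naturality statement precise and correctly oriented: one must verify that the bijection of \cref{prop:bundles} is genuinely functorial under linear coordinate changes, so that the pullback of line bundles on the curve side matches the substitution $\bm y\mapsto Z\bm y$ on the matrix side, and one must be careful about whether $Z$ or $Z^{-1}$ induces the geometric map and whether $\gamma$ goes $C\to C'$ or $C'\to C$ (these choices must be consistent between the two implications). A secondary, purely routine point is checking that a linear isomorphism between two smooth plane cubics is always the restriction of a linear automorphism of $\P^2_{\alg}$ — this is standard since the embedding is the complete anticanonical (degree $3$) system, so any isomorphism is induced by an element of $\PGL_3(\alg)$ — but it should be cited or noted rather than belabored. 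Once these points are pinned down, the proposition follows formally from \cref{prop:bundles} with no further computation.
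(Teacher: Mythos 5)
Your plan is correct and is exactly the deduction the paper intends: the paper offers no written proof beyond declaring the proposition "a direct consequence of \cref{prop:bundles}", and your decomposition of the $\Gamma$-action into the left--right pair $(X,Y)$ (which fixes $[\tuM]$ and hence the associated bundle) and the substitution $\bm y\mapsto Z\bm y$ (which corresponds, by naturality of Ishitsuka's bijection, to pullback along the induced linear isomorphism of curves) is precisely the bookkeeping that makes that assertion true. The only blemish is the ill-typed expression $\gamma^*\cor{L}$ in your converse direction (with $\gamma:C\to C'$ and $\cor{L}$ on $C$ you mean $(\gamma^{-1})^*\cor{L}=\cor{L}'$), an orientation slip you yourself flag and which does not affect the argument.
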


\begin{proof}
Rephrasing \cref{def:ps}, one has that  $\ps{\tuM}=\ps{\tuM'}$ if and only if there exists $Z\in\GL_3(\alg)$ such that
$[\tuM'({\bm y})]=[\tuM(Z\bm{y})]$.  By calling $\gamma$ the map $C\rightarrow C'$ that is induced by $Z$, it follows from \cref{prop:bundles} that $\ps{\tuM}=\ps{\tuM'}$ if and only if $\gamma^*\cor{L}'=\cor{L}$.
\end{proof}

\subsection{Explicit Weierstrass representations}\label{sec:ishitsuka}

\noindent
Until the end of the present section, let $a,b\in \alg$ with $4a^3+27b^2\neq 0$ and let $E$ denote the elliptic curve 
\begin{equation}\label{eq:E-sWe}
E\ :\ y^2z=x^3+axz^2+bz^3.
\end{equation}
In this paper, when talking of a \emph{Weierstrass equation} of  a curve $E$ we will mean an equation of the form \eqref{eq:E-sWe}, commonly referred to as a short Weierstrass equation of $E$.  
In the following definition, the matrix $\J{E}{P}$ is equivalent, in the sense of \cref{def:equivalence}, to the matrix $M_P$ from \cite[Ex.~7.6]{Ishitsuka/17}.

\begin{defn}\label{def:JP} 
 Let $P=(\lambda,\mu,1)\in E(\alg)$. Then $\J{E}{P}\in\Mat_3(\alg[x,y,z])$ is defined by
\[
\J{E}{P}=\J{E}{P}(x,y,z)=\begin{pmatrix}
x-\lambda z & y-\mu z & 0 \\
y+\mu z & \lambda x +(a+\lambda^2)z & x \\
0 & x & -z 
\end{pmatrix}.
\]
Moreover, the matrix $\tuB_{E,P}=\tuB_{E,P}(x,y,z)\in \Mat_6(\alg[x,y,z])$ is defined as 
\[
\tuB_{E,P}=\begin{pmatrix}
0 & \J{E}{P} \\
-\J{E}{P}^{\tp} & 0
\end{pmatrix}
\]
and the group $\GP_{\tuB_{E,P}}(\alg)$ is denoted $\GP_{E,P}(\alg)$.
\end{defn}

Note that all (projective) points in $E(\alg)\setminus\{\cor{O}\}$ are of the form $(\lambda,\mu,1)$ as in \cref{def:JP}.
The following result is the combination of Theorem 5.2, i.e.\ this paper's \cref{prop:bundles}, and Proposition 7.1 from \cite{Ishitsuka/17}; cf.\ also \cite[Ex.~7.6]{Ishitsuka/17}. For an alternative reference, see for instance \cite[Th.~1]{RaviTri14}.

\begin{prop}\label{thm:ishitsuka}
Assume $6\alg=\alg$. Then the following hold:
\begin{enumerate}[label=$(\arabic*)$]
\item the following map is a well-defined bijection
\[
E(\alg)\setminus\{\cor{O}\}\longrightarrow \cor{L}_E, \quad P\longmapsto [\J{E}{P}].
\]
\item for $P\in E(\alg)\setminus\{\cor{O}\}$, one has 
\[
[\J{E}{P}]\in \cor{L}_E^{\mathrm{sym}}\ \Longleftrightarrow \ P\in  E[2](\alg).
\]
\end{enumerate}
\end{prop}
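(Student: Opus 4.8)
The plan is to deduce \cref{thm:ishitsuka} from the results already assembled in this section, mainly \cref{prop:bundles} and \cref{prop:M-bundles}, combined with the classical dictionary between $E(\alg)\setminus\{\cor{O}\}$, $\Pic^0(E)\setminus\{0\}$ and non-effective degree $0$ line bundles, recorded in \cref{lem:divlinbdl}, \eqref{eq:D}, \eqref{eq:pic0}, and \cref{lem:abel}. Concretely, part $(1)$ should follow by composing the bijection $E(\alg)\setminus\{\cor{O}\}\to\Pic^0(E)\setminus\{0\}$, $P\mapsto[P-\cor{O}]$ from \eqref{eq:pic0}, with the bijection $\Pic^0(E)\setminus\{0\}\to\cor{L}_0$, $[D]\mapsto\cor{L}(D)$ from \eqref{eq:D}, and finally with the inverse of the bijection $\cor{L}_E\to\{\text{non-effective deg }0\text{ line bundles}\}$ supplied by \cref{prop:bundles}. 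What remains is the \emph{explicit} content: that under this chain the line bundle $\cor{L}(P-\cor{O})$ corresponds precisely to the equivalence class $[\J{E}{P}]$. This is where the statement refers to \cite[Prop.~7.1]{Ishitsuka/17} and \cite[Example~7.6]{Ishitsuka/17}: one has to check that the matrix $M_P$ of Ishitsuka, built from the Weierstrass data $(a,b,\lambda,\mu)$, is a linear determinantal representation of $E$ whose associated line bundle is $\cor{L}(P-\cor{O})$, and then observe (as already noted just before \cref{def:JP}) that $\J{E}{P}\sim M_P$, so $[\J{E}{P}]$ is the same class in $\cor{L}_E$. I would state the correspondence $\cor{L}(P-\cor{O})\leftrightarrow[M_P]$ as the cited input and then only verify the elementary linear algebra that $\J{E}{P}$ and $M_P$ differ by left/right multiplication by constant invertible matrices, hence define the same element of $\cor{L}_E$; well-definedness and bijectivity are then automatic from the cited bijections.

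For part $(2)$ the plan is to characterize when $[\J{E}{P}]$ lies in $\cor{L}_E^{\mathrm{sym}}$, i.e.\ admits a symmetric representative. A determinantal representation $\tuM$ of $E$ is equivalent to a symmetric one if and only if the associated line bundle $\cor{L}$ is isomorphic to its own ``dual'' in the appropriate sense — more precisely, symmetric linear determinantal representations of a plane cubic correspond to line bundles $\cor{L}$ with $\cor{L}\otimes\cor{L}\cong\cor{O}_E(1)$ (equivalently, $\cor{L}^{\otimes 2}$ is the hyperplane bundle), which is the classical statement underlying the theory of theta characteristics / symmetric determinantal representations and is exactly what \cite{Ishitsuka/17} records. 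Translating through \eqref{eq:D} and \eqref{eq:pic0}, writing $\cor{L}=\cor{L}(P-\cor{O})$ and $\cor{O}_E(1)=\cor{L}(3\cor{O})$ (since $\cor{O}$ is a flex), the condition $\cor{L}^{\otimes 2}\cong\cor{O}_E(1)$ becomes $[2(P-\cor{O})]=[3\cor{O}]$ in $\Pic(E)$; but $3\cor{O}\sim\cor{O}_E(1)$ and the hyperplane class has degree $3$, whereas $2(P-\cor{O})$ has degree $0$, so one must instead track the symmetry condition at the level of $\Pic^0$, where it reads $[2P-2\cor{O}]=0$, i.e.\ $[2]P=\cor{O}$ by Abel's Theorem (\cref{lem:abel}). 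Hence $[\J{E}{P}]\in\cor{L}_E^{\mathrm{sym}}$ iff $P\in E[2](\alg)$.

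The main obstacle, and the step I would be most careful about, is pinning down the precise normalization in the symmetry criterion: exactly which line bundle corresponds to symmetric representations of a plane cubic, and how the choice of $\cor{O}$ as a flex (so that $\cor{O}_E(1)=\cor{L}(3\cor{O})$) makes the degree bookkeeping work out. This is purely a matter of quoting \cite[Prop.~7.1, Example~7.6]{Ishitsuka/17} correctly and feeding it into \cref{prop:bundles}; once the correspondence ``$[\tuM]$ symmetric $\iff$ its line bundle $\cor{L}$ satisfies the relevant self-duality'' is in hand, the reduction to $[2]P=\cor{O}$ via \eqref{eq:pic0} and \cref{lem:abel} is a one-line computation. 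The hypothesis $6\alg=\alg$ enters exactly as in \cite{Ishitsuka/17} (to guarantee that the Weierstrass form is available in characteristic $\neq 2,3$ and that the explicit matrix constructions are valid), so I would invoke it only to license the use of \cref{prop:bundles} and the cited propositions, not re-derive anything. No genuinely new argument is needed beyond assembling these pieces; the write-up is essentially a bookkeeping proof citing \cref{prop:bundles}, \eqref{eq:D}, \eqref{eq:pic0}, \cref{lem:abel}, and Ishitsuka's explicit computation, with the remark that $\J{E}{P}\sim M_P$.
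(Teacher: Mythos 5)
Your proposal is correct and matches the paper's treatment: the paper offers no independent proof of this proposition, stating only that it is the combination of Ishitsuka's Theorem~5.2 (= \cref{prop:bundles}) and Proposition~7.1 (cf.\ Example~7.6), together with the observation that $\J{E}{P}\sim M_P$, which is exactly the assembly of bijections \eqref{eq:pic0}, \eqref{eq:D}, \cref{prop:bundles} and the symmetry criterion that you spell out. Your only wobble — initially quoting the symmetry criterion in the theta-characteristic normalization $\cor{L}^{\otimes 2}\cong\cor{O}_E(1)$ before correcting to the degree-$0$ normalization $\cor{L}^{\otimes 2}\cong\cor{O}_E$, i.e.\ $[2]P=\cor{O}$ — is resolved correctly, and the conclusion agrees with \cref{rmk:JP}\ref{it:JP}.
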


\begin{remark}\label{rmk:JP}
Let $P=(\lambda, \mu,1)\in E(\alg)$. Then the following hold:
\begin{enumerate}[label=$(\arabic*)$]
\item $\J{E}{P}^{\tp}=\J{E}{\ominus P}$.
\item\label{it:JP} $\J{E}{P}$ is symmetric if and only if $\mu=0$, equivalently $P$ has order $2$ in $E(\alg)$.
\item if $\mu=0$, then $\J{E}{P}$ is equivalent to the following ``Hessian matrix'': 
\[
\mathrm{H}_{E,P}=\begin{pmatrix}
3\lambda x+az & y & a\lambda z+ax+3bz \\
y & x-\lambda z & -\lambda y\\
a\lambda z +ax+3bz & -\lambda y & a\lambda x+3\lambda bz+3bx-a^2z
\end{pmatrix}.
\]
This matrix corresponds to one of the three solutions to Hesse's system for $E$; cf.\ \cite{Hesse/44} and, with direct connection to this paper's work, \cite[Eq.~(1.6)]{stanojkovski2019hessian}.
\end{enumerate}
\end{remark}

\begin{remark}[Flex points for linearity]\label{rmk:flex}
An elliptic curve $\tilde{E}$ in $\P^2_{\alg}$ is defined by a smooth cubic in $\alg[y_1,y_2,y_3]$ which, however, need not be in short Weierstrass form.  For a $\alg$-linear change of coordinates to exist in order to express $\tilde{E}$ by a short Weierstrass equation, a sufficient condition is for $\tilde{E}$ to have a flex point over $\alg$. This is explicitly explained in \cite[Sec.~4.4]{Cremona/03} and accounts of this transformation can also be found in \cite[Prop.~III.3.1]{silverman} and \cite[Sec.~1.3]{SilvermanTate}. Via this change of coordinates, the flex point is mapped to the point at infinity $\mathcal{O}=(0:1:0)$, given in projective coordinates, which is also taken to be the identity for the group law on $\tilde{E}$.  Once this identification is made, it is a classical result that the collection of flex points in $\tilde{E}(\alg)$ coincides with $\tilde{E}[3](\alg)$ where the unique element of order $1$ is precisely $\cor{O}$; cf.\ \cite[Ex.~5.37]{Fulton69}. 
\end{remark}

\section{Proofs of Theorems \ref{mainthm:sat},  \ref{mainthm:aut}, and \ref{mainthm:adj}}\label{sec:ABC}

Relying on a number of techniques including the employment of Lie algebras (via the Baer correspondence) and results on the realization of elliptic curves as zero sets of determinants of $3\times 3$ matrices of linear forms, in Sections \ref{sec:AdjTh}, \ref{sec:proof-sat}, and \ref{sec:proof-aut} we prove Theorems \ref{mainthm:adj}, \ref{mainthm:sat}, and \ref{mainthm:aut} in the forms of \cref{cor:adj-cent-elliptic}, \cref{th:awesome}, and \cref{cor:pseudo}, respectively. To this end, we let $E$ denote the elliptic curve 
\[
E\ :\ y^2z=x^3+axz^2+bz^3, \quad a,b\in \alg \textup{ with } 4a^3+27b^2\neq 0.
\]

\subsection{Adjoint algebras for $E$-groups and the proof of \cref{mainthm:adj}}\label{sec:AdjTh}

We are now ready to describe the structure of the adjoint algebras of
$E$-groups coming from an isotropically decomposable $\tuB$ with upper-right corner $\tuM=\J{E}{P}$.

\begin{prop}\label{prop:JP-adj}
  Assume $\alg=2\alg$, and let $P=(\lambda,\mu,1)\in E(\alg)$. Then one has
    \begin{align*} 
        \Adj(\J{E}{P})&=\{(kI_3,kI_3) ~|~ k\in\alg\}, \ \ \textup{ and } \ \ \Adj(\J{E}{P},\J{E}{P}^{\tp}) = \begin{cases} 
            \Adj(\J{E}{P}) & \text{if } \mu=0, \\
            0 & \text{otherwise}. 
        \end{cases} 
    \end{align*} 
\end{prop}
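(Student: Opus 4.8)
The plan is to compute everything on the level of matrices of linear forms. Write $\J{E}{P} = Ax + By + Cz$, where
\[
A = \begin{pmatrix}1&0&0\\0&\lambda&1\\0&1&0\end{pmatrix},\quad B = \begin{pmatrix}0&1&0\\1&0&0\\0&0&0\end{pmatrix},\quad C = \begin{pmatrix}-\lambda & -\mu & 0\\\mu & a+\lambda^2 & 0\\0&0&-1\end{pmatrix};
\]
since $A$ and $B$ are symmetric, the coefficient matrices of $\J{E}{P}^{\tp}$ are $A$, $B$, and $C^{\tp}$. Reading off the pairs of equations in \cref{lem:adj-structure} with $\tuM = \J{E}{P}$, on this level
\[
\Adj(\J{E}{P}) = \{(\alpha,\beta)\in\Mat_3(\alg)^2 \mid \alpha^{\tp}A = A\beta,\ \alpha^{\tp}B = B\beta,\ \alpha^{\tp}C = C\beta\},
\]
and $\Adj(\J{E}{P},\J{E}{P}^{\tp})$ is cut out by the same system, except that the last equation becomes $\gamma^{\tp}C^{\tp} = C\delta$. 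The key observation is that $\det A = -1$, so the $A$-equation determines the second coordinate: $\beta = A^{-1}\alpha^{\tp}A$ (and likewise $\delta = A^{-1}\gamma^{\tp}A$), with $A^{-1} = \left(\begin{smallmatrix}1&0&0\\0&0&1\\0&1&-\lambda\end{smallmatrix}\right)$.

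For $\Adj(\J{E}{P})$, substituting $\beta = A^{-1}\alpha^{\tp}A$ into the remaining two equations turns them into the requirement that $\varepsilon := \alpha^{\tp}$ commute with both $N := BA^{-1} = \left(\begin{smallmatrix}0&0&1\\1&0&0\\0&0&0\end{smallmatrix}\right)$ and $M := CA^{-1}$. Since $N$ is a single nilpotent Jordan block ($N^{2}\neq 0 = N^{3}$), its centralizer in $\Mat_3(\alg)$ is $\{aI_3 + bN + cN^2 \mid a,b,c\in\alg\}$; requiring that such an $\varepsilon$ also commute with the explicit matrix $M$ forces $b=c=0$ upon comparing, say, the $(1,2)$- and $(2,2)$-entries. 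Hence $\alpha = \varepsilon^{\tp} = kI_3$ for some $k\in\alg$, and then $\beta = A^{-1}(kI_3)A = kI_3$, which is the first assertion of the proposition.

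The adjoint module is handled by the same reduction. If $\mu = 0$, then $\J{E}{P}$ is symmetric (\cref{rmk:JP}), so $\J{E}{P}^{\tp} = \J{E}{P}$ and $\Adj(\J{E}{P},\J{E}{P}^{\tp}) = \Adj(\J{E}{P})$, which has just been computed. If $\mu\neq 0$, a pair $(\gamma,\delta)$ in the module corresponds via $\varepsilon := \gamma^{\tp}$ to a matrix commuting with $N$, hence of the form $\varepsilon = aI_3 + bN + cN^2$, that in addition satisfies $\varepsilon M' = M\varepsilon$ with $M' := C^{\tp}A^{-1}$. Comparing entries of $\varepsilon M'$ and $M\varepsilon$, the $(1,2)$-entry forces $b=0$; the $(1,3)$-entry then reads $a\mu = -a\mu$, i.e.\ $2a\mu = 0$, so $a = 0$ since $\mu\neq 0$ and $\charac(\alg)\neq 2$ (which holds because $\alg = 2\alg$); and the $(2,2)$-entry forces $c = 0$. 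Thus $\varepsilon = 0$, whence $\gamma = 0$ and $\delta = A^{-1}\gamma^{\tp}A = 0$, so $\Adj(\J{E}{P},\J{E}{P}^{\tp}) = 0$.

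No step is deep: the whole argument is $3\times 3$ matrix arithmetic over $\alg$. The points that need care are keeping the transpose conventions for $\Adj(\,\cdot\,)$ and $\Adj(\,\cdot\,,\,\cdot\,)$ straight (they come from \cref{lem:adj-structure}), inverting $A$ and forming $N$, $M$, $M'$ correctly, and using the hypothesis $\alg = 2\alg$ exactly at the implication $2a\mu = 0 \Rightarrow a = 0$. The most error-prone part is likely the entry-by-entry comparison of $\varepsilon M$ with $M\varepsilon$ and of $\varepsilon M'$ with $M\varepsilon$, though each amounts to only a handful of linear relations among $a,b,c,\lambda,\mu$.
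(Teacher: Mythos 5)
Your proof is correct, and it takes a genuinely different route from the paper. The paper's proof assembles the defining conditions into an explicit $18\times 27$ matrix $M(\bm{z})$ over $\alg[\bm{z}]$ and shows its left kernel is $1$-dimensional (resp.\ trivial, when $\mu\neq 0$) by column-echelon reductions that are partly delegated to \textsf{SageMath} and \textsf{Magma}. You instead exploit the single structural fact that the coefficient matrix $A$ of $x$ has constant determinant $-1$: solving the $A$-equation for the second coordinate collapses the system to the commutation condition $\varepsilon N=N\varepsilon$ with $N=BA^{-1}$ a regular nilpotent, whose centralizer is the $3$-dimensional algebra $\alg[N]$, and then one intertwining condition against $M=CA^{-1}$ (resp.\ $M'=C^{\tp}A^{-1}$) kills the extra parameters. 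I checked the matrices $A^{-1}$, $N$, $M$, $M'$ and the entry comparisons: the $(1,2)$- and $(2,2)$-entries do force $b=c=0$ in the algebra case, and in the module case with $\mu\neq 0$ the $(1,3)$-entry gives $2a\mu=0$, where $\alg=2\alg$ is used exactly as you say. Your argument is fully human-verifiable and arguably preferable for a reader, at the cost of being tied to the specific shape of $\J{E}{P}$; the paper's echelon-form argument is closer to how one would actually compute adjoint algebras algorithmically, which matters for the later complexity claims. One cosmetic caveat: you overload $a$ as both the Weierstrass coefficient and a centralizer coefficient; rename one of them before this goes anywhere near print.
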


\begin{proof} 
    Let $\bm{z} = (z_1,z_2,z_3)$ be variables. The adjoint algebra $\Adj(\J{E}{P})$
    is determined by a linear system of $27$ equations in $18$ variables. We
    describe an $M(\bm{z})\in\Mat_{18\times 27}(\alg[\bm{z}])$ such that the
    left kernel of $M(\lambda, \mu, a+\lambda^2)$ is in bijection with
    $\Adj(\J{E}{P})$. Write $\J{E}{P} = Ax + By + Cz$ for $A,B,C\in\Mat_3(\alg)$ computable from \cref{def:JP}, and
    define 
    \begin{align} \label{eqn:original-system}
        M(\bm{z}) &= {\footnotesize \begin{bmatrix} 
            -A & 0 & 0 & -B & 0 & 0 & -C & 0 & 0 \\
            0 & -A & 0 & 0 & -B & 0 & 0 & -C & 0 \\
            0 & 0 & -A & 0 & 0 & -B & 0 & 0 & -C \\
            E_{11} & z_1 E_{21} + E_{31} & E_{21} & E_{21} & E_{11} & 0 & -z_1 E_{11} -z_2 E_{21} & z_2 E_{11} + z_3 E_{21} & -E_{31} \\
            E_{12} & z_1 E_{22} + E_{32} & E_{22} & E_{22} & E_{12} & 0 & -z_1 E_{12} -z_2 E_{22} & z_2 E_{12} + z_3 E_{22} & -E_{32} \\
            E_{13} & z_1 E_{23} + E_{33} & E_{23} & E_{23} & E_{13} & 0 & -z_1 E_{13} -z_2 E_{23} & z_2 E_{13} + z_3 E_{23} & -E_{33} 
        \end{bmatrix}},
    \end{align}
 where $E_{ij}$ is the $3\times 3$ matrix with $1$ in the $(i,j)$-entry and $0$
    elsewhere. 
    
    By performing Gaussian elimination over $\alg[{\bm z}]$ on the columns of $M({\bm z})$, one can conclude that the left kernel of $M({\bm z})$ is
    $1$-dimensional, regardless of the values of $\lambda$, $\mu$, or $a$. The
    computations have been carried out in \textsf{SageMath}~\cite{SageMath} and
    \textsf{Magma}~\cite{magma/97}.  Now, since the adjoint algebra is unital,
    the left kernel of $M(\bm{z})$ being $1$-dimensional implies that $\Adj(\J{E}{P})\cong \alg$. 
    
    For $\Adj(\J{E}{P}, \J{E}{P}^{\tp})$,  \cref{rmk:JP}\ref{it:JP} ensures
    that, if $\mu=0$, then $\Adj(\J{E}{P}, \J{E}{P}^{\tp})=\Adj(\J{E}{P})$. Now
    assume that $\mu\neq 0$. A matrix $M'(\bm{z})$ whose left kernel defines
    a basis for $\Adj(\J{E}{P}, \J{E}{P}^{\tp})$, at
    $\bm{z}=(\lambda,\mu,a+\lambda^2)$, is obtained from $M(\bm{z})$ by
    replacing the three blocks $-C$ with $-C^{\tp}$. 
By performing Gaussian elimination over $K[{\bm z}]$ one can show that  $M'(\bm{z})$ has full
    rank, implying 
    $\Adj(\J{E}{P}, \J{E}{P}^{\tp})=0$. These computations have also been
    carried out in \textsf{SageMath} and \textsf{Magma}.
\end{proof}

\begin{remark}\label{rem:elliptic-dimin}
    The consequence of \Cref{prop:JP-adj}, combined with \cref{rmk:JP}, is that for all skew-symmetric
    matrices $\mathrm{S}\in \Mat_3(\alg)$, the matrix $\tuB =
    \left(\begin{smallmatrix} 0 & \J{E}{P} \\ -\J{E}{P}^{\tp} & \mathrm{S}
    \end{smallmatrix}\right)$ is half-generic. 
\end{remark}

\begin{coro}\label{cor:adj-cent-elliptic}
    Let $t : V \times V \rightarrow T$ be a fully-nondegenerate alternating
    $\alg$-tensor whose Pfaffian defines a smooth cubic $E$ in $\P^2_{\alg}$ with a flex point $\cor{O}\in
    E(\alg)$.  Assume $6\alg =
    \alg$. Then the following hold.
    \begin{enumerate}[label=$(\arabic*)$] 
        \item\label{cor-part:2} $|\ti{t}| = 2$ if and only if 
        $\Adj(t)\cong \mathbf{X}_1(\alg)$.
        \item\label{cor-part:q+1} $|\ti{t}| > 2$ if and only if 
        $\Adj(t)\cong \mathbf{S}_2(\alg)$.
    \end{enumerate}
\end{coro}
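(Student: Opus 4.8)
The plan is to obtain \cref{cor:adj-cent-elliptic} by specialising \cref{thm:reducible-adjoints} and combining it with \cref{prop:adj-X-S}; the only genuine work is checking that $t$ is half-generic, so that \cref{thm:reducible-adjoints} applies.

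First I would dispose of the hypotheses of \cref{thm:reducible-adjoints} that are immediate. A smooth plane cubic is geometrically irreducible, so the cubic form $\Pf(t)$ is irreducible over $\alg$; by assumption $t$ is full, hence (using also $\Pf(t)\neq 0$) fully nondegenerate. Using the $\alg$-rational flex point, \cref{rmk:flex} supplies a $\alg$-linear change of coordinates on $T$ — that is, a pseudo-isometry of $t$ — after which the curve $\{\Pf(t)=0\}$ is given by a short Weierstrass equation \eqref{eq:E-sWe} for an elliptic curve $E$. Since half-genericity, $|\ti{t}|$, $\Cent(t)$, and $\Adj(t)$ (up to $*$-isomorphism) are all pseudo-isometry invariants, I may assume from now on that $\Pf(t)$ is of this form.

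For the forward implications in both parts, assume $|\ti{t}|\geq 2$. By \cref{prop:ti3d} there is a pseudo-isometry taking $t$ to the tensor with matrix $\left(\begin{smallmatrix}0 & \tuM\\ -\tuM^{\tp} & 0\end{smallmatrix}\right)$, where $\tuM\in\Mat_3(\alg[\bm{y}]_1)$ and $\det\tuM$ is a scalar multiple of $\Pf(t)$; in particular $[\tuM]\in\cor{L}_E$. Since $6\alg=\alg$, \cref{thm:ishitsuka}(1) provides $P\in E(\alg)\setminus\{\cor{O}\}$ with $[\tuM]=[\J{E}{P}]$, so $t$ is pseudo-isometric to the tensor attached to $\tuB_{E,P}$. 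Now \cref{prop:JP-adj} says exactly that $\Adj(\J{E}{P})\cong\alg$ and that $\Adj(\J{E}{P},\J{E}{P}^{\tp})$ is $\Adj(\J{E}{P})$ if $\J{E}{P}$ is symmetric and $0$ otherwise — which is the defining condition in \cref{def:dim}. Hence $\tuB_{E,P}$, and therefore $t$, is half-generic. Applying \cref{thm:reducible-adjoints} now yields $\Cent(t)\cong\alg$, together with $\Adj(t)\cong\mathbf{X}_1(\alg)$ when $|\ti{t}|=2$ (part \ref{thm-part:2}) and $\Adj(t)\cong\mathbf{S}_2(\alg)$ when $|\ti{t}|>2$ (part \ref{thm-part:q+1}).

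The reverse implications need no further input: since $\Pf(t)\neq 0$, \cref{prop:adj-X-S} already shows that $\Adj(t)\cong\mathbf{X}_1(\alg)$ forces $|\ti{t}|=2$ and $\Adj(t)\cong\mathbf{S}_2(\alg)$ forces $|\ti{t}|>2$ (the assumption $\Cent(t)\cong\alg$ is not needed for this direction). I expect the main obstacle to be the half-genericity step — recognising that any such $t$ with $|\ti{t}|\geq 2$ is pseudo-isometric to a tensor built from some $\J{E}{P}$ — which is precisely where the flex-point hypothesis and the bijection of \cref{thm:ishitsuka} are used, and where one has to make sure that the identity $[\tuM]=[\J{E}{P}]$ in $\cor{L}_E$ lifts to an honest pseudo-isometry between the corresponding block tensors.
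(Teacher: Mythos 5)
Your proposal is correct and follows essentially the same route as the paper: reduce to a Weierstrass/isotropically-decomposed form via the flex point, identify the top-right block with some $\J{E}{P}$ (the paper cites \cref{rmk:flex} and \cref{prop:M-bundles} where you invoke \cref{thm:ishitsuka}, but both rest on the same classification), deduce half-genericity from \cref{prop:JP-adj}, and conclude with \cref{thm:reducible-adjoints} for the forward directions and \cref{prop:adj-X-S} for the converses. Your extra care about lifting the equivalence $[\tuM]=[\J{E}{P}]$ to a pseudo-isometry of the block tensors is sound (cf.\ \cref{rmk:autotopism-pseudo}) but not strictly needed, since half-genericity only requires the adjoint conditions on $\tuM$, which are preserved under equivalence.
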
 

\begin{proof} 
    Since $t$ is fully-nondegenerate with a Pfaffian defining a smooth cubic
    in $\P^2_{\alg}$, one has $\dim_{\alg}V = 6$ and $\dim_{\alg}T=3$. Let
    $\tuB\in \Mat_{6}(\alg[y_1,y_2,y_3]_1)$ be associated with $t$. 
    
    First we assume $|\ti{t}|\geq 2$. Without loss of generality, assume $\tuB$
    is isotropically decomposed with top right $3\times 3$ block equal to
    $\tuM$. Since $\charac(\alg)\notin \{2,3\}$ and $\Pf(\tuB)$ has a
    $\alg$-rational flex, \cref{rmk:flex} together with \cref{prop:M-bundles} ensure the existence of a pair $(E,P)$ such that $\ps{\tuM} = \ps{\J{E}{P}}$. Therefore, $\Adj(\tuM) \cong
    \Adj(\J{E}{P})$, and by \cref{prop:JP-adj}, the tensor $t$ is half-generic. The forward directions for both   \ref{cor-part:2} and \ref{cor-part:q+1} follow from \Cref{thm:reducible-adjoints}. The reverse directions  for both follow from \cref{prop:adj-X-S}. 
\end{proof} 

\begin{proof}[Proof of \cref{mainthm:adj}]
If $t$ is the tensor associated to the matrix $\tuB$ from \cref{mainthm:adj}, then by \cref{cor:adj-cent-elliptic} the following equivalences hold
\[
|\ti{t}| = 2\ \Longleftrightarrow\ \Adj(t)\cong \mathbf{X}_1(\alg), \quad |\ti{t}|> 2\ \Longleftrightarrow\ \Adj(t)\cong \mathbf{S}_2(\alg).
\]
Relying on $\tuB$ being (a)symmetrically half-generic, as is done in the proof
\cref{thm:reducible-adjoints}, combining \cref{prop:JP-adj} with \cref{rmk:JP}
we deduce that
\[
|\ti{t}| = 2\ \Longleftrightarrow\ P\in E(\alg)\setminus E[2](\alg), \quad |\ti{t}|> 2\ \Longleftrightarrow\ P\in E[2](\alg)\setminus\{\cor{O}\}. \qedhere
\] 
\end{proof}

\subsection{Isomorphism testing via isogenies and the proof of
\Cref{mainthm:sat}}\label{sec:proof-sat}

In this section, we give necessary and sufficient conditions for two groups of
the form $\GP_{E,P}(\ff)$ and $\GP_{E',P'}(\ff)$ to be isomorphic. 

\begin{lemma}\label{lem:swap}
Define the following matrices in $\GL_3(\alg)$:
\[
X_0=\begin{pmatrix}
-1 & 0 & 0 \\ 0 & 1 & 0 \\ 0 & 0 & 1
\end{pmatrix}\ \ \textup{ and } \ \ 
Z_0=\begin{pmatrix}
-1 & 0 & 0 \\ 0 & 1 & 0 \\ 0 & 0 & -1
\end{pmatrix}.
\]
For every elliptic curve $E$ in short Weierstrass form over $\alg$ and every
$P\in E(\alg)\setminus\{\cor{O}\}$, the following equality holds:
\[
\begin{pmatrix}
0 & X_0 \\ X_0 & 0
\end{pmatrix}
\tuB_{E,P}(Z_0{\bm y})
\begin{pmatrix}
0 & X_0 \\ X_0 & 0
\end{pmatrix} = \tuB_{E,P}({\bm y}).
\]
\end{lemma}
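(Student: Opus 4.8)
The plan is to reduce the $6\times 6$ matrix identity to a single $3\times 3$ identity and then verify the latter by direct substitution. Set $S=\left(\begin{smallmatrix}0 & X_0\\ X_0 & 0\end{smallmatrix}\right)$; since $X_0^2=I_3$ one has $S=S^{\tp}=S^{-1}$. Writing $\tuB_{E,P}=\left(\begin{smallmatrix}0 & \J{E}{P}\\ -\J{E}{P}^{\tp} & 0\end{smallmatrix}\right)$ and abbreviating $N=\J{E}{P}(Z_0\bm{y})$, a block multiplication gives
\[
S\,\tuB_{E,P}(Z_0\bm{y})\,S=\begin{pmatrix}0 & -X_0 N^{\tp} X_0\\ X_0 N X_0 & 0\end{pmatrix}.
\]
Comparing with $\tuB_{E,P}(\bm{y})=\left(\begin{smallmatrix}0 & \J{E}{P}(\bm{y})\\ -\J{E}{P}(\bm{y})^{\tp} & 0\end{smallmatrix}\right)$, and noting that the two off-diagonal blocks of the right-hand side are each the negative transpose of the other, the whole claim reduces to the single identity
\[
X_0\,\J{E}{P}(Z_0\bm{y})\,X_0=-\J{E}{P}(\bm{y})^{\tp}.
\]

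For this remaining identity I would simply evaluate both sides for $P=(\lambda,\mu,1)$. Since $Z_0=\diag(-1,1,-1)$, substituting $Z_0\bm{y}$ amounts to the change of variables $(x,y,z)\mapsto(-x,y,-z)$; since $X_0=\diag(-1,1,1)$, conjugation by $X_0$ negates the first row and the first column of a matrix. Applying both operations to the explicit matrix $\J{E}{P}$ of \cref{def:JP} and collecting the signs entry by entry yields
\[
X_0\,\J{E}{P}(Z_0\bm{y})\,X_0=-\begin{pmatrix}x-\lambda z & y+\mu z & 0\\ y-\mu z & \lambda x+(a+\lambda^2)z & x\\ 0 & x & -z\end{pmatrix}=-\J{E}{P}(\bm{y})^{\tp},
\]
which is exactly what is needed; using \cref{rmk:JP} this can equivalently be phrased as $X_0\,\J{E}{P}(Z_0\bm{y})\,X_0=-\J{E}{\ominus P}(\bm{y})$.

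The argument is an elementary verification, so I do not expect a genuine obstacle. The only point requiring care is the sign bookkeeping, which comes from two independent sources — the substitution $Z_0$ acting on the variables and the conjugation by $X_0$ acting on rows and columns — and in particular the fact that the entries in positions $(1,2)$ and $(2,1)$ are the ones whose $\mu$-terms are genuinely interchanged. This is precisely the mechanism by which $\J{E}{P}^{\tp}$, rather than $\J{E}{P}$, appears on the right-hand side, in agreement with the antidiagonal block structure of $\tuB_{E,P}$.
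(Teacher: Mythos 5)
Your proof is correct; the paper states \cref{lem:swap} without proof, treating it as a direct verification, and your argument — reducing via the block multiplication to the single identity $X_0\,\J{E}{P}(Z_0\bm{y})\,X_0=-\J{E}{P}(\bm{y})^{\tp}$ and checking it entrywise — is exactly the computation intended. The sign bookkeeping checks out, and your observation that the lower-left block identity follows from the upper-right one by transposing (since $X_0$ is symmetric) correctly justifies checking only one block.
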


For the next result, recall that a group
$G$ acts \emph{$k$-transitively} on a set $X$ if its induced action $g\cdot
(x_1,\dots, x_k) = (g(x_1), \dots, g(x_k))$ on the subset of $X^k$ of all
elements with pairwise distinct entries is transitive. The next result is an easy consequence of \cite[Prop.~4.10]{stanojkovski2019hessian}.

\begin{lemma}\label{lem:psi-M}
Let $E$ be an elliptic curve in short Weierstrass form over the field $\alg$ and let  $P\in E[2](\alg)\setminus\{\cor{O}\}$.  Define, moreover, $\psi:\GL_2(\alg)\rightarrow\GL_6(\alg)$ by 
\[
\begin{pmatrix}
a & b \\ c & d
\end{pmatrix} \longmapsto 
\begin{pmatrix}
aI_3 & bI_3 \\ cI_3 & dI_3
\end{pmatrix}.
\]
Then $\psi(\GL_2(\alg))$ acts $2$-transitively on $\ti{\tuB_{E, P}}$.
\end{lemma}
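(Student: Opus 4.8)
The plan is to show that $\psi(\GL_2(\alg))$ is a subgroup of the pseudo-isometry group that permutes the $q+1$ elements of $\ti{\tuB_{E,P}}$ transitively on ordered pairs of distinct points. First I would verify that, for $\tuM = \J{E}{P}$ with $P$ of order $2$ (so $\tuM^{\tp}=\tuM$ by \cref{rmk:JP}\ref{it:JP}), every element of $\psi(\GL_2(\alg))$ is indeed a pseudo-isometry of $\tuB = \tuB_{E,P}$. This is the computation in \cref{rmk:autotopism-pseudo}/\eqref{eqn:S-adj}: writing $X = \left(\begin{smallmatrix} a & b \\ c & d\end{smallmatrix}\right)$, one checks directly that $\psi(X)^{\tp}\,\tuB\,\psi(X)$ equals $(ad-bc)$ times $\tuB$ (using $\tuM^{\tp}=\tuM$), so after composing with the central scalar $(ad-bc)^{-1}$ on $T$ we get a genuine pseudo-isometry; in fact this already matches the form of $\Adj(\tuB)\cong \mathbf{S}_2(\alg)$ from \cref{thm:reducible-adjoints}\ref{thm-part:q+1}. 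Hence $\psi(\GL_2(\alg))$ acts on the set $\ti{\tuB_{E,P}}$, which has size $q+1$ by \cref{cor:four-cases} since $|\ti{\tuB_{E,P}}|>2$ in the symmetric case.

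Next I would identify $\ti{\tuB_{E,P}}$ explicitly. Decompose $V = U\oplus W$ with $U = W = \alg^3$ and $\tuB = \left(\begin{smallmatrix} 0 & \tuM \\ -\tuM^{\tp} & 0\end{smallmatrix}\right)$. Following the analysis in the proof of \cref{prop:ti3d}: every totally isotropic $X$ other than $U$ itself is a graph $X_D = \{w + D\overline{w} : w\in W\}$ for some $D\in\Mat_3(\alg)$, and $X_D$ is totally isotropic iff $D^{\tp}\tuM = \tuM^{\tp}D$; since $\Adj(\tuM)\cong\alg$ and $\tuM$ is symmetric (via \cref{prop:JP-adj}), the space of such $D$ is exactly $\alg\cdot I_3$. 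Thus $\ti{\tuB_{E,P}} = \{U\} \cup \{X_{\lambda I_3} : \lambda\in\alg\}$, a set visibly in bijection with $\P^1(\alg)$. Now I would compute the action of $\psi(X)$ on this parametrization: mapping $w + \lambda\overline{w} \mapsto \psi(X)(w+\lambda\overline w)$ and re-expressing in graph form shows that $\psi(X)$ acts on the parameter $\lambda\in\P^1(\alg)$ by the Möbius (fractional linear) transformation associated to $X$, i.e.\ the standard action of $\GL_2(\alg)$ on $\P^1(\alg)$ via $\PGL_2(\alg)$. The main point to get right here is the bookkeeping: tracking which block of $\psi(X)$ sends $U$ and $W$ to which graphs, and checking that the $\lambda = \infty$ point (corresponding to $U$) transforms correctly.

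Finally, the conclusion is immediate from the classical fact that $\PGL_2(\alg)$ acts sharply $3$-transitively, hence in particular $2$-transitively, on $\P^1(\alg)$: given two ordered pairs of distinct elements of $\ti{\tuB_{E,P}} \leftrightarrow \P^1(\alg)$, there is $X\in\GL_2(\alg)$ whose Möbius action carries one to the other, and then $\psi(X)$ is the required element of $\psi(\GL_2(\alg))$. The main obstacle I anticipate is purely computational rather than conceptual: correctly carrying out the change-of-basis that re-expresses $\psi(X)(X_{\lambda I_3})$ as a graph $X_{\mu I_3}$ and confirming $\mu$ is the Möbius transform of $\lambda$ — this requires care with the $\overline{(\cdot)}$ identification $W\to U$ and with the transpose conventions, but no deep input beyond linear algebra and the structural facts already established (\cref{prop:JP-adj}, \cref{prop:ti3d}, \cref{cor:four-cases}).
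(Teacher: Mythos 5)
Your proposal is correct, and it is worth noting that the paper itself does not prove this lemma at all: it is imported verbatim as \cite[Prop.~4.10]{stanojkovski2019hessian}. What you have written is therefore a self-contained derivation from the machinery internal to this paper, and each step checks out. The computation $\psi(X)^{\tp}\tuB_{E,P}\,\psi(X)=(ad-bc)\tuB_{E,P}$ (valid precisely because $\J{E}{P}$ is symmetric when $P\in E[2](\alg)$, cf.\ \cref{rmk:JP}) shows $\psi(\GL_2(\alg))$ preserves total isotropy; the graph analysis from the proof of \cref{prop:ti3d} combined with $\Adj(\J{E}{P})=\{(kI_3,kI_3)\}$ from \cref{prop:JP-adj} gives $\ti{\tuB_{E,P}}=\{U\}\cup\{X_{\lambda I_3}:\lambda\in\alg\}\cong\P^1(\alg)$; and the bookkeeping you flag as the main risk is in fact painless: writing $X_\lambda=\{(\lambda v,v):v\in\alg^3\}$ and $U=X_\infty$, one gets $\psi\left(\begin{smallmatrix}a&b\\c&d\end{smallmatrix}\right)\cdot(\lambda v,v)=((a\lambda+b)v,(c\lambda+d)v)$, which is exactly the M\"obius action $\lambda\mapsto(a\lambda+b)/(c\lambda+d)$ on $\P^1(\alg)$, whence sharp $3$-transitivity and in particular $2$-transitivity. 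Two cosmetic remarks: you do not actually need \cref{cor:four-cases} (which is stated only for finite $\ff$), since your explicit parametrization already identifies the orbit set over an arbitrary $\alg$; and the pair $(\psi(X),(ad-bc)\id_T)$ is already a pseudo-isometry as defined in \cref{defn:pseudo-isometry}, so no rescaling is required.
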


\begin{thm}
\label{th:awesome}
    Let $E$ and $E'$ be elliptic curves in $\P^2_{\ff}$ given by Weierstrass
    equations, and let $P\in E(\ff)\setminus\{\cor{O}\}$ and $P'\in
    E'(\ff)\setminus\{\cor{O}'\}$. Assume $\charac(\ff)=p\geq 5$.  Then the
    following are equivalent.
    \begin{enumerate}[label=$(\arabic*)$]
        \item The $\ff$-Lie algebras $\LA_{E,P}(\ff)$ and $\LA_{E',P'}(\ff)$ are
        isomorphic.
        \item The set $\pseudo_F(\tuB_{E,P},\tuB_{E',P'})$ is nonempty.
        \item There exists an isomorphism $\varphi:E\rightarrow E'$ of elliptic
        curves such that $\varphi(P)=P'$.
    \end{enumerate}
\end{thm}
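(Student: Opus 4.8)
The plan is to establish $(1)\Leftrightarrow(2)$ directly and then prove $(3)\Rightarrow(2)\Rightarrow(3)$. For $(1)\Leftrightarrow(2)$: the tensors realized by $\tuB_{E,P}$ and $\tuB_{E',P'}$ are alternating and full (their Pfaffians are nonzero cubics involving all three variables) and $\charac(\ff)=p\geq 5>2$, so this is exactly \cref{thm:Baer-correspondence}$(1)$. For $(3)\Rightarrow(2)$: an isomorphism $\varphi\colon E\to E'$ of elliptic curves in Weierstrass form is projective-linear, as it respects the embeddings by $|3\cor{O}|$ and $|3\cor{O}'|$ (cf.\ \cref{rmk:flex}). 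Under the chain of bijections in \cref{thm:ishitsuka}, \cref{prop:bundles}, \eqref{eq:D} and \eqref{eq:pic0}, the degree-$0$ line bundle attached to $[\J{E}{P}]$ corresponds to $P$, naturally in the pair $(E,P)$; hence $\varphi(P)=P'$ implies that $\varphi^{*}$ of the bundle attached to $[\J{E'}{P'}]$ is the bundle attached to $[\J{E}{P}]$, and \cref{prop:M-bundles} yields $\ps{\J{E}{P}}=\ps{\J{E'}{P'}}$ (alternatively, this identity can be checked directly from \cref{def:JP}). Thus there exist $X,Y,Z\in\GL_3(\ff)$ with $X^{\tp}\J{E'}{P'}(Z\bm{y})Y=\J{E}{P}(\bm{y})$, and substituting into the block form exactly as in \cref{rmk:autotopism-pseudo} gives $\diag(X^{\tp},Y^{\tp})\,\tuB_{E',P'}(Z\bm{y})\,\diag(X,Y)=\tuB_{E,P}(\bm{y})$, so $\pseudo_\ff(\tuB_{E,P},\tuB_{E',P'})\neq\varnothing$.

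For the crucial direction $(2)\Rightarrow(3)$, fix $(\alpha,\beta)\in\pseudo_\ff(\tuB_{E,P},\tuB_{E',P'})$; then $\alpha$ restricts to a bijection $\ti{\tuB_{E,P}}\to\ti{\tuB_{E',P'}}$. By \cref{cor:four-cases}, \cref{prop:ti3d} and \cref{thm:ishitsuka}$(2)$, the number $|\ti{\tuB_{E,P}}|$ equals $2$ if $P\notin E[2](\ff)$ and $q+1$ if $P\in E[2](\ff)\setminus\{\cor{O}\}$, and the same holds on the primed side; in both cases $\tuB_{E,P}$ and $\tuB_{E',P'}$ carry the two ``coordinate'' maximal isotropic subspaces visible in their block forms. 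The first step is to arrange that $\alpha$ sends the upper block of $\tuB_{E,P}$ to the upper block of $\tuB_{E',P'}$ and the lower block to the lower block. When $|\ti{\tuB_{E,P}}|=2$ this costs at most composing $(\alpha,\beta)$ with the block-swapping self-pseudo-isometry of $\tuB_{E,P}$ from \cref{lem:swap}. When $|\ti{\tuB_{E,P}}|=q+1$ the matrix $\J{E}{P}$ is symmetric, so the map $\psi$ of \cref{lem:psi-M} satisfies $\psi(g)^{\tp}\tuB_{E,P}\,\psi(g)=\det(g)\,\tuB_{E,P}$, whence $\psi(g)$ extends to a self-pseudo-isometry of $\tuB_{E,P}$; since $\psi(\GL_2(\ff))$ is $2$-transitive on $\ti{\tuB_{E,P}}$ this again suffices. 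Once $\alpha=\diag(X,Y)$, unwinding \cref{defn:pseudo-isometry} on the block matrices gives $X^{\tp}\J{E'}{P'}(\bm{y})Y=\J{E}{P}(Z\bm{y})$ for some $Z\in\GL_3(\ff)$, hence $\ps{\J{E}{P}}=\ps{\J{E'}{P'}}$, and \cref{prop:M-bundles} produces a projective-linear isomorphism $\gamma\colon E\to E'$ identifying the attached line bundles.

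It remains to upgrade $\gamma$ to an isomorphism of elliptic curves sending $P$ to $P'$. Being projective-linear, $\gamma$ permutes the inflection points of the two cubics; since $\cor{O}'$ is a flex of $E'$ (\cref{rmk:flex}), $Q:=\gamma^{-1}(\cor{O}')$ is a flex of $E$, so $Q\in E[3](\ff)$. The translation $\tau_Q$ by the $3$-torsion point $Q$ is again projective-linear and acts trivially on $\Pic^0(E)$, so $\varphi:=\gamma\circ\tau_Q$ is a projective-linear isomorphism $E\to E'$ with $\varphi(\cor{O})=\cor{O}'$ that still identifies the same line bundles; applying naturality of the bundle--point correspondence and Abel's theorem (\cref{lem:abel}, \cref{lem:divlinbdl}) to $\varphi$ forces $\varphi(P)=P'$, so $\varphi$ is the required isomorphism of elliptic curves. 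I expect the real work to sit in this last adjustment together with the block-alignment reduction: a pseudo-isometry is a priori only a linear automorphism of the ambient $6$-dimensional space, and cutting it down to a geometric isomorphism of the two cubics compatible both with the identities ($\cor{O}\mapsto\cor{O}'$) and with the marked points ($P\mapsto P'$) is exactly what the combinatorics of maximal isotropic subspaces, the determinantal line-bundle dictionary, and the flex/$3$-torsion correspondence are designed to supply.
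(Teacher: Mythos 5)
Your proof is correct and follows essentially the same route as the paper's: reduce a pseudo-isometry to a block-diagonal one via \cref{lem:swap} and \cref{lem:psi-M}, translate the resulting isotopism into the equality $\ps{\J{E}{P}}=\ps{\J{E'}{P'}}$, invoke \cref{prop:M-bundles}, correct the resulting linear map by translation by a $3$-torsion point, and finish with Abel's theorem to get $\varphi(P)=P'$. The only cosmetic difference is that you deduce $\gamma^{-1}(\cor{O}')\in E[3](\ff)$ from the fact that projective-linear maps preserve flexes, whereas the paper derives it from $\delta^{*}\cor{M}'=\cor{M}$ together with \cref{lem:divlinbdl} and \cref{lem:abel}; the two observations are equivalent.
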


\begin{proof}
Let $t:U\times W\rightarrow T$ and $t':U'\times W'\rightarrow T'$ be the
$\ff$-tensors defined by $\J{E}{P}$
and $\J{E'}{P'}$, respectively. Up to postcomposing with the automorphisms from
\cref{lem:swap} or \cref{lem:psi-M}, the Lie algebras $\LA_{E,P}(\ff)$ and $\LA_{E',P'}(\ff)$ are
isomorphic if and only if there exists an isomorphism $\LA_{E,P}(\ff)\rightarrow\LA_{E',P'}(\ff)$ mapping $U$ to $U'$ and $W$ to $W'$, which is equivalent to saying there are matrices $X,Y,Z\in\GL_3(\ff)$ such that $(X,Y,Z)$ is an $\ff$-isotopism $t\rightarrow t'$.
In particular, $\LA_{E,P}(\ff)$ and $\LA_{E',P'}(\ff)$ are isomorphic if and only if $\ps{\J{E'}{P'}}=\ps{\J{E}{P}}$; cf.\ \cref{def:ps}. 

Now let $\cor{L}$ and $\cor{L}'$ be line bundles on $C=E$ and $C'=E'$ and let $\gamma:E\rightarrow E'$ be linear as given by \cref{prop:M-bundles}. Without loss of generality, we take $\cor{L}=\cor{L}(P-\cor{O})$ and $\cor{L'}=\cor{L}(P'-\cor{O}')$; cf.\ \eqref{eq:D} and \eqref{eq:pic0}. Moreover, note that the line bundles $\cor{M}=\cor{L}(3\mathcal{O})$ and $\cor{M}'=\cor{L}(3\cor{O}')$ define the given embeddings of $E\rightarrow \P^2_{\ff}$ and $E'\rightarrow \P^2_{\ff}$; see also \cite[Prop.~III.3.1]{silverman}. In particular, the condition on the map $\gamma$ can be replaced with the existence of a (not necessarily linear) isomorphism $\delta:E\rightarrow E'$ such that 
\begin{equation}\label{eq:bdl1}
\delta^*\cor{L}'=\cor{L} \textup{ and } \delta^*\cor{M}'=\cor{M}.
\end{equation}
We now show that the existence of $\delta$ satisfying \eqref{eq:bdl1} is equivalent to the existence of an isomorphism of elliptic curves $\varphi: E\rightarrow E'$ with the property that $\varphi(P)=P'$.
By using the symbol $\delta$ also for the map $\Div(E)\rightarrow\Div(E')$ that is induced by $\delta$, we rewrite \eqref{eq:bdl1} as
\[
\delta^*\cor{L}'=\cor{L} \textup{ and } \cor{L}(3\delta^{-1}(\cor{O}'))=\delta^*\cor{L}(3\cor{O'})=\delta^*\cor{M}'=\cor{M}=\cor{L}(3\cor{O}).
\]
We derive from \cref{lem:divlinbdl} that $[3\delta^{-1}(\cor{O}')]=[3\cor{O}]$, in other words 
$
[3(\delta^{-1}(\cor{O}')-3\cor{O})]= 0.
$
It now follows from Abel's Theorem, i.e.\ \cref{lem:abel}, that $[3](\delta^{-1}(\cor{O}')\ominus\cor{O})=\cor{O}$ and so $\delta^{-1}(\cor{O}')$ belongs to $E[3](\ff)$. Let $\tau$ denote translation by $\delta^{-1}(\cor{O}')$ on $E$. We then get that $\delta\circ\tau:E\rightarrow E'$ is an isomorphism of elliptic curves. Set $\varphi=\delta\circ\tau$. To conclude the proof, we show that $\varphi(P)=P'$. For this, note that $\varphi^*\cor{L}=\varphi^{*}\cor{L}(P-\cor{O})=\cor{L}(\varphi^{-1}(P)-\varphi^{-1}(\cor{O}))$. As a consequence, \eqref{eq:bdl1} and \cref{lem:divlinbdl} yield
\[
[\varphi^{-1}(P')-\cor{O}]=[\varphi^{-1}(P')-\varphi^{-1}(\cor{O}')]=[ P-\cor{O}].
\]
Abel's Theorem implies now that $\varphi^{-1}(P')=P$ equivalently that $\varphi(P)=P'$.
\end{proof}

\begin{proof}[Proof of \Cref{mainthm:sat}]
    Let $t:U\times W\rightarrow T$ and $t':U'\times W'\rightarrow T'$ be the
    $\ff$-tensors defined by
    $\J{E}{P}$ and $\J{E'}{P'}$, respectively. By
    \cref{thm:Baer-correspondence},  the groups $\GP_{E, P}(\ff)$ and $\GP_{E', P'}(\ff)$ are isomorphic if
    and only if there exists an $\F_p$-pseudo isometry between $t$ and
    $t'$. From \cref{cor:adj-cent-elliptic}, we know that $\Cent(t) \cong \Cent(t') \cong
    \ff$. Thus,  arguing as in the proof of \cref{prop:reduction}, we get that 
    $\pseudo_{\F_p}(t, t')$ can be considered as contained in the set $\pseudo_{\ff}(t, t')\times
    \Gal(\ff/\F_p)$. Therefore, $t$ and $t'$ are
    $\F_p$-pseudo isometric if and only if there exists $\sigma \in
    \Gal(\ff/\F_p)$ such that ${}^{\sigma}t$ and $t'$ are $\ff$-pseudo
    isometric. The matrix of linear forms associated to ${}^{\sigma}t$ is
    $\tuB_{{\sigma}(E), {\sigma}(P)}$. Again by
    \cref{thm:Baer-correspondence},  the tensors ${}^{\sigma}t$ and $t'$ are $\ff$-pseudo
    isometric if and only if $\LA_{{\sigma}(E), {\sigma}(P)}(\ff)$ and $\LA_{E',
    P'}(\ff)$ are isomorphic $\ff$-Lie algebras.  To conclude apply \cref{th:awesome}.
\end{proof}

\begin{remark}[The work of Ng]\label{rmk:Ng}
For a nice overview of the geometry of the $\Gamma$-orbits of $\mathcal{L}_{\mathcal{C}}$ we refer to \cite[Sec.~2]{Ng}. 
\cref{th:awesome} is a specialized variation of  Theorem~1 of~\cite{Ng}, which classifies the complex $\Gamma$-orbits in terms of triples $(E,L_1,L_2)$ where $L_1$ and $L_2$ are particularly chosen line bundles on $E$; cf.\ \eqref{eq:bdl1}.  The study in \cite{Ng} goes beyond the smooth case classifying also singular cuboids up to $\Gamma$-equivalence. In future work, we hope to come back to the investigation of the class of groups arising from this last family. 
\end{remark}

\subsection{Automorphisms of elliptic groups and the proof of
\Cref{mainthm:aut}}\label{sec:proof-aut}

In this section we compute the size of the automorphism group of a group arising
from a tensor satisfying $\cor{T}(t)\geq 2$ and whose Pfaffian class defines a
cubic with a flex point over $\ff$. Indeed, using pseudo-isometries, any such
curve can be put in short Weierstrass equation, and thus the group in question
will be isomorphic to a group of the form $\GP_{E,P}(\ff)$. 

\begin{thm}\label{thm:pseudo}
    Let $E$ be an elliptic curve in $\P^2_{\ff}$ given by a Weierstrass equation,
    and let $P\in E(\ff)\setminus\{\cor{O}\}$. Assume that $\charac(\ff) =
    p\geq 5$. Then 
    \[ 
        |\pseudo_{\ff}(\tuB_{E,P})| = \frac{|\Aut_{\cor{O}}(E)|}{|\Aut_{\cor{O}}(E)\cdot P|}\cdot|E[3](\ff)|\cdot\begin{cases}
        |\GL_2(\ff)| & \textup{if } P\in E[2](\ff),\\
        2(q-1)^2 & \textup{otherwise}. 
        \end{cases}
    \]
\end{thm}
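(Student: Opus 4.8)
The plan is to let $\Pi:=\pseudo_{\ff}(\tuB_{E,P})$ act on the set $\ti{t}$ of maximal totally isotropic subspaces of $V=\ff^{6}$ and read off $|\Pi|$ by orbit--stabilizer. As $|\pseudo_{\ff}(\tuB_{E,P})|$ is a pseudo-isometry invariant, I would take $\tuB_{E,P}$ exactly as in \cref{def:JP}, with half-matrix $\tuM=\J{E}{P}$ and the two evident isotropic $3$-spaces $U=\ff^{3}\oplus 0$ and $W=0\oplus\ff^{3}$; note $\Pf(\tuB_{E,P})=\pm\det\J{E}{P}$ is the irreducible cubic of $E$, so \cref{prop:ti3d} and \cref{cor:four-cases} apply, and by \cref{rmk:JP}\ref{it:JP} the matrix $\J{E}{P}$ is symmetric exactly when $P\in E[2](\ff)$. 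Since a pseudo-isometry permutes $\ti{t}$ (it preserves total isotropy and dimension), $\Pi$ acts on the set $\mathcal{P}$ of \emph{ordered} pairs of distinct elements of $\ti{t}$. A pseudo-isometry fixing both $U$ and $W$ is necessarily block-diagonal $\diag(X,Y)$, and, exactly as in \cref{rmk:autotopism-pseudo}, the pseudo-isometry condition then reads $X^{\tp}\J{E}{P}(\bm{y})Y=\J{E}{P}(Z\bm{y})$ for the corresponding $Z$ on $T$; hence $\Stab_{\Pi}\bigl((U,W)\bigr)\cong\mathrm{Auto}_{\ff}(\J{E}{P})$.

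\textbf{Computing $|\mathrm{Auto}_{\ff}(\J{E}{P})|$.} Projecting an autotopism $(X,Y,Z)$ onto $Z$ gives a short exact sequence whose kernel is $\{(X,Y):X^{\tp}\J{E}{P}Y=\J{E}{P}\}$, which under $(X,Y)\mapsto(X,Y^{-1})$ is the unit group of $\Adj(\J{E}{P})$; by \cref{prop:JP-adj} this is $\{(kI_{3},kI_{3}):k\in\ff^{\times}\}$, of order $q-1$. The image is $I:=\{Z\in\GL_{3}(\ff):\J{E}{P}(Z\bm{y})\sim\J{E}{P}(\bm{y})\}$. Taking determinants in $A^{\tp}\J{E}{P}(\bm{y})B=\J{E}{P}(Z\bm{y})$ shows every $Z\in I$ induces a linear automorphism $\gamma_{Z}$ of $\P^{2}_{\ff}$ preserving $E$; because $\cor{O}$ is a flex, the group of such $\gamma_{Z}$ is $E[3](\ff)\rtimes\Aut_{\cor{O}}(E)$ (a translation $\tau_{R}$ preserving the embedding bundle $\cor{L}(3\cor{O})$ forces $R\in E[3](\ff)$). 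Running the line-bundle dictionary of \cref{sec:determinantal} --- \cref{prop:M-bundles} together with \cref{thm:ishitsuka}, exactly as in the proof of \cref{th:awesome} --- I would show that $\J{E}{P}(Z\bm{y})\sim\J{E}{P}(\bm{y})$ holds precisely when $\gamma_{Z}$ fixes $[P-\cor{O}]\in\Pic^{0}(E)$; since translations act trivially on $\Pic^{0}(E)$ and $\rho\in\Aut_{\cor{O}}(E)$ fixes $[P-\cor{O}]$ iff $\rho(P)=P$, this identifies $I$ with the preimage in $\GL_{3}(\ff)$ of $E[3](\ff)\rtimes\Stab_{\Aut_{\cor{O}}(E)}(P)$, of order $(q-1)\,|E[3](\ff)|\,|\Aut_{\cor{O}}(E)|/|\Aut_{\cor{O}}(E)\cdot P|$; conversely each such $Z$ lies in $I$, as $\J{E}{P}(Z\bm{y})$ and $\J{E}{P}(\bm{y})$ are then linear representations of $E$ carrying the same line bundle, hence $\sim$-equivalent by \cref{thm:ishitsuka}. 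Multiplying, $|\mathrm{Auto}_{\ff}(\J{E}{P})|=(q-1)^{2}\,|E[3](\ff)|\,|\Aut_{\cor{O}}(E)|/|\Aut_{\cor{O}}(E)\cdot P|$.

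\textbf{The orbit of $(U,W)$ and conclusion.} It remains to prove $\Pi$ is transitive on $\mathcal{P}$, for then $|\Pi|=|\mathcal{P}|\cdot|\mathrm{Auto}_{\ff}(\J{E}{P})|$. If $P\notin E[2](\ff)$, then $\J{E}{P}$ is not symmetric, so $[\J{E}{P}]\notin\cor{L}_{E}^{\mathrm{sym}}$ by \cref{thm:ishitsuka} and $|\ti{t}|=2$ by \cref{prop:ti3d}; thus $|\mathcal{P}|=2$ and transitivity is just the existence of a pseudo-isometry swapping $U$ and $W$, which \cref{lem:swap} provides, giving $|\Pi|=2(q-1)^{2}\,|E[3](\ff)|\,|\Aut_{\cor{O}}(E)|/|\Aut_{\cor{O}}(E)\cdot P|$. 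If $P\in E[2](\ff)\setminus\{\cor{O}\}$, then $\J{E}{P}$ is symmetric, so $|\ti{t}|>2$ by \cref{prop:ti3d}, hence $|\ti{t}|=q+1$ by \cref{cor:four-cases} and $|\mathcal{P}|=(q+1)q$; moreover in the symmetric case $\psi(g)^{\tp}\tuB_{E,P}(\bm{y})\psi(g)=\det(g)\,\tuB_{E,P}(\bm{y})=\tuB_{E,P}(\det(g)\bm{y})$ for all $g\in\GL_{2}(\ff)$, so $g\mapsto(\psi(g),\det(g)I_{3})$ embeds $\GL_{2}(\ff)$ in $\Pi$, and this subgroup acts $2$-transitively on $\ti{t}$ by \cref{lem:psi-M}, hence transitively on $\mathcal{P}$; therefore $|\Pi|=(q+1)q(q-1)^{2}\,|E[3](\ff)|\,|\Aut_{\cor{O}}(E)|/|\Aut_{\cor{O}}(E)\cdot P|=|\GL_{2}(\ff)|\,|E[3](\ff)|\,|\Aut_{\cor{O}}(E)|/|\Aut_{\cor{O}}(E)\cdot P|$. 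Combining the two cases yields the stated formula.

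\textbf{Main obstacle.} The delicate step is the middle one: identifying $I$ with the scalar thickening of $E[3](\ff)\rtimes\Stab_{\Aut_{\cor{O}}(E)}(P)$. This is where the machinery of \cref{sec:determinantal} must be deployed --- transporting the $\Gamma$-action on linear determinantal representations to the action on degree-$0$ line bundles on $E$ while simultaneously tracking the embedding bundle $\cor{L}(3\cor{O})$, together with the classical facts that the linear automorphisms of a plane cubic with a rational flex form $E[3]\rtimes\Aut_{\cor{O}}(E)$ and that translations act trivially on $\Pic^{0}$. Everything else reduces to orbit--stabilizer, the elementary analysis of $|\ti{t}|$ via \cref{prop:ti3d} and \cref{cor:four-cases}, the adjoint computation \cref{prop:JP-adj}, and the short matrix identity for the $\psi(\GL_{2})$-action.
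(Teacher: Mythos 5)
Your proposal is correct and follows essentially the same route as the paper's proof: orbit--stabilizer on (ordered pairs of) maximal totally isotropic subspaces, with $2$-transitivity supplied by \cref{lem:swap} and \cref{lem:psi-M}, the stabilizer of $(U,W)$ identified with $\mathrm{Auto}_{\ff}(\J{E}{P})$ (the paper's $\Aut_V^f(\LA)$), and that group computed by projecting onto the $Z$-coordinate, with the kernel controlled by \cref{prop:JP-adj} and the image by the line-bundle argument from the proof of \cref{th:awesome}. The only difference is bookkeeping: the paper maps into $\Aut(E)$ so that both factors of $q-1$ land in the kernel, whereas you split them between the kernel of the $\GL_3$-projection and the scalars in the image, which amounts to the same computation.
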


\begin{proof}
    Let $t: V\times V \to T$ be the tensor defined by $\tuB_{E,P}\in
    \Mat_6(\ff[y_1,y_2,y_3])$ and let $\LA
    = \LA_t(\ff)$ be the Lie algebra of $\GP_{t}(\ff)$ via the Baer
    correspondence. 
    We compute the order of
    $\pseudo_{\ff}(t)$. Following the strategy in
    \cite{stanojkovski2019hessian}, we work with the automorphism group
    $\Aut(\LA)=\Aut_{\ff}(\LA)$ of the $\ff$-algebra $\LA$. For this, let $U,W$
    be distinct elements of $\ti{t}$ corresponding to the base choice
    yielding $\tuB_{E,P}$ and note that $V=U\oplus W$. We define the following
    subgroups of $\Aut(\LA)$:
    \begin{itemize}
    \item $\Aut_V(\LA)=\{\alpha\in\Aut(\LA) \mid \alpha(V)=V \}$ and
    \item $\Aut_V^f(\LA)=\{\alpha\in\Aut(\LA) \mid \alpha(U)=U,\, \alpha(W)=W
    \}$.
    \end{itemize}
    Since $t$ is full, if $(\alpha,\beta) \in \pseudo_{\ff}(t)$, then
    $\alpha$ uniquely determines $\beta$. 
    In particular, we have that $\pseudo_{\ff}(t)\cong \Aut_V(\LA)$.
    We now look at the action of $\Aut_V(\LA)$ on $\ti{t}$. Using
    \cref{lem:swap,lem:psi-M}, we derive that this action is $2$-transitive.
    It follows that the stabilizer of the pair $(U,W)$ is equal to
    $\Aut_V^f(\LA)$ and has index $|\cor{T}(t)|(|\cor{T}(t)|-1)$ in
    $\Aut_V(\LA)$. By \cref{cor:four-cases}, it thus holds that
    \begin{equation}
        |\pseudo_{\ff}(\tuB_{E,P})| = |\Aut_V(\LA)|=|\Aut_V^f(\LA)|\cdot\begin{cases}
        q(q+1) & \textup{if } P\in E[2](\ff),\\
        2 & \textup{otherwise}. 
    \end{cases}
    \end{equation}
    To determine $|\Aut_V^f(\LA)|$ 
we note that, via \cref{rmk:autotopism-pseudo},  an element
$$\diag(X,Y,Z)=\begin{pmatrix}
X & 0 & 0 \\ 0 & Y & 0 \\ 0 & 0 & Z
\end{pmatrix} \in\GL_9(\ff),  \textup{ with } X,Y,Z\in\GL_3(\ff)$$
belongs to $\Aut_V^f(\LA)$ if and only if
    $X^{\tp}\J{E}{P}(Z{\bm y})Y=\J{E}{P}({\bm y})$.  Since the change of coordinates given by $Z$ maps $E$ to itself,  the following map is well defined
    \[ 
    \varphi:
    \Aut_V^f(\LA)\rightarrow \Aut(E), \quad    \diag(X, Y, Z) \mapsto Z.
    \]
    Thus, $\varphi$ maps into the linear part of $\Aut(E)$, namely those
    automorphisms of $E$ that extend to linear transformations of $\P^2_{\ff}$. From the proof of \cref{th:awesome}, we know that
    \[
        |\im\varphi| = |E[3](\ff)|\cdot|\{\varphi\in\Aut_{\cor{O}}(E) : \varphi(P)=P\}|=|E[3](\ff)|\cdot\frac{|\Aut_{\cor{O}}(E)|}{|\Aut_{\cor{O}}(E)\cdot P|}.
    \]
    We claim that $|\ker\varphi| = (q-1)^2$. To prove this, we start by
    observing that 
    \[
    \ker\phi=\{\diag(X,Y,cI_3) \mid X,Y\in\GL_3(\ff),\, c\in\ff^\times, X^{\tp}\J{E}{P}(c{\bm y})Y=\J{E}{P}({\bm y})\}.
    \]
    The last equality in the definition of $\ker\phi$ can be rewritten as
    $(cX^{\tp})\J{E}{P}Y=\J{E}{P}$, so since $Y$ is invertible, we get a map
    \[
    \ker\varphi\longrightarrow\Adj(\J{E}{P}), \quad \diag(X,Y,cI_3)\longmapsto(cX,Y^{-1}).
    \]
    All elements of the form $(aI_3,bI_3,(ab)^{-1}I_3)$ with $a,b\in\ff^\times$
    are elements of $\ker\phi$, and as a consequence of \cref{prop:JP-adj}, the
    converse is also true. Indeed, if $\diag(X,Y,cI_3)$ belongs to
    $\ker\varphi$, then $(cX,Y^{-1})$ 
    is an element of
    $\Adj(\J{E}{P})=\{(kI_3,kI_3) \mid k\in \ff\}$. In particular, there exists
    $k\in\ff^\times$ such that $cX=kI_3=Y^{-1}$. It then follows that
    $X=c^{-1}kI_3$ and $Y=k^{-1}I_3$. Thus, 
    $\ker\varphi=\{(aI_3,bI_3,(ab)^{-1}I_3)\mid a,b\in\ff^\times\}$, and we conclude that $|\ker\varphi| = |\ff^{\times}|^2=(q-1)^2$.  Since $|\GL_2(\ff)|=q(q+1)(q-1)^2$ the proof is complete.
\end{proof}

For an elliptic curve $E$ in $\P^2_{\ff}$ given by a short Weierstrass equation and $P\in E(\ff)\setminus\{\cor{O}\}$, we write 
\[
\Gal_{E,P}(\ff/\F_p)=\{\sigma\in\Gal(\ff/\F_p) \, : \, \pseudo(\tuB_{E,P},\tuB_{\sigma(E),\sigma(P)})\neq \varnothing\}.
\]
The following two corollaries follow in a straightforward way from \cref{prop:reduction} and \cref{thm:pseudo}.  Note that the denominators on the left side are $|\Hom_{\ff}(V,T)|$ and $|\Hom_{\F_p}(V,T)|$, respectively.

\begin{coro}
    Let $E$ be an elliptic curve in $\P^2_{\ff}$ given by a Weierstrass equation.
    Moreover, let $P\in E(\ff)\setminus\{\cor{O}\}$. Assume that
    $\charac(\ff)=p\geq 5$. Then the following holds:
    \[
    \frac{|\Aut_{\ff}(\LA_{E,P}(\ff))|}{q^{18}}=|E[3](\ff)|\cdot \frac{|\Aut_{\cor{O}}(E)|}{|\Aut_{\cor{O}}(E)\cdot P|}\cdot\begin{cases}
    |\GL_2(\ff)| & \textup{if } P\in E[2](\ff),\\
    2(q-1)^2 & \textup{otherwise}. 
    \end{cases}
    \]
\end{coro}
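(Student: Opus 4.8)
The plan is to split off the factor $q^{18}$ as the size of the ``translation'' part of the automorphism group and then to invoke \cref{thm:pseudo}, which already computes $|\pseudo_{\ff}(\tuB_{E,P})|$. Write $\LA=\LA_{E,P}(\ff)=V\oplus T$ for the Lie algebra attached to the alternating $\ff$-tensor $t$ represented by $\tuB_{E,P}\in\Mat_6(\ff[y_1,y_2,y_3])$, so that $\dim_{\ff}V=6$ and $\dim_{\ff}T=3$ since $\Pf(\tuB_{E,P})$ cuts out a plane cubic. The tensor $t$ is full (the three coefficient matrices of $\tuB_{E,P}$ are linearly independent, for otherwise a linear change of variables would eliminate one coordinate from $\Pf(\tuB_{E,P})$, forcing the cubic to be singular), so $T=[\LA,\LA]$ is a characteristic ideal of $\LA$ and every $\alpha\in\Aut_{\ff}(\LA)$ stabilizes $T$.

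Next I would run the $\ff$-linear analogue of the first half of the proof of \cref{prop:reduction}. Sending $\alpha\mapsto(\alpha_V,\alpha_T)$, where $\alpha_T=\alpha|_T$ and $\alpha_V$ is the map induced on $V=\LA/T$, gives a homomorphism $\Aut_{\ff}(\LA)\to\pseudo_{\ff}(t)$. It is surjective because, for $(\alpha,\beta)\in\pseudo_{\ff}(t)$, the block map $\diag(\alpha,\beta)$ on $V\oplus T$ is an $\ff$-algebra automorphism of $\LA$ mapping onto $(\alpha,\beta)$, exactly as in the converse direction of \cref{thm:Baer-correspondence}. Its kernel consists of the automorphisms fixing $T$ pointwise and inducing the identity on $\LA/T$, i.e.\ the maps $v+w\mapsto v+w+f(v)$ for $f\in\Hom_{\ff}(V,T)$; this identifies the kernel with $\Hom_{\ff}(V,T)$, which has order $q^{\dim_{\ff}V\cdot\dim_{\ff}T}=q^{18}$. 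We thus obtain a short exact sequence
\[
    1 \longrightarrow \Hom_{\ff}(V,T) \longrightarrow \Aut_{\ff}(\LA) \longrightarrow \pseudo_{\ff}(t) \longrightarrow 1,
\]
so that $|\Aut_{\ff}(\LA_{E,P}(\ff))|=q^{18}\cdot|\pseudo_{\ff}(\tuB_{E,P})|$.

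Finally I would divide by $q^{18}$ and substitute the value of $|\pseudo_{\ff}(\tuB_{E,P})|$ provided by \cref{thm:pseudo}, obtaining the displayed identity. No real obstacle arises: all the geometry (the group $\Aut_{\cor{O}}(E)$, the orbit $\Aut_{\cor{O}}(E)\cdot P$, the $3$-torsion $E[3](\ff)$, and the split according to whether $P\in E[2](\ff)$) is already encapsulated in \cref{thm:pseudo}, and the only thing to verify here is the exactness of the displayed sequence — in particular that $T=[\LA,\LA]$ and the description of the kernel of restriction — which is the same computation as in the proof of \cref{prop:reduction}, carried out over $\ff$ instead of $\F_p$ so that no Galois twist enters.
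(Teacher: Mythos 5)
Your proposal is correct and takes essentially the route the paper intends: the paper derives this corollary ``in a straightforward way'' from \cref{prop:reduction} and \cref{thm:pseudo}, and you simply make explicit the $\ff$-linear analogue of the exact sequence $1\to\Hom_{\ff}(V,T)\to\Aut_{\ff}(\LA)\to\pseudo_{\ff}(t)\to 1$ (with kernel of order $q^{18}$ and no Galois factor, since only $\ff$-linear automorphisms are counted) before substituting the value of $|\pseudo_{\ff}(\tuB_{E,P})|$ from \cref{thm:pseudo}. The auxiliary observation that fullness of $t$ follows from smoothness of the Pfaffian cubic is also sound.
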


\begin{coro}\label{cor:pseudo}
    Let $E$ be an elliptic curve in $\P^2_{\ff}$ given by a Weierstrass equation and $P\in E(\ff)\setminus\{\cor{O}\}$. Assume that
    $\charac(\ff)=p\geq 5$ and $|\ff|=p^e$. Then the following holds:
    \[
    \frac{|\Aut(\GP_{E,P}(\ff))|}{p^{18e^2}}=|\Gal_{E,P}(\ff/\F_p)|\cdot|E[3](\ff)|\cdot \frac{|\Aut_{\cor{O}}(E)|}{|\Aut_{\cor{O}}(E)\cdot P|}\cdot\begin{cases}
    |\GL_2(\ff)| & \textup{if } P\in E[2](\ff),\\
    2(p^e-1)^2 & \textup{otherwise}. 
    \end{cases}
    \]
\end{coro}

\begin{remark}\label{rmk:computing-galois}
Let $E$ be an elliptic curve given by the Weierstrass equation \eqref{eq:E-sWe} over $\ff$ and let $P=(\lambda,\mu,1)\in E(\ff)$.
To compute the size of $\Gal_{E,P}(\ff/\F_p)$ one can rely on \cref{th:awesome} in the following way.  For $\sigma$ to belong to $\Gal_{E,P}(\ff/\F_p)$ a necessary and sufficient condition is that $\pseudo_F(\tuB_{E,P},\tuB_{\sigma(E),\sigma(P)})$ be non-empty. Thanks to  \cref{th:awesome} the last condition is equivalent to the existence of an isomorphism of elliptic curves $\varphi:E\rightarrow \sigma(E)$ such that $\varphi(P)={\sigma}(P)$. With the aid of, for instance, \cite[Tab.~III.3.1, p.~45]{silverman}, one shows that such a $\varphi$ is given by a map $(x,y)\mapsto (u^2x,u^3y)$ where $u\in\ff$ is chosen such that  $(\sigma(\lambda),\sigma(\mu))=(u^2\lambda,u^3\mu)$ and 
\[
(u^4,u^6)=\begin{cases}
(u^4,\sigma(b)b^{-1}) & \textup{ if } a=0 \quad (\textup{equiv. } j(E)=0), \\
(\sigma(a)a^{-1},u^6) & \textup{ if } b=0 \quad (\textup{equiv. } j(E)=1728), \\
(\sigma(a)a^{-1},\sigma(b)b^{-1}) & \textup{ otherwise.}
\end{cases}
\]
\end{remark}

\begin{ex}
Let $E$ be the elliptic curve defined over $\F_5$ by $y^2=x^3-2x$ and note that $E[2](\F_5)=\{\cor{O},(0,0)\}$ while, setting $\ff=\F_5[\sqrt{2}]$, we get 
\[E[2](\ff)=\{\cor{O},(0,0),(\sqrt{2},0),(-\sqrt{2},0)\}.\]
The matrices $\J{E}{P}$ corresponding to $P\in E[2](\ff)\setminus\{\cor{O}\}$ are equivalent to the Hessian matrices given in \cite[Sec.~1.4]{stanojkovski2019hessian} where $\delta$ is chosen to be $2$.  We show that for each choice of $P$, the group $\Gal_{E,P}(\ff/\F_p)$ coincides with $\Gal(\ff/\F_p)$. For this, we note that in this case $b=0$, so we can identify $\Aut_{\cor{O}}(E)$ with $\F_5^\times$; cf.\ \cite[Sec.~III.10]{silverman}.  Following the notation from \cref{rmk:computing-galois}, we fill the following table:
\[
\begin{array}{c|c|c|c|c}
\sigma & u^4 & \sigma(0,0) & \sigma(\sqrt{2},0) & \sigma(-\sqrt{2},0)\\
\hline
\id & 1 & (0,0) & (\sqrt{2},0) & (-\sqrt{2},0) \\
x\mapsto x^5 & 1 & (0,0) & (-\sqrt{2},0) & (\sqrt{2},0)
\end{array}
\]
Taking $u=1$ in the first row and $u=2$ in the second yields the claim.
\end{ex}

\begin{ex}\label{ex:Galois-required}
    Let $f(x) = x^2 - x + 2 \in \F_5[x]$, which is irreducible. Set
    $\ff = \F_5[x]/(f(x))$, and let $\alpha\in \ff$ be a root of $f$.
    Let $E$ be the elliptic curve in $\P^2_{\ff}$ given by $y^2 = x^3 +
    \alpha x + \alpha$. The $j$-invariant of $E$ is $\alpha - 1$. Let $\sigma\in
    \Gal(\ff/\F_5)$ be the map $x\mapsto x^5$ so that  ${\sigma}(E)$  is defined
    by $y^2 = x^3 + \alpha^5 x + \alpha^5$. The $j$-invariant of $\sigma(E)$ is
    $\alpha^5-1 = -\alpha$, so the elliptic curves $E$ and ${\sigma}(E)$ are not
    isomorphic. Let now $P = (\alpha^3, \alpha)\in E(\ff)$ and note that
    $\sigma(P) =(\alpha^{15},\alpha^5)\in {\sigma}(E)(\ff)$. It follows that
    $\LA= \LA_{E, P}(\ff)$ and $\LA_\sigma= \LA_{\sigma(E), \sigma(P)}(\ff)$ are
    not isomorphic as $\ff$-Lie algebras -- that is, there is no $\ff$-linear
    isomorphism $\LA\to \LA_\sigma$ since $E$ and ${\sigma}(E)$ are not
    isomorphic. However,  $(I_9, \sigma) \in \GL_9(\ff)\rtimes
    \Gal(\ff/\F_5)$ yields an $\ff$-semilinear isomorphism
    $\LA\rightarrow\LA_{\sigma}$. Thus, as $\F_5$-Lie algebras,
    $\LA\cong\LA_\sigma$ and, consequently, $\GP_{E, P}(\ff)\cong
    \GP_{\sigma(E), \sigma(P)}(\ff)$.
To make this isomorphism explicit, observe that
    \begin{align}\label{eqn:J_EP(K)}
        J & = \J{E}{P}= 
        \begin{pmatrix} 
            x - \alpha^3z & y - \alpha z & 0 \\ 
            y + \alpha z & \alpha^3 x + (\alpha + \alpha^6)z & x \\ 
            0 & x & -z 
        \end{pmatrix}.
    \end{align} 
    Viewing $\ff$ as a $2$-dimensional vector space with basis
    $\{1, \alpha\}$ over $\F_5$, we rewrite $J$ from~\eqref{eqn:J_EP(K)} as
    an $\F_5$-tensor $\F_5^6\times \F_5^6
    \to \F_5^6$. The associated matrix of linear forms is 
    \begin{align*} 
        \overline{J} &= {\tiny \begin{pmatrix} 
            x_1 + 2z_1 + z_2 & x_2 + 3z_1 + 3z_2 & y_1 - z_2 & y_2 + 2z_1 - z_2 & 0 & 0 \\ 
            x_2 + 3z_1 + 3z_2 & 3x_1 + x_2 - z_1 + z_2 & y_2 + 2z_1 - z_2 & 3y_1 + y_2 + 2z_1 + z_2 & 0 & 0 \\
            y_1 + z_2 & y_2 + 3z_1 + z_2 & 3x_1 - x_2 + 2z_1 + z_2 & 2x_1 + 2x_2 + 3z_1 + 3z_2 & x_1 & x_2 \\
            y_2 + 3z_1 + z_2 & 3y_1 + y_2 + 3z_1 - z_2 & 2x_1 + 2x_2 + 3z_1 + 3z_2 & x_1 - x_2 - z_1 + z_2 & x_2 & 3x_1 + x_2 \\
            0 & 0 & x_1 & x_2 & - z_1 & -z_2 \\
            0 & 0 & x_2 & 3x_1 + x_2 & -z_2 & 2z_1 - z_2 
        \end{pmatrix}} .
    \end{align*} 
    We do the same construction for the matrix ${\sigma}(J)$ associated to ${\sigma}(E)$ and ${\sigma}(P)$: 
    \begin{align*} 
        \overline{{\sigma}(J)} &= {\tiny \begin{pmatrix} 
            x_1 + 3z_1 - z_2 & x_2 + 2z_1 + 2z_2 & y_1 - z_1 + z_2 & y_2 + 3z_1 & 0 & 0 \\ 
            x_2 + 2z_1 + 2z_2 & 3x_1 + x_2 + z_1 - z_2 & y_2 + 3z_1 & 3y_1 + y_2 + 3z_2 & 0 & 0 \\
            y_1 + z_1 - z_2 & y_2 + 2z_1 & 2x_1 + x_2 + 3z_1 - z_2 & 3x_1 + 3x_2 + 2z_1 + 2z_2 & x_1 & x_2 \\
            y_2 + 2z_1 & 3y_1 + y_2 + 2z_2 & 3x_1 + 3x_2 + 2z_1 + 2z_2 & -x_1 + x_2 + z_1 - z_2 & x_2 & 3x_1 + x_2 \\
            0 & 0 & x_1 & x_2 & - z_1 & -z_2 \\
            0 & 0 & x_2 & 3x_1 + x_2 & -z_2 & 2z_1 - z_2 
        \end{pmatrix}} .
    \end{align*} 
To define the isomorphism corresponding to $(I_9, \sigma)$,
    we define a $6\times 6$ block diagonal matrix $D = \diag(X, X, X)$ by setting
    \begin{align*} 
        X &= \begin{pmatrix} 
            1 & 1 \\ 0 & -1 
        \end{pmatrix} .
    \end{align*} 
    Note that $X^{-1} = X$. One can check that $D^{\tp}\overline{J}(D\bm{x})D=
    \overline{{\sigma}(J)}$, implying that 
    \begin{align*} 
        \begin{pmatrix} 
            D^{\tp} & 0 \\ 0 & D^{\tp} 
        \end{pmatrix} \begin{pmatrix} 
            0 & \overline{J}(D\bm{x}) \\ 
            -\overline{J}^{\tp}(D\bm{x}) & 0 
        \end{pmatrix} \begin{pmatrix} 
            D & 0 \\ 0 & D
        \end{pmatrix} 
        &= \begin{pmatrix} 
            0 & \overline{{\sigma}(J)} \\ 
            -\overline{{\sigma}(J)}^{\tp} & 0 
        \end{pmatrix} .
    \end{align*} 
    Thus, $(\diag(D, D), D)$ is an $\F_5$-pseudo isometry, and
    therefore, $\GP_{E, P}(\ff)\cong \GP_{\sigma(E), \sigma(P)}(\ff)$. This
    argument also shows that the Galois part of $\Aut(\GP_{E, P}(\ff))$ must be
    trivial -- otherwise one would for example get that  $E$ and ${\sigma}(E)$ are isomorphic. 
  \cref{prop:reduction} yields
    \begin{align*} 
        \Aut(\GP_{E, P}(\ff)) \cong \Hom_{\F_5}(\F_5^{12}, \F_5^6) \rtimes \pseudo_{\ff}(\tuB_{E,P}). 
    \end{align*} 
\end{ex}

\section{Isomorphism testing for $E$-groups}\label{sec:algs}

The main goal of this section is to prove \Cref{mainthm:iso}, which we do in
\cref{sec:iso-proof}. For this, we develop a number of algorithms to recognize
when a $p$-group is (isomorphic to) an $E$-group. We even consider the more
general situation computing an isotropic decomposition of alternating tensors
with irreducible Pfaffian in \cref{sec:recognition}. We end with a discussion in
\cref{sec:implementation} about our implementation of \cref{mainthm:iso} in
\textsf{Magma}~\cite{magma/97}, where we also provide some examples.

\subsection{Computational models for finite groups} 
\label{sec:computational-model}

Common computational models for finite groups are given by (small) sets of
generators in either (1) matrix groups over finite fields~\cite{Luks/92} or (2)
permutation groups~\cite{Seress/03}. Although polycyclic and
``power-commutator'' presentations seem to work well in practice, it is not
known whether multiplication with these presentations can be done in polynomial
time~\cite{Leedham-GreenSoicher/98}. 

Throughout, we work with the ``permutation group quotient'' model proposed by
Kantor and Luks in~\cite{KantorLuks/90}. This avoids issues where the given
$p$-group $G$ can only be faithfully represented as a permutation group on a set
whose size is approximately $|G|$; see Neumann's example in~\cite{Neumann/86}.
The following proposition ensures that we can efficiently go from groups to
tensors using the permutation group quotient model. With appropriate bounds for
the prime $p$, one can achieve this for the matrix group model as
well~\cite{Luks/92}. 

\begin{prop}[{\cite[Sec.~4]{KantorLuks/90}}]\label{prop:basic}
    Given a group $G$ as a quotient of a permutation group, 
    each of the following problems has an algorithm that runs in time polynomial in $\log|G|$.
    \begin{enumerate}[label=$(\arabic*)$] 
        \item Compute $|G|$.
        \item Given $x, x_1, \dots, x_k\in G$ either write $x$ as a word in $\{x_1,\dots, x_k\}$, or else prove $x\notin \langle x_1,\dots, x_k\rangle$. 
        \item Find generators for $\ZG(G)$ and for $[G,G]$.
        \item Decide if $G$ is nilpotent of class $2$.
        \item If $G$ is nilpotent of class $2$, construct the commutator tensor of $G$ as a matrix of linear forms.
    \end{enumerate} 
\end{prop}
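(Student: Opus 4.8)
The plan is to reduce every task to a polynomial-time algorithm run on a permutation group, never attempting to realize $G$ itself faithfully as a permutation group. Concretely, ``$G$ given as a quotient of a permutation group'' means that we are handed generating sets for a subgroup $H\le\mathrm{Sym}(\Omega)$ with $|\Omega|=n$ and for a normal subgroup $N\trianglelefteq H$, with $G$ identified with $H/N$ and an element of $G$ represented by some preimage in $H$ (equivalently, by a coset $hN$). The toolkit is: the Schreier--Sims algorithm, which in time polynomial in $n$ and the number of generators produces a base and strong generating set for any permutation group given by generators, and hence its order, a membership test, and, by sifting, a straight-line program (so a word) for any member in the given generators; the polynomial-time computation of derived subgroups and normal closures; and the polynomial-time computation of centralizers and, more generally, of preimages of centralizers in quotient groups, following \cite{Luks/92,KantorLuks/90,Seress/03}.

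With this in place the first four items are immediate. For~$(1)$, compute $|H|$ and $|N|$ and output $|G|=|H|/|N|$. For~$(2)$, lift $x,x_1,\dots,x_k$ to preimages $\hat x,\hat x_1,\dots,\hat x_k\in H$, form the subgroup $K=\langle\hat x_1,\dots,\hat x_k\rangle N$ of $H$ (by adjoining the generators of $N$), and observe that $x\in\langle x_1,\dots,x_k\rangle$ if and only if $\hat x\in K$; a membership test in $K$ decides this, and when it succeeds the straight-line program it returns, projected modulo $N$ (the $N$-generators becoming trivial), is a word for $x$ in $x_1,\dots,x_k$. For~$(3)$, since $[G,G]=[H,H]N/N$ it suffices to compute generators of the derived subgroup $[H,H]$ and reduce them modulo $N$; and the preimage in $H$ of $\ZG(G)=\ZG(H/N)$ is the subgroup $\{h\in H : [h,g]\in N\text{ for every generator }g\text{ of }H\}$, a finite intersection of preimages of element centralizers in $H/N$, computable in polynomial time by \cite{Luks/92,KantorLuks/90}; reduce its generators modulo $N$. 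For~$(4)$, by the same reasoning $[[G,G],G]=[[H,H],H]N/N$, so $G$ is nilpotent of class at most $2$ if and only if $[[H,H],H]\le N$ (a membership test on a generating set obtained as a normal closure), and of class exactly $2$ if in addition $[H,H]\not\le N$; note this decides the question without assuming nilpotency of $G$ a priori.

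For~$(5)$, assume $G$ has class $2$ and use $(3)$ to obtain (preimages of) $Z=\ZG(G)$ and $G'=[G,G]$. Both $V=G/Z$ and $G'$ are finite abelian $p$-groups, for which Schreier--Sims together with linear algebra over $\Z/p^m$ yields polycyclic generating sequences and lets one write any element in coordinates; choose $g_1,\dots,g_r\in G$ whose images form such a sequence for $V$. By class $2$ the commutator $[g_i,g_j]\in G'$ is well-defined on $V$ and biadditive in the $g$'s, so it suffices to compute $[g_i,g_j]$ for $i<j$, express each in the chosen generators of $G'$ using $(2)$, and assemble the resulting structure constants; in the exponent-$p$ situation used throughout the paper, $V$ and $G'$ are $\F_p$-vector spaces and this array is literally a matrix of linear forms over $\F_p$ realizing $t_G$.

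The main obstacle is $(3)$, and through it $(5)$: one cannot simply pass to a faithful permutation representation of $G=H/N$---by Neumann's example \cite{Neumann/86} the smallest such representation may have degree comparable to $|G|$---so $\ZG(G)$ has to be produced as a preimage inside $H$, which genuinely requires the polynomial-time ``centralizer modulo a normal subgroup'' machinery rather than a naive orbit computation. Everything else is Schreier--Sims bookkeeping plus, for $(5)$, routine linear algebra over $\Z/p^m$.
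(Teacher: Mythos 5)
The paper gives no proof of this proposition; it is imported verbatim from \cite{BMW/17}, so there is nothing internal to compare against. Your reconstruction follows the standard Kantor--Luks quotient-model argument and is essentially sound: items $(1)$, $(2)$ and $(4)$ are correct applications of Schreier--Sims, membership testing and normal closures applied to preimages in $H$, and item $(5)$ correctly reduces the construction of $t_G$ to the biadditivity of the commutator on $G/\ZG(G)$ together with linear algebra over $\F_p$ (or $\Z/p^m$).

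The one step whose stated justification does not hold up is the computation of $\ZG(G)$ in item $(3)$. You describe the preimage of the center as a finite intersection of preimages of element centralizers and assert that this is polynomial-time computable by \cite{Luks/92,KantorLuks/90}. But computing the centralizer of a single element in a permutation group (a fortiori in a quotient) and intersecting two subgroups are each polynomial-time equivalent to graph isomorphism and are not known to be in P, so that particular decomposition does not yield a polynomial algorithm. The conclusion is nevertheless correct: the center is one of the special cases that genuinely is polynomial-time computable --- for $H\leq \mathrm{Sym}(\Omega)$ one obtains $\ZG(H)=H\cap C_{\mathrm{Sym}(\Omega)}(H)$ orbit by orbit using the semiregularity of the centralizer of a transitive constituent, and \cite{KantorLuks/90} extends the computation of centers (and of the upper and lower central series, which also covers your item $(4)$) to quotients $H/N$. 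Since you cite the right sources for the right fact, this is a misattributed justification rather than a fatal gap, but the argument as literally written would not establish polynomiality.
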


\subsection{Isotropic decompositions}
\label{sec:recognition}

We depart from the focused setting of $E$-groups, and we consider the more
general setting of computing an isotropic decomposition for an alternating
$\ff$-tensor $t : V\times V\to T$.

\begin{thm}\label{thm:recognition}
    Let $\tuB\in \Mat_{2n}(\ff[y_1,\dots, y_d]_1)$ be skew-symmetric. Then there
    exist Las Vegas algorithms, using $O(n^6d + n^2d^5)$ and $O(n^6d)$
    operations in $\ff$ respectively, to do the following.
    \begin{enumerate}[label=$(\arabic*)$]
        \item\label{thm-part:cent} Write $\tuB$ over $C=\Cent(\tuB)$, provided
        $C$ is a field, call it $\tuB_{C}$. 
        \item\label{thm-part:ti-sub} Return $X\in \GL_{2n}(\ff)$ such that
        $X^{\tp}\tuB_{C} X$ gives an isotropic decomposition whenever
        $\tuB_{C}$ is decomposable with irreducible Pfaffian.
    \end{enumerate} 
\end{thm}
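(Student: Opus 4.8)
The plan is to build the two algorithms on top of well-understood linear-algebra primitives (solving linear systems, computing kernels, factoring polynomials over $\ff$, and the module-theoretic machinery for $*$-algebras) and to analyze their complexity in terms of the number of field operations. For part~\ref{thm-part:cent}, I would first compute $\Cent(\tuB)$ by solving the defining linear system in \eqref{eq:centroid}: this is a homogeneous system in $(2n)^2 + (2n)^2 + d^2$ unknowns whose coefficients come from the $d$ structure constant matrices of $\tuB$, so it can be set up and solved with $O(n^6 d + n^2 d^5)$ operations after a suitable reorganization (the two summands reflecting the two shapes of block one reduces). Then one checks whether $\Cent(\tuB)$ is a field — by \cref{lem:comm-cent} it is at least a commutative ring since a tensor with irreducible Pfaffian is fully nondegenerate — and, if so, one rewrites $\tuB$ as a matrix of linear forms over $\Cent(\tuB) \cong \ff$ by choosing an $\ff$-basis of $V$ adapted to the $\Cent(\tuB)$-module structure; this base change is again linear algebra over the (small) field. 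The output $\tuB_{\ff}$ is skew-symmetric over $\ff$ and has $\Cent(\tuB_{\ff}) \cong \ff$ by construction.

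For part~\ref{thm-part:ti-sub}, assume now $\tuB = \tuB_{\ff}$ has $\Cent(\tuB)\cong\ff$, irreducible Pfaffian, and is decomposable (i.e.\ $|\ti{\tuB}|\geq 1$, and in fact the decomposability hypothesis says $|\ti{\tuB}|\geq 1$ with a decomposition into two totally isotropic halves, so $|\ti{\tuB}|\geq 2$ — equivalently, by \cref{thm:reducible-adjoints}, $\Adj(\tuB)$ is $*$-isomorphic to $\mathbf{X}_1(\ff)$ or $\mathbf{S}_2(\ff)$). The key step is to compute $\Adj(\tuB)$ via the linear system \eqref{eq:adjoint} — another system solvable in $O(n^6 d)$ operations — together with its natural $*$-structure $(\alpha,\beta)\mapsto(\beta,\alpha)$. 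Then I would compute the Jacobson radical $R$ of the $*$-algebra $\Adj(\tuB)$ (Las Vegas, via the standard radical algorithms over finite fields) and decompose $\Adj(\tuB)/R$ into simple $*$-rings using \cref{thm:simple-star-algebras}; the half-generic structure forces the semisimple quotient to be exactly $\mathbf{X}_1(\ff)$ or $\mathbf{S}_2(\ff)$. Following the proof of \cref{prop:adj-X-S}, I pick an idempotent $e$ in the appropriate simple component with $e+e^\ast = 1$ and $ee^\ast = e^\ast e = 0$, lift it through $R$ to an honest idempotent $(X,Y)\in\Adj(\tuB)$ with $X + Y = I_{2n}$, $XY = YX = 0$; then $U = XV$ and $W = YV$ are complementary totally isotropic subspaces of dimension $n$ (by \cref{lem:max-dim}), and assembling a basis of $U$ followed by a basis of $W$ into the columns of $X\in\GL_{2n}(\ff)$ yields $X^{\tp}\tuB_{\ff} X$ in the desired isotropically decomposed block form.

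The main obstacle, and the part requiring the most care, is the complexity bookkeeping for part~\ref{thm-part:cent}: the centroid system naively has $\Theta((n^2 + d^2)^2)$ unknowns and $\Theta(n^2 d)$ equations with dense coefficients, and one must argue that exploiting the block structure (the three components $\End(U), \End(W), \End(T)$ interact only through the $d$ bilinear forms) brings the cost down to $O(n^6 d + n^2 d^5)$ rather than something larger. A secondary subtlety is the Las Vegas nature: computing the Jacobson radical and the Wedderburn-type decomposition of $\Adj(\tuB)$, and extracting square roots / nonsquares when distinguishing the $\mathbf{O}$-, $\mathbf{U}$-, $\mathbf{S}$-, $\mathbf{X}$-types in \cref{thm:simple-star-algebras}, inherently uses randomization over $\ff$, which is the source of the ``Las Vegas'' qualifier and must be flagged as such. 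I would also remark that decomposability is automatically detectable from $\Adj(\tuB)$ itself — so the algorithm can either certify the decomposition or report that none exists — which is what makes the conditional ``whenever $\tuB_{\ff}$ is decomposable'' into an effective dichotomy.
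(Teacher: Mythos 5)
Your proposal follows essentially the same route as the paper: part \ref{thm-part:cent} is done by solving the linear system defining $\Cent(\tuB)$, testing whether it is a field, and rewriting $\tuB$ over it; part \ref{thm-part:ti-sub} is done by computing $\Adj(\tuB)$ with its $*$-structure, constructively recognizing it as $\mathbf{X}_1(\ff)$ or $\mathbf{S}_2(\ff)$, and pulling back idempotents $e$ with $e+e^*=1$, $ee^*=e^*e=0$ to obtain the two complementary totally isotropic subspaces, exactly as in the paper's Lemmas \ref{lem:cent-rewrite} and \ref{lem:ti-sub-alg} (the paper outsources your radical/idempotent-lifting step to the cited constructive recognition of $*$-algebras, and its complexity claim is likewise dominated by the two linear systems). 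The only cosmetic slip is your appeal to an irreducible Pfaffian in part \ref{thm-part:cent}, which is neither assumed nor needed there.
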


We prove \Cref{thm:recognition} with the next two lemmas. Common to both
algorithms, and also their bottleneck, is computing a basis of a large system of
linear equations. 

\begin{lem}\label{lem:cent-rewrite}
    There exists a Las Vegas algorithm that, given $\tuB\in\Mat_{m\times
    n}(\ff[y_1,\dots, y_d]_1)$, decides whether its centroid is a field extension $C/\ff$ and, if so, 
    returns the matrix of linear forms $\tuB_C$ over $C$ using $O(mnd(m^2 + n^2
    + d^2)^2)$ operations in $\ff$. 
\end{lem}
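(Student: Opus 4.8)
The plan is to reduce the whole task to solving one homogeneous linear system over $\ff$ followed by light post-processing of the resulting finite-dimensional $\ff$-algebra. Fix the standard bases of $U=\ff^{m}$, $W=\ff^{n}$, $T=\ff^{d}$, and write $\tuB=\sum_{k=1}^{d}\tuB^{(k)}y_{k}$ with $\tuB^{(k)}\in\Mat_{m\times n}(\ff)$. Unwinding \eqref{eq:centroid}, the triple represented by matrices $(X,Y,Z)\in\Mat_{m}(\ff)\times\Mat_{n}(\ff)\times\Mat_{d}(\ff)$ lies in $\Cent(\tuB)$ if and only if
\[
X^{\tp}\tuB^{(k)}=\tuB^{(k)}Y=\sum_{l=1}^{d}Z_{kl}\,\tuB^{(l)}\qquad(k=1,\dots,d),
\]
a homogeneous linear system of $2mnd$ scalar equations in the $m^{2}+n^{2}+d^{2}$ unknown entries of $X,Y,Z$. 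Building and row-reducing the associated $2mnd\times(m^{2}+n^{2}+d^{2})$ matrix costs $O(mnd\,(m^{2}+n^{2}+d^{2})^{2})$ field operations, since Gaussian elimination on an $M\times N$ system uses $O(MN\min(M,N))\le O(MN^{2})$ operations; it returns an $\ff$-basis $\theta_{1},\dots,\theta_{s}$ of $C_{0}:=\Cent(\tuB)$ together with $s=\dim_{\ff}C_{0}$. This is the dominant cost, and everything below is arranged to stay within it.

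Next I would decide whether $C_{0}$ is a field. The key point is that if $C_{0}$ is a field $C$, then each of $U$, $W$, $T$ is a nonzero $C$-vector space, because the three coordinate projections $C_{0}\to\End_{\ff}(U),\End_{\ff}(W),\End_{\ff}(T)$ are unital ring homomorphisms from a field with nonzero target, hence injective; consequently $[C:\ff]=s$ divides each of $m,n,d$, so $s\le\min(m,n,d)$. If the computed $s$ exceeds $\min(m,n,d)$, I output ``$\Cent(\tuB)$ is not a field'' and stop; otherwise $s\le\min(m,n,d)$, so computing the structure constants of $C_{0}$ (composition in $\End_{\ff}(U)\times\End_{\ff}(W)\times\End_{\ff}(T)$ followed by expressing the results in the basis $\theta_{1},\dots,\theta_{s}$) and a commutativity check (automatic when the underlying tensor is fully nondegenerate by \Cref{lem:comm-cent}, but verified in general) cost only $\mathrm{poly}(s,\log q)$, while a non-commutative $C_{0}$ is certainly not a field. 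Finally, a standard Las Vegas structural algorithm for finite-dimensional algebras over a finite field computes the Jacobson radical and the primitive idempotents of $C_{0}$: it is a field exactly when the radical is $0$ and there is a unique primitive idempotent, in which case the same tools --- concretely, drawing random $\theta\in C_{0}$ and testing its minimal polynomial $\mu_{\theta}\in\ff[x]$ for being irreducible of degree $s$, which occurs with probability bounded away from $0$ once $C_{0}$ is a field --- produce a presentation $C\cong\ff[x]/(\mu)$ and a generator $\vartheta=(X_{0},Y_{0},Z_{0})$ of $C_{0}$.

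It then remains to rewrite $\tuB$ over $C$. The endomorphism $X_{0}$ makes $U$ a $C$-vector space, and a $C$-basis of $U$ (of cardinality $m/e$, where $e=[C:\ff]$) is extracted by Gaussian elimination over $\ff$, greedily choosing $\ff$-independent $C$-spans; likewise for $W$ via $Y_{0}$ and for $T$ via $Z_{0}$. Since $\vartheta$ generates $C_{0}=\Cent(\tuB)$, the tensor attached to $\tuB$ is $C$-bilinear, so rewriting it in these $C$-bases converts the $d$ coefficient matrices of $\tuB$ into a matrix $\tuB_{C}\in\Mat_{(m/e)\times(n/e)}(C[y'_{1},\dots,y'_{d/e}]_{1})$; this conversion costs $O(mnd)$ field operations up to the bounded factor $e$, again within budget.

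The linear-system solve is routine; the care lies in the post-processing, and the main obstacle is to keep the algebra-structure computations inside the stated complexity when $\tuB$ is highly degenerate --- a regime in which one of $m,n,d$ may dominate while $\Cent(\tuB)$ is large and far from a field. This is precisely what the early cutoff $s\le\min(m,n,d)$ and the commutativity check guard against; once they are in place, correctness follows directly from \eqref{eq:centroid} and the freeness of vector spaces over a field, and the only randomness sits in the standard structural algorithm, equivalently in finding a primitive element of $C$.
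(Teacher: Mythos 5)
Your proposal is correct and follows essentially the same route as the paper: solve the homogeneous linear system defining $\Cent(\tuB)$ (the dominant $O(mnd(m^2+n^2+d^2)^2)$ cost), recognize the field and find a generator via standard Las Vegas structural algorithms for finite-dimensional algebras (the paper delegates this to Brooksbank--Wilson, which you reconstitute explicitly, adding the harmless $s\leq\min(m,n,d)$ cutoff), then extract $C$-bases of the three spaces and rewrite $\tuB$ over $C$. The extra details you supply are consistent with the paper's argument and stay within the stated complexity.
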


\begin{proof} 
    \emph{Algorithm.} Compute a basis for the centroid $C=\Cent(\tuB)$  by solving a system of linear equations.
 Use \cite[Th.~1.3]{BrooksbankWilson/15} to determine if $C$ is a field or not, and if $C$ is a field, then find a
    generator $X=(X_1,X_2,X_3)$ of $C$ as a unital $\ff$-algebra. If $C$ is not a field, then just return
    $\tuB$, so we assume $C$ is a field. Let $V_1=\ff^m$, $V_2=\ff^n$, and
    $V_3=\ff^d$. 
    Find a
    maximal set of nonzero vectors $\mathcal{B}_i$ in $V_i$ which are all in
    different $X_i$ orbits. For each $(u,v) \in \mathcal{B}_1 \times
    \mathcal{B}_2$, write $u^{\tp}\tuB v$ as $\sum_{w\in\mathcal{B}_3}
    f_{u,v}^{(w)}(X)w$, where $f_{u,v}^{(w)}(z)\in \ff[z]$. Return the
    matrix $\tuB_C=(f_{u,v}^{(w)}(X))_{u,v,w}$, running through all
    $(u,v,w)\in\mathcal{B}_1\times \mathcal{B}_2\times \mathcal{B}_3$.

    \emph{Correctness.} We only ever proceed beyond the construction of a basis
    for $C$ if $C$ is a field. In this case, the $V_i$ are $C$-vector spaces.
    The sets $\mathcal{B}_i$ are $C$-bases for the $V_i$. 
    
    \emph{Complexity.} A basis for the centroid $C$ is computed by solving a
    homogeneous system of $mnd$ linear equations in $m^2+n^2+d^2$ variables over
    $\ff$. By \cite[Th.~1.3]{BrooksbankWilson/15}, determining whether or
    not $C$ is a field is done constructively in polynomial time using a Las
    Vegas algorithm, but this complexity is dominated by the complexity to
    compute a basis for $C$. The complexity to find bases $\mathcal{B}_i$ and
    rewrite $\tuB$ over $C$ is also dominated by the complexity for the
    centroid. 
\end{proof} 

\begin{lem}\label{lem:ti-sub-alg}
    There is an algorithm that, given $\tuB \in
    \Mat_{2n}(\ff[y_1,\dots, y_d]_1)$ skew-symmetric, returns $X\in\GL_{2n}(\ff)$ such that
    $X^{\tp}\tuB X$ gives an isotropic decomposition whenever $\tuB$ is
    decomposable with irreducible Pfaffian, using $O(n^{6}d)$ operations in
    $\ff$.
\end{lem}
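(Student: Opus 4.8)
The plan is to translate isotropic decompositions of $\tuB$ into idempotents of the adjoint $*$-algebra $\Adj(\tuB)$ and to locate such an idempotent via the classification of simple $*$-algebras over $\ff$ (\cref{thm:simple-star-algebras}). The dictionary is as follows. Suppose $e=(\alpha,\beta)\in\Adj(\tuB)$ is a nontrivial idempotent with $e+e^{*}=1$; equivalently, $\alpha\in\Mat_{2n}(\ff)$ is idempotent with $\alpha\neq 0,I$ and $\beta=I-\alpha$. Then $U:=\im\alpha$ and $W:=\ker\alpha$ satisfy $V=U\oplus W$, and both are totally isotropic for $t$: for $u,u'\in U$ one has $t(u,u')=t(\alpha u,u')=t(u,(I-\alpha)u')=t(u,0)=0$ since $\alpha u'=u'$, and symmetrically for $W$ using $e^{*}=(\beta,\alpha)\in\Adj(\tuB)$. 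By \cref{lem:max-dim} both $\dim U$ and $\dim W$ are at most $n$, so as they sum to $2n$ each equals $n$; taking the columns of $X\in\GL_{2n}(\ff)$ to be a basis of $U$ followed by a basis of $W$ gives $X^{\tp}\tuB X=\left(\begin{smallmatrix} 0 & \tuM \\ -\tuM^{\tp} & 0\end{smallmatrix}\right)$. Conversely, if $\tuB$ is isotropically decomposable, say $V=U\oplus W$ with $U,W$ totally isotropic (necessarily $n$-dimensional by \cref{lem:max-dim}), then the projection $\pi_{U}$ of $V$ onto $U$ along $W$ gives $(\pi_{U},I-\pi_{U})\in\Adj(\tuB)$ — the adjointness $t(\pi_{U}u,w)=t(u,(I-\pi_{U})w)$ is a one-line check on the decomposition — which is a nontrivial idempotent with the exchange property. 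Thus it suffices to produce a nontrivial idempotent $e\in\Adj(\tuB)$ with $e+e^{*}=1$, and such an $e$ exists exactly when $\tuB$ is decomposable.

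The first computational step is to produce a basis of $\Adj(\tuB)\subseteq\Mat_{2n}(\ff)\times\Mat_{2n}(\ff)$. Recording the $(\ell,m)$-entry of $\tuB$ by its coefficient vector $\bm b_{\ell m}\in\ff^{d}$, the algebra is the solution set of the linear equations $\sum_{\ell}\alpha_{\ell i}\,\bm b_{\ell j}=\sum_{\ell}\beta_{\ell j}\,\bm b_{i\ell}$ for $i,j\in[2n]$, a homogeneous system of $(2n)^{2}d$ equations in $2(2n)^{2}$ unknowns. Solving it by Gaussian elimination costs $O(n^{6}d)$ field operations, and this is the bottleneck of the whole procedure.

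Next I would work inside the finite-dimensional $*$-algebra $A=\Adj(\tuB)$, of $\ff$-dimension at most $8n^{2}$. Compute its Jacobson radical $J$ (a $*$-ideal by \cite[Thm.~2.1]{BrooksbankWilson/12}), the semisimple quotient $\bar A=A/J$, and the decomposition $\bar A\cong\bigoplus_{i}A_{i}$ into simple $*$-algebras, reading off the type of each $A_{i}$ from \cref{thm:simple-star-algebras}. In each $A_{i}$ one then seeks a nontrivial idempotent $f_{i}$ with $f_{i}+f_{i}^{*}=1_{A_{i}}$: for the types $\mathbf{X}_{m}$ and $\mathbf{S}_{2m}$ this is one of the standard idempotents exhibited in the classification, for types $\mathbf{O}_{m}^{+}$ and $\mathbf{U}_{m}$ with $m$ even it is the projection onto a maximal totally isotropic subspace of the associated orthogonal, resp.\ Hermitian, form along a complementary such subspace (deterministic linear algebra), and for the remaining types no such $f_{i}$ exists. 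If some $A_{i}$ admits no such $f_{i}$, or if they are all forced to equal $1_{A_{i}}$, then $\tuB$ is not decomposable and an arbitrary matrix may be returned; otherwise $\bar e:=(f_{i})_{i}$ is a nontrivial idempotent of $\bar A$ with $\bar e+\bar e^{*}=1$, and it lifts through the nilpotent ideal $J$ to an idempotent $e=(\alpha,I-\alpha)\in A$ with $e+e^{*}=1$: writing $\bar h=2\bar e-1$, lift to a skew element $h_{0}\in A$, replace it by $h:=h_{0}(h_{0}^{2})^{-1/2}$ (finite binomial series, legitimate since $h_{0}^{2}-1\in J$ is nilpotent and $\charac(\ff)\neq 2$), which is skew with $h^{2}=1$, and set $e=\tfrac12(1+h)$. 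Finally compute bases of $\im\alpha$ and $\ker\alpha$, assemble $X$, and return it. Everything after the computation of $\Adj(\tuB)$ lives in an algebra and modules of size $O(n^{2})$, uses $O(n^{3})$-cost matrix arithmetic together with $*$-algebra structure computations polynomial in $n$ following \cite{BrooksbankWilson/12}, hence $O(n^{6})$ field operations; the total is $O(n^{6}d)$, dominated by the first step.

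The main obstacle is this idempotent-location step. One must verify that, under the decomposability hypothesis, every simple $*$-component of $\bar A$ is indeed of a type carrying an idempotent $f$ with $f+f^{*}=1$ — this is where \cref{lem:max-dim} together with the Witt-index geometry of the forms in \cref{thm:simple-star-algebras} enters, ruling out the types $\mathbf{O}_{m}^{-}$, $\mathbf{O}_{m}^{\circ}$ and $\mathbf{U}_{m}$ with $m$ odd — and one must track the complexity carefully so that the radical computation, the Wedderburn decomposition, the type identification and the $*$-equivariant idempotent lifting all fit inside the claimed $O(n^{6}d)$ bound. Correctness of the output under the hypothesis is then immediate from the dictionary established in the first paragraph.
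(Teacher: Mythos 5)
Your proposal is correct, and its backbone coincides with the paper's: compute a basis of $\Adj(\tuB)$ by solving the linear system (this is the $O(n^6d)$ bottleneck in both arguments), locate an element $e=(\alpha,\beta)$ with $e+e^{*}=1$ and $ee^{*}=e^{*}e=0$, and read off the decomposition from $\im\alpha$ and $\ker\alpha$, using \cref{lem:max-dim} to see that both pieces have dimension exactly $n$; your ``dictionary'' is precisely the computation in \cref{prop:adj-X-S}. Where you diverge is in how the idempotent is found. The paper does not compute the radical or run a Wedderburn analysis: it simply tests whether $\Adj(\tuB)$ is $*$-isomorphic to $\mathbf{X}_1(\ff)$ or $\mathbf{S}_2(\ff)$ and, if so, pulls back the explicit elements $(1,0)$ resp.\ $E_{11},E_{11}+E_{12}$ through the constructed $*$-isomorphism; correctness in the decomposable case is delegated to \cref{thm:reducible-adjoints}, which guarantees that only these two types occur. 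Your route is heavier but more self-contained: the existence of the projection idempotent onto a summand of a given decomposition shows that every simple $*$-component of $\Adj(\tuB)/J$ carries an $f_i$ with $f_i+f_i^{*}=1$, and your $*$-equivariant lifting through the radical (which is valid since $\charac\ff\neq 2$ and $J$ is nilpotent) recovers a genuine idempotent without ever invoking \cref{thm:reducible-adjoints}. Two small caveats: the constructive recognition of $*$-algebras over $\ff$ and the construction of maximal totally isotropic subspaces of orthogonal and Hermitian forms are Las Vegas procedures (the paper's \cref{thm:recognition} is stated as such, and both you and the paper ultimately lean on \cite[Thm.~4.1]{BrooksbankWilson/12} for the claim that this cost is dominated by the linear-system solve); and your remark that the $f_i$ might ``all be forced to equal $1_{A_i}$'' is vacuous, since $f_i+f_i^{*}=1$ already rules out $f_i\in\{0,1_{A_i}\}$ in odd characteristic.
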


\begin{proof} 
    \emph{Algorithm.} Compute $A=\Adj(\tuB)$. Determine if $A$ is isomorphic to
    either $\mathbf{X}_1(\ff)$ or $\mathbf{S}_2(\ff)$. If not, report that
    $\tuB$ is not isotropically decomposable with an irreducible Pfaffian.
    Otherwise, construct an isomorphism of $*$-algebras $\phi: A \rightarrow S$,
    where $S$ is either $\mathbf{X}_1(\ff)$ or $\mathbf{S}_2(\ff)$. Set 
    \begin{align*}
        \mathcal{E} = \begin{cases} 
            \{E_{11}, E_{22}\} & \text{if } S = \mathbf{S}_2(\ff), \\
            \{(1, 0), (0, 1)\} & \text{if } S = \mathbf{X}_1(\ff),
        \end{cases}
    \end{align*} 
    where $E_{ij}$ is the matrix with $1$ in the $(i,j)$ entry and $0$
    elsewhere. For each $e \in \mathcal{E}$, set $(Y, Z) = \phi^{-1}(e)\in A$ and let $U_e\leq \ff^n$ be the column
    span of $Y$. Let $\mathcal{B}$ a basis
    for $\ff^{2n}$ containing bases for $U_e$ for all $e\in \mathcal{E}$, and
    return the transition matrix $X\in\GL_{2n}(\ff)$.

    \emph{Correctness.} By Theorem~\ref{thm:reducible-adjoints}, if $\tuB$ is
    isotropically decomposable with an irreducible Pfaffian, then $A$ is
    isomorphic to either $\mathbf{X}_1(\ff)$ or $\mathbf{S}_2(\ff)$. In both
    cases, $\mathcal{E}$ is a set of elements in $S$ whose images induce
    distinct, $n$-dimensional, totally-isotropic subspaces;  cf.\ the proof of \cref{prop:adj-X-S}. To show that $U_e$
    is totally-isotropic,  set $(Y, Z)=\phi^{-1}(e)$. For each $u,v\in \ff^{2n}$ one has $u^{\tp}Y^{\tp}\tuB
    Yv = u^{\tp}\tuB ZYv = 0$,  proving that $U_e$ is totally isotropic.

    \emph{Complexity.} A basis for $\Adj(\tuB)$ is constructed by solving $n^2d$
    (homogeneous) linear equations in $2n^2$ variables.
    By~\cite[Th.~4.1]{BrooksbankWilson/12}, the complexity for the
    constructive recognition of $*$-algebras is dominated by the cost of
    constructing a basis for $\Adj(\tuB)$. 
\end{proof}

\begin{proof}[Proof of \Cref{thm:recognition}]
    Use \Cref{lem:cent-rewrite} for \ref{thm-part:cent}, and for
    \ref{thm-part:ti-sub}, apply \Cref{lem:ti-sub-alg}.
\end{proof}

\subsection{Isomorphism testing of $E$-groups and the proof of \Cref{mainthm:iso}} 
\label{sec:iso-proof}

\cref{thm:recognition} is almost enough to prove the first two statements of
\cref{mainthm:iso}. The next lemma fills the gap by providing an algorithm to
find flex points on smooth cubics; cf.\ \cref{rmk:flex}.

\begin{lem}\label{lem:flex-computation}
    Given a homogeneous cubic $f\in \ff[y_1,y_2,y_3]$, there exists an algorithm
    that decides if $f$ is smooth and if so returns all $a\in
    \ff^3\setminus\{(0,0,0)\}$ such that $f(a)=0$ and $a$ is a flex point of
    $f$, which uses $O(\log q)$ field operations. 
\end{lem}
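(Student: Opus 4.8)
The plan is to reduce the problem to the classical Hessian criterion for inflection points. First I would form the Hessian cubic $H=\mathrm{Hess}(f)=\det\left(\left(\partial^2 f/\partial y_i\partial y_j\right)_{i,j}\right)\in\ff[y_1,y_2,y_3]$, obtained from $f$ by a fixed polynomial formula in its at most ten coefficients. To decide smoothness I would evaluate the discriminant $\Delta(f)$ of the ternary cubic $f$ — again a fixed polynomial in the coefficients of $f$ — and declare $f$ non-smooth exactly when $\Delta(f)=0$ (equivalently, when $\partial_1 f,\partial_2 f,\partial_3 f$ have a common projective zero, which by Euler's relation and $\charac(\ff)=p\geq 5$ is equivalent to $V(f)$ being singular). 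When $f$ is smooth, since $p\geq 5$ the classical Hessian criterion applies: a point of $\P^2_\ff$ lies on $V(f)$ and is an inflection point of $V(f)$ if and only if it lies on $V(f)\cap V(H)$; over $\overline{\ff}$ this common zero locus consists of exactly the nine points of $E[3]$, so in particular $\gcd(f,H)=1$ in $\ff[y_1,y_2,y_3]$ and $V(f)\cap V(H)$ is a finite set of at most nine points.

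It then remains to list the $\ff$-rational points of the zero-dimensional scheme $V(f,H)$. I would do this by elimination, chart by chart. On $\{y_3\neq 0\}$, setting $\tilde f=f(y_1,y_2,1)$ and $\tilde H=H(y_1,y_2,1)$, I would form $R(y_1)=\mathrm{Res}_{y_2}(\tilde f,\tilde H)\in\ff[y_1]$, which has degree at most $9$ and is not identically zero because $\gcd(f,H)=1$ (to make this completely clean one may first apply a change of coordinates in $\GL_3(\ff)$, found after a bounded number of random attempts, ensuring all inflection points lie in this chart and project injectively to the $y_1$-line, so that $R$ is squarefree of degree $9$). The $\ff$-roots $\alpha$ of $R$ account for the first coordinates of the affine $\ff$-rational inflection points, and for each such $\alpha$ the matching $y_2$-values are the $\ff$-roots of $\gcd(\tilde f(\alpha,y_2),\tilde H(\alpha,y_2))$ (the empty set if this is a nonzero constant), each of which is then checked to be a genuine zero of $f$ and $H$. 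The line $\{y_3=0\}$ is handled in the same spirit, via the binary form $\gcd\bigl(f(y_1,y_2,0),H(y_1,y_2,0)\bigr)$ of degree at most $3$. Collecting the outputs over the three standard charts and removing repetitions yields exactly the requested set of $\ff$-rational inflection points (or the empty set).

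For the complexity: computing $H$, $\Delta(f)$, the resultant $R$, and the relevant greatest common divisors involves only polynomials of bounded degree in a bounded number of variables, hence $O(1)$ field operations; the single $q$-dependent step is extracting the roots in $\ff$ of the univariate polynomials produced above, all of degree at most $9$. For such a $g$ one first computes $\gcd\bigl(g(y),y^{q}-y\bigr)$ — costing $O(\log q)$ field operations, namely the cost of forming $y^{q}\bmod g(y)$ by repeated squaring in $\ff[y]/(g)$ — and then factors the resulting squarefree, totally split polynomial of bounded degree in a further $O(\log q)$ field operations; so the whole procedure uses $O(\log q)$ field operations. The main difficulty is not computational but in the justifications: one must invoke the Hessian criterion in the correct form (this is where $p\geq 5$ is used), ensure the resultants involved do not vanish identically (guaranteed by $\gcd(f,H)=1$, with the preliminary generic change of coordinates removing any remaining degeneracies), and recover each common zero together with all of its coordinates by back-substitution across all charts; the dominant cost, and the origin of the $O(\log q)$ bound, is the univariate root extraction over $\ff$.
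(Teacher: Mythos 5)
Your proof is correct and follows essentially the same route as the paper's: both characterize the flexes as the $\ff$-points of the zero-dimensional locus cut out by $f$ and the Hessian determinant $H$, and both reduce the whole computation to factoring a bounded number of bounded-degree univariate polynomials over $\ff$, which is the sole source of the $O(\log q)$ bound. The only differences are cosmetic — you test smoothness via the discriminant of the ternary cubic rather than by solving the Jacobian system, and you perform the elimination with resultants chart by chart rather than with a lexicographic Gr\"obner basis — and each of these is an $O(1)$-operation substitute for the corresponding step in the paper.
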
 

\begin{proof}
    \emph{Algorithm.} Compute a Gr\"obner basis $\mathcal{G}_1$ for the ideal
    \[ 
        I_1 = \langle \partial f / \partial y_1, \partial f / \partial y_2,
        \partial f / \partial y_3, f\rangle \textup{ of } \ff[y_1,y_2,y_3]
    \] 
   using the lexicographical monomial order. Let $g\in\mathcal{G}_1$ be
    homogeneous with at most $2$ variables. Factor $g$ using univariate
    algorithms~\cite[Ch.~14]{vzGG/13}, and use those solutions to find all the
    solutions to the polynomial system determined by $\mathcal{G}_1$. If a
    solution exists, declare $f$ ``singular,'' and return a singular point.
    Otherwise $f$ is smooth.
    
    Compute the determinant of the Hessian matrix of $f$, which yields a
    homogeneous cubic $H\in \ff[y_1,y_2,y_3]$. Construct a Gr\"obner basis
    $\mathcal{G}_2$ for the ideal $I_2 = \langle f, H\rangle$ using the
    lexicographical monomial order. Return the set of solutions to the
    polynomial system determined by $\mathcal{G}_2$ in the same fashion as above.
    
    \emph{Correctness.} The algorithm to decide whether $f$ is smooth is correct
    by definition and the fact that $\langle \mathcal{G}_1\rangle = I_1$. The
    existence of such a $g\in \mathcal{G}_1$ is the content of the Elimination
    Theorem~\cite[Sec.~3.1]{CLO}. If the algorithm starts to compute the flexes,
    we know that $f$ is, therefore, smooth. By B\'ezout's theorem the number of
    flex points is bounded above by $9$. Thus, $I_2$ is $0$-dimensional and the
    flexes are the solutions to the polynomial system determined by
    $\mathcal{G}_2$. 
    
    \emph{Complexity.} Computing a Gr\"obner basis for a set of polynomials
    whose degree and number of variables are constant is done using $O(1)$ field
    operations. We can solve the polynomial systems determined by
    $\mathcal{G}_i$ by calling a constant number of univariate factoring
    algorithms. Since the degrees are bounded by some absolute constant,
    factoring is done using $O(\log q)$ field operations~\cite[Ch.~14]{vzGG/13}. 
\end{proof}

\begin{proof}[Proof of \Cref{mainthm:iso}\ref{thm-part:recog1}]
    By \cref{cor:adj-cent-elliptic}, if the $G_i$ are elliptic groups, the
    $\Cent(t_{G_i})$ are finite extensions of $\F_p$. Write $t_{G_i}$ as a
    matrix of linear forms $\widetilde{\tuB}_i \in
    \Mat_{6m}(\mathbb{F}_p[y_1,\dots, y_{3m}]_1)$.  Use
    \Cref{thm:recognition}\ref{thm-part:cent} to express $\widetilde{\tuB}_i$ as
    a matrix of linear forms $\tuB_i \in \Mat_6(\ff[y_1,y_2,y_3]_1)$ over the
    centroid $\ff$ of $\widetilde{\tuB}_i$, using $O(m^7)$ field operations. If
    the algorithm fails at any stage, then $G_i$ is not an elliptic group.
    Otherwise compute the Pfaffians of $\tuB_i$. Then apply
    \Cref{lem:flex-computation} to decide if the Pfaffians are elliptic curves
    containing flex points; this uses $O(\log |\ff|) = O(m\log p)$ field
    operations. If they are not, then $G_i$ is not isomorphic to some
    $\GP_{E,P}(\ff)$. 
\end{proof}

Now our objective is to develop the algorithms that feed into the algorithm for
\cref{thm:computational-heart}, which is the main algorithm for
\cref{mainthm:iso}\ref{thm-part:iso}. We first describe some algorithms, which
will be used in \cref{thm:computational-heart}, that use a constant number of
field operations. 

\begin{lem}\label{lem:Aut-E}
    Assume $\charac(\ff)\geq 5$. Given an elliptic curve $E$ in short
    Weierstrass form, there is an algorithm returning the set of automorphisms
    $\Aut_{\cor{O}}(E)$ as a subgroup of $\GL_3(\ff)$ using $O(\log q)$ field
    operations. 
\end{lem}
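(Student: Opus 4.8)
The plan is to use the classical explicit description of $\Aut_{\cor{O}}(E)$ for an elliptic curve in short Weierstrass form $E : y^2 = x^3 + ax + b$ over a field of characteristic $\geq 5$, together with the observation that every such automorphism is linear on $\P^2_{\ff}$ and can therefore be written as an explicit $3\times 3$ matrix. Recall (cf.\ \cite[Table~III.3.1]{silverman} or \cite[Sec.~III.10]{silverman}) that the automorphisms of $E$ fixing $\cor{O}$ are exactly the maps $(x,y)\mapsto (u^2 x, u^3 y)$ for $u\in\ff^\times$ satisfying $u^4 a = a$ and $u^6 b = b$; the set of admissible $u$ forms a cyclic group $\mu$ of order $1$, $2$, $4$, or $6$ depending on whether $j(E)\notin\{0,1728\}$, $j(E)=1728$ (and $b=0$), or $j(E)=0$ (and $a=0$), with the usual refinement that order $4$ or $6$ requires $\ff$ to contain the relevant roots of unity. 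The map $(x:y:z)\mapsto(u^2 x : u^3 y : z)$ is linear, so it corresponds to $\diag(u^2, u^3, 1)\in\GL_3(\ff)$.

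\emph{Algorithm.} Given $a,b\in\ff$, first branch on the three cases: if $a=0$ and $b\neq 0$, the candidate exponents $u$ are the sixth roots of $b/b=1$ that also satisfy $u^6 b = b$, i.e.\ all sixth roots of unity in $\ff$; if $b=0$ and $a\neq 0$, the candidates are the fourth roots of unity in $\ff$; and if $ab\neq 0$, the candidates are the solutions of $u^4=1$ and $u^6=1$ simultaneously, i.e.\ the square roots of unity $\{\pm 1\}$. (The case $a=b=0$ does not arise since $4a^3+27b^2\neq 0$.) In each case, enumerate the relevant roots of unity in $\ff$ by first computing a generator $\zeta$ of $\ff^\times$ (or a suitable power of one) and taking the appropriate powers; then for each admissible $u$ output the matrix $\diag(u^2,u^3,1)$. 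Return the resulting finite subset of $\GL_3(\ff)$.

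\emph{Correctness} follows from the cited classification: every element of $\Aut_{\cor{O}}(E)$ has the stated shape, each such shape is linear, and the matrices $\diag(u^2,u^3,1)$ are distinct for distinct $u$ (as $u$ ranges over a group of exponent dividing $6$ and $\charac\ff\geq 5$, the pair $(u^2,u^3)$ determines $u$). \emph{Complexity:} finding a generator of $\ff^\times$, or extracting roots of unity of bounded order, is done with a Las Vegas / deterministic procedure using $O(\log q)$ field operations (the number of candidate roots is at most $6$, a constant, and each is obtained by a single exponentiation, which costs $O(\log q)$ multiplications); assembling the at most $6$ diagonal matrices is $O(1)$. Hence the total cost is $O(\log q)$ field operations.

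The only mild subtlety—and the point I would be most careful about—is the extraction of fourth and sixth roots of unity in $\ff$: one must correctly detect whether $\ff^\times$ contains elements of order $4$ (resp.\ $6$), which amounts to checking $4\mid q-1$ (resp.\ $6\mid q-1$) and then producing the roots via exponentiation of a generator; this is where the $O(\log q)$ cost enters and where one should be careful not to over- or under-count the automorphism group. Everything else is a direct transcription of the Weierstrass automorphism formulas into matrix form.
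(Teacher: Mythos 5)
Your proposal is correct and follows essentially the same route as the paper: classify $\Aut_{\cor{O}}(E)$ via the standard Weierstrass description (Silverman, Thm.~III.10.1), branch on $j(E)\in\{0,1728\}$ versus neither, and return the matrices $\diag(u^2,u^3,1)$ for the admissible roots of unity $u$ in $\ff$. One small caution on the complexity: computing a full generator of $\ff^\times$ is \emph{not} known to cost $O(\log q)$ field operations (the paper itself budgets $O(q^{1/4}\log q)$ for exactly that task in its autotopism algorithm, citing Shparlinski), so you should lean entirely on your fallback of extracting only the roots of $x^n-1$ for $n\leq 6$ — e.g.\ by factoring these constant-degree polynomials, which is the paper's method and is Las Vegas $O(\log q)$ — rather than on producing a primitive element first.
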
 

\begin{proof} 
    \emph{Algorithm.} Let $\mu_n(\ff)$ be the set of roots of $x^n - 1$ in
    $\ff$. Define $\mathcal{R}$ to be $\mu_2(\ff)$, $\mu_4(\ff)$, $\mu_6(\ff)$
    respectively when $j(E)\neq 0,1728$ or $j(E)=1728$ or $j(E)=0$. Return the
    subset $\{\diag(\omega^2, \omega^3, 1) ~|~ \omega \in \mathcal{R}\}$ of
    $\GL_3(\ff)$.

    \emph{Correctness.} It follows from \cite[Th.~III.10.1]{silverman}.

    \emph{Complexity.} Factoring constant-degree univariate polynomials is done
    using $O(\log q)$ field operations~\cite[Ch.~14]{vzGG/13}.
\end{proof}

The following is relevant in view of \cref{rmk:autotopism-pseudo}.

\begin{prop}\label{prop:autotopism}
    Assume $\charac(\ff) \geq 5$. Given an elliptic curve $E$ in short
    Weierstrass form and $P\in E(\ff)$, there is an algorithm returning a
    generating set for $\mathrm{Auto}_{\ff}(\J{E}{P})$ using $O(q^{1/4}\log q)$
    field operations.
\end{prop}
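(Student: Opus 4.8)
The plan is to reduce to the structural description of the autotopism group that is already implicit in the proof of \cref{thm:pseudo}. By \cref{rmk:autotopism-pseudo},
\[
\mathrm{Auto}_{\ff}(\J{E}{P}) = \bigl\{(X,Y,Z)\in\GL_3(\ff)^3 \;\bigm|\; X^{\tp}\J{E}{P}(\bm y)Y = \J{E}{P}(Z\bm y)\bigr\},
\]
and the assignment $\varphi\colon(X,Y,Z)\mapsto Z$ is a group homomorphism into $\GL_3(\ff)$. Since $\J{E}{P}(cI_3\bm y)=c\,\J{E}{P}(\bm y)$ for $c\in\ff^\times$, combining this with $\Adj(\J{E}{P})=\{(kI_3,kI_3)\mid k\in\ff\}$ from \cref{prop:JP-adj} shows that $\ker\varphi=\{(aI_3,bI_3,abI_3)\mid a,b\in\ff^\times\}\cong\ff^\times\times\ff^\times$; and, after the harmless substitution $Z\leftrightarrow Z^{-1}$ used in the proof of \cref{thm:pseudo}, the image $\im\varphi$ is the group of linear automorphisms of $E$ identified there, of order $|E[3](\ff)|\cdot|\Aut_{\cor{O}}(E)|/|\Aut_{\cor{O}}(E)\cdot P|$, generated by the linear representatives of the translations $\tau_Q$ for $Q\in E[3](\ff)$ (which are linear because the $Q$ are flex points) and of the elements of $\Aut_{\cor{O}}(E)$ fixing $P$. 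Thus it suffices to produce two generators of $\ker\varphi$ and one autotopism above each member of a generating set of $\im\varphi$.

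For the second task, compute $\Aut_{\cor{O}}(E)$ by \cref{lem:Aut-E} in $O(\log q)$ field operations and discard the automorphisms not fixing $P$; compute $E[3](\ff)$ by factoring the $3$-division polynomial (of degree $\le 4$) and extracting the square roots needed for the $y$-coordinates, again in $O(\log q)$ operations; and, for each of the at most $9$ points $Q\in E[3](\ff)$ and each of the at most $6$ automorphisms fixing $P$, write down the associated matrix in $\GL_3(\ff)$ by the usual formulas, in $O(1)$ operations each. This produces a bounded-size generating set for $\im\varphi$. For each member $\bar Z$, lift it as follows: $\J{E}{P}(\bm y)$ and $\J{E}{P}(\bar Z\bm y)$ are linear determinantal representations of the same smooth cubic which, because $\bar Z\in\im\varphi$, correspond to the same line bundle, so the $\ff$-space $\{(A,B)\in\Mat_3(\ff)^2\mid A^{\tp}\J{E}{P}(\bm y)=\J{E}{P}(\bar Z\bm y)\,B\}$ is a free rank-one module over $\Adj(\J{E}{P})\cong\ff$; it is therefore one-dimensional, and since a genuine autotopism with third component $\bar Z$ exists, its nonzero elements $(A,B)$ all have $A,B\in\GL_3(\ff)$, giving the autotopism $(A,B^{-1},\bar Z)$. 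Solving this constant-size linear system costs $O(1)$ operations.

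For the first task, fix a generator $g$ of $\ff^\times$; then $(gI_3,I_3,gI_3)$ and $(I_3,gI_3,gI_3)$ generate $\ker\varphi$. This is the only step not bounded by $O(\log q)$ operations: factor $q-1$ by a deterministic method such as Pollard--Strassen in $O(q^{1/4}\log q)$ field operations, and then assemble a primitive root of $\ff$ from elements of prime-power order in $O(\log q)$ further operations. The union of the two generating sets generates $\mathrm{Auto}_{\ff}(\J{E}{P})$ by the exact sequence $1\to\ker\varphi\to\mathrm{Auto}_{\ff}(\J{E}{P})\xrightarrow{\varphi}\im\varphi\to1$, and the total cost is dominated by the factorization of $q-1$, giving $O(q^{1/4}\log q)$. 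I expect the genuine content to be the lifting step — justifying that for a valid $\bar Z$ the transporter space is a rank-one module (hence one-dimensional with all nonzero elements invertible), via $\Adj(\J{E}{P})\cong\ff$ — together with the routine but fiddly matter of pinning down the linear matrices implementing translation by $3$-torsion points; the $q^{1/4}$ in the running time is merely the cost of naming a generator of $\ff^\times$.
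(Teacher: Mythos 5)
Your proposal is correct and follows essentially the same route as the paper: the same exact sequence with kernel $\{(aI_3,bI_3,abI_3)\}$ generated from a primitive root of $\ff^\times$, the same identification of the image with the linear automorphisms coming from $\Aut_{\cor{O}}(E)_P$ and translations by $E[3](\ff)$, and the same lifting of each such $Z$ via a nonzero element of the one-dimensional adjoint module $\Adj(\J{E}{P},\J{E}{P}(Z\bm{y}))$, with the $O(q^{1/4}\log q)$ cost dominated by producing a generator of $\ff^{\times}$. The only cosmetic differences are that the paper obtains $E[3](\ff)$ as the set of flexes via \cref{lem:flex-computation} rather than from the division polynomial, and cites Shparlinski for the primitive-root step rather than your factor-and-assemble argument.
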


\begin{proof} 
    \emph{Algorithm.} Initialize $\mathcal{X}=\varnothing$. Use \cref{lem:Aut-E}
    to construct $\Aut_{\cor{O}}(E)$, and use \cref{lem:flex-computation} to get
    the set $R$ of flexes of $E$ over $\ff$. For each $\Omega\in
    \Aut_{\cor{O}}(E)$ fixing $P$ construct a nonzero $(X_{\omega},
    Y_{\omega})\in\Adj(\J{E}{P}, \J{E}{P}(\Omega\bm{y}))$ and include
    $(X_{\omega}, Y_{\omega}^{-1},\Omega)$ in $\mathcal{X}$. For each $Q\in R$,
    let $\tau_Q\in \GL_3(\ff)$ be the linear map given by translation by $Q$ on
    $E$, and construct a nonzero $(X_Q, Y_Q)\in \Adj(\J{E}{P}, \J{E}{P}(\tau_Q\bm{y}))$ and include $(X_Q, Y_Q^{-1}, \tau_Q)$ in $\mathcal{X}$.
    Find $a\in \ff$ such that $\langle a\rangle = \ff^{\times}$. Then return
    $\mathcal{X} \cup \{(aI_3, I_3, aI_3), (I_3, aI_3, aI_3)\}$.
    \

    \emph{Correctness.} As explained in \cref{rmk:flex}, the set $R$ is equal to
    $E[3](\ff)$.  As it is evident from \cref{rmk:autotopism-pseudo} and the proof of \Cref{thm:pseudo},     
    the map $\mathrm{Auto}_\ff(\J{E}{P})\to
    \Aut_V^f(\LA_{E, P}(\ff))$ given by $(X, Y, Z) \mapsto \diag(X, Y, Z)$ is an isomorphism, and thus correctness follows from the arguments there presented.

    \emph{Complexity.} The cardinalities of $\Aut_{\cor{O}}(E)$ and $E[3](\ff) =
    R$ are bounded from above by $6$ and $9$, respectively. The complexity is dominated by the complexity to find a
    primitive element of $\ff$. From the theorem of \cite{Shparlinski}, finding
    $a\in \ff^{\times}$ such that $\langle a\rangle = \ff^{\times}$ can be done
    in time $O(q^{1/4}\log q)$. 
\end{proof}

\begin{thm}\label{thm:computational-heart}
    Assume $\charac(\ff) = p \geq 5$. There exists an algorithm that, given a
    subfield $L\subset \ff$ and isotropically decomposable $\tuB,\tuB'\in \Mat_6(\ff[y_1, y_2,
    y_3]_1)$ whose Pfaffians define elliptic curves in $\P^2_{\ff}$ with
    flex points over $L$, returns the possibly empty coset
    $\pseudo_{L}(\tuB, \tuB')$ using $O(q)$ field operations. 
\end{thm}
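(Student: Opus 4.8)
The plan is to reduce the computation of the coset $\pseudo_L(\tuB,\tuB')$ to a bounded search over Galois twists combined with the (essentially constant-time) machinery for autotopisms of the $3\times 3$ matrices $\J{E}{P}$. First I would put both inputs in a normal form: since $\tuB$ is isotropically decomposable with irreducible Pfaffian, \cref{prop:ti3d} lets me write $\tuB$ pseudo-isometrically as $\left(\begin{smallmatrix}0 & \tuM \\ -\tuM^{\tp} & 0\end{smallmatrix}\right)$, and by \cref{rmk:flex} the flex point over $L$ supplies an $L$-linear change of coordinates bringing $\Pf(\tuB)$ into short Weierstrass form; by \cref{prop:M-bundles} (or \cref{thm:ishitsuka}) the class $\ps{\tuM}$ then equals $\ps{\J{E}{P}}$ for an explicitly computable pair $(E,P)$ with $E$ over $L$, and likewise $(E',P')$ for $\tuB'$. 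Record the two transition matrices in $\GL_6(\ff)$ (and $\GL_3(\ff)$) used; composing with them at the end lets me transport the answer back. Computing these normal forms uses \cref{lem:flex-computation} and linear algebra over $\ff$, i.e.\ $O(\log q)$ field operations plus $O(1)$ for the constant-size linear systems.

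Next, by the same reasoning as in the proof of \cref{mainthm:sat} (using \cref{thm:Baer-correspondence} and $\Cent\cong\ff$ from \cref{cor:adj-cent-elliptic}), an $L$-pseudo-isometry $\tuB\to\tuB'$ exists exactly when there is $\sigma\in\Gal(\ff/L)$ with ${}^\sigma t$ and $t'$ $\ff$-pseudo-isometric, and the full coset fibers over the set $S$ of such $\sigma$, each fiber being a coset of $\pseudo_\ff(\tuB)$. Since $[\ff:L]=O(\log q)$, I can enumerate $\Gal(\ff/L)$; for each $\sigma$, $\tuB_{\sigma(E),\sigma(P)}$ is computed entry-wise, and by \cref{th:awesome} the existence of an $\ff$-pseudo-isometry $\tuB_{\sigma(E),\sigma(P)}\to\tuB_{E',P'}$ is equivalent to an isomorphism of elliptic curves $\sigma(E)\to E'$ carrying $\sigma(P)$ to $P'$. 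Using \cref{lem:Aut-E} and the explicit isomorphism recipe of \cref{rmk:computing-galois} (a map $(x,y)\mapsto(u^2x,u^3y)$ with $u$ determined up to $\Aut_{\cor{O}}$ by ratios of the Weierstrass coefficients, together with a translation by a $3$-torsion point so that $P$ maps correctly), I can decide this and, when it holds, produce one explicit $\ff$-isotopism $\J{\sigma(E)}{\sigma(P)}\to\J{E'}{P'}$, hence via \cref{rmk:autotopism-pseudo} one element $(\alpha_\sigma,\beta_\sigma)$ of the fiber. The cost here is dominated by extracting roots of unity / a square root of a coefficient ratio in $\ff$ — $O(\log q)$ per $\sigma$ — so $O(\log^2 q)=O(q)$ overall for this part.

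Finally I need the subgroup $\pseudo_\ff(\tuB)=\pseudo_\ff(t)$ to describe the whole coset. Since $\tuB$ is in the form $\left(\begin{smallmatrix}0 & \J{E}{P}\\-\J{E}{P}^{\tp} & 0\end{smallmatrix}\right)$, \cref{thm:pseudo} and its proof identify $\pseudo_\ff(\tuB)$ with $\Aut_V(\LA_{E,P}(\ff))$, which is built from the stabilizer $\Aut_V^f(\LA)\cong\mathrm{Auto}_\ff(\J{E}{P})$ together with the finitely many ``swap'' elements from \cref{lem:swap} and (when $P\in E[2]$) the $\GL_2(\ff)$-action from \cref{lem:psi-M}. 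So I invoke \cref{prop:autotopism} to get generators for $\mathrm{Auto}_\ff(\J{E}{P})$, embed them block-diagonally into $\GL_6(\ff)$ per \cref{rmk:autotopism-pseudo}, adjoin the swap generator of \cref{lem:swap} and, in the $2$-torsion case, generators of $\psi(\GL_2(\ff))$ from \cref{lem:psi-M}; this yields a generating set for $\pseudo_\ff(\tuB)$. Then $\pseudo_L(\tuB,\tuB')$ is the (possibly empty) union over $\sigma\in S$ of $\pseudo_\ff(\tuB)\cdot(\alpha_\sigma,\beta_\sigma)$ composed with the recorded normal-form transition maps and the semilinear factor $\sigma$; I return a generating set of $\pseudo_\ff(\tuB)$ together with one coset representative for each $\sigma\in S$. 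The dominant cost is the primitive-element search inside \cref{prop:autotopism}, which is $O(q^{1/4}\log q)=O(q)$; everything else is $O(\log^{O(1)}q)$. The main obstacle I expect is the bookkeeping in the last step: correctly threading the several changes of basis (isotropic normalization, Weierstrass normalization, and the Galois twist) so that the final generators and coset representatives genuinely lie in $\pseudo_L(\tuB,\tuB')$ for the original inputs, and verifying that the listed generators together with one representative per $\sigma\in S$ really exhaust the coset — for which I would lean on the coset structure recorded after \cref{defn:pseudo-isometry} and on \cref{prop:reduction}\,(2).
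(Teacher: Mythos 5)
Your overall architecture matches the paper's: normalize to an isotropic decomposition, bring the Pfaffian into short Weierstrass form via the flex, identify $(E,P)$ and $(E',P')$, search over $\Gal(\ff/L)$ using \cref{th:awesome} to test for an isomorphism $\sigma(E)\to E'$ carrying $\sigma(P)$ to $P'$, and assemble $\pseudo_\ff(\tuB)$ from \cref{prop:autotopism}, \cref{lem:swap}, and \cref{lem:psi-M}. However, there is a genuine gap at the single step that is actually the heart of the algorithm and the source of the $O(q)$ bound: you never say \emph{how} to compute the point $P$ with $\ps{\tuM}=\ps{\J{E}{P}}$ (resp.\ $P'$). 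You cite \cref{prop:M-bundles} and \cref{thm:ishitsuka}, but those are bijection/existence statements, not procedures; they tell you such a $P$ exists and is unique, not how to find it. Your complexity accounting for this step (``linear algebra\dots\ $O(1)$ for the constant-size linear systems'' and ``$O(\log q)$'') is therefore unsubstantiated. The paper resolves this by brute force: list all $O(q)$ points $P\in E(\ff)$ and, for each, test whether $\Adj(\J{E}{P},\tuN)\neq 0$, which by \cref{prop:JP-adj} (the adjoint module is $1$-dimensional with invertible nonzero elements) happens exactly when $[\tuN]=[\J{E}{P}]$. That enumeration is precisely what dominates the cost and gives the stated $O(q)$; without some such procedure your algorithm is incomplete, and with it your claimed sub-$O(q)$ costs for the normalization phase are wrong (though the theorem's $O(q)$ bound would still hold).

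Two smaller points. First, the test in \cref{th:awesome}(3) asks for an isomorphism of elliptic curves (fixing the identities) with $\varphi(P)=P'$; the ``translation by a $3$-torsion point'' you invoke is not part of this test — the $E[3]$ translations enter only as elements of $\pseudo_\ff(\tuB)$ itself (via \cref{prop:autotopism}), so your description conflates the decision step with the generator-construction step. Second, your output format (generators of $\pseudo_\ff(\tuB)$ plus one representative per $\sigma\in S$) is a legitimate, if slightly more verbose, description of the coset than the paper's (generators plus a single representative), and the bookkeeping of transition matrices you flag as a concern is handled in the paper exactly as you propose, by conjugating by the recorded $X$, $Z$, $Z'$ and the adjoint-module elements.
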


\begin{proof} 
    \emph{Algorithm.} Use \cref{thm:recognition}\ref{thm-part:ti-sub} to rewrite
    $\tuB$ and $\tuB'$ such that they are isotropically decomposed. Let
    $\tuM,\tuM' \in \Mat_3(\ff[y_1,y_2,y_3]_1)$ be the top right $3\times 3$
    blocks in $\tuB$ and $\tuB'$, respectively. Construct $Z,Z'\in\GL_3(\ff)$
    with the property that both $f = \det(\tuM(Z\bm{y}))$ and $f' = \det(\tuM'(Z'\bm{y}))$
    yield short Weierstrass forms of the curves $E$ and $E'$. 

Set $\tuN = \tuM(Z\bm{y})$ and $\tuN' = \tuM'(Z'\bm{y})$ and find $P\in E(\ff)$ and $P'\in E'(\ff)$ such that
    $\Adj(\J{E}{P}, \tuN)$ and $\Adj(\J{E'}{P'}, \tuN')$ are nontrivial.
    Determine if there exists $\sigma\in \Gal(\ff/L)$  and an
    isomorphism $\phi: E'\to \sigma(E)$ of elliptic curves such that $P'\mapsto \sigma(P)$. If no
    such $\sigma$ exists, return $\varnothing$; otherwise, let $\varphi$ be represented by a matrix in $\GL_3(\ff)$ as given in \cref{lem:Aut-E} and choose
    \begin{itemize}
    \item $(X_1,Y_1)\in
    \Adj(\J{\sigma(E)}{\sigma(P)}, \tuN)\setminus\{0\}$,
    \item $(X_2, Y_2)\in \Adj(\J{E'}{P'},
    \tuN')\setminus\{0\}$, and 
    \item $(X_3, Y_3)\in \Adj(\J{E'}{P'},
    \J{\sigma(E)}{\sigma(P)}(\phi\bm{y}))\setminus\{0\}$.
\end{itemize}
Set, moreover, $\alpha= \diag( X_2X_3^{-1}\sigma X_1^{-1}, Y_2^{-1}Y_3\sigma Y_1) $ and $\beta=(Z')^{-1}\phi^{-1} \sigma Z$. 
Now, use \Cref{prop:autotopism} to construct a generating set $\mathcal{X}$ for
    $\mathrm{Auto}_{\ff}(\J{E}{P})$ and write
    \begin{align*} 
        \mathcal{Y}_1 &= \left\{\left(\begin{pmatrix} 
            X_1\gamma X_1^{-1} & 0 \\ 0 & Y_1^{-1}\delta Y_1
        \end{pmatrix}, Z^{-1}\epsilon Z\right) ~\middle|~ (\gamma, \delta, \epsilon) \in \mathcal{X} \right\}. 
    \end{align*} 
    If $P \in E[2](\ff)$, then set 
    \begin{align}\label{eqn:GL2-twist} 
        \mathcal{Y}_2 &= \left\{\left(\begin{pmatrix} 
            aI_3 & bX_1Y_1 \\ cY_1^{-1}X_1^{-1} & dI_3
        \end{pmatrix}, (ad-bc)I_3 \right) ~\middle|~ \begin{pmatrix} a & b \\ c & d \end{pmatrix} \in \GL_2(\ff) \right\}.
    \end{align} 
    If $P\notin E[2](\ff)$, then set 
    \begin{align}
        \mathcal{Y}_2 &= \left\{\left(\begin{pmatrix} 
            0 & X_1X_0Y_1 \\ Y_1^{-1}X_0X_1^{-1} & 0
        \end{pmatrix}, Z^{-1}Z_0Z \right) ~\middle|~ (X_0, Z_0) \text{ as in \Cref{lem:swap}}\right\}.
    \end{align} 
    Return the generating set $\mathcal{Y}=\mathcal{Y}_1\cup\mathcal{Y}_2$ for
    $\pseudo_{\ff}(\tuB)$ and $(\alpha, \beta) \in \pseudo_{L}(\tuB, \tuB')$. 
    
    \emph{Correctness.} For the existence of the matrices $Z$ and $Z'$ use \cref{rmk:flex}. Assume there exists $P\in E(\ff)$ such
    that $[\tuN] = [\J{E}{P}]$. By definition, there exist $X, Y\in \GL_3(\ff)$
    such that $X^{\tp}\tuN Y = \J{E}{P}$. Thus, $(A, B)\in \Adj(\tuN, \J{E}{P})$
    if and only if $(AX, Y^{-1}B) \in \Adj(\J{E}{P})$.  By \Cref{prop:JP-adj},
    the latter is $1$-dimensional and all non-zero elements are invertible.
    Thus, $\Adj(\tuN, \J{E}{P})$ is non-trivial if and only if $[\tuN]=[\J{E}{P}]$. Since $\det(\tuN) = 0$ defines a short Weierstrass equation
    and $\charac(\ff)\geq 5$, \cref{thm:ishitsuka} ensures the existence and
    uniqueness of such a $P$. A similar argument holds for $\tuN'$ and
    $\J{E'}{P'}$.

    If there is no isomorphism of $\phi: E'\to \sigma(E)$ mapping $P'$ to
    $\sigma(P)$, then \Cref{mainthm:sat} guarantees that $\tuB$ and $\tuB'$ are
    not $L$-pseudo-isometric. Otherwise, the isomorphism is linear being
    represented by a matrix in $\GL_3(\ff)$. Hence in this case,
    $\J{E'}{P'}(\phi\bm{y})$ is equivalent to $\J{\sigma(E)}{\sigma(P)}$, and
    the adjoint algebra $\Adj(\J{E'}{P'}(\phi\bm{y}), \J{\sigma(E)}{\sigma(P)})$
    is $1$-dimensional by a similar argument as before. \cref{prop:autotopism}
    and direct calculations show that $\mathcal{Y}\subset \pseudo_{\ff}(\tuB)$
    and $(\alpha, \beta)\in \pseudo_L(\tuB, \tuB')$. That $\langle \mathcal{Y}
    \rangle = \pseudo_{\ff}(\tuB)$ follows from the proof of \Cref{thm:pseudo}. 

    \emph{Complexity.} The complexity is dominated by listing and finding the
    unique points $P$ and $P'$. There are at most $O(q)$ points, and listing the
    points on an elliptic curve can be done using $O(q)$ field operations. 
\end{proof}

\begin{proof}[Proof of \Cref{mainthm:iso}\ref{thm-part:iso}]
    We assume the algorithm for \Cref{mainthm:iso}\ref{thm-part:recog1} has
    already been carried out successfully. Thus, the matrices $\tuB_1$ and
    $\tuB_2$ associated to $t_{G_1}$ and $t_{G_2}$ are written over their
    centroids, $\ff$, and are decomposable. Now use the algorithm in
    \cref{thm:computational-heart} with $L = \F_p$.
\end{proof}

\subsection{Implementation}
\label{sec:implementation}

We have implemented the algorithms in this section in the computer algebra
system \textsf{Magma}~\cite{magma/97}, and they are publicly
available~\cite{GitEGroups}. The plot in \cref{fig:data} shows the runtimes on
an Intel Xeon Gold 6138 2.00 GHz running \textsf{Magma} V2.26-11.

We describe the process shown in~\cref{fig:data}
from \cref{sec:main-results}. For each prime-power $q=p^e$ up to $10^5$,
avoiding integers with $p\in\{2,3\}$, we construct an elliptic curve in short
Weierstrass form $y^2 = x^3 + ax + b$, by choosing $(a,b)\in \F_q^2$ uniformly
at random and discarding any pair $(a,b)$ satisfying $4a^3 + 27b^2=0$. For each
elliptic curve $E$, we choose $P\in E(\F_q)\setminus \{\cor{O}\}$ uniformly at
random. After writing $\tuB_{E, P}$ over $\F_p$, our tensor is still represented
with convenient choices of bases. In order to remove this bias, we randomly
construct $X\in\GL_{6e}(\F_p)$ and $Z\in\GL_{3e}(\F_p)$, and set $\tuB = X^{\tp}
\tuB_{E,P}(Z\bm{y}) X$. We finally construct generators for the automorphism
group of $\GP_{\tuB}(\F_p)$ using \cref{mainthm:iso}.

\cref{mainthm:sat} gives us a characterization of the isomorphism classes of
elliptic $p$-groups for $p\notin \{2,3\}$. Such a characterization lends itself
more easily to explicit computations. In \cref{tab:iso-classes}, for each
prime-power $q\in [5, 97]$, with $q$ not equal to $2^e$ or $3^e$, we compute the
number of isomorphism classes, denoted by $N_q$, of the $G = \GP_{E, P}(\ff)$
such that $|G| = q^9$. 

\begin{table}[h]
    \centering 
    \begin{tabular}{|c|c||c|c||c|c||c|c||c|c|}
        \hline
        $q$ & $N_q$ & $q$ & $N_q$ & $q$ & $N_q$ & $q$ & $N_q$ & $q$ & $N_q$  \\ \hline \hline 
        5 & 31 & 19 & 381 & 37 & 1409 & 53 & 2863 & 73 & 5405 \\
        7 & 57 & 23 & 551 & 41 & 1723 & 59 & 3539 & 79 & 6321 \\
        11 & 131 & 25 & 385 & 43 & 1893 & 61 & 3785 & 83 & 6971 \\
        13 & 185 & 29 & 871 & 47 & 2255 & 67 & 4557 & 89 & 8011 \\ 
        17 & 307 & 31 & 993 & 49 & 1393 & 71 & 5111 & 97 & 9509 \\ \hline 
    \end{tabular}
    \smallskip 
    \caption{The number of isomorphism classes $N_q$ of the $\GP_{E, P}(\ff)$.}
    \label{tab:iso-classes}
\end{table}

\noindent The data in \cref{tab:iso-classes} provides good evidence that the
following conjecture seems true; in particular this would imply that the function $p\mapsto
N_p$ is quasipolynomial.
\begin{conj}\label{conj:N_p}
    For primes $p\geq 5$, we have 
    \[ 
        N_p = p^2 + p - \mathrm{gcd}(p - 2, 3) + \mathrm{gcd}(p - 1, 4).
    \] 
\end{conj}
\noindent It is clear that \Cref{conj:N_p} cannot be true for prime-powers: for
example,
\[ 
    N_{25} = 385 \neq 653 = 25^2 + 25 - \mathrm{gcd}(25 - 2, 3) + \mathrm{gcd}(25 - 1, 4).
\]
\begin{quest}
  For fixed $e$, is $p\mapsto N_{p^e}$ a quasipolynomial? 
\end{quest}

It may seem that the tensors we consider, namely $\tuB_{E, P}$, are somewhat
rare in general. Indeed, the existence of $3$-dimensional totally-isotropic
subspaces of $\ff^6$ should not occur ``at random''. However, this is not the
case. Let $\tuB\in \Mat_6(\ff[y_1,y_2,y_3]_1)$ whose Pfaffian defines an
elliptic curve in $\P^2_{\ff}$. By \cref{cor:four-cases}, there are four
cases for $|\ti{\tuB}|$, namely $0$, $1$, $2$, and $q+1$. By
\cref{thm:reducible-adjoints}, if $|\ti{\tuB}|\geq 1$, then there are three
different $*$-semisimple types: $\mathbf{O}_1(\ff)$, $\mathbf{X}_1(\ff)$, and
$\mathbf{S}_2(\ff)$. From computer evidence, it seems that $\Adj(\tuB) \cong
\mathbf{U}_1(L)$, where $L/\ff$ is a quadratic extension, whenever
$\ti{\tuB}=\varnothing$.

\begin{figure}[h]
    \centering 
    \includegraphics{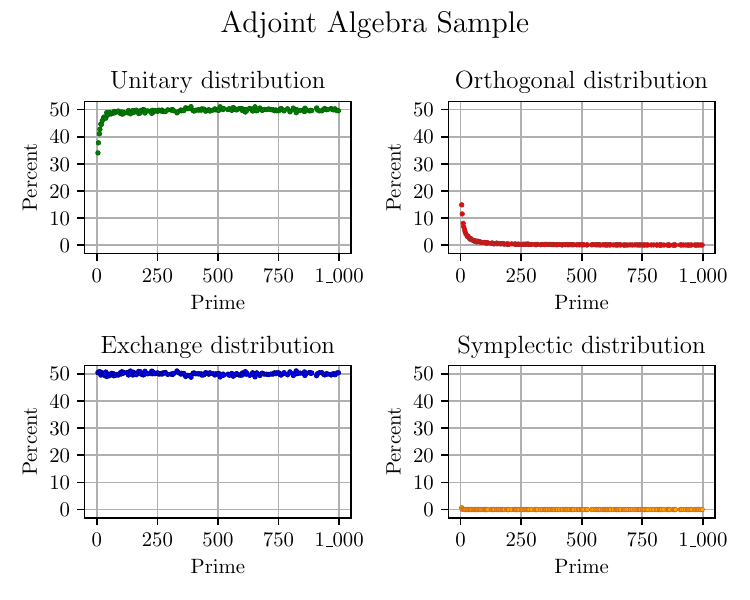}
    \caption{The isomorphism type for the $*$-semisimple part of the adjoint algebra for randomly constructed tensors.}
    \label{fig:adjoint}
\end{figure}

For primes $p\in [3, 1000]$, we constructed $1000$ matrices $\tuB \in
\Mat_{6}(\F_p[y_1, y_2, y_3]_1)$ where $\Pf(\tuB)$ defines an elliptic curve in
$\P^2_{\F_p}$. Each $\tuB$ was constructed uniformly at random: there are
$36$ homogeneous linear polynomials, so we chose $108$ elements from $\F_p$
uniformly at random to build $\tuB$, discarding matrices until they satisfied
the required condition. The outcome of the experiment is shown in
\cref{fig:adjoint}. It seems that, as $p\to \infty$, the probability of
$\Adj(\tuB)\cong \mathbf{X}_1(\ff)$ is equal to $0.5$, which seems to also be
equal to the probability of $\Adj(\tuB)\cong \mathbf{U}_1(L)$. In other words,
it seems that for ``random'' $\tuB \in \Mat_{6}(\F_p[y_1, y_2, y_3]_1)$, we have
either $|\ti{\tuB}|=2$ or $|\ti{\tuB}|=0$ at $50\%$ of the time. 

\bibliographystyle{abbrv}
\bibliography{e-groups}

\begin{thebibliography}{10}

\bibitem{Baer/38}
R.~Baer.
\newblock Groups with abelian central quotient group.
\newblock {\em Trans. Amer. Math. Soc.}, 44(3):357--386, 1938.

\bibitem{BanPal/12}
A.~Bandini and L.~Paladino.
\newblock Number fields generated by the 3-torsion points of an elliptic curve.
\newblock {\em Monatsh. Math.}, 168(2):157--181, 2012.

\bibitem{BMS/17}
M.~Bardestani, K.~Mallahi-Karai, and H.~Salmasian.
\newblock Kirillov's orbit method and polynomiality of the faithful dimension of {$p$}-groups.
\newblock {\em Compos. Math.}, 155(8):1618--1654, 2019.

\bibitem{Beauville00}
A.~{Beauville}.
\newblock {Determinantal hypersurfaces}.
\newblock {\em {Mich. Math. J.}}, 48:39--64, 2000.

\bibitem{magma/97}
W.~Bosma, J.~Cannon, and C.~Playoust.
\newblock The {M}agma algebra system. {I}. {T}he user language.
\newblock volume~24, pages 235--265. 1997.
\newblock Computational algebra and number theory (London, 1993).

\bibitem{BostonIsaacs04}
N.~Boston and M.~I. Isaacs.
\newblock Class numbers of $p$-groups of a given order.
\newblock {\em J. Algebra}, 279(2):810--819, 2004.

\bibitem{BMW/17}
P.~A. Brooksbank, J.~Maglione, and J.~B. Wilson.
\newblock A fast isomorphism test for groups whose {L}ie algebra has genus 2.
\newblock {\em J. Algebra}, 473:545--590, 2017.

\bibitem{BMW/20}
P.~A. Brooksbank, J.~Maglione, and J.~B. Wilson.
\newblock Exact sequences of inner automorphisms of tensors.
\newblock {\em J. Algebra}, 545:43--63, 2020.

\bibitem{BOW/19}
P.~A. Brooksbank, E.~A. O'Brien, and J.~B. Wilson.
\newblock Testing isomorphism of graded algebras.
\newblock {\em Trans. Amer. Math. Soc.}, 372(11):8067--8090, 2019.

\bibitem{BrooksbankWilson/12}
P.~A. Brooksbank and J.~B. Wilson.
\newblock Computing isometry groups of {H}ermitian maps.
\newblock {\em Trans. Amer. Math. Soc.}, 364(4):1975--1996, 2012.

\bibitem{BrooksbankWilson/15}
P.~A. Brooksbank and J.~B. Wilson.
\newblock The module isomorphism problem reconsidered.
\newblock {\em J. Algebra}, 421:541--559, 2015.

\bibitem{CH:isomorphism}
J.~J. Cannon and D.~F. Holt.
\newblock Automorphism group computation and isomorphism testing in finite groups.
\newblock {\em J. Symbolic Comput.}, 35(3):241--267, 2003.

\bibitem{CLO}
D.~A. Cox, J.~Little, and D.~O'Shea.
\newblock {\em Ideals, varieties, and algorithms}.
\newblock Undergraduate Texts in Mathematics. Springer, Cham, fourth edition, 2015.
\newblock An introduction to computational algebraic geometry and commutative algebra.

\bibitem{Cremona/03}
J.~Cremona.
\newblock {G1CRPC}: Rational points on curves, 2003.
\newblock \url{https://www.cise.ufl.edu/research/SpaceTimeUncertainty/Spatial3D/crem03.pdf}.

\bibitem{duSautoy/01}
M.~du~Sautoy.
\newblock A nilpotent group and its elliptic curve: non-uniformity of local zeta functions of groups.
\newblock {\em Israel J. Math.}, 126:269--288, 2001.

\bibitem{duSautoy/02}
M.~du~Sautoy.
\newblock Counting subgroups in nilpotent groups and points on elliptic curves.
\newblock {\em J. Reine Angew. Math.}, 549:1--21, 2002.

\bibitem{dS+VL}
M.~P.~F. du~Sautoy and M.~Vaughan-Lee.
\newblock Non-{PORC} behaviour of a class of descendant {$p$}-groups.
\newblock {\em J. Algebra}, 361:287--312, 2012.

\bibitem{Duke}
W.~Duke.
\newblock Elliptic curves with no exceptional primes.
\newblock {\em C. R. Acad. Sci. Paris S\'{e}r. I Math.}, 325(8):813--818, 1997.

\bibitem{ELGO:pgrp-automorphism}
B.~Eick, C.~R. Leedham-Green, and E.~A. O'Brien.
\newblock Constructing automorphism groups of {$p$}-groups.
\newblock {\em Comm. Algebra}, 30(5):2271--2295, 2002.

\bibitem{Fulton69}
W.~Fulton.
\newblock {\em Algebraic curves. {A}n introduction to algebraic geometry}.
\newblock W. A. Benjamin, Inc., New York-Amsterdam, 1969.

\bibitem{GS/84}
F.~Grunewald and D.~Segal.
\newblock Reflections on the classification of torsion-free nilpotent groups.
\newblock In {\em Group theory}, pages 121--158. Academic Press, London, 1984.

\bibitem{Hall}
P.~Hall.
\newblock The classification of prime-power groups.
\newblock {\em J. Reine Angew. Math.}, 182:130--141, 1940.

\bibitem{Hartshorne}
R.~Hartshorne.
\newblock {\em Algebraic geometry}.
\newblock Graduate Texts in Mathematics, No. 52. Springer-Verlag, New York-Heidelberg, 1977.

\bibitem{Hesse/44}
O.~Hesse.
\newblock \"{U}ber die {E}limination der {V}ariabeln aus drei algebraischen {G}leichungen vom zweiten {G}rade mit zwei {V}ariabeln.
\newblock {\em J. Reine Angew. Math.}, 28:68--96, 1844.

\bibitem{HIgman1}
G.~Higman.
\newblock Enumerating {$p$}-groups. {I}. {I}nequalities.
\newblock {\em Proc. London Math. Soc. (3)}, 10:24--30, 1960.

\bibitem{Higman2}
G.~Higman.
\newblock Enumerating {$p$}-groups. {II}. problems whose solution is porc.
\newblock {\em Proc. London Math. Soc. (3)}, 10:566--582, 1960.

\bibitem{Ishitsuka/17}
Y.~Ishitsuka.
\newblock A positive proportion of cubic curves over {$\Bbb Q$} admit linear determinantal representations.
\newblock {\em J. Ramanujan Math. Soc.}, 32(3):231--257, 2017.

\bibitem{IvanyosQiao/19}
G.~Ivanyos and Y.~Qiao.
\newblock Algorithms based on {$*$}-algebras, and their applications to isomorphism of polynomials with one secret, group isomorphism, and polynomial identity testing.
\newblock {\em SIAM J. Comput.}, 48(3):926--963, 2019.

\bibitem{KantorLuks/90}
W.~M. Kantor and E.~M. Luks.
\newblock Computing in quotient groups.
\newblock In {\em Proceedings of the Twenty-Second Annual ACM Symposium on Theory of Computing}, STOC '90, pages 524--534, New York, NY, USA, 1990. Association for Computing Machinery.

\bibitem{Lee/16}
S.~Lee.
\newblock A class of descendant {$p$}-groups of order {$p^9$} and {H}igman's {PORC} conjecture.
\newblock {\em J. Algebra}, 468:440--447, 2016.

\bibitem{Leedham-GreenSoicher/98}
C.~R. Leedham-Green and L.~H. Soicher.
\newblock Symbolic collection using {D}eep {T}hought.
\newblock {\em LMS J. Comput. Math.}, 1:9--24, 1998.

\bibitem{Luks/92}
E.~M. Luks.
\newblock Computing in solvable matrix groups.
\newblock In {\em 33rd Annual Symposium on Foundations of Computer Science, Pittsburgh, Pennsylvania, USA, 24-27 October 1992}, pages 111--120. {IEEE} Computer Society, 1992.

\bibitem{M17}
J.~Maglione.
\newblock Efficient characteristic refinements for finite groups.
\newblock {\em J. Symbolic Comput.}, 80(part 2):511--520, 2017.

\bibitem{M21}
J.~Maglione.
\newblock Filters compatible with isomorphism testing.
\newblock {\em J. Pure Appl. Algebra}, 225(3):Paper No. 106528, 28, 2021.

\bibitem{GitEGroups}
J.~Maglione.
\newblock \textsf{EGroups}, ver.~1.0, 2022.
\newblock \texttt{\url{https://github.com/joshmaglione/egroups}}.

\bibitem{Miller/78}
G.~L. Miller.
\newblock On the {{\(n\log{n}\)}} isomorphism technique (preliminary report).
\newblock In {\em Proceedings of the 10th annual ACM symposium on theory of computing, STOC'78, San Diego, CA, USA, May 1--3, 1978}, pages 51--58. New York, NY: Association for Computing Machinery (ACM), 1978.

\bibitem{Neumann/86}
P.~M. Neumann.
\newblock Some algorithms for computing with finite permutation groups.
\newblock In {\em Proceedings of groups---{S}t. {A}ndrews 1985}, volume 121 of {\em London Math. Soc. Lecture Note Ser.}, pages 59--92. Cambridge Univ. Press, Cambridge, 1986.

\bibitem{Ng}
K.~O. Ng.
\newblock The classification of {$(3,3,3)$} trilinear forms.
\newblock {\em J. Reine Angew. Math.}, 468:49--75, 1995.

\bibitem{O'BrienVoll/15}
E.~A. O'Brien and C.~Voll.
\newblock Enumerating classes and characters of {$p$}-groups.
\newblock {\em Trans. Amer. Math. Soc.}, 367(11):7775--7796, 2015.

\bibitem{RaviTri14}
G.~V. Ravindra and A.~Tripathi.
\newblock Torsion points and matrices defining elliptic curves.
\newblock {\em Internat. J. Algebra Comput.}, 24(6):879--891, 2014.

\bibitem{Rossmann/19}
T.~Rossmann.
\newblock The average size of the kernel of a matrix and orbits of linear groups, {II}: duality.
\newblock {\em J. Pure Appl. Algebra}, 224(4):106203, 28, 2020.

\bibitem{Rossmann/22}
T.~Rossmann.
\newblock On the enumeration of orbits of unipotent groups over finite fields.
\newblock {\em Proc. Amer. Math. Soc.}, 153(2):479--495, 2025.

\bibitem{RV/2019}
T.~Rossmann and C.~Voll.
\newblock Groups, graphs, and hypergraphs: average sizes of kernels of generic matrices with support constraints.
\newblock {\em Mem. Amer. Math. Soc.}, 294(1465):v+120, 2024.

\bibitem{Seress/03}
A.~Seress.
\newblock {\em Permutation group algorithms}, volume 152 of {\em Cambridge Tracts in Mathematics}.
\newblock Cambridge University Press, Cambridge, 2003.

\bibitem{Shparlinski}
I.~Shparlinski.
\newblock On finding primitive roots in finite fields.
\newblock {\em Theoret. Comput. Sci.}, 157(2):273--275, 1996.

\bibitem{silverman}
J.~H. Silverman.
\newblock {\em The arithmetic of elliptic curves}, volume 106 of {\em Graduate Texts in Mathematics}.
\newblock Springer, Dordrecht, second edition, 2009.

\bibitem{SilvermanTate}
J.~H. Silverman and J.~T. Tate.
\newblock {\em Rational points on elliptic curves}.
\newblock Undergraduate Texts in Mathematics. Springer, Cham, second edition, 2015.

\bibitem{stanojkovski2019hessian}
M.~Stanojkovski and C.~Voll.
\newblock Hessian matrices, automorphisms of {$p$}-groups, and torsion points of elliptic curves.
\newblock {\em Math. Ann.}, 381(1-2):593--629, 2021.

\bibitem{SageMath}
{The Sage Developers}.
\newblock {\em {S}ageMath, the {S}age {M}athematics {S}oftware {S}ystem ({V}ersion~8.7)}, 2019.
\newblock See {\url{https://www.sagemath.org}}.

\bibitem{VL/18}
M.~Vaughan-Lee.
\newblock Non-{PORC} behaviour in groups of order {$p^7$}.
\newblock {\em J. Algebra}, 500:30--45, 2018.

\bibitem{Voll/04}
C.~Voll.
\newblock {Zeta functions of groups and enumeration in Bruhat-Tits buildings}.
\newblock {\em Amer. J. Math.}, 126:1005--1032, 2004.

\bibitem{Voll/05}
C.~Voll.
\newblock {Functional equations for local normal zeta functions of nilpotent groups}.
\newblock {\em Geom. Func. Anal. (GAFA)}, 15:274--295, 2005.
\newblock with an appendix by A. Beauville.

\bibitem{vzGG/13}
J.~von~zur Gathen and J.~Gerhard.
\newblock {\em Modern computer algebra}.
\newblock Cambridge University Press, Cambridge, third edition, 2013.

\bibitem{Weinstein/16}
J.~Weinstein.
\newblock Reciprocity laws and {G}alois representations: recent breakthroughs.
\newblock {\em Bull. Amer. Math. Soc. (N.S.)}, 53(1):1--39, 2016.

\bibitem{Wilson/12}
J.~B. Wilson.
\newblock Existence, algorithms, and asymptotics of direct product decompositions, {I}.
\newblock {\em Groups Complex. Cryptol.}, 4(1):33--72, 2012.

\bibitem{Wilson/13}
J.~B. Wilson.
\newblock More characteristic subgroups, {L}ie rings, and isomorphism tests for {$p$}-groups.
\newblock {\em J. Group Theory}, 16(6):875--897, 2013.

\bibitem{Wilson/17}
J.~B. Wilson.
\newblock On automorphisms of groups, rings, and algebras.
\newblock {\em Comm. Algebra}, 45(4):1452--1478, 2017.

\bibitem{Wyman/72}
B.~F. Wyman.
\newblock What is a reciprocity law?
\newblock {\em Amer. Math. Monthly}, 79:571--586; correction, ibid. 80 (1973), 281, 1972.

\end{thebibliography}

\end{document}